\theoremstyle{definition}
\newtheorem{notation}{Notation}[section]
\newtheorem{theorem}[notation]{Theorem}
\newtheorem{proposition}[notation]{Proposition}
\newtheorem{corollary}[notation]{Corollary}
\newtheorem{lemma}[notation]{Lemma}
\newtheorem{remark}[notation]{Remark}
\newtheorem{definition}[notation]{Definition}
\newtheorem*{theorem*}{Theorem}
\newtheorem*{corollary*}{Corollary}
\newtheorem{example}[notation]{Example}
\newtheorem{theoremx}{Theorem}
\newtheorem{definitionx}[theoremx]{Definition}
\numberwithin{equation}{section}
\begin{document}
	
	\title[Rokhlin dimension for actions of residually compact groups]{Rokhlin dimension for actions of residually compact groups}
	
	\author{Xin Cao}
	\address{School of Mathematical Sciences, Key Laboratory of Intelligent Computing and Applications(Ministry of Education), Tongji University, Shanghai 200092, China}
	\email{2111153@tongji.edu.cn}
	
	\author{Xiaochun Fang}
	\address{School of Mathematical Sciences,  Key Laboratory of Intelligent Computing and Applications(Ministry of Education), Tongji University, Shanghai 200092, China}
	\email{xfang@tongji.edu.cn}
	
	\author{Jianchao Wu}
	\address{Shanghai Center for Mathematical Sciences, Fudan University,\newline\indent 2005 Songhu Rd., Shanghai 200438, China}
	\email{jianchao\_wu@fudan.edu.cn}

	\date{Month, Day, Year}
	\keywords{C*-algebra, crossed product, Rokhlin dimension, residually compact groups}
	\date{\today}

    \thanks{The last author was partially supported by NSFC Key Program No.~12231005 and National Key R\&D Program of China 2022YFA100700. }

	\begin{abstract}
         We introduce the concept of Rokhlin dimension for actions of residually compact groups on C*-algebras, which extends and unifies previous notions for actions of compact groups, residually finite groups and the reals. We then demonstrate that finite nuclear dimension (respectively, absorption of a strongly self-absorbing C*-algebra) is preserved under the formation of crossed products by residually compact group actions with finite Rokhlin dimension (respectively, finite Rokhlin dimension with commuting towers). Furthermore, if second countable residually compact group contains a non-open cocompact closed subgroup, then crossed products arising from actions with finite Rokhlin dimension are stable. Finally, we study the relationship between the tube dimension of a topological dynamical system and the Rokhlin dimension of the induced C*-dynamical system.
	\end{abstract}
    \maketitle

    \tableofcontents
	
	\section{Introduction}\label{sec1}
	Given a C*-dynamical system $(G, A,\alpha )$, the Rokhlin property is an important regularity condition on $\alpha$, it can be used to transfer certain properties from $A$ to the crossed product C*-algebra $A\rtimes_\alpha G$. In \cite{Izu04a,Izu04b}, Izumi initiated his pioneering work on finite group actions with the Rokhlin property. Roughly speaking, for an action by a finite group, the Rokhlin property says that there exists a partition of unity consisting of approximately central projections that are mutually orthogonal, indexed by the elements of the group, and permuted by the group action. Such a partition of unity is a prototypical example of what one may call a Rokhlin tower in the C*-dynamical setting.
    There is a plethora of work on crossed products that are formed by actions with Rokhlin-type properties (see, e.g., \cite{BKR07,Gar17a,HW07,Kis02,MS12,MS14,Nak99,OP12,Phi11,San15}). However, group actions with the Rokhlin property are rare, and many C*-algebras do not admit finite group actions with the Rokhlin property, due to restrictions on their $K$-theory (see, e.g., \cite[Section 10.3]{GKPT18}).  
    
	\par 
        There are two major approaches to soften the definition of the Rokhlin property and expand its applicability.   
        The first approach allows a ``small remainder'' to complement a Rokhlin tower, and results in the notions of tracial Rokhlin property \cite{Phi11} and subsequently weak tracial Rokhlin property \cite{Arc08,Wang13,HO13}. 
        The main application of this approach is to show that tracially $\mathcal{Z}$-absorption is preserved by taking crossed products by actions with the weak tracial Rokhlin property.  
        However, this approach typically requires the simplicity of the C*-algebra and does not seem to work well with the notion of finite nuclear dimension.  
        
        This paper follows the second approach, which allows a partition of unity consisting of multiple Rokhlin towers, and results in the notion of the Rokhlin dimension \cite{HWZ15}. Roughly speaking, for a finite group action, a Rokhlin tower in this formulation consists of positive elements that are mutually orthogonal, indexed by the group elements, and permuted by the group action. It is easy to see that an action of a finite group has zero Rokhlin dimension if and only if it satisfies the Rokhlin property. 
        The notion of finite Rokhlin dimension generalizes Rokhlin property while still being strong enough to ensure compatibility with regularity properties such as finite nuclear dimension. 
    
	\par The Rokhlin dimension was initially defined for actions by finite groups and the group $\mathbb{Z}$ of integers \cite{HWZ15} and subsequently generalized to actions of more general groups: compact groups in \cite{Gar17a,Gar17b,GHS19}, residually finite groups in \cite{SWZ19}, and the group $\mathbb{R}$ of real numbers in \cite{HSWW17}. All of these definitions can be used to transfer good structure properties, such as finite nuclear dimension and $\mathcal{Z}$-absorption (the latter case requires an extra assumption of commuting towers), from a C*-algebra $A$ to its crossed product C*-algebra $A \rtimes_\alpha G$. 
    A natural next target would be residually compact groups, a concept that extends residual finiteness to the scope of locally compact groups. 
    To this end, Szab{\'o} \cite{Sza19} generalized the theory of Rokhlin dimension \emph{with commuting towers} to work for second countable locally compact groups with residually compact approximations that are decreasing, discrete and normal. 

        \par In this paper, we unify all the known constructions of Rokhlin dimension and generalize the theory to work for residually compact groups, defined as follows: a second countable locally compact group is called \emph{residually compact} if it admits a sequence, called a \emph{residually compact approximation}, that consists of closed cocompact subgroups such that the intersection of these subgroups is trivial. Our definition of a residually compact approximation is more general than those of Szab{\'o} \cite{Sza19} and Amini \cite{A24}, in that our sequence of cocompact closed subgroups need not be decreasing, nor discrete, nor normal. 
        Unlike in the residually finite setting, residual compactness for second countable locally compact groups does not automatically admit decreasing, discrete, or normal approximating subgroups, so it is desirable to forgo these requirements in the theory. 
        On the other hand, just like the case of residually finite groups, in order to apply the theory of Rokhlin dimension, we shall almost always assume a regularity condition on the residually compact approximations. The regular residually compact approximation in this paper includes, as a special case, the one in \cite{Sza19} which consists of decreasing discrete normal cocompact subgroups (see Proposition~\ref{prop: decr disc nor is regular}).
        Note that the various classes of groups mentioned in the last paragraph, namely compact groups, residually finite groups and the group $\mathbb{R}$ of real numbers, are all residually compact groups. Further natural examples, for which a Rokhlin dimension theory has not been fully established, include $\mathbb{R}^d$ and the continuous Heisenberg group, etc. 
        In fact, all these groups admit regular residually compact approximations. 
      
    \par We define the Rokhlin dimension for residually compact group actions as follows (here we assume unitality to ease the exposition):
    \begin{definitionx}[see Definition~\ref{def main} for the general case]
        Let $G$ be a residually compact group with a residually compact approximation $\sigma$. Let $A$ be a unital separable C*-algebra, and let $\alpha\colon G\to {\rm Aut}(A)$ be an action. Let $d\in \mathbb{N}$ be a natural number.
        \par For any $H\in \sigma$, we say $\alpha$ has \emph{Rokhlin dimension at most $d$ relative to $H$} and write $\mathrm{dim}_{\mathrm{Rok}}(\alpha,H)\leq d$, if there exist equivariant completely positive contractive order zero maps
        \[\varphi_l\colon C(G/H)\to A_\infty\cap A'\quad (l=0,\ldots,d) \]     
        with $\varphi_0(1)+\cdots+\varphi_d(1)=1$. The value ${\rm dim}_{\rm Rok}(\alpha,H) $ is defined to be the smallest $d\in\mathbb{N}$ such that ${\rm dim}_{\rm Rok}(\alpha,H)\leq d$, or $\infty$ if no such $d$ exists.
        \par We define the \emph{Rokhlin dimension} of $\alpha$ along $\sigma$ as
        \[ \mathrm{dim_{Rok}}(\alpha,\sigma)=\sup_{H\in \sigma}\mathrm{dim_{Rok}}(\alpha,H).\]
    \end{definitionx}

    This definition of Rokhlin dimension is compatible with those given earlier in various special cases (\cite{HWZ15,Gar17a,Gar17b,GHS19,SWZ19,HSWW17}). Imposing the extra requirement that the order zero maps $\varphi_0, \ldots, \varphi_d$ have commuting ranges, we arrive at the related notion of Rokhlin dimension with \emph{commuting towers}, which again generalizes the corresponding definitions in these previous papers as well as the one in \cite{Sza19}. 

    \par 
    To see how this definition matches the description given earlier that uses Rokhlin towers, we observe that each map $\varphi_l$ in the above definition can be lifted to an approximately equivariant, approximately central, approximately order zero, completely positive contractive map $\psi_l\colon C(G/H) \to A$ (see Proposition~\ref{prop5.1}). In particular, if $G$ is a finite group and $H$ is chosen to be trivial, then $\{\psi_l(\delta_g)\colon g\in G\}$ forms a Rokhlin tower in the previous descriptions, where each $\delta_g$ is the characteristic function of $\{g\}$. 
    
	\par With the above definition, we are able to unify and generalize the previous theories on Rokhlin dimension. In particular, we show a number of important C*-algebraic properties are preserved under taking crossed products with regard to actions with finite Rokhlin dimension. Let us start with finite nuclear dimension.  
	\begin{theoremx}[{see Theorem~\ref{thm:dimnuc}}]
		Let $A$ be a separable C*-algebra, let $G$ be a second countable residually compact group, and let $\alpha\colon G\to {\rm Aut}(A)$ be a continuous action. 
        Let $\sigma=(G_n)_{n\in \mathbb{N}}$ be a regular residually compact approximation of $G$. Then the following inequality holds:
		\[{\rm dim}_{\rm nuc}^{+1}(A\rtimes_\alpha G)\leq {\rm asdim}^{+1}(\Box_{\sigma}G)\cdot {\rm dim}_{\rm Rok}^{+1}(\alpha,\sigma)\cdot {\rm dim}_{\rm nuc}^{+1}(A). \]
	\end{theoremx}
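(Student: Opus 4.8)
I would prove the inequality by producing, for every finite subset $F\subseteq A\rtimes_\alpha G$ and every $\varepsilon>0$, a completely positive approximation of $\mathrm{id}_{A\rtimes_\alpha G}$ on $F$ to within $\varepsilon$ that factors through a finite-dimensional C*-algebra and whose ``downstairs'' map decomposes into at most ${\rm asdim}^{+1}(\Box_\sigma G)\cdot{\rm dim}_{\rm Rok}^{+1}(\alpha,\sigma)\cdot{\rm dim}_{\rm nuc}^{+1}(A)$ completely positive contractive order zero maps. Write $m={\rm asdim}(\Box_\sigma G)$, $d={\rm dim}_{\rm Rok}(\alpha,\sigma)$ and $r={\rm dim}_{\rm nuc}(A)$. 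Since elements of the crossed product are approximated by compactly supported $A$-valued functions on $G$, I may assume that $F$ consists of such functions supported in a fixed compact set $K\ni e$. Because the Rokhlin dimension is the supremum over $H\in\sigma$, I fix a single $H=G_n$, taking $n$ large enough, which is possible by the regularity of $\sigma$ together with $\bigcap_nG_n=\{e\}$, so that the quotient map $G\to G/H$ is injective on $K$ and the relevant box-space geometry is visible at scale $K$.

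The approximation will be assembled from three independent sources of colors, one for each factor in the bound. First, the Rokhlin dimension hypothesis supplies equivariant c.p.c. order zero maps $\varphi_0,\dots,\varphi_d\colon C(G/H)\to A_\infty\cap A'$ with $\sum_l\varphi_l(1)=1$; by Proposition~\ref{prop5.1} these lift to approximately central, approximately equivariant, approximately order zero maps $\psi_l\colon C(G/H)\to A$, which convert functions on the compact quotient into approximately central elements of $A$. Second, the finite asymptotic dimension of $\Box_\sigma G$ provides, for the chosen quotient $G/H$, an open cover organized into $m+1$ colors $j=0,\dots,m$, each color consisting of uniformly separated sets whose Lebesgue number is large relative to $K$; equivalently, a partition of unity $(\chi^{(j)}_i)$ on $G/H$ in which, for each fixed $j$, the $\chi^{(j)}_i$ have pairwise disjoint supports. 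Third, the nuclear dimension of $A$ yields, for the finite subset of $A$ arising from the coefficients of $F$, a factorization $A\to F_A\to A$ through a finite-dimensional $F_A$ whose downstairs map is a sum of $r+1$ c.p.c. order zero maps.

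I would then combine these. Pushing the colored partition of unity through $\psi_l$ produces, for each pair $(l,j)$, an approximately central family of positive elements of $A$ with controlled orthogonality, which play the role of Rokhlin-tower coefficients; multiplying by the group translations that implement the cover and tensoring with the $k$-th order zero map of the nuclear-dimension factorization yields, for each triple $(l,j,k)$, a single c.p.c. order zero map into $A\rtimes_\alpha G$, the finite-dimensional model $F_0$ being the corresponding direct sum of matrix amplifications of $F_A$. The separation built into each coarse color $j$, the order zero property of $\varphi_l$, and the order zero property of the $k$-th nuclear-dimension map together force each summand indexed by $(l,j,k)$ to be order zero, and summing over the at most $(m+1)(d+1)(r+1)$ triples recovers $\mathrm{id}$ on $F$ up to $\varepsilon$, giving exactly the stated bound.

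The hard part will be the assembly step in the locally compact, non-discrete setting. Unlike the clean combinatorics of the discrete residually finite case of \cite{SWZ19}, here the action and the convolution product are only continuous, so the commutation and orthogonality estimates that make each $(l,j,k)$-summand order zero must be controlled uniformly against integration over compactly supported functions on $G$. Verifying that the coarse coloring coming from $\Box_\sigma G$ can be transported along the Rokhlin towers to yield genuine (not merely approximate) order zero maps, and that all the error terms can be made uniform in the choice of $H=G_n$, is where the regularity of the residually compact approximation $\sigma$ is essential and where the bulk of the technical work lies.
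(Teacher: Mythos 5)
Your overall architecture---reducing to $F\subseteq C_c(G,A)$ supported in a compact set, fixing a single $H=G_n$ via regularity, and multiplying three sources of ``colors'' to get the bound $(m+1)(d+1)(r+1)$---matches the paper's. But there is a genuine gap at the exact point you defer as ``the bulk of the technical work'': your proposal never supplies a working mechanism by which ${\rm dim}_{\rm nuc}(A)$ enters the estimate. You propose to tensor the nuclear-dimension factorization $A\to F_A\to A$ with ``matrix amplifications'' indexed by the group translations implementing the cover. For a non-discrete $G$ this does not make sense as stated: the translations implementing a tower form a continuum, there are no finitely many unitaries $u_g$ inside $A\rtimes_\alpha G$ to multiply against, and the cut-down of the crossed product along a single tower is not a matrix amplification of $A$. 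What it actually is---and this is the key structural fact your proposal is missing---is (after an identification) a hereditary subalgebra of $(C_0(G)\otimes A)\rtimes_{\beta\otimes\alpha}G\cong A\otimes\mathcal{K}(L^2(G))$ by Takai duality. The paper's proof runs entirely through this fact: it cuts $A\rtimes_\alpha G$ down by the functions $\mu_i$ from Lemma~\ref{lem main}, uses regularity (injectivity of $\pi_n$ on ${\rm supp}(\mu_i)$, i.e.\ condition~(a) of that lemma) to build an isomorphism $\zeta_i\colon B^i_\beta\to B^i_\lambda$ between the cut-down of the $C_0(G)$-crossed product (where Takai duality applies and hence ${\rm dim}_{\rm nuc}(B^i_\beta)\leq{\rm dim}_{\rm nuc}(A)$) and the cut-down of the $C(G/G_n)$-crossed product (where the Rokhlin-induced order zero maps $\eta_j\rtimes G$ of Lemma~\ref{lem2.31} and Remark~\ref{rem2.21} live). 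Without this bridge, the coarse-geometric and Rokhlin data cannot be connected to the nuclear dimension of $A$ at all.

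A second, related defect: you insist on producing a completely positive approximation through a \emph{finite-dimensional} algebra in one step, and you assert that the resulting $(l,j,k)$-summands are ``genuine (not merely approximate) order zero'' maps. Even if you imported the Takai-duality identification, your route would force you to further approximate $\mathcal{K}(L^2(G))$ by matrix algebras, and the composed maps into $A\rtimes_\alpha G$ would then only be approximately order zero, leaving you with an error-propagation problem you do not address. The paper sidesteps this entirely by \emph{not} factoring through finite-dimensional algebras: it factors the identity of $A\rtimes_\alpha G$, up to $\varepsilon$ on $F$, through the C*-algebra $B^0_\beta\oplus\cdots\oplus B^l_\beta$ of finite nuclear dimension, via a genuinely c.p.c.\ map (the cut-downs $\Phi$ followed by conditional expectations $\Phi'$) and $(l+1)(d+1)$ genuinely order zero maps $(\eta_j\rtimes G)\circ\zeta_i$, and then invokes Lemma~\ref{lem6.1} (from \cite{HSWW17}) to convert this two-layer factorization into the product inequality. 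If you want to salvage your approach, the repair is essentially to adopt these two ingredients: the Takai-duality identification of the tower cut-downs, and Lemma~\ref{lem6.1} in place of a direct finite-dimensional construction.
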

    For notational convenience, each dimension in the formula is increased by 1, as indicated by the superscripts. 
    Here $\Box_{\sigma}G$ denotes the box space of $G$ associated to $\sigma$, that is, the coarse disjoint union of the sequence of quotient spaces $\{G/H\colon H\in \sigma\}$. We point out that the asymptotic dimension and the box space construction have individually played significant roles in coarse geometry. 
    This above formula directly generalizes the one for residually finite groups \cite[Theorem~B]{SWZ19}. To see how it recovers the nuclear dimension bounds for compact groups \cite[Theorem~3.4]{Gar17b} and the reals \cite[Theorem~4.5]{HSWW17}, one simply observes that ${\rm asdim}(\Box_{\sigma}G) = 0$ for any compact group $G$ while ${\rm asdim}(\Box_{\sigma} \mathbb{R}) = 1$. 
    
    Another regularity property of great significance in the modern theory of C*-algebras is $\mathcal{Z}$-stability, also known as $\mathcal{Z}$-absorption, that is, the ability to tensorially absorb the Jiang-Su algebra $\mathcal{Z}$. This fits into a range of interesting properties called $D$-absorption, where $D$ is a strongly self-absorbing C*-algebra, e.g., $\mathcal{Z}$, $M_{p^\infty}$, $\mathcal{O}_\infty$ and $\mathcal{O}_2$ (\cite{TW07}). For example, $\mathcal{O}_\infty$-absorption has played a pivotal role in the classification theory of purely infinite C*-algebras. 
   Again, we extend the previous theories by showing that, under the assumptions of finite Rokhlin dimension \emph{with commuting towers} and finite-dimensional box spaces, $D$-absorption is also preserved under taking crossed products (see \cite[Theorem~4.4]{Gar17a}, \cite[Theorem~F]{SWZ19}, \cite[Theorem~C]{HSWW17} and \cite[Corollary~B]{Sza19}). 
   \begin{theoremx}[{see Theorem~\ref{thm: D-ab}}]
   	Let $D$ be a strongly self-absorbing C*-algebra. Let $A$ be a separable, $D$-absorbing C*-algebra. Let $G$ be a second countable residually compact group with a regular approximation $\sigma=(G_n)_{n\in\mathbb{N}}$ and $\alpha\colon G\to {\rm Aut}(A)$ a continuous action. If ${\rm asdim}(\Box_{\sigma}G)<\infty$ and $\alpha$ has finite Rokhlin dimension with commuting towers, then $A\rtimes_\alpha G$ is $D$-absorbing.
   \end{theoremx}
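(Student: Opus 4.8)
The plan is to verify $D$-absorption of $B:=A\rtimes_\alpha G$ through the Toms--Winter--Kirchberg characterization: for a separable C*-algebra, $B\cong B\otimes D$ if and only if there is a unital $*$-homomorphism from $D$ into the central sequence algebra $F(B)=(B_\infty\cap B')/\mathrm{Ann}(B,B_\infty)$. Since $G$ need not be compact, $B$ is in general non-unital, so it is important to work with Kirchberg's $F(B)$ rather than $B_\infty\cap B'$; the unit of $F(B)$ is represented by any element of $B_\infty\cap B'$ acting as a unit on $B$. The whole proof thus reduces to constructing a single unital $*$-homomorphism $D\to F(B)$.

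First I would record a bridge between $F(B)$ and the central sequence algebra of $A$. The action $\alpha$ induces an action $\alpha_\infty$ on $F(A)$, and an element of $F(A)$ that is fixed by $\alpha_\infty$ automatically commutes with $A\subseteq M(B)$ and with every $\int_G f(g)u_g\,dg\in B$ (because $x u_g=u_g\alpha_{g,\infty}^{-1}(x)$), hence represents an element of $F(B)$. This yields a unital $*$-homomorphism $F(A)^{\alpha_\infty}\to F(B)$, so it suffices to build a unital $*$-homomorphism $D\to F(A)^{\alpha_\infty}$, that is, an $\alpha_\infty$-\emph{invariant} unital copy of $D$ in $F(A)$. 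Because $A$ is $D$-absorbing and separable, there is at least a unital $*$-homomorphism $\gamma\colon D\to F(A)$, but $\gamma$ carries no equivariance; manufacturing invariance out of $\gamma$ is the crux of the theorem, and this is where the Rokhlin data enters.

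The mechanism is an averaging of $\gamma$ against the Rokhlin towers. Fix $H\in\sigma$ and the equivariant commuting order zero maps $\varphi_0,\dots,\varphi_d\colon C(G/H)\to F(A)$ with $\sum_l\varphi_l(1)=1$ afforded by the hypothesis (together with their central lifts from Proposition~\ref{prop5.1}). By a standard reindexing argument, using separability of $D$ and the size of $F(A)$, I may assume $\gamma(D)$ commutes with the ranges of all $\varphi_l$. To see why invariance can then be forced, consider the model case $d=0$ with $G$ finite and $H$ trivial, where $\varphi_0$ is a unital equivariant $*$-homomorphism and $e_g:=\varphi_0(\delta_{[g]})$ is a Rokhlin partition of unity with $\alpha_{h,\infty}(e_g)=e_{hg}$; the element
\[\bar\gamma(x)=\sum_{g\in G}e_g\,\alpha_{g,\infty}(\gamma(x))\]
is then a \emph{unital, $\alpha_\infty$-invariant} $*$-homomorphism, as one checks directly from $e_ge_{g'}=\delta_{g,g'}e_g$, $\sum_g e_g=1$, and the fact that $\alpha_{g,\infty}(\gamma(x))$ commutes with every $e_{g'}$. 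For general $d$ and compact $G/H$ the sum over $g$ is replaced by integration against the order zero maps $\varphi_l$ and a sum over $l$, and multiplicativity of the resulting averaged map is exactly what the commuting-towers hypothesis buys: the order zero functional calculus makes the within-tower averaging multiplicative, while commuting ranges of the $\varphi_l$ (and of $\gamma(D)$ with them) kill the cross terms between different towers, so that the $d+1$ order zero contributions assemble into a genuine unital homomorphism rather than merely an order zero map.

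It remains to let $H$ range over $\sigma$ and produce a single map $D\to F(B)$. Here the hypothesis $\mathrm{asdim}(\Box_\sigma G)<\infty$ is used to control the construction uniformly in $H$: it provides coarse partitions of unity on the quotients $G/H$ with uniformly bounded colour number, which both replaces the discrete sum $\sum_{[g]}$ above by a manageable finite object in the continuous case and renders the averaged homomorphisms coherent across the family $\{G/H:H\in\sigma\}$, so that a diagonal/reindexing argument yields one unital $*$-homomorphism $D\to F(B)$. The main obstacle is precisely this last synthesis: because the approximation $\sigma$ is allowed to be neither decreasing, discrete, nor normal, the Rokhlin maps for different $H$ do not refine one another, and the coherence of the invariant averages across $\sigma$---secured only through the finite asymptotic dimension of the box space and the commuting-towers calculus---is the delicate technical heart of the argument.
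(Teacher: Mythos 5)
Your top-level reduction is sound and matches the paper's: passing to the fixed-point central sequence algebra $F_\infty^{(\alpha)}(A)^{\tilde{\alpha}_\infty}$ and observing that a unital copy of $D$ there yields $D$-absorption of the crossed product is exactly the paper's Lemma~\ref{lemd1}. But the core construction you propose has a genuine gap. For noncompact $G$ and a proper cocompact closed subgroup $H$, the averaging you describe --- ``the sum over $g$ is replaced by integration against the order zero maps $\varphi_l$'' --- is not well defined: the integrand $gH \mapsto \alpha_{\infty,g}(\gamma(x))$ is not a function on $G/H$, since $\alpha_{\infty,gh}(\gamma(x)) \neq \alpha_{\infty,g}(\gamma(x))$ for $h \in H$. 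Your model case works only because there $H$ is trivial and the group itself is compact. This is precisely where ${\rm asdim}(\Box_\sigma G)<\infty$ enters in the paper, and you have misplaced its role: it is needed for each \emph{single} $H$, not for ``coherence across $\sigma$.'' Lemma~\ref{lem main} supplies compactly supported functions $\mu_0,\ldots,\mu_s$ on $G$ whose $H$-translates of supports are disjoint, which sum to $1$ over $H$-orbits, and which are almost invariant under a prescribed compact set $M$; these make the maps $\varphi^i(a)(gH)=\sum_{h\in H}\mu_i(gh)\alpha_{gh}(a)$ well defined (at most one nonzero term) and approximately equivariant. Without them there is no way to smear $\gamma(D)$ over $G/H$ at all. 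Conversely, the cross-$H$ coherence you flag as ``the delicate technical heart'' is a non-issue: for each tolerance $(\varepsilon, M, F_D, F_A)$ the paper uses one suitable $G_{n_0}\in\sigma$ and then diagonalizes, with no compatibility required between different members of $\sigma$.

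A second, smaller gap: your claim that the commuting-towers hypothesis makes the assembled average ``a genuine unital homomorphism rather than merely an order zero map'' is false as stated. Even with pairwise commuting ranges, a sum of order zero maps adding up to the unit is not multiplicative. What is true --- and what the paper uses as Lemma~\ref{lemd2} --- is an abstract existence statement: if $D$ is \emph{strongly self-absorbing} and there exist finitely many c.p.c.\ order zero maps $D\to B$ with pairwise commuting images summing to $1_B$, then some unital $\ast$-homomorphism $D\to B$ exists. So the paper never upgrades the averaged maps themselves; it produces $(s+1)(m+1)$ approximately central, approximately invariant, pairwise approximately commuting order zero maps $\psi^{(i,l)}=\theta\circ(\phi_l\otimes{\rm id}_A)\circ\varphi^i\circ k^{(i,l)}$, takes a diagonal limit to get exact such maps into $F_\infty^{(\alpha)}(A)^{\tilde{\alpha}_\infty}$, and only then invokes Lemma~\ref{lemd2}, whose proof leans on strong self-absorption of $D$ --- a hypothesis your multiplicativity argument never uses. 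To repair your proposal you would need to (i) replace the ill-defined averaging by the box-space construction of Lemma~\ref{lem main}, accepting that each tower contributes only an order zero map, and (ii) route the final multiplicativity through Lemma~\ref{lemd2} rather than through the averaging itself.
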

   For non-unital C*-algebras, it is also meaningful to ask whether they are stable. For non-discrete locally compact groups, the resulting crossed products are always non-unital. We present a theorem concerning the stability of these crossed product C*-algebras. Even in the case of compact groups, this result is more general than \cite[Corollary~4.19]{GHS19} in that we do not require commuting Rokhlin towers.
   \begin{theoremx}[{see Theorem~\ref{thm: stable}}]
   		Let $G$ be a second countable, residually compact group with a non-open cocompact closed subgroup (in particular, $G$ is not discrete). Let $A$ be a separable C*-algebra and let $\alpha\colon  G\to {\rm Aut}(A)$ be a continuous group action. If ${\rm dim}_{\rm Rok}(\alpha)<\infty$, then $A\rtimes_\alpha G$ is stable.
   \end{theoremx}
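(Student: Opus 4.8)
The plan is to verify the Hjelmborg--R\o rdam criterion for stability. Since $A$ is separable and $G$ is second countable, $B:=A\rtimes_\alpha G$ is separable, hence $\sigma$-unital, so it suffices to show that for every positive $a\in B$ and every $\varepsilon>0$ there is a positive $b\in B$ with $\|ab\|<\varepsilon$ and $a\precsim b$. In other words, orthogonally to any prescribed $a$ we must manufacture a Cuntz-dominating copy; the whole difficulty is producing enough ``room to shift'' inside a crossed product that is non-unital precisely because $G$ is non-discrete.

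The source of this room is the quotient. A non-open cocompact closed subgroup $H$ yields a compact metrizable $X:=G/H$ which is \emph{infinite and perfect}: by homogeneity of the transitive $G$-action, a single isolated point would force $X$ to be discrete, i.e.\ $H$ to be open. Consequently the transformation-group algebra $C(X)\rtimes_{\mathrm{lt}}G$ of the transitive action on the infinite space $X$ is stable --- it is Morita equivalent to $C^*(H)$ via the infinite-multiplicity module $L^2(X)$, and concretely of the form $\mathcal{K}(L^2(X))\otimes C^*(H)$ --- so its multiplier algebra carries shift isometries implementing orthogonal, Cuntz-equivalent copies of any element. This is the abstract reservoir of orthogonality that I intend to import into $B$.

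The import is effected by finite Rokhlin dimension. The equivariant c.p.c.\ order zero maps $\varphi_0,\dots,\varphi_d\colon C(X)\to A_\infty\cap A'$ with $\varphi_0(1)+\cdots+\varphi_d(1)=1$, together with the canonical unitaries $u_g$, satisfy $u_g\varphi_l(f)u_g^*=\varphi_l(g\cdot f)$ by equivariance, and thus assemble into an approximate, order-zero realization of $C(X)\rtimes_{\mathrm{lt}}G$ inside $(A\rtimes_\alpha G)_\infty$ whose range approximately commutes with the copy of $A$. Lifting the $\varphi_l$ to honest maps $\psi_l\colon C(X)\to A$ via Proposition~\ref{prop5.1}, I would pull the orthogonal shift back into $B$: after approximating $a$ by an element of $C_c(G,A)$ with compact $G$-support $K$ and passing to a fine enough scale so that the portion of $X$ occupied by $a$ is small, I cover that portion by small open sets $U$, choose $s\in G$ with $sU\cap U=\emptyset$, and form $b$ from the pieces $u_s\psi_l(f)^{1/2}a\,\psi_l(f)^{1/2}u_s^*$ with $f$ supported in $U$. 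Summing over $l=0,\dots,d$ and over the cover, and exploiting that $u_s$ is a single unitary conjugation, gives $a\precsim b$, while the disjointness of the shifted supports gives $\|ab\|<\varepsilon$. Notably the $d+1$ towers are handled directly in these finite sums, so no commuting-tower hypothesis is needed, which is exactly the advertised improvement over \cite{GHS19}.

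The step I expect to be the main obstacle is reconciling the partial centrality with the orthogonality bookkeeping. The Rokhlin image commutes only with $A$, not with the unitaries $u_g$, so it is \emph{not} a central sequence of $B$ and one cannot simply absorb the stable algebra tensorially; instead one must verify by hand that the shift constructed in the $X$-direction is genuinely (approximately) orthogonal to the given $a$ while still Cuntz-dominating it. This forces the quantitative geometric input --- that the scale of the relevant quotient can be taken large enough for the fixed compact $K$ to occupy only a small, translatable portion of $X$, so that a disjoint translate $s\cdot U$ exists --- together with careful control of the order zero and equivariance defects of the $\psi_l$ under functional calculus and conjugation. Assembling these approximations so that the cumulative error stays below $\varepsilon$ is the technical heart of the argument.
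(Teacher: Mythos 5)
Your setup matches the paper's: you invoke the Hjelmborg--R{\o}rdam criterion (the paper uses the reformulation in Lemma~\ref{lem8.1} and Remark~\ref{rmk:lem8.1-sequence}), and you correctly identify $C(G/H)\rtimes_\lambda G\cong C^*(H)\otimes\mathcal{K}(L^2(G/H))$ (Green's imprimitivity, $H$ non-open) as the reservoir of stability, as well as the correct obstacle, namely that the Rokhlin towers commute only with $A$ and not with the group unitaries. However, the transfer mechanism you propose --- localizing $a$ in a ``portion'' of $X=G/H$ and conjugating by a unitary $u_s$ with $sU\cap U=\emptyset$ --- has a genuine gap, in fact two. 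First, the localization premise is false: an element $a\in C_c(G,A)\subseteq A\rtimes_\alpha G$ is not supported in any proper subset of $X$ in a sense visible to the Rokhlin maps. To recover $a$ (which you need for $a\precsim b$) you must use a partition of unity $\{f_U\}$ with $\sum_U f_U=1$ on \emph{all} of $G/H$, since only then does $\sum_{l,U}\psi_l(f_U)^{1/2}a\,\psi_l(f_U)^{1/2}\approx\bigl(\sum_l\psi_l(1)\bigr)a=a$; but then no group element can translate $\bigcup_U U\supseteq X$ off itself, because $sX=X$. The compact case makes this stark: there $H=\{e\}$, $X=G$, and the $G$-support of $a$ may be all of $G$, so no ``fine enough scale'' exists --- and the theorem's hypotheses (a single fixed non-open cocompact $H$, or even $\mathrm{dim}_{\mathrm{Rok}}(\alpha)<\infty$ over all such $H$) provide no family of quotients of growing size. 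Group translation is simply the wrong source of orthogonality; the shift-type witnesses inside the stable algebra $C(G/H)\rtimes_\lambda G$ (which you name but then do not use in the construction) are the right one.

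Second, even where a translating $s$ exists, your orthogonality estimate relies on disjointness of supports killing products $\psi_{l'}(f')\psi_l(f)$; this holds within a single tower (order zero) but fails across towers $l'\neq l$, and since the theorem deliberately avoids the commuting-towers hypothesis, nothing controls these cross terms. This is exactly the point where the paper's proof of Theorem~\ref{thm: stable} diverges: it fixes a factorization $b^{1/2}\approx b^{1/2}a\,\theta_\alpha(g)$ with $a\in A_+^1$, $g\in C^*(G)_+^1$, views $C^*(G)$ as a nondegenerate subalgebra of the stable algebra $C(G/H)\rtimes_\lambda G$, and applies the refined local criterion Lemma~\ref{lem8.2} to produce witnesses $y_i\in C(G/H)\rtimes_\lambda G$ together with \emph{exactly} orthogonal controllers $c_i\in C^*(G)$, $c_ic_j=0$. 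The induced order zero maps $\psi_i\rtimes G$ (Lemma~\ref{lem2.31}, after replacing $\varphi_i$ by $\varphi_i^{1/2}$) are $C^*(G)$-bimodule maps, so the cross-tower terms vanish identically via $(\psi_i\rtimes G)(y_ic_i)(\psi_j\rtimes G)(c_jy_j^*)=(\psi_i\rtimes G)(y_i)\,\theta(c_ic_j)\,(\psi_j\rtimes G)(y_j^*)=0$, with no support-disjointness or scale argument needed. To repair your proposal you would need to replace translation by this bimodule mechanism (or something equivalent); as written, both the Cuntz-domination step and the orthogonality step fail.
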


    Our strategy of proof for this result deviates significantly from those for previous results of this kind, which is necessitated by the fact that all previous results rely on commutativity of some form (commuting towers in \cite[Corollary~4.19]{GHS19} and commutativity of the acting group in \cite[Theorem~7.6]{HSWW17}), a luxury we do not have. Our proof, like that of \cite[Theorem~7.6]{HSWW17}, also ends up verifying a local characterization of stability developed in \cite{HR98} for $\sigma$-unital C*-algebras (see Lemma~\ref{lem8.1}), which, very roughly speaking, says stability can be witnessed by finding a way to ``move'' each positive element to an approximately orthogonal position. However, deeper insights are needed to verify this local characterization in our generality. 
    
    To illustrate the ideas (in an informal way), we consider the special case where $A$ is unital and $G$ is compact. Note that $C(G)\rtimes G \cong \mathcal{K}(L^2(G))$, which is the ultimate source of stability in our proof. The task now is to transfer the ``stability witnesses'' from $C(G)\rtimes G$ to $A\rtimes_\alpha G$. 
    In the easiest case where ${\rm dim}_{\rm Rok}(\alpha)=0$, we have a $G$-equivariant $\ast$-homomorphism $\varphi \colon C(G) \to A_\infty \cap A'$, which then induces a $\ast$-homomorphism $C(G)\rtimes G \to (A\rtimes_\alpha G)_\infty$, the latter allowing us to directly map the ``stability witnesses'' of $C(G)\rtimes G$ into $A\rtimes_\alpha G$ (the subscript ``$\infty$'' is dropped through a perturbation argument). 
    However, this strategy does not generalize easily to higher Rokhlin dimensions, or for that matter, even the case where ${\rm dim}_{\rm Rok}(\alpha)=1$, since now we only have a (jointly unital) pair of $G$-equivariant order zero maps $\varphi_0, \varphi_1 \colon C(G) \to A_\infty \cap A'$, and the induced maps $C(G)\rtimes G  \to (A\rtimes_\alpha G)_\infty$  are also only order zero and thus not sufficient in themselves to preserve the structure of ``stability witnesses''. A key fact we exploit in our proof is that these induced order zero maps are bimodule maps with regard to the natural left and right $C^*(G)$-module structures on the crossed products. This becomes useful when combined with a more elaborate version of the aforementioned local characterization that we develop (see Lemma~\ref{lem8.2}), which uses certain orthogonal elements in a fixed subalgebra ($C^*(G)$ in this case) to witness stability of the ambient C*-algebra. Combining these ideas, one may transfer the ``stability witnesses'' of the pair $C(G)\rtimes G \supseteq C^*(G)$, via the above order zero maps, to $A\rtimes_\alpha G$, thus proving the latter is stable. 
   
   \par In \cite{HSWW17}, Hirshberg, Szab{\'o}, Winter and the last author of this paper also demonstrated a close relationship between the so-called tube dimension of a topological flow and the Rokhlin dimension of the induced C*-dynamical system. 
   In particular, it was shown that the Rokhlin dimension of a topological flow is bounded from above by the tube dimension, the latter being a notion formulated in purely topological dynamical terms. 
   We extend this result to the setting of amenable residually compact groups, building on top of the work of Enstad, Favre and Raum that generalizes the tube dimension to locally compact groups \cite[Definition~4.1]{EFR23}. 
   
   \begin{theoremx}[{see Theorem~\ref{thm: tubedim}}]
   	Let $G$ be an amenable residually compact group and let $X$ be a locally compact Hausdorff space. Let $G\curvearrowright X$ be an action and $\alpha\colon  G\to {\rm Aut}(C_0(X))$ be the associated action. Then
   	\[{\rm dim}_{\rm Rok}(\alpha)\leq {\rm dim}_{\rm tube}(G\curvearrowright X). \]
   \end{theoremx}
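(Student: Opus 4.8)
The plan is to fix a residually compact approximation of $G$ and to verify that $\mathrm{dim}_{\mathrm{Rok}}(\alpha,H)\le d:=\mathrm{dim}_{\mathrm{tube}}(G\curvearrowright X)$ for every cocompact closed subgroup $H$ (we may assume $d<\infty$), since the general bound then follows by taking the supremum in Definition~\ref{def main}. The first observation is that commutativity of $A=C_0(X)$ simplifies the target: the sequence algebra $A_\infty$ is commutative, so $A_\infty\cap A'=A_\infty$ and the relative-commutant requirement is automatic; the only genuine constraints on the maps $\varphi_0,\dots,\varphi_d\colon C(G/H)\to A_\infty$ are equivariance, complete positivity, contractivity, the order zero property, and $\sum_l\varphi_l(1)=1$. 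I would then exploit the structure theorem for c.p.c.\ order zero maps into a commutative C*-algebra: such a map $C(G/H)\to C_0(Y)$ has the form $f\mapsto h\cdot(f\circ\kappa)$ for a positive contraction $h$ and a continuous return map $\kappa$ defined on $\{h>0\}$ taking values in the compact space $G/H$. Thus the problem reduces to producing, at each scale, weight functions on $X$ together with equivariant return maps to $G/H$ — precisely the data a tube cover supplies.

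For the cover, fix $H$ and recall that cocompactness makes $G/H$ compact, so there is a compact $K\subseteq G$ with $KH=G$. Finiteness of the tube dimension yields, at every scale, an open cover of (a large part of) $X$ by tubes of multiplicity at most $d+1$, where each tube $U$ carries a continuous equivariant coordinate $f_U\colon U\to G$ with $f_U(g\cdot x)=g\,f_U(x)$ whenever $x,g\cdot x\in U$; composing with the quotient $q\colon G\to G/H$ produces an equivariant return map $\bar f_U=q\circ f_U\colon U\to G/H$. Using the multiplicity bound I would colour the tubes into $d+1$ families $\mathcal U_0,\dots,\mathcal U_d$ whose members are pairwise disjoint within each colour. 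This disjointness is exactly what guarantees that the map assembled from one colour is genuinely order zero, since its piecewise-defined weight and return map then have continuous, non-overlapping pieces. To make the later equivariance estimate work I would insist (which finite tube dimension permits) that at scale $n$ the coordinates $f_U$ range over increasingly large Følner sets $F_n\nearrow G$, so that $F_nH=G$ for large $n$.

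Next I would build the maps and pass to the limit. Using amenability of $G$, choose at scale $n$ a partition of unity $(h_U)_U$ subordinate to the tube cover whose members vary slowly over the Følner bulk of each tube and taper only over a Følner-negligible boundary, and set $\psi_l^{(n)}(f)=\sum_{U\in\mathcal U_l}h_U\cdot(f\circ\bar f_U)$. Each $\psi_l^{(n)}$ is c.p.c.\ and order zero by the structure theorem, and $\sum_l\psi_l^{(n)}(1)=\sum_U h_U=1$ on the covered region. Because every $\bar f_U$ is exactly equivariant, the $f$-arguments in $\alpha_g\psi_l^{(n)}(f)$ and $\psi_l^{(n)}(\lambda_g f)$ (where $\lambda$ is left translation on $C(G/H)$) agree wherever $x$ and $g^{-1}x$ lie in the same tube, so the discrepancy collapses to $\sum_U\bigl(h_U(g^{-1}x)-h_U(x)\bigr)f(g^{-1}\bar f_U(x))$ plus boundary terms; slow variation of the weights controls the former in supremum norm and the Følner estimate controls the latter, uniformly for $g$ in compact sets. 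Assembling the sequences $(\psi_l^{(n)})_n$ by the standard reindexing argument then yields honestly equivariant c.p.c.\ order zero maps $\varphi_l\colon C(G/H)\to A_\infty$ with $\sum_l\varphi_l(1)=1$ (the non-unital bookkeeping being handled through a quasi-central approximate unit), which gives $\mathrm{dim}_{\mathrm{Rok}}(\alpha,H)\le d$ and hence the theorem.

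The main obstacle I anticipate is the construction of the last paragraph: reconciling the inherently thin, transversal geometry of the tubes with the \emph{asymptotic $G$-invariance} of the weights $h_U$ — equivalently, of $\varphi_l(1)$ — which is exactly where amenability enters and where the abelian flow argument of \cite{HSWW17} must be upgraded. In the present generality $G/H$ is merely a homogeneous space and $G$ is non-abelian, so the slow-variation and Følner estimates bounding $\|\alpha_g h_U-h_U\|$ must be carried out for the genuine left-translation action, and the scale of the cover must be matched to $H$ so that the coordinates $f_U$ range over translates of a set with $KH=G$ while their ends stay Følner-negligible. Controlling the boundary terms, where the equivariance relation $\bar f_U(g^{-1}x)=g^{-1}\bar f_U(x)$ breaks down because $g^{-1}x$ leaves the tube, is the technical heart of the argument, built as in \cite{EFR23} on the tube formalism; once these estimates are in place, the remaining steps are the routine structure-theorem and sequence-algebra manipulations sketched above.
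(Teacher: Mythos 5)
Your overall architecture matches the paper's proof: fix a closed cocompact subgroup $H$, cover a large compact set by tubes, pull functions on $G/H$ back along the tube coordinates composed with $G\to G/H$, multiply by weight functions, use disjointness of supports within each colour class to get order zero maps and a F{\o}lner-type estimate on the weights to get approximate equivariance, and then assemble in the sequence algebra (the paper verifies the finite-set characterization in Proposition~\ref{prop5.1}(3) rather than reindexing lifted sequences, but by that proposition the two routes are equivalent; likewise, commutativity of $C_0(X)$ trivializes the centrality condition in both arguments). However, as written your proposal has a genuine gap, and it is exactly the step you yourself flag as ``the technical heart'': you never establish the existence of the coloured, F{\o}lner, tube-subordinate partition of unity, and the one concrete claim you make toward it is false. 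A cover of multiplicity at most $d+1$ cannot in general be coloured into $d+1$ families of pairwise disjoint members: such a colouring is a proper colouring of the intersection graph of the cover, and already a multiplicity-$2$ cover whose nerve is an odd cycle requires $3$ colours. Converting the multiplicity bound in the definition of tube dimension into a decomposable (coloured) family requires a refinement argument, and refining threatens the defining property that each $Kx$ lies in a single member; producing, in addition, weights that are $(K,\varepsilon)$-F{\o}lner is where amenability must enter. None of this is carried out in your sketch.

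This missing input is precisely what the paper does not rebuild by hand: it invokes Proposition~\ref{prop9.3} (that is, \cite[Proposition~5.4]{EFR23}), which for amenable $G$ states that $\mathrm{dim}_{\mathrm{tube}}(G\curvearrowright X)\le d$ is equivalent to the existence, for every compact $K\subseteq G$, $\varepsilon>0$ and compact $Y\subseteq X$, of a finite partition of unity $\{\varphi_i\}_{i\in I}$ for $Y$ whose members are $(K,\varepsilon)$-F{\o}lner, supported in interiors of tubes, and indexed by $I=I^0\sqcup\cdots\sqcup I^d$ with disjoint supports within each $I^l$. With that result in hand, the rest of the paper's proof is the computation you describe: set $\phi_l(f)=\sum_{i\in I^l}\varphi_i\cdot(f\circ\pi_i)$, where $\pi_i$ is the tube coordinate followed by $G\to G/H$, and verify the conditions of Proposition~\ref{prop5.1}(3) by the same same-tube versus different-tube case analysis you outline. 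So the fix is not to upgrade the flow argument of \cite{HSWW17} from scratch, but simply to cite the Enstad--Favre--Raum proposition; without it (or an actual proof of its content), your proposal does not amount to a proof.
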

   
   Since it was shown in \cite[Theorem~4.2]{EFR23} that free actions of connected Lie groups of polynomial growth on finite-dimensional spaces admit finite tube dimension, we thus have a rich source of examples of actions with finite Rokhlin dimension coming from topological dynamical systems. 
   
	\par This paper is organized as follows: In Section~\ref{sec2}, we present some basic notions and preliminaries. In Section~\ref{sec3}, we discuss residually compact groups, extending the theory in \cite{A24}. In Section~\ref{sec4}, we define the box spaces of residually compact groups and prove a lemma that characterizes their asymptotic dimension. In Section~\ref{sec5}, we establish the definition of Rokhlin dimension for actions of residually compact groups. In Sections~\ref{sec6},~\ref{sec7} and~\ref{sec8}, we prove the theorems concerning finite nuclear dimension, $D$-absorption and stability of crossed product C*-algebras, respectively. In Section~\ref{sec9}, we study the relationship between tube dimension and the Rokhlin dimension.

	\section{Preliminaries}\label{sec2}
	\subsection{Some basic notations}

	\par Let $A$ be a C*-algebra. We denote by ${\rm Aut}(A)$ the automorphism group of $A$, and we write $\mathcal{M}(A)$ for its multiplier algebra. In addition, we denote by $A_+$ the set of all positive elements in $A$ and by $A_+^1$ the set of all positive contractive elements in $A$. Homomorphisms of C*-algebras are always assumed to be $\ast$-homomorphisms.
    
	\par Let $G$ be a locally compact group, an action of $G$ on $A$ is always assumed to be a (point-norm) continuous group homomorphism from $G$ to ${\rm Aut}(A)$. If $H$ is a subgroup of $G$, we write $G/H$ for its left coset space with the quotient topology. 
    
	\par We also adopt the following shorthand for the sake of brevity: if $\dim(-)$ is one of the notions of dimension (e.g., covering dimension $\dim(-)$, nuclear dimension ${\rm dim}_{\rm nuc}(-)$, asymptotic dimension ${\rm asdim}(-)$, Rokhlin dimension ${\rm dim}_{\rm Rok}(-)$, etc), we write $\dim^{+1}(-)$ in formulas to stand for $1+\dim(-)$, i.e., adding one to the value of the dimension.
    
	\par The abbreviations c.p.\ and c.p.c.\ stand for completely positive and completely positive contractive, respectively.
    
	\par Sometimes we write $a=_\varepsilon b$ to mean $\|a-b\|< \varepsilon$.

	\subsection{Central sequence algebras}(\cite{Kir06}) Let us recall some well-known definitions. 
    
	\par Let $A$ be a C*-algebra. We denote by $\ell^\infty(\mathbb{N},A)$ the set of all bounded sequences in $A$. This is a C*-algebra with the supremum norm and pointwise operations. Also define
	\[c_0(\mathbb{N},A)=\{(a_n)_{n\in\mathbb{N}}\in \ell^\infty(\mathbb{N},A)\colon \,\lim_{n\to\infty}\|a_n\|=0  \}. \]
	It is easy to see that $c_0(\mathbb{N},A)$ is an ideal in $\ell^\infty(\mathbb{N},A)$. We denote by $A_\infty$ the quotient $\ell^\infty(\mathbb{N},A)/c_0(\mathbb{N},A)$ and write $\eta_A\colon \ell^\infty(\mathbb{N},A)\to A_\infty$ for the quotient map. The C*-algebra $A$ can be embedded in $A_\infty$ as constant sequences, and we denote it by $\iota_A\colon A\to A_\infty$. 
    
	\par Let $D\subseteq A$ be a subalgebra. Define
	\[ A_\infty\cap D'=\{a\in A_\infty \colon\, ad=da \,\text{ for all }\, d\in D \}. \]
	It is the relative commutant of $D$ in $A_\infty$. Define 
	\[{\rm Ann}(D,A_\infty)=\{a\in A_\infty \colon \, ad=da=0 \,\text{ for all }\, d\in D \}. \]
	Then ${\rm Ann}(D,A_\infty)$ is a closed two-sided ideal in $ A_\infty\cap D'$. Thus, we set
	\[F(D,A_\infty)=( A_\infty\cap D')/{\rm Ann}(D,A_\infty), \]
	this is the central sequence algebra of $A$ relative to $D$. We write $\kappa_{D,A}\colon A_\infty\cap D'\to F(D,A_\infty)$ for the quotient map. If $D$ is $\sigma$-unital, then $F(D,A_\infty)$ is unital. If $D=A$, we write $F_\infty(A)=F(A,A_\infty)$. And, moreover, if $A$ is unital, $F_\infty(A)=A_\infty\cap A'$.
    
	\par Let $G$ be a locally compact group, $\alpha\colon G\to {\rm Aut}(A)$ be a continuous action of $G$ on $A$, and let $D$ be an $\alpha$-invariant subalgebra of $A$. Since every automorphism on $A$ can naturally induce an automorphism on $\ell^\infty(\mathbb{N},A)$ by being applied component-wise, $\alpha$ induces an action $\ell^\infty(\alpha)$ of $G$ on $\ell^\infty(\mathbb{N},A)$. Then there are also induced actions of $G$ on $A_\infty\cap D'$ and $F(D,A_\infty)$, which we denote by $\alpha_\infty$ and $\tilde{\alpha}_\infty$, respectively. However, $\ell^\infty(\alpha)$ may not be continuous in general. We consider the C*-subalgebra
    
	\[\ell^{\infty,(\alpha)}(\mathbb{N},A)=\{a\in\ell^\infty(\mathbb{N},A)\colon \, g\to \ell^\infty(\alpha_g)(a)\,\text{is continuous} \} .\]
    
	We set $A_\infty^{(\alpha)}=\eta_A(\ell^{\infty,(\alpha)}(\mathbb{N},A))$  and set $F^{(\alpha)}(D,A_\infty )=\kappa_{D,A}(A_\infty^{(\alpha)}\cap D')$. The subalgebras $A_\infty^{(\alpha)}$ and $F^{(\alpha)}(D,A_\infty )$ are invariant under $\alpha_\infty$ and $\tilde{\alpha}_\infty$, respectively. Then we can restrict $\alpha_\infty$ to $A_\infty^{(\alpha)}$ and $\tilde{\alpha}_\infty$ to $F^{(\alpha)}(D,A_\infty )$, which we also denote by $\alpha_\infty$ and $\tilde{\alpha}_\infty$, and the restrictions are continuous. Clearly, the image of the standard embedding of $A$ is in $A_\infty^{(\alpha)}$, so we can view $\iota_A$ as a map from $A$ to $A_\infty^{(\alpha)}$. 
    
	\begin{remark}
		We might also define 
		$$A_\infty^{(\alpha)}=\{a\in A_\infty\colon \,g\to \alpha_{\infty,g}(a) \,\text{is continuous} \}.$$
		In fact, by \cite[Theorem~2]{Bro00} these two definitions are consistent.
	\end{remark}
    
	\begin{remark}[\cite{Kir06}]\label{rem2.21}
		There is a canonical $\ast$-homomorphism 
		\[F(D,A_\infty)\otimes_{\rm max} D\to A_\infty\quad \text{via} \quad \kappa_{D,A}(a)\otimes d\mapsto a\cdot d .\]
		If $D$ is $\sigma$-unital, $1\otimes d$ is mapped to $d$ for all $d\in D$. Furthermore, the $\ast$-homomorphism is $\tilde{\alpha}_\infty\otimes\alpha|_D$-$\alpha_\infty$ equivariant, It can be restricted to an equivariant $\ast$-homomorphism
		\[F^{(\alpha)}(D,A_\infty)\otimes_{\rm max} D\to A^{(\alpha)}_\infty. \]
	\end{remark}
    
	We record the following useful lemma. 
    
	\begin{lemma}[{\cite[Lemma~2.3]{HSWW17}}] \label{lem2.21}
		Let $A$ be a C*-algebra, let $G$ be a locally compact group, and let $\alpha\colon  G\to {\rm Aut}(A)$ be a continuous action. Then there exists a natural $\ast$-homomorphism $\phi\colon  A_\infty^{(\alpha)} \rtimes_{\alpha_\infty}G \to (A\rtimes_\alpha G)_\infty$ that is compatible with the standard embedding in the sense that $\phi \circ(\iota_A \rtimes G)=\iota_{A\rtimes_\alpha G}$. 
	\end{lemma}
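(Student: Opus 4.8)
The plan is to build the homomorphism first at the level of bounded sequences and then descend it to the quotient. Concretely, I would begin by constructing a natural $\ast$-homomorphism
\[ \Phi\colon \ell^{\infty,(\alpha)}(\mathbb{N},A)\rtimes_{\ell^\infty(\alpha)}G\to \ell^\infty(\mathbb{N},A\rtimes_\alpha G), \]
defining it first on the dense convolution $\ast$-subalgebra $C_c(G,\ell^{\infty,(\alpha)}(\mathbb{N},A))$. Given $f$ in this algebra, I identify it with the sequence of functions $f_n=\mathrm{ev}_n\circ f\in C_c(G,A)$, where $\mathrm{ev}_n$ is the $n$-th coordinate evaluation, and set $\Phi(f)=([f_n])_n$, with $[f_n]$ the image of $f_n$ in $A\rtimes_\alpha G$. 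Since $(f*h)(g)=\int_G f(s)\,\ell^\infty(\alpha_s)(h(s^{-1}g))\,ds$ is computed coordinatewise, one has $(f*h)_n=f_n*h_n$, so $\Phi$ is multiplicative and $\ast$-preserving; and the estimate $\sup_n\|[f_n]\|\le\sup_n\|f_n\|_{L^1}\le\|f\|_{L^1(G,\ell^\infty)}$ shows $\Phi$ is $L^1$-contractive and lands in bounded sequences. To extend $\Phi$ to the (full) crossed product I would use that $f\mapsto\|\Phi(f)\|$ is then a C*-seminorm dominated by the $L^1$-norm; since the universal crossed-product norm is the supremum of all $L^1$-dominated C*-seminorms, $\Phi$ is contractive for it and extends.

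Next I would isolate the ideal contributions. First one checks that $c_0(\mathbb{N},A)$ lies inside $\ell^{\infty,(\alpha)}(\mathbb{N},A)$ (for a tail-vanishing sequence the $\alpha$-continuity follows by splitting off a large-index tail and using point-norm continuity on the finite initial segment), so that one has the short exact sequence of $G$-C*-algebras $0\to c_0(\mathbb{N},A)\to\ell^{\infty,(\alpha)}(\mathbb{N},A)\xrightarrow{\eta_A}A_\infty^{(\alpha)}\to 0$. I then claim $\Phi$ carries $c_0(\mathbb{N},A)\rtimes G$ into $c_0(\mathbb{N},A\rtimes_\alpha G)$: for $f\in C_c(G,c_0(\mathbb{N},A))$ the bound $\|[f_n]\|\le\int_G\|f_n(g)\|\,dg$ together with dominated convergence (the integrand is dominated by the compactly supported $g\mapsto\|f\|_\infty\,\mathbbm{1}_{\mathrm{supp}\,f}(g)$ and tends to $0$ pointwise in $g$) forces $\|[f_n]\|\to 0$.

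Finally I would descend. Applying the full crossed-product functor, which is exact, to the above short exact sequence shows that the quotient map $q\colon\ell^{\infty,(\alpha)}(\mathbb{N},A)\rtimes G\to A_\infty^{(\alpha)}\rtimes_{\alpha_\infty}G$ is surjective with kernel exactly $c_0(\mathbb{N},A)\rtimes G$. Composing $\Phi$ with the quotient $\eta_{A\rtimes_\alpha G}\colon\ell^\infty(\mathbb{N},A\rtimes_\alpha G)\to(A\rtimes_\alpha G)_\infty$ gives a $\ast$-homomorphism that, by the previous paragraph, annihilates $c_0(\mathbb{N},A)\rtimes G=\ker q$, and therefore factors through $q$ to yield the desired $\phi\colon A_\infty^{(\alpha)}\rtimes_{\alpha_\infty}G\to(A\rtimes_\alpha G)_\infty$. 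Compatibility with the standard embedding, $\phi\circ(\iota_A\rtimes G)=\iota_{A\rtimes_\alpha G}$, is then verified on the dense subalgebra $C_c(G,A)$ by tracing a constant-sequence element through the whole construction.

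I expect the principal technical obstacle to be the first step, namely checking that $\Phi$ is well defined and $L^1$-bounded and hence continuous for the full crossed-product norm; the descent is clean but rests essentially on the exactness of the full crossed-product functor. I would also flag that the argument as written uses the \emph{full} crossed product on both sides, and that for a reduced-crossed-product formulation one would additionally need exactness of $G$ or a separate direct verification that $\Phi$ is continuous for the reduced norm.
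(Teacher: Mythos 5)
Your proposal is correct and, in substance, coincides with the proof this paper relies on: the paper does not prove the lemma itself but cites \cite[Lemma~2.3]{HSWW17}, where the argument is exactly yours --- assemble the coordinatewise maps into a $\ast$-homomorphism $\ell^{\infty,(\alpha)}(\mathbb{N},A)\rtimes_{\ell^\infty(\alpha)}G\to\ell^\infty(\mathbb{N},A\rtimes_\alpha G)$, check that $c_0(\mathbb{N},A)\rtimes G$ is carried into $c_0(\mathbb{N},A\rtimes_\alpha G)$, and use exactness of the full crossed-product functor on $0\to c_0(\mathbb{N},A)\to\ell^{\infty,(\alpha)}(\mathbb{N},A)\to A_\infty^{(\alpha)}\to 0$ to descend to the quotient. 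Your closing caveat about full versus reduced crossed products is also consistent with the conventions here, since the paper works with full crossed products throughout (cf.\ Lemma~\ref{lem2.31}).
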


	\subsection{Order zero maps}
	We recall some basic definitions. More details can be found in \cite{WZ09}.
    
	\par Two elements $a$ and $b$ in a C*-algebra are said to be orthogonal, $a\perp b$, if $ab=ba=a^*b=b^*a=0$. If $a$ and $b$ are self-adjoint, then $a\perp b$ if and only if $ab=0$.
    
	\begin{definition}
		Let $A,B$ be C*-algebras and let $\varphi\colon A\to B$ be a c.p.c.\ map, we say that $\varphi$ has order zero if for any $a,b\in A_+$,
		\[a\perp b \Rightarrow \varphi(a)\perp\varphi(b). \]
	\end{definition}
    
	In the preceding definition, $a,b$ can be replaced by general elements in $A$, the result remains the same. Subsequently, applying \cite[Theorem~3.3]{WZ09}, we can easily deduce the following.
    
	\begin{proposition}
		Let $A,B$ be C*-algebras and let $\varphi\colon A\to B$ be a c.p.c.\ map, $\varphi$ has order zero if and only if
		\[\varphi(a)\varphi(bc)=\varphi(ab)\varphi(c) \]
		for every $a,b,c\in A$.
	\end{proposition}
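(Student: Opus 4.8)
The plan is to treat the two implications separately, using the Winter--Zacharias structure theorem for order zero maps (\cite[Theorem~3.3]{WZ09}) for the forward direction and a direct algebraic argument for the converse.

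For the forward direction, suppose $\varphi$ is order zero. I would invoke \cite[Theorem~3.3]{WZ09}, which produces the C*-algebra $C:=C^*(\varphi(A))\subseteq B$, a positive contraction $h\in\mathcal{M}(C)$, and a $\ast$-homomorphism $\pi\colon A\to\mathcal{M}(C)$ such that $h$ commutes with $\pi(A)$ and $\varphi(a)=h\,\pi(a)$ for all $a\in A$. With this representation in hand the identity becomes a one-line computation: for all $a,b,c\in A$,
\[ \varphi(a)\varphi(bc)=h\pi(a)\,h\pi(bc)=h^2\pi(abc)=h\pi(ab)\,h\pi(c)=\varphi(ab)\varphi(c), \]
where the middle equalities use that $h$ is central relative to $\pi(A)$ and that $\pi$ is multiplicative. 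Note that this establishes the identity for arbitrary (not necessarily positive) elements at once.

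For the converse, assume $\varphi(a)\varphi(bc)=\varphi(ab)\varphi(c)$ for all $a,b,c\in A$, and let $a,b\in A_+$ with $a\perp b$, i.e.\ $ab=0$. Since $a\geq 0$, we have $\|a^{1/2}b\|^2=\|b^*ab\|=0$, so $a^{1/2}b=0$, where $a^{1/2}\in C^*(a)\subseteq A$. Substituting $a^{1/2},a^{1/2},b$ for the three arguments in the identity yields
\[ \varphi(a)\varphi(b)=\varphi(a^{1/2})\,\varphi(a^{1/2}b)=\varphi(a^{1/2})\,\varphi(0)=0. \]
Because $\varphi$ is positive, $\varphi(a)$ and $\varphi(b)$ are self-adjoint, so taking adjoints gives $\varphi(b)\varphi(a)=0$ as well, and hence $\varphi(a)\perp\varphi(b)$. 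This is exactly the order zero condition.

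The argument is essentially routine once the structure theorem is granted; the only points requiring a little care are that $a^{1/2}$ lies in $A$ itself (so no unitization is needed), and that verifying the orthogonality of images on positive elements suffices, since for self-adjoint images the two remaining relations follow automatically by adjoining. I do not anticipate a genuine obstacle here---the substance is entirely carried by \cite[Theorem~3.3]{WZ09}.
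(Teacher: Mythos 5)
Your proposal is correct and takes essentially the same approach as the paper, whose entire proof consists of the remark that the proposition follows easily from \cite[Theorem~3.3]{WZ09}. Your write-up simply supplies the details left implicit: the structure decomposition $\varphi(\cdot)=h\pi(\cdot)$ for the forward direction, and the elementary converse via $a^{1/2}b=0$ together with self-adjointness of the images, both of which check out.
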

    
	The following is another lemma that will be used throughout the paper.
    
	\begin{lemma}[{\cite[Lemma~2.4]{HSWW17}}]\label{lem2.31}
		Let $A$ and $B$ be C*-algebras, let $G$ be a locally compact group, and let $\alpha \colon G\to {\rm Aut}(A)$ and $\beta\colon  G\to{\rm Aut}(B)$ be continuous actions. Let $\varphi\colon  A\to B$ be an $\alpha$-$\beta$ equivariant c.p.c.\ order zero map. Then there is an induced c.p.c.\ order zero map $\varphi \rtimes G\colon A\rtimes_\alpha G\to B\rtimes_\beta G$.
		\par In fact, for every $k\in\mathbb{N}$, the full crossed product construction is functorial with respect to sums of $k$ c.p.c.\ order zero maps via the assignment $\sum_{j=1}^k\varphi^j\mapsto \sum_{j=1}^k \varphi^j\rtimes G$.
	\end{lemma}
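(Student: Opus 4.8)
The plan is to bypass any direct manipulation of crossed-product norms by routing the argument through the cone picture of order zero maps, where the induced map becomes a genuine $\ast$-homomorphism and hence functoriality is automatic. Recall from the Winter--Zacharias structure theory \cite{WZ09} that giving a c.p.c.\ order zero map $\varphi\colon A\to B$ is equivalent to giving a $\ast$-homomorphism $\Phi_\varphi\colon C_0((0,1])\otimes A\to B$ out of the cone over $A$, the two being related by $\varphi(a)=\Phi_\varphi(\iota\otimes a)$, where $\iota(t)=t$ is the canonical generator of $C_0((0,1])$, and $\Phi_\varphi$ is uniquely determined by $\varphi$. The first step is therefore to upgrade the equivariance of $\varphi$ to equivariance of $\Phi_\varphi$ with respect to the cone action $\mathrm{id}\otimes\alpha$, which is trivial in the cone coordinate. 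To do this I would observe that $\beta_g\circ\Phi_\varphi$ and $\Phi_\varphi\circ(\mathrm{id}\otimes\alpha_g)$ are both $\ast$-homomorphisms out of the cone, and their associated order zero maps are $\beta_g\circ\varphi$ and $\varphi\circ\alpha_g$ respectively; these coincide by the assumed equivariance of $\varphi$, so the uniqueness clause of the cone correspondence forces $\beta_g\circ\Phi_\varphi=\Phi_\varphi\circ(\mathrm{id}\otimes\alpha_g)$. Continuity of the cone action is inherited from that of $\alpha$.

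With $\Phi_\varphi$ now equivariant, the second step invokes the universal property of the full crossed product, which is functorial for equivariant $\ast$-homomorphisms, to produce a $\ast$-homomorphism
\[\Phi_\varphi\rtimes G\colon \big(C_0((0,1])\otimes A\big)\rtimes_{\mathrm{id}\otimes\alpha}G\longrightarrow B\rtimes_\beta G.\]
The third step is to identify the domain. Since $G$ acts trivially on the nuclear C*-algebra $C_0((0,1])$, there is a canonical isomorphism $\big(C_0((0,1])\otimes A\big)\rtimes_{\mathrm{id}\otimes\alpha}G\cong C_0((0,1])\otimes(A\rtimes_\alpha G)$; in words, the crossed product of the cone is the cone over the crossed product. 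Feeding this in, $\Phi_\varphi\rtimes G$ becomes a $\ast$-homomorphism $C_0((0,1])\otimes(A\rtimes_\alpha G)\to B\rtimes_\beta G$, and reading the cone correspondence \cite{WZ09} backwards, this is precisely the datum of a c.p.c.\ order zero map $A\rtimes_\alpha G\to B\rtimes_\beta G$, which I take to be $\varphi\rtimes G$. Tracing through the identifications shows that on the dense subalgebra $C_c(G,A)$ the map is the pointwise application $f\mapsto\varphi\circ f$.

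For the functoriality statement about sums of $k$ order zero maps, I would lean on this explicit pointwise description. Each $\varphi^j\rtimes G$ acts on $C_c(G,A)$ by $f\mapsto\varphi^j\circ f$, so $\sum_{j=1}^k(\varphi^j\rtimes G)$ acts by $f\mapsto\big(\sum_{j=1}^k\varphi^j\big)\circ f$; since this depends only on the sum $\sum_{j=1}^k\varphi^j$ and not on the chosen decomposition into order zero summands, the assignment $\sum_{j=1}^k\varphi^j\mapsto\sum_{j=1}^k\varphi^j\rtimes G$ is well defined on $C_c(G,A)$ and extends by density, with the compatibility under pre- and post-composition by equivariant $\ast$-homomorphisms falling out of the same formula. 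I expect the main obstacle to lie not in any of these formal manipulations but in the cone identification of the third step together with the equivariant bookkeeping of the first; the entire point of the cone picture is that it converts the delicate c.p.c.\ and order zero requirements into properties of an honest $\ast$-homomorphism, where they hold automatically, so the remaining labor is concentrated in verifying that the crossed product of a cone is again a cone and that the uniqueness in the Winter--Zacharias correspondence is robust enough to transport equivariance.
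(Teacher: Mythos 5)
Your proposal is correct and follows essentially the same route as the proof of \cite[Lemma~2.4]{HSWW17}, which this paper cites rather than reproves: the Winter--Zacharias cone correspondence, transport of equivariance via its uniqueness clause, functoriality of full crossed products for equivariant $\ast$-homomorphisms, the identification $(C_0((0,1])\otimes A)\rtimes_{\mathrm{id}\otimes\alpha}G\cong C_0((0,1])\otimes(A\rtimes_\alpha G)$ for the trivially-acted cone factor, and the pointwise formula $f\mapsto\varphi\circ f$ on $C_c(G,A)$ to handle well-definedness and functoriality for sums. No gaps.
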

	
	\subsection{Asymptotic dimension}
    The asymptotic dimension is an important invariant for metric spaces. Introduced by Gromov, it distinguishes itself from the classical notion of covering dimension in that it reflects the large-scale behaviors of metric spaces. We supply here only the basics, including one of the equivalent definitions. More details can be found in \cite[Chapter~9]{R03} and \cite[Chapter~2]{NY12}. See also \cite[Theorem~19]{BD08} and \cite[Proposition~1.2.7]{Sphd}.

    \begin{definition}
		The \emph{asymptotic dimension} of a metric space $(X,d)$, denoted by ${\rm asdim}(X)$, is defined to be the smallest non-negative integer $n$ such that for every $R>0$ there exists a uniformly bounded cover $\mathcal{V}=\mathcal{V}^0\cup\cdots\cup\mathcal{V}^n$ of $X$ with Lebesgue number at least $R$ such that each $\mathcal{V}^l$ consists of pairwise $R$-disjoint sets. 
	\end{definition}

    The idea that the asymptotic dimension reflects the large-scale behaviors of metric spaces can be made precise in saying that it is invariant under coarse equivalence (see  \cite[Chapter~2]{R03} and \cite[Chapter~1]{NY12}). 	\begin{definition}
		Let $(X,d_X)$ and $(Y,d_Y)$ be two metric spaces. A map $f\colon X\to Y $ is a \emph{coarse equivalence} if:
		\begin{enumerate}
			\item there is a $R>0$ such that 
			\[f(X)\cap B(y,R)\neq \emptyset,\, \text{for every} \,y\in Y, \]
			where $f(X)$ is the image and $B(y,R)$ is the open ball with $y$ as center and $R$ as radius.
			\item there are no-decreasing functions $\rho_-,\rho_+\colon [0,\infty)\to [0,\infty)$ with $\lim_{t\to\infty}\rho_-(t)=\infty=\lim_{t\to\infty}\rho_+(t)$ such that
			\[\rho_-(d_X(x,y))\leq d_Y(f(x),f(y))\leq \rho_+(d_X(x,y)) \]
			for all $x,y\in X$.
		\end{enumerate}
        
		\par Two metric spaces $(X,d_X)$ and $(Y,d_Y)$ are \emph{coarsely equivalent} if there exists a coarse equivalence map from $X$ to $Y$.
	\end{definition}
		
      \begin{lemma}\label{lem:asdim-coarse-eq}
          If two metric spaces $(X,d_X)$ and $(Y,d_Y)$ are coarsely equivalent, then ${\rm asdim} (X) = {\rm asdim} (Y)$. 
      \end{lemma}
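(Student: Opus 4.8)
The plan is to isolate the one-sided monotonicity of asymptotic dimension under maps satisfying condition (2) of the definition, and then to exploit condition (1) via a coarse inverse to obtain the reverse inequality. We may assume throughout that the relevant right-hand side is finite, since otherwise the inequality to be proved is vacuous. The technical core is the following claim $(\ast)$: if $h\colon (Z,d_Z)\to (W,d_W)$ admits non-decreasing functions $\rho_-,\rho_+\colon[0,\infty)\to[0,\infty)$ with $\rho_-(t)\to\infty$ and $\rho_-(d_Z(z,z'))\le d_W(h(z),h(z'))\le\rho_+(d_Z(z,z'))$ for all $z,z'$, then ${\rm asdim}(Z)\le{\rm asdim}(W)$. (Note that only $\rho_-\to\infty$ is needed here, not $\rho_+\to\infty$.) Condition (2) says exactly that $f\colon X\to Y$ is such a map, so $(\ast)$ immediately gives ${\rm asdim}(X)\le{\rm asdim}(Y)$.

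To prove $(\ast)$, suppose ${\rm asdim}(W)\le n$ and fix a scale $R>0$ for $Z$. I set $S=\rho_+(R)+1$ and apply the definition of asymptotic dimension to $W$ at scale $S$, obtaining a uniformly bounded cover $\mathcal U=\mathcal U^0\cup\cdots\cup\mathcal U^n$ of $W$ with Lebesgue number at least $S$ and each $\mathcal U^l$ consisting of pairwise $S$-disjoint sets. Pulling back along $h$, I set $\mathcal V^l=\{h^{-1}(U)\colon U\in\mathcal U^l\}$ and $\mathcal V=\mathcal V^0\cup\cdots\cup\mathcal V^n$, discarding empty members. Four short verifications with the control functions then finish this step: (i) $\mathcal V$ covers $Z$ because $\mathcal U$ covers $W$ and $h$ lands in $W$; (ii) each $\mathcal V^l$ is $R$-disjoint, since $d_Z(z,z')\le R$ would force $d_W(h(z),h(z'))\le\rho_+(R)<S$, contradicting $S$-disjointness of the corresponding $U,U'$; (iii) $\mathcal V$ is uniformly bounded, because $z,z'\in h^{-1}(U)$ yields $\rho_-(d_Z(z,z'))\le d_W(h(z),h(z'))\le{\rm diam}(U)$, and $\rho_-\to\infty$ bounds $d_Z(z,z')$ uniformly; and (iv) $\mathcal V$ has Lebesgue number at least $R$, because for $x\in Z$ the set $h(B_Z(x,R))$ lands inside $B_W(h(x),S)$ (as $\rho_+(R)<S$), which lies in some $U$, whence $B_Z(x,R)\subseteq h^{-1}(U)$.

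For the reverse inequality I construct a coarse inverse of $f$ and apply $(\ast)$ to it. By condition (1) there is $R_0>0$ with $f(X)\cap B_Y(y,R_0)\neq\emptyset$ for every $y\in Y$, so I may choose $g\colon Y\to X$ with $d_Y(y,f(g(y)))<R_0$ for all $y$. For $y,y'\in Y$ the points $f(g(y)),f(g(y'))$ lie within $2R_0$ of $y,y'$, so the two inequalities for $f$ transfer to control functions for $g$: from $\rho_-(d_X(g(y),g(y')))\le d_Y(y,y')+2R_0$ one extracts a finite non-decreasing upper control $\sigma_+$ (using $\rho_-\to\infty$), while from $\rho_+(d_X(g(y),g(y')))\ge d_Y(y,y')-2R_0$ one extracts a non-decreasing lower control $\sigma_-$ with $\sigma_-\to\infty$ by inverting $\rho_+$. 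Thus $g$ satisfies the hypotheses of $(\ast)$, giving ${\rm asdim}(Y)\le{\rm asdim}(X)$; combining the two inequalities yields the claim.

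I expect the two genuinely delicate points to be the following. First, in $(\ast)$ one must choose a \emph{single} scale (here $S=\rho_+(R)+1$) making all four properties of the pulled-back cover hold at once; this is the bookkeeping heart of the argument. Second, and more essentially, the reverse inequality hinges on constructing the coarse inverse $g$, where condition (1) is indispensable, and on verifying that its lower control $\sigma_-$ tends to infinity — this last point is precisely where the hypothesis $\rho_+(t)\to\infty$ enters. The remaining estimates are routine manipulations of non-decreasing functions and their generalized inverses.
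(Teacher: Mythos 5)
Your proof is correct, and it is worth noting that the paper itself offers no proof of this lemma at all: it is stated as a standard fact with citations to Roe and Nowak--Yu, and your argument (monotonicity of asymptotic dimension under maps with two-sided control functions, applied first to $f$ and then to a coarse inverse $g$ extracted from the coarse-density condition) is exactly the standard argument found in those references. The only detail to watch is that your choice of scale $S=\rho_+(R)+1$ implicitly uses the ``ball'' convention for the Lebesgue number; under the ``diameter'' convention one should instead take $S=\rho_+(2R)+1$ (or run the estimate with diameters of subsets rather than balls), which is a harmless adjustment that does not affect the structure of the argument.
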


	\section{Residually compact groups}\label{sec3}
	
	In this section, we introduce residually compact groups which are analogous to residually finite groups. To begin, let us recall some relevant definitions.
    
	\begin{definition}
		A discrete group $G$ is said to be \emph{residually finite} if for every $g\in G\backslash\{1_G\}$ there is a subgroup $H$ with finite index such that $g\notin H$.
		\par If moreover, $G$ is countable, then $G$ is residually finite if and only if there is a decreasing sequence $(G_n)_{n\in\mathbb{N}}$ of subgroups with finite index such that $\bigcap_{n\in \mathbb{N}}G_n=\{1_G\}.$
	\end{definition}
    
	For general topological groups, a natural generalization of the concept of finite-index subgroups is the following: 
    
	\begin{definition}
		Let $G$ be a topological group and $H$ be a subgroup of $G$. The subgroup $H$ is \emph{cocompact} if there exists a compact subset $K\subseteq G$ such that $G=KH$. 
	\end{definition}

	\begin{remark}
		A subgroup $H$ of $G$ is cocompact if and only if there exists a compact subset $K\subseteq G$ such that $G=HK$, since the left coset space and the right coset space are homeomorphic.
	\end{remark}

    For closed subgroups in a locally compact group, the following equivalent characterization is well-known. We provide a brief proof for completeness. 
    
	\begin{proposition}
		Let $G$ be a locally compact group and $H$ be a closed subgroup of $G$. Then $H$ is cocompact if and only if the quotient space $G/H$ is compact.
	\end{proposition}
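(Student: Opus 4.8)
The plan is to prove both directions of the equivalence, each of which is essentially an application of the fact that the quotient map $\pi\colon G\to G/H$ is continuous, open, and surjective. I would first note that these properties of $\pi$ are standard for quotients of topological groups by closed subgroups, and they are the only structural input needed beyond local compactness.

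For the ``only if'' direction, suppose $H$ is cocompact, so there is a compact set $K\subseteq G$ with $G=KH$. Since $\pi$ is continuous, $\pi(K)$ is a compact subset of $G/H$. I would then observe that $G=KH$ forces $\pi(K)=G/H$: indeed, every coset $gH$ with $g\in G$ can be written $g=kh$ for some $k\in K$, $h\in H$, whence $gH=kH=\pi(k)\in\pi(K)$. Thus $G/H=\pi(K)$ is the continuous image of a compact set, hence compact.

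For the ``if'' direction, suppose $G/H$ is compact. Here I would use local compactness of $G$ to choose, for each $g\in G$, a compact neighborhood $V_g$ of $g$; then $\{\pi(\mathrm{int}\,V_g)\}_{g\in G}$ is an open cover of $G/H$ because $\pi$ is an open map. By compactness of $G/H$ I extract a finite subcover indexed by $g_1,\ldots,g_n$, and set $K=V_{g_1}\cup\cdots\cup V_{g_n}$, which is compact. The finite subcover property gives $G/H=\pi(K)$, and unwinding this equality yields $G=KH$: for any $g\in G$, the coset $gH$ lies in $\pi(K)$, so $gH=kH$ for some $k\in K$, meaning $g\in kH\subseteq KH$. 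Hence $H$ is cocompact.

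The only point requiring a little care, rather than being purely formal, is the ``if'' direction's use of local compactness to manufacture a compact $K$ with $\pi(K)=G/H$; this is where the hypothesis that $G$ is locally compact is genuinely used, since in a general topological group one cannot lift a compact quotient to a compact ``transversal-covering'' set. The openness of $\pi$ is what makes the sets $\pi(\mathrm{int}\,V_g)$ open and thus eligible for a covering argument, so I would make sure to invoke that property explicitly. Everything else reduces to the elementary set-theoretic identity $\pi(K)=G/H\iff G=KH$.
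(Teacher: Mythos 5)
Your proof is correct and follows essentially the same route as the paper: continuity of the quotient map handles the forward direction, and local compactness plus a finite subcover of $\{\pi(\mathrm{int}\,V_g)\}$ handles the converse, just as the paper does with open sets $U_g$ having compact closures. Your explicit appeal to the openness of $\pi$ is a welcome clarification of a point the paper leaves implicit, but the arguments are otherwise identical.
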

    
	\begin{proof}
		If $H$ is closed cocompact, then the quotient space $G/H$ is compact, since the quotient map is continuous by definition. 
        
		\par For the converse, we denote the quotient map by $\pi$. Since $G$ is locally compact, for every $g\in G$ we can find an open neighborhood $U_g$ of $g$ with $\overline{U_g}$ compact. Then $\{\pi(U_g)\colon g\in G\}$ is an open cover of $G/H$. Thus, there is a finite open cover $\{\pi(U_i)\colon i=1,\ldots,n\}$ for some $n\in\mathbb{N}$. Let $K=\bigcup_{i=1}^n\overline{U_i}$. Then $K$ is compact and $G=KH$.
	\end{proof}

	Now we proceed to define residually compact groups.
     
	\begin{definition}\label{def1}
    A locally compact Hausdorff group $G$ is said to be \emph{residually compact} if for every $g\in G\backslash \{1_G\}$ there is a closed cocompact subgroup $H$ such that $g\notin H$.
	\end{definition}

    \begin{remark}
        It follows from a simple compactness argument that $G$ is residually compact if and only if for any compact subset $F\subseteq G\backslash \{1_G\}$, there exist $n\in \mathbb{N}$ and closed cocompact subgroups $H_1,\ldots,H_n \leq G$, such that $F\subseteq G\backslash \bigcap_{i=1}^n H_i$.
    \end{remark}
    
	With this definition, a second countable topological group can also be characterized in terms of sequences of closed cocompact subgroups. We first record a simple fact about locally compact groups.
    
	\begin{lemma}\label{lem3.1}
		Let $G$ be a locally compact group. Then $G$ is second countable if and only if it is $\sigma$-compact and first countable if and only if $G\backslash \{1_G\}$ is $\sigma$-compact.
	\end{lemma}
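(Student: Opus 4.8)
The plan is to treat the statement as a cycle of three equivalent conditions for a locally compact Hausdorff group $G$: (i) $G$ is second countable; (ii) $G$ is $\sigma$-compact and first countable; (iii) $G\setminus\{1_G\}$ is $\sigma$-compact. I would establish (i) $\Leftrightarrow$ (ii), (i) $\Rightarrow$ (iii), and (iii) $\Rightarrow$ (ii), which together yield both ``if and only if''s. The tools are all standard point-set topology for groups: the Birkhoff--Kakutani theorem (a Hausdorff topological group is metrizable if and only if it is first countable); the fact that a locally compact Hausdorff space is $\sigma$-compact once it is second countable (cover by relatively compact open sets and apply the Lindel\"of property); the fact that a $\sigma$-compact locally compact metrizable space is separable and hence second countable; and the homogeneity of topological groups (left translations are homeomorphisms), so that first countability at $1_G$ is equivalent to first countability everywhere.

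For (i) $\Rightarrow$ (ii), second countability trivially gives first countability, and combining local compactness with the Lindel\"of property yields a countable subcover of the cover of $G$ by relatively compact open sets, so $G$ is $\sigma$-compact. For (ii) $\Rightarrow$ (i), first countability upgrades to metrizability by Birkhoff--Kakutani, and a $\sigma$-compact locally compact metric space is a countable union of compact, hence separable, subspaces, so it is separable, and a separable metric space is second countable. For (i) $\Rightarrow$ (iii), since $\{1_G\}$ is closed, $G\setminus\{1_G\}$ is an open subspace of $G$, hence itself locally compact Hausdorff and (second countability being hereditary) second countable, so by the same Lindel\"of argument it is $\sigma$-compact.

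The main obstacle is (iii) $\Rightarrow$ (ii), specifically deducing \emph{first countability} at $1_G$ from $\sigma$-compactness of $G\setminus\{1_G\}$; ($\sigma$-compactness of $G$ itself is immediate since $G=(G\setminus\{1_G\})\cup\{1_G\}$ and $\{1_G\}$ is compact). Here I would fix a compact neighborhood $W$ of $1_G$ and note that $W\setminus\{1_G\}$ is a closed subspace of $G\setminus\{1_G\}$ (as $W$ is closed), hence $\sigma$-compact, and is also locally compact Hausdorff as an open subset of the compact space $W$. I would then choose a compact exhaustion $L_1\subseteq L_2\subseteq\cdots$ of $W\setminus\{1_G\}$ with each $L_n$ contained in the interior of $L_{n+1}$ relative to $W\setminus\{1_G\}$, and set $V_n:=\mathrm{int}_G(W)\setminus L_n$, an open neighborhood of $1_G$. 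The claim is that $\{V_n\}_{n}$ is a neighborhood base at $1_G$: given any open $V\ni 1_G$ with $V\subseteq\mathrm{int}_G(W)$, the set $W\setminus V$ is a compact subset of $W\setminus\{1_G\}$, hence covered by the increasing open family $\{\mathrm{int}_{W\setminus\{1_G\}}(L_n)\}_n$ and therefore contained in a single $L_n$; from $W\setminus V\subseteq L_n$ one gets $G\setminus L_n\subseteq(G\setminus W)\cup V$, and intersecting with $\mathrm{int}_G(W)$ (which is disjoint from $G\setminus W$) gives $V_n\subseteq V$. By homogeneity this produces a countable neighborhood base at every point, establishing first countability and closing the cycle.
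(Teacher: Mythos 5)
Your proposal is correct, and its key step---showing that $\sigma$-compactness of $G\setminus\{1_G\}$ yields first countability by taking a compact exhaustion $L_1\subseteq L_2\subseteq\cdots$ of $W\setminus\{1_G\}$ inside a compact neighborhood $W$ of $1_G$ and using the sets $\mathrm{int}_G(W)\setminus L_n$ as a countable local base at $1_G$---is exactly the paper's argument (the paper's sets $U\setminus\overline{U_n}$), carried out with more care. The remaining differences are minor: you prove the first equivalence in full via Birkhoff--Kakutani, which the paper explicitly declares standard and omits, and you obtain $\sigma$-compactness of $G\setminus\{1_G\}$ from second countability and the Lindel\"of property, whereas the paper derives it from $\sigma$-compactness and first countability via $\bigcup_n(K_n\setminus U_n)=G\setminus\{1_G\}$ for a countable local base $\{U_n\}$ at $1_G$.
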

    
	\begin{proof}
        The first equivalence is standard (it can be proved directly or seen as a consequence of the Birkhoff-Kakutani theorem) and we omit its proof. We focus on the second equivalence. 
    
		Since $G$ is $\sigma$-compact, we have $G=\bigcup_{n\in \mathbb{}{N}}K_n$, where $K_n$ is compact. Since $G$ is first countable, $1_G$ has a countable neighborhood base $\{U_1,\ldots, U_n,\ldots\}$. Then, $K_n\backslash U_n$ is compact, for all $n\in \mathbb{N}$. Then, 
		\[\bigcup_{n\in \mathbb{N}}(K_n\backslash U_n)=\bigcup_{n\in \mathbb{N}}K_n\backslash \bigcap_{n\in \mathbb{N}}U_n=\bigcup_{n\in \mathbb{N}}K_n\backslash \{1_G\}=G\backslash \{1_G\} .\]
		Hence, $G\backslash \{1_G\}$ is $\sigma$-compact.
        
        \par If $G\backslash \{1_G\}$ is $\sigma$-compact,  then $G\backslash\{1_G\}=\bigcup_{n\in \mathbb{N}}K_n$, where $K_n$ is compact. By local compactness, we may assume there is an increasing sequence of open sets $U_n$ such that $K_n = \overline{U_n}$ for each $n$, and we may also fix a precompact open neighborhood $U$ of $1_G$. 
        A compactness argument then shows that $\{ U \setminus \overline{U_n} \}$ forms a countable local basis of $1_G$, while $G=\bigcup_{n\in \mathbb{N}}K_n \cup U$. 
		Hence, $G$ is first countable and $\sigma$-compact.
	\end{proof}
    
	\begin{proposition}
		Let $G$ be a second countable locally compact group. Then $G$ is residually compact if and only if there exists a sequence consisting of closed cocompact subgroups $G_n\leq G$ with $\bigcap_{n\in \mathbb{N}}G_n=\{1_G\}$.
	\end{proposition}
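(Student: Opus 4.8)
The plan is to prove the two implications separately; the backward direction is immediate from the definition, while the forward direction hinges on reducing the (a priori uncountable) supply of witnessing subgroups to a single countable family.

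For the backward implication, suppose a sequence $(G_n)_{n\in\mathbb{N}}$ of closed cocompact subgroups with $\bigcap_{n\in\mathbb{N}}G_n=\{1_G\}$ is given. For any $g\in G\setminus\{1_G\}$, the triviality of the intersection forces $g\notin G_n$ for some $n$, and since $G_n$ is a closed cocompact subgroup this verifies Definition~\ref{def1} directly. No topological input is needed here.

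For the forward implication, the essential point is that residual compactness only furnishes, for each nonidentity element, its own witnessing subgroup, so I first extract a countable subfamily whose intersection is already trivial. Here I would invoke Lemma~\ref{lem3.1}: since $G$ is second countable, $G\setminus\{1_G\}$ is $\sigma$-compact, so I may write $G\setminus\{1_G\}=\bigcup_{n\in\mathbb{N}}K_n$ with each $K_n$ compact. For each fixed $n$, applying the remark following Definition~\ref{def1} to the compact set $K_n$ (which avoids $1_G$) yields finitely many closed cocompact subgroups $H_{n,1},\dots,H_{n,m_n}$ with $K_n\cap\bigcap_{i=1}^{m_n}H_{n,i}=\emptyset$.

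It then remains to collect the countable family $\{H_{n,i}\colon n\in\mathbb{N},\,1\le i\le m_n\}$ and re-enumerate it as a single sequence $(G_n)_{n\in\mathbb{N}}$ of closed cocompact subgroups. To see that $\bigcap_{n\in\mathbb{N}}G_n=\{1_G\}$, observe that $1_G$ lies in every subgroup, while any $g\neq 1_G$ lies in some $K_n$ and hence fails to lie in at least one $H_{n,i}$, so it is excluded from the total intersection. The one genuine subtlety, and the step I expect to be the crux, is precisely this passage from the one-subgroup-per-point data to a countable family: it relies crucially on second countability, funneled through the $\sigma$-compactness of $G\setminus\{1_G\}$ provided by Lemma~\ref{lem3.1}. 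Everything else is routine bookkeeping.
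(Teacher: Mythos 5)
Your proof is correct and follows essentially the same route as the paper: the forward direction uses Lemma~\ref{lem3.1} to write $G\setminus\{1_G\}$ as a countable union of compact sets, applies the compact-set reformulation of residual compactness to each piece, and re-enumerates the resulting countable family, exactly as the paper does. The only (harmless) deviation is in the backward direction, where you verify Definition~\ref{def1} directly from the trivial intersection, which is in fact slightly more economical than the paper's argument via the compact-set characterization.
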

    
	\begin{proof}
		Since $G$ is second countable, it follows from Lemma~\ref{lem3.1} that $G\backslash\{1_G\}$ is $\sigma$-compact, that is, $G\backslash\{1_G\}=\bigcup_{n\in \mathbb{N}}K_n$, where $K_n$ is compact for all $n\in \mathbb{N}$. For every $K_n$, there exist $m_n\in \mathbb{N}$ and $H_{n_1},\ldots,H_{n_{m_n}}\leq G$ such that $K_n\subseteq G\backslash \bigcap_{i=1}^{m_n}H_{n_i}$. Therefore,
		\[ G\backslash\{1_G\}=\bigcup_{n\in \mathbb{N}}K_n\subseteq \bigcup_{n\in \mathbb{N}}(G\backslash\bigcap_{i=1}^{m_n}H_{n_i})=G\backslash (\bigcap_{n\in \mathbb{N}}\bigcap_{i=1}^{m_n}H_{n_i}).\]
		Then,
		\[\bigcap_{n\in \mathbb{N}}\bigcap_{i=1}^{m_n}H_{n_i}\subseteq \{1_G\}.  \]
		By rearranging, we obtain a countable sequence $\{H_n\}_{n\in \mathbb{N}}$ consisting of closed cocompact groups, and $\bigcap_{n\in\mathbb{N}}H_n=\{1_G\}$.
        
        \par For the converse, $\{G_n\}_{n\in\mathbb{N}}$ is a sequence consisting of closed cocompact subgroups with $\bigcap_{n\in\mathbb{N}}G_n=\{1_G\}$. So, $G\backslash\{1_G\}=\bigcup_{n\in\mathbb{N}}G_n^{\rm c}$. Then, for every compact subset $F\subseteq G\backslash\{1_G\}$, $F\subseteq \bigcup_{n\in\mathbb{N}}G_n^{\rm c}$. Then there exist $m\in\mathbb{N}$ and $n_1,\ldots,n_m\in \mathbb{N}$ such that $F\subseteq \bigcup_{i=1}^m G_{n_i}^{\rm c}=G\backslash\bigcap_{i=1}^m G_{n_i}$. Hence $G$ is residually compact.
	\end{proof}
    
     In general, the sequence can not be chosen as decreasing because the intersection of two cocompact groups may not be cocompact. For example, $\mathbb{Z}$ and $\sqrt{2}\mathbb{Z}$ are two closed cocompact subgroups of $\mathbb{R}$, yet their intersection is not cocompact. In \cite{A24}, Amini's definition needs the sequence to be decreasing, so our definition is slightly weaker than his. In the rest of the paper, our residually compact groups will always be assumed to be second countable, and the sequence is said to be a residually compact approximation.

     \begin{example}
         It is clear that the following groups are residually compact: the reals $\mathbb{R}$ (and more generally $\mathbb{R}^n$), compact groups and residually finite groups. 
     \end{example}
    
    Moreover, we can get more residually compact groups by semidirect products.
    
	\begin{proposition}
		Let $G,K$ be residually compact groups with residually compact approximations $(G_n)_{n\in\mathbb{N}},(K_n)_{n\in\mathbb{N}}$ respectively, and let $\alpha\colon G\to {\rm Aut}(K)$ be a group homomorphism with $(k,g)\to \alpha_g(k)$ being continuous where $k\in K,g\in G$. If for every $n$, $K_n$ is $\alpha|_{G_n}$-invariant,  then the semidirect product $K\rtimes_{\alpha}G$ is residually compact.
	\end{proposition}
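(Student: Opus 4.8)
The plan is to take, for each $n\in\mathbb{N}$, the obvious candidate $H_n:=K_n\rtimes_\alpha G_n$, viewed inside $K\rtimes_\alpha G$ as the set of pairs $(k,g)$ with $k\in K_n$ and $g\in G_n$, and to show that $(H_n)_{n\in\mathbb{N}}$ is a residually compact approximation of $K\rtimes_\alpha G$. First I would check that each $H_n$ is a closed subgroup. Closedness is immediate, since the underlying space of $K\rtimes_\alpha G$ is $K\times G$ with the product topology and $K_n\times G_n$ is closed there. That $H_n$ is closed under the group operations is exactly where the hypothesis enters: for $(k_1,g_1),(k_2,g_2)\in H_n$ the product is $(k_1\alpha_{g_1}(k_2),g_1g_2)$, and since $g_1\in G_n$ leaves $K_n$ invariant we get $\alpha_{g_1}(k_2)\in K_n$, so the first coordinate lies in $K_n$; inverses are handled the same way. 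Note that $K\rtimes_\alpha G$ is second countable and locally compact, as these properties pass from $K$ and $G$ to $K\times G$, and the joint continuity of $\alpha$ makes the group operations continuous.

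Next I would verify that the intersection is trivial, which is the easy part: since $\bigcap_n H_n=\bigl(\bigcap_n K_n\bigr)\times\bigl(\bigcap_n G_n\bigr)$, the defining property of the two given residually compact approximations yields $\bigcap_n H_n=\{1_K\}\times\{1_G\}=\{1_{K\rtimes_\alpha G}\}$. It then remains to show that each $H_n$ is cocompact, which is where I expect the only real difficulty.

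For cocompactness, fix compact sets $C_K\subseteq K$ and $C_G\subseteq G$ with $K=C_KK_n$ and $G=C_GG_n$. The naive guess $C_K\times C_G$ does not quite work because of the twist: solving $(k,g)=(c_K,c_G)(k',g')$ forces $k'=\alpha_{c_G}^{-1}(c_K^{-1}k)$, so one needs $k\in C_K\,\alpha_{c_G}(K_n)$ rather than $k\in C_KK_n$, and $\alpha_{c_G}$ need not preserve $K_n$ when $c_G\notin G_n$. The key device is to enlarge $C_K$ to
\[ C_K':=\{\alpha_{c_G}(c_K)\colon c_G\in C_G,\ c_K\in C_K\}, \]
which is compact, being the image of the compact set $C_G\times C_K$ under the continuous map $(c_G,c_K)\mapsto\alpha_{c_G}(c_K)$. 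Applying the automorphism $\alpha_{c_G}$ to $K=C_KK_n$ gives $K=\alpha_{c_G}(C_K)\,\alpha_{c_G}(K_n)\subseteq C_K'\,\alpha_{c_G}(K_n)$ for every $c_G\in C_G$. Given $(k,g)$, I would first write $g=c_Gg'$ with $c_G\in C_G$, $g'\in G_n$, and then use this inclusion to write $k=c_K''\,\alpha_{c_G}(k')$ with $c_K''\in C_K'$, $k'\in K_n$, so that $(k,g)=(c_K'',c_G)(k',g')$. This shows $K\rtimes_\alpha G=(C_K'\times C_G)\cdot H_n$ with $C_K'\times C_G$ compact, hence $H_n$ is cocompact.

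The main obstacle is precisely this interaction between the twisting automorphism and the subgroup $K_n$: one must absorb the family of automorphisms $\alpha_{c_G}$ indexed by the compact transversal $C_G$, and the joint continuity hypothesis on $\alpha$ is exactly what guarantees that the enlarged transversal $C_K'$ is still compact. Once these three ingredients—closed subgroup, trivial intersection, and cocompactness—are in place, the conclusion that $K\rtimes_\alpha G$ is residually compact follows.
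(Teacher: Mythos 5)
Your proof is correct, and it follows the paper's overall plan: the same candidate subgroups $H_n=K_n\rtimes_\alpha G_n$, the same verification that they are closed subgroups (with the $\alpha|_{G_n}$-invariance of $K_n$ entering exactly where you say), and the same trivial-intersection observation. Where you genuinely diverge is the cocompactness step. The paper puts the subgroup factor on the \emph{left}: it writes $G=G_nF_n$ and $K=K_nE_n$ with $F_n,E_n$ compact, decomposes $g=g'f$ with $g'\in G_n$, $f\in F_n$, then decomposes $\alpha_{g'^{-1}}(k)=k'e$ with $k'\in K_n$, $e\in E_n$, and concludes $(k,g)=(\alpha_{g'}(k'),g')(e,f)$, i.e.\ $K\rtimes_\alpha G=H_n\cdot(E_n\times F_n)$. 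In that ordering the twist is by $\alpha_{g'}$ with $g'$ in the subgroup $G_n$, so it is absorbed directly by the invariance hypothesis and no enlargement of the compact transversal is needed. You instead put the compact factor on the left, which produces the twist-by-$\alpha_{c_G}$ obstruction you identify, and you resolve it by passing to $C_K'=\{\alpha_{c_G}(c_K)\colon c_G\in C_G,\ c_K\in C_K\}$, compact by joint continuity. Both decompositions are legitimate, since (as the paper notes) $G=HK$ and $G=KH$ are equivalent formulations of cocompactness. The paper's ordering is slightly slicker; your version has the mildly interesting feature that cocompactness of the \emph{set} $K_n\times G_n$ is established without using the invariance hypothesis at all --- invariance is needed only to make $H_n$ a subgroup.
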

    
	\begin{proof}
		Consider the sequence $\{K_n\rtimes_{\alpha}G_n\colon n\in\mathbb{N}\}$. For every $n\in\mathbb{N}$, $K_n\rtimes_{\alpha}G_n$ is a closed subgroup of $K\rtimes_{\alpha}G$ since $K_n$ is $\alpha|_{G_n}$-invariant. And it is obvious that $\bigcap_{n\in\mathbb{N}}(K_n\rtimes_{\alpha}G_n)=\{(1_K,1_G)\}$. So, it remains to prove that the subgroup $\{K_n\rtimes_{\alpha}G_n\}$ is cocompact. For every $n\in\mathbb{N}$, there exist compact subsets $F_n\subseteq G$ and $E_n\subseteq K$ such that $G=G_nF_n,K=K_nE_n$. Then $E_n\times F_n$ is a compact subset of $K\rtimes_{\alpha}G$. Moreover, for every $(k,g)\in K\rtimes_\alpha G$, there exist $g'\in G_n,f\in F_n$ such that $g=g'f$ and then there exist $k'\in K_n,e\in E_n$ such that $\alpha_{g'^{-1}}(k)=k'e$. Thus, $(k,g)=(\alpha_{g'}(k'),g')(e,f)$. That is to say, $K\rtimes_{\alpha}G=(K_n\rtimes_{\alpha}G_n)(E_n\times F_n)$.
	\end{proof}
    
	In particular, we have the following:
    
	\begin{corollary}
		Let $G, K$ be residually compact groups. Then the direct product $K\times G$ is residually compact.
	\end{corollary}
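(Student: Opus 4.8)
The plan is to recognize the direct product $K \times G$ as a special instance of the semidirect product construction treated in the preceding Proposition, namely the case where the action $\alpha$ is trivial, and then simply verify that the hypotheses of that Proposition are met in this degenerate situation.

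Concretely, I would fix residually compact approximations $(G_n)_{n\in\mathbb{N}}$ and $(K_n)_{n\in\mathbb{N}}$ for $G$ and $K$ respectively, and take $\alpha\colon G\to{\rm Aut}(K)$ to be the trivial homomorphism, i.e.\ $\alpha_g={\rm id}_K$ for every $g\in G$. The three hypotheses of the preceding Proposition then hold for trivial reasons: the map $(k,g)\mapsto\alpha_g(k)=k$ is the coordinate projection $K\times G\to K$ and hence continuous; and each $K_n$ is $\alpha|_{G_n}$-invariant because $\alpha_g(K_n)=K_n$ holds automatically when $\alpha_g$ is the identity. Since $K\rtimes_\alpha G$ with trivial $\alpha$ is precisely the direct product $K\times G$, invoking the Proposition yields that $K\times G$ is residually compact.

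I do not anticipate any genuine obstacle here, as the entire content of the statement is already packaged in the semidirect-product Proposition; the only thing to confirm is that passing to the trivial action costs nothing in the verification of continuity and invariance. It may be worth remarking, for the reader's orientation, that the approximating sequence produced this way is exactly $\{K_n\times G_n\colon n\in\mathbb{N}\}$, whose intersection is $\{(1_K,1_G)\}$ and each of whose members is cocompact as a product of cocompact subgroups; this makes explicit how the general argument specializes, but it adds nothing beyond what the Proposition already delivers.
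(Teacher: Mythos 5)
Your proposal is correct and matches the paper's intended argument exactly: the paper introduces this corollary with ``In particular,'' signaling that it is precisely the special case of the preceding semidirect-product proposition with trivial action $\alpha$, which is what you verify. Nothing further is needed.
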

    
	\begin{corollary}
		Let $G$ be a residually compact group and $K$ be a compact group. Let $\alpha\colon G\to{\rm Aut}(K)$ be a group homomorphism with $(k,g)\to \alpha_g(k)$ being continuous. Then the semidirect product $K\rtimes_{\alpha}G$ is residually compact.
	\end{corollary}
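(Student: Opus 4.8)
The plan is to deduce this directly from the preceding proposition by exhibiting a suitable residually compact approximation of the compact group $K$. Since $K$ is compact and Hausdorff, the trivial subgroup $\{1_K\}$ is closed, and the quotient $K/\{1_K\} = K$ is compact, so $\{1_K\}$ is a closed cocompact subgroup of $K$. Hence the constant sequence $K_n = \{1_K\}$ (for all $n\in\mathbb{N}$) is a residually compact approximation of $K$, as $\bigcap_{n\in\mathbb{N}}K_n=\{1_K\}$ trivially. I would then fix an arbitrary residually compact approximation $(G_n)_{n\in\mathbb{N}}$ of $G$, which exists by hypothesis.

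The only hypothesis of the proposition that requires checking is the $\alpha|_{G_n}$-invariance of each $K_n$, and this is automatic: every $\alpha_g$ is a group automorphism of $K$ and therefore fixes the identity, so $\alpha_g(\{1_K\})=\{1_K\}=K_n$ for all $g\in G_n$. With both approximations in hand and the invariance condition verified, the preceding proposition applies verbatim and yields that $K\rtimes_{\alpha}G$ is residually compact; concretely, the approximating subgroups it produces are $K_n\rtimes_{\alpha}G_n=\{1_K\}\rtimes_{\alpha}G_n$, which is just a copy of $G_n$ inside $K\rtimes_{\alpha}G$. There is no genuine obstacle here: the entire content of the statement lies in the proposition, and the corollary is merely the observation that a compact group is residually compact via its trivial subgroup, for which the $\alpha$-invariance hypothesis costs nothing.
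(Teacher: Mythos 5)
Your proposal is correct and matches the paper's intended argument: the paper states this corollary as an immediate consequence of the preceding semidirect product proposition, and the only content is exactly what you supply, namely that the constant sequence $K_n=\{1_K\}$ is a residually compact approximation of the compact group $K$ whose $\alpha|_{G_n}$-invariance is automatic since automorphisms fix the identity. No gaps; the application of the proposition then goes through verbatim.
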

    
	In addition to obtaining new residually compact groups through the semidirect product, we also have several propositions concerning subgroups.
    
	\begin{proposition}
		Let $G$ be a locally compact group and $H$ be a closed cocompact subgroup. If $H$ is residually compact, then $G$ is residually compact. 
	\end{proposition}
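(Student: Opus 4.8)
The plan is to verify the defining separating property of Definition~\ref{def1} for $G$ directly. Fix an arbitrary element $g \in G \setminus \{1_G\}$; the goal is to produce a closed cocompact subgroup of $G$ that does not contain $g$, and I would distinguish two cases according to whether $g$ lies in $H$.

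If $g \notin H$, then $H$ itself is the desired witness: it is closed and cocompact in $G$ by hypothesis, and it omits $g$. No further argument is needed in this case.

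The substantive case is $g \in H$, i.e.\ $g \in H \setminus \{1_H\}$ (recall $1_H = 1_G$). Since $H$ is residually compact, its separating property supplies a subgroup $L \le H$ that is closed and cocompact \emph{in $H$} with $g \notin L$, and the remaining task is to upgrade $L$ to a closed cocompact subgroup \emph{of $G$}. Closedness passes up immediately, since a closed subset of the closed subgroup $H$ is closed in $G$. For cocompactness I would establish the transitivity of cocompactness: picking compact sets $K_1 \subseteq H$ and $K_2 \subseteq G$ with $H = K_1 L$ and $G = K_2 H$, I obtain $G = K_2 H = (K_2 K_1) L$, where $K_2 K_1$ is compact because it is the image of the compact set $K_2 \times K_1$ under the continuous multiplication map of $G$. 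Thus $L$ is a closed cocompact subgroup of $G$ avoiding $g$, which is exactly what the definition demands.

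The proof is short, and the only step calling for any care is the transitivity of cocompactness, which in turn rests on the elementary fact that the product of two compact subsets of a topological group is compact; the rest is a direct unwinding of the definitions together with the permanence of closedness under the tower $L \le H \le G$. This is the step I would expect to highlight as the (minor) crux of the argument. I would work throughout with the separating formulation of Definition~\ref{def1}, which is all that the statement requires.
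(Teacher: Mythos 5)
Your proof is correct. Both it and the paper's proof rest on exactly the same crux: cocompactness is transitive (from $G = K_2H$ and $H = K_1L$ one gets $G = (K_2K_1)L$ with $K_2K_1$ compact), and closedness passes up the tower $L \le H \le G$. Where you differ is in which formulation of residual compactness you verify. You check the elementwise separating property of Definition~\ref{def1}, which forces a case split: $g \notin H$ (take $H$ itself as the witness) versus $g \in H$ (promote a witness from $H$ to $G$). The paper instead works with a residually compact approximation $(H_n)_{n\in\mathbb{N}}$ of $H$ — the sequence formulation it adopts for second countable groups — and shows each $H_n$ is closed and cocompact in $G$, so that $(H_n)_{n\in\mathbb{N}}$ is itself a residually compact approximation of $G$; no case split is needed there, because each $H_n$ is contained in $H$, so the condition $\bigcap_{n} H_n = \{1_G\}$ automatically takes care of the elements of $G \setminus H$ as well. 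What each approach buys: yours is more elementary and slightly more general, since it needs no second countability and no sequence machinery, only Definition~\ref{def1}; the paper's, at the cost of invoking the approximation-sequence characterization, delivers the stronger and directly reusable conclusion that an approximation of $H$ serves verbatim as an approximation of $G$, which is the form consumed by the box-space and Rokhlin-dimension results later in the paper.
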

    
	\begin{proof}
		Since $H$ is a closed cocompact subgroup of $G$, there is a compact subset $F\subseteq G$ such that $G=FH$. We also have a residually compact approximation $(H_n)_{n\in\mathbb{N}}$ of $H$. For every $n\in \mathbb{N}$, there is a compact subset $E_n\subseteq H$ such that $H=E_nH_n$. Then $FE_n$ is a compact subset of $G$ and $G=(FE_n)H_n$. Furthermore, $H_n$ is closed in $G$ since $H$ is closed. Thus, $(H_n)_{n\in\mathbb{N}}$ is a residually compact approximation of $G$.
	\end{proof}
    
	\begin{proposition}[see {\cite[Lemma~7]{A24}}]\label{pro3.1}
		Let $G$ be a residually compact $\sigma$-compact group and let $H$ be an open normal cocompact subgroup of $G$. Then $H$ is residually compact.
	\end{proposition}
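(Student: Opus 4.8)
The plan is to verify the pointwise condition of Definition~\ref{def1} directly for $H$: given an arbitrary $h\in H\setminus\{1_G\}$, I must exhibit a closed cocompact subgroup of $H$ that omits $h$. The natural strategy is to pull back a witness from the ambient group. Since $G$ is residually compact, there is a closed cocompact subgroup $K\leq G$ with $h\notin K$, and I would take as my candidate $L:=K\cap H$. Three of the four required properties are then immediate: $L$ is a subgroup of $H$ (intersection of subgroups), $L$ is closed in $H$ (it is $H\cap K$ with $K$ closed in $G$), and $h\notin L$ (since $h\notin K$). The entire content of the proof lies in the remaining claim, that $L$ is cocompact in $H$.

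This is exactly the step I expect to be the main obstacle, because, as the example of $\mathbb{Z}$ and $\sqrt{2}\,\mathbb{Z}$ inside $\mathbb{R}$ recorded above shows, the intersection of two cocompact subgroups need not be cocompact. The observation that resolves the difficulty is that an \emph{open} cocompact subgroup automatically has finite index: since $H$ is open, the quotient $G/H$ is discrete, and since $H$ is cocompact, $G/H$ is compact by the characterization of cocompactness via compactness of the quotient established above; a discrete compact space is finite, so $N:=[G:H]<\infty$. I would then exploit this finiteness twice.

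First, finite index of $H$ forces $[K:K\cap H]\leq[G:H]=N<\infty$, so $L=K\cap H$ is a finite-index subgroup of the cocompact subgroup $K$, and a finite-index subgroup of a cocompact subgroup is again cocompact in $G$. Indeed, writing $G=FK$ with $F$ compact and decomposing $K=\bigcup_{j=1}^{m}k_jL$ into finitely many cosets, one obtains $G=\bigl(\bigcup_{j=1}^{m}Fk_j\bigr)L$ with $\bigcup_{j=1}^{m}Fk_j$ compact; hence $L$ is cocompact in $G$. Second, I would descend cocompactness from $G$ down to $H$: if $L\leq H\leq G$ with $L$ cocompact in $G$ and $H$ closed, then $L$ is cocompact in $H$. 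Writing $G=F'L$ with $F'$ compact, every $h'\in H$ factors as $h'=f\ell$ with $f\in F'$ and $\ell\in L\subseteq H$, whence $f=h'\ell^{-1}\in F'\cap H$; since $F'\cap H$ is compact (a closed subset of the compact $F'$), this gives $H=(F'\cap H)L$. Combining the two steps shows that $L$ is cocompact in $H$, which completes the verification and proves that $H$ is residually compact.

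I would remark that the argument as outlined uses only that $H$ is open and cocompact (and hence closed and of finite index in $G$); the normality of $H$ and the $\sigma$-compactness of $G$ play no essential role, since the witness $K$ is produced one element at a time straight from the pointwise Definition~\ref{def1} rather than from a countable approximating sequence.
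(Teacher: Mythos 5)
Your proof is correct, and it is necessarily a different route from the paper's, because the paper gives no proof at all: Proposition~\ref{pro3.1} is imported by citation from \cite[Lemma~7]{A24}, whose setting is Amini's stronger framework in which residually compact approximations are required to be decreasing sequences (this provenance is presumably why the normality and $\sigma$-compactness hypotheses appear in the statement). Your argument, by contrast, is self-contained and verifies the paper's pointwise Definition~\ref{def1} directly. The key insight --- that an open cocompact subgroup has finite index, since $G/H$ is then discrete and compact --- is exactly what circumvents the obstruction you correctly identify (intersections of cocompact subgroups need not be cocompact, as with $\mathbb{Z}$ and $\sqrt{2}\,\mathbb{Z}$ in $\mathbb{R}$), and the three supporting steps ($[K:K\cap H]\leq[G:H]$ via the injection $K/(K\cap H)\hookrightarrow G/H$, which needs no normality; finite-index subgroups of cocompact subgroups are cocompact; and descent of cocompactness from $G$ to the closed subgroup $H$) are all complete and correct. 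Your concluding remark is accurate: neither normality nor $\sigma$-compactness is used, so your statement is strictly more general than the one cited. Two small points are worth making explicit. First, state early that $H$ open implies $H$ closed (its complement is a union of open cosets); this is what licenses both the quotient-compactness characterization of cocompactness and the compactness of $F'\cap H$. Second, your argument buys more than the proposition asks for: applying it with $K=G_n$ for each member of a residually compact approximation $\sigma=(G_n)_{n\in\mathbb{N}}$ of $G$ shows that each $H\cap G_n$ is closed and cocompact in $H$, and since $\bigcap_{n}(H\cap G_n)=H\cap\bigcap_{n}G_n=\{1_G\}$, the sequence $(H\cap G_n)_{n\in\mathbb{N}}$ is itself a residually compact approximation of $H$ --- which is precisely the sequence-level form in which Proposition~\ref{pro3.1} is later invoked in the corollary on box spaces of open normal subgroups in Section~\ref{sec4}, so your proof actually closes a small gap between what the proposition states and how it is used.
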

	
	\section{Box spaces and asymptotic dimension}\label{sec4}
    
	In this section, we generalize the construction of box spaces to the setting of residually compact groups, and study their asymptotic dimension. We start by recalling that if $G$ is a second countable locally compact group, there exists a right-invariant proper compatible metric $d$ on $G$ (\cite{Str74}). A metric is termed \emph{proper} if closed balls are compact, \emph{right-invariant} if $d(g_1h,g_2h)=d(g_1,g_2)$ for every $g_1,g_2,h\in G$, and \emph{compatible} if the topology induced by the metric coincides with the original topology. 
	\par Then, for a closed cocompact subgroup $H\leq G$,  $d$ induces a metric $d'$ on $G/H$ by
	\[d'(g_1H,g_2H)=\inf\{d(g_1h_1,g_2h_2)\colon h_1,h_2\in H\},\]
	and this metric generates the topology on $G/H$( \cite[Lemma~3.20]{DK18}). Extending the construction for residually finite groups, we now define the box space of a residually compact group.
    
	\begin{definition}\label{def:box_space}
		Let $G$ be a second countable, locally compact, residually compact group, and let $\sigma=(G_n)_{n\in\mathbb{N}}$ be a sequence of closed cocompact subgroups of $G$. Let us equip $G$ with a proper, right-invariant, compatible metric $d$. For each $n\in \mathbb{N}$, there is an induced quotient metric $d_n$ on $G/G_n$. Then the \emph{box space} $\Box_{\sigma}G$ associated with $\sigma$ is defined as the coarse disjoint union of the sequence of compact metric spaces $(G/G_n,d_n)$. More precisely, it is a coarse space whose underlying set is the disjoint union $\bigsqcup_{n\in\mathbf{N}}G/G_n$, and whose coarse structure is determined by a metric $d^B$ such that for all $n$, the metric $d^B$ restricts to $d_n$ on the subset $G/G_n$, and such that we have \[d^B(G/G_n, G/G_m)\to\infty\]
		as $n \neq m$ and $n+m\to\infty$.
	\end{definition}
    
	Up to coarse equivalence, the box space $\Box_{\sigma}G$ does not depend on the choice of the proper right-invariant compatible metric. The proof of the following two theorems are almost identical to those in the setting of residually finite groups (\cite[Lemma~1.2.9 and Proposition~1.2.10]{Sphd}) and thus are skipped.
    
	\begin{theorem}\label{thm:box_space-independent-group-metric}
		Let $G$ be a second countable, locally compact group. Then for any two proper, right-invariant, compatible metrics $d_1$ and $d_2$ on $G$. there exists a monotonically increasing function $\rho\colon [0,\infty)\to[0,\infty)$ with $\lim_{t\to\infty
		}\rho(t)=\infty$, and such that $d_2(g,h)\leq \rho(d_1(g,h))$ for all $g,h\in G$. Moreover, the metric space $(G,d_1)$ and $(G,d_2)$ are coarsely equivalent through the identity map.
	\end{theorem}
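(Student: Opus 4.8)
\section*{Proof proposal}

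The plan is to reduce the comparison of the two metrics to their behaviour near the identity, using right-invariance, and then to convert a one-sided domination of metrics into the two-sided estimate required for coarse equivalence by means of a generalized-inverse construction.

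First I would record the basic reduction. Since each $d_i$ is right-invariant, setting $|x|_i:=d_i(x,1_G)$ gives $d_i(g,h)=|gh^{-1}|_i$ for $i=1,2$, so it suffices to compare the two length functions $|\cdot|_1$ and $|\cdot|_2$ on $G$. Next, define $\rho(t):=t+\sup\{|x|_2\colon |x|_1\le t\}$. The essential point---and the only place where the hypotheses are genuinely used---is that this supremum is finite: since $d_1$ is proper, the closed ball $\{x\in G\colon |x|_1\le t\}$ is compact, and since $d_2$ is compatible with the topology of $G$, the function $x\mapsto |x|_2$ is continuous and hence bounded on this compact set. Consequently $\rho$ is real-valued and non-decreasing, and the summand $t$ forces both that it is (strictly) increasing and that $\rho(t)\to\infty$. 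Putting $x=gh^{-1}$ and $t=|x|_1=d_1(g,h)$ yields $d_2(g,h)=|x|_2\le\rho(d_1(g,h))$, which is the first assertion. Applying the same construction with the roles of $d_1$ and $d_2$ interchanged produces a non-decreasing $\tilde\rho$ with $\tilde\rho(t)\to\infty$ and $d_1(g,h)\le\tilde\rho(d_2(g,h))$ for all $g,h\in G$.

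For coarse equivalence via the identity map $\mathrm{id}\colon(G,d_1)\to(G,d_2)$, the first condition in the definition (existence of $R>0$ with $\mathrm{id}(G)\cap B(y,R)\neq\emptyset$ for every $y$) is trivial, since $\mathrm{id}(G)=G$. It remains to produce non-decreasing $\rho_-,\rho_+\colon[0,\infty)\to[0,\infty)$ with $\rho_\pm(t)\to\infty$ such that $\rho_-(d_1(g,h))\le d_2(g,h)\le\rho_+(d_1(g,h))$. I would take $\rho_+:=\rho$ from above. For the lower bound, I would use the generalized inverse $\rho_-(s):=\inf\{t\ge 0\colon \tilde\rho(t)\ge s\}$; the set is nonempty because $\tilde\rho(t)\to\infty$. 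A routine check shows $\rho_-$ is non-decreasing, and since $\tilde\rho$ is finite-valued and non-decreasing one verifies that $\rho_-(s)\to\infty$ as $s\to\infty$. Finally, from $d_1(g,h)\le\tilde\rho(d_2(g,h))$ the value $d_2(g,h)$ lies in $\{t\colon \tilde\rho(t)\ge d_1(g,h)\}$, whence $\rho_-(d_1(g,h))\le d_2(g,h)$, completing the argument.

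I expect the main (though mild) obstacle to be precisely the finiteness of the dominating supremum defining $\rho$: this is where properness of one metric (compactness of its balls) must be combined with compatibility of the other (continuity of its length function), and where second countability and local compactness implicitly guarantee that such metrics exist and behave well. The remaining work---the monotone bookkeeping and the generalized-inverse passage from one-sided domination to the two-sided coarse-equivalence estimate---is routine, and the symmetry of the setup means the lower bound costs no new ideas beyond the upper one.
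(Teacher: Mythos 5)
Your proof is correct and is essentially the argument the paper has in mind: the paper skips this proof entirely, citing the residually finite (discrete) case, and your construction $\rho(t)=t+\sup\{|x|_2\colon |x|_1\le t\}$ --- with properness of $d_1$ giving compactness of the ball and compatibility of $d_2$ giving continuity, hence boundedness, of $|\cdot|_2$ on it --- is precisely the natural adaptation of that discrete argument (where balls are finite) to the locally compact setting. The generalized-inverse passage from the symmetric one-sided bound $d_1\le\tilde\rho(d_2)$ to the required lower control $\rho_-(d_1)\le d_2$ is likewise correct and standard.
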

    
	\begin{theorem}\label{thm:box_space-independent-gap-metric}
		Let $G$ be a second countable, locally compact residually compact group, and let $\sigma=(G_n)_{n\in \mathbb{N}}$ be a residually compact approximation of $G$. Let $d^1,d^2$ be two proper, right-invariant, compatible metrics on $G$. For each $n\in\mathbb{N}$, let $d_n^1,d_n^2$ be the quotient metrics, respectively. Then the coarse disjoint unions of the sequences $(G/G_n,d_n^1)$ and $(G/G_n,d_n^2)$ are coarsely equivalent via the identity map.
	\end{theorem}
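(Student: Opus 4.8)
The plan is to first transfer the comparison of the two metrics on $G$ supplied by Theorem~\ref{thm:box_space-independent-group-metric} down to the quotient metrics, \emph{uniformly in $n$}, and then to verify that such a uniform comparison, combined with the compactness of the quotients and the divergence of the gaps between distinct components, forces the identity map to be a coarse equivalence of the two coarse disjoint unions. The crucial feature of the input is that a single pair of control functions, coming from $G$ itself, governs all of the quotients simultaneously.

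First I would apply Theorem~\ref{thm:box_space-independent-group-metric} to the ordered pairs $(d^1,d^2)$ and $(d^2,d^1)$ to obtain monotonically increasing functions $\rho,\rho'\colon[0,\infty)\to[0,\infty)$ with $\lim_{t\to\infty}\rho(t)=\lim_{t\to\infty}\rho'(t)=\infty$ such that $d^2(g,h)\le\rho(d^1(g,h))$ and $d^1(g,h)\le\rho'(d^2(g,h))$ for all $g,h\in G$. Next I would push these to each quotient $G/G_n$. By right-invariance the double infimum defining the quotient metric collapses, $d_n^i(g_1G_n,g_2G_n)=\inf_{h\in G_n}d^i(g_1h,g_2)$, since $d^i(g_1h_1,g_2h_2)=d^i(g_1h_1h_2^{-1},g_2)$ and $h_1h_2^{-1}$ ranges over all of $G_n$. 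Because $d^1$ is proper and $G_n$ is closed, the set $g_1G_n\cap\overline{B}_{d^1}(g_2,C)$ is compact for every $C$, so the infimum defining $d_n^1(g_1G_n,g_2G_n)$ is attained at some $h_0\in G_n$, i.e.\ $d^1(g_1h_0,g_2)=d_n^1(g_1G_n,g_2G_n)$. Evaluating $d^2$ at this same $h_0$ then yields
\[
d_n^2(g_1G_n,g_2G_n)\le d^2(g_1h_0,g_2)\le\rho\bigl(d^1(g_1h_0,g_2)\bigr)=\rho\bigl(d_n^1(g_1G_n,g_2G_n)\bigr).
\]
This bound holds for every $n$ with the single function $\rho$; symmetrically, $d_n^1\le\rho'(d_n^2)$ for all $n$.

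The remaining work is bookkeeping at the level of the coarse disjoint union, where I would separate same-component from cross-component pairs. Writing $d^{B,i}$ for the box-space metrics, for $x,y$ in the same component $G/G_n$ one has $d^{B,i}(x,y)=d_n^i(x,y)$, so the previous step gives $d^{B,2}\le\rho(d^{B,1})$ and $d^{B,1}\le\rho'(d^{B,2})$ there directly. For $x\in G/G_n$ and $y\in G/G_m$ with $n\ne m$, I would use that each $G/G_n$ is compact (as $G_n$ is closed cocompact), hence of finite diameter, together with the defining property $d^{B,1}(G/G_n,G/G_m)\to\infty$: for any threshold $r$ only finitely many pairs of distinct components lie within $d^{B,1}$-distance $r$, and over those finitely many compact components the $d^{B,2}$-distances are bounded. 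Consequently the function $\rho_+(r):=\max\bigl\{\rho(r),\ \sup\{d^{B,2}(x,y): d^{B,1}(x,y)\le r\}\bigr\}$ is finite, nondecreasing, and tends to $\infty$, and satisfies $d^{B,2}\le\rho_+\circ d^{B,1}$. Swapping the two metrics produces an analogous nondecreasing $\tilde\rho_+\to\infty$ with $d^{B,1}\le\tilde\rho_+\circ d^{B,2}$, whose lower inverse serves as the function $\rho_-$ in the definition of coarse equivalence. Since the identity is a bijection, the coarse-density condition is automatic, so $\rho_-$ and $\rho_+$ exhibit the identity as the desired coarse equivalence.

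I expect the main obstacle to be precisely the uniformity in $n$ of the comparison between the quotient metrics: this is what the single pair $(\rho,\rho')$ coming from $G$ buys us, and it is exactly the input a coarse disjoint union needs in order to conclude a coarse equivalence rather than merely a componentwise one. A secondary subtlety is that the box-space metric is only specified up to its coarse structure, so one must argue at the level of control functions; handling the cross-component distances through the compactness of the quotients and the divergence of the gaps is the technical heart that keeps the argument uniform.
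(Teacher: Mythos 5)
Your proposal is correct and takes essentially the same approach as the paper: the paper skips this proof, stating it is almost identical to the residually finite case it cites, and your argument is exactly that adaptation to the locally compact setting --- a single pair of control functions from Theorem~\ref{thm:box_space-independent-group-metric} is pushed down to all quotients uniformly (using right-invariance to collapse the double infimum and properness plus closedness of $G_n$ to attain it), and the cross-component distances in the coarse disjoint union are controlled by compactness of the quotients together with the divergence of the gaps. No gaps to report.
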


    \begin{corollary}
        Let $G$ be a second countable, locally compact, residually compact group, and let $\sigma=(G_n)_{n\in\mathbb{N}}$ be a sequence of closed cocompact subgroups of $G$. Then ${\rm asdim} (\Box_{\sigma}G)$ is well-defined, that is, it does not depend on the choice of the metrics in Definition~\ref{def:box_space}. 
    \end{corollary}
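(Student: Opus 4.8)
The plan is to reduce the well-definedness of ${\rm asdim}(\Box_\sigma G)$ to the coarse invariance of asymptotic dimension recorded in Lemma~\ref{lem:asdim-coarse-eq}. Inspecting Definition~\ref{def:box_space}, the construction depends on two metric choices: the proper, right-invariant, compatible metric $d$ on $G$, which determines the quotient metrics $d_n$ on each $G/G_n$; and the metric $d^B$ realizing the coarse disjoint union, which is pinned down only by the requirements that it restrict to $d_n$ on each piece and that $d^B(G/G_n,G/G_m)\to\infty$ as $n\neq m$ and $n+m\to\infty$. I would therefore argue that neither choice affects the asymptotic dimension.

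For the dependence on $d$, fix a single recipe for forming the coarse disjoint union. Given two proper, right-invariant, compatible metrics $d^1$ and $d^2$ on $G$, Theorem~\ref{thm:box_space-independent-gap-metric} asserts that the resulting coarse disjoint unions of $(G/G_n,d_n^1)$ and $(G/G_n,d_n^2)$ are coarsely equivalent via the identity map. Lemma~\ref{lem:asdim-coarse-eq} then immediately yields that their asymptotic dimensions coincide.

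For the dependence on $d^B$, fix $d$ and let $d^B_1,d^B_2$ be two metrics on $\bigsqcup_{n} G/G_n$ satisfying the conditions of Definition~\ref{def:box_space}. I would verify that the identity map from $(\bigsqcup_n G/G_n, d^B_1)$ to $(\bigsqcup_n G/G_n, d^B_2)$ is a coarse equivalence. Coarse density is automatic since the map is a bijection. For the control functions, one uses that both metrics agree with $d_n$ within each piece, while the hypothesis $d^B_i(G/G_n,G/G_m)\to\infty$ ensures that, for any fixed bound $R$, only finitely many pairs of pieces $(n,m)$ can host points at $d^B_1$-distance at most $R$; across those finitely many pairs the $d^B_2$-distances are likewise bounded, which produces monotone control functions $\rho_-,\rho_+$ (and symmetrically in the reverse direction). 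A second application of Lemma~\ref{lem:asdim-coarse-eq} gives equality of asymptotic dimensions, and combining the two steps proves the corollary.

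The only genuinely delicate point is the $d^B$-step: although the two gap metrics may grow at very different rates, the ``gaps tend to infinity'' hypothesis confines every uniformly bounded family of pairs to finitely many pieces, so the comparison of $d^B_1$ and $d^B_2$ reduces, at each distance scale, to a finite problem on which both metrics are automatically comparable. This is precisely the standard fact that the coarse disjoint union of a sequence of bounded metric spaces is well-defined up to coarse equivalence, and it is where all the content of the corollary resides.
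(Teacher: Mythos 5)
Your proposal is correct and follows essentially the same route as the paper, which proves the corollary by combining Theorem~\ref{thm:box_space-independent-group-metric}, Theorem~\ref{thm:box_space-independent-gap-metric}, and Lemma~\ref{lem:asdim-coarse-eq}. The only difference is that you spell out the independence of the choice of the gap metric $d^B$ (via the finitely-many-pairs argument), a standard fact about coarse disjoint unions that the paper treats as implicit in its citation of the two theorems; your version is simply a more detailed rendering of the same argument.
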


    \begin{proof}
        This follows directly from Theorem~\ref{thm:box_space-independent-group-metric}, Theorem~\ref{thm:box_space-independent-gap-metric},  and Lemma~\ref{lem:asdim-coarse-eq}. 
    \end{proof}
    
	\par Note that as in \cite{SWZ19}, we do not require our approximation sequences to consist only of normal subgroups, a departure from the classical literature on the subject of box spaces. Herein lies a caveat: for a general non-normal approximation sequence $\sigma$, the coarse structure of the box space $\Box_\sigma G$ may behave badly with regard to that of $G$ itself. To rule out certain pathological examples, a regularity condition was introduced in \cite[Notation~3.6 and Lemma~3.7]{SWZ19}. We shall need a suitable generalization of this condition. 
    
	\begin{definition}
		A sequence $\sigma$ consisting of closed cocompact subgroups of $G$ is called \emph{regular} if, for any compact subset $F\subseteq G$, there is $H\in \sigma$ such that the quotient map $G\to G/H$ is injective on $F\cdot k$ for any $k\in G$. 
	\end{definition}
    
	\begin{remark}
		An approximation sequence $\sigma$ is regular if and only if for any compact subset $F\subseteq G$, there is $H\in \sigma$ such that $F^{-1}F\cap \bigcup_{k\in G}kHk^{-1}=\{1_G\}$. To prove this, one can directly adapt the argument in \cite[Lemma~3.7]{SWZ19}. 
	\end{remark}
    
    In \cite{Sza19}, Szab{\'o} studied second countable locally compact groups equipped with a residually compact approximation, which consists of decreasing discrete normal cocompact subgroups. We point out that the residually compact approximation in \cite[Definition~2.1]{Sza19} is a regular residually compact approximation in this paper. Moreover, our regular residually compact approximation is more general than his.
	\begin{proposition}\label{prop: decr disc nor is regular}
		Let $G$ be a locally compact group and $\sigma$ be a decreasing residually compact approximation consisting of normal discrete cocompact subgroups. Then $\sigma$ is regular.
	\end{proposition}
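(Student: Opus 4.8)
The plan is to verify the equivalent formulation of regularity recorded in the Remark following the definition, namely that for every compact $F\subseteq G$ there is $H\in\sigma$ with $F^{-1}F\cap\bigcup_{k\in G}kHk^{-1}=\{1_G\}$. The key simplification comes from normality: since each $G_n$ is normal we have $kG_nk^{-1}=G_n$ for every $k\in G$, so $\bigcup_{k\in G}kG_nk^{-1}=G_n$. Thus it suffices to show that for any compact $F\subseteq G$ there exists $n$ with $F^{-1}F\cap G_n=\{1_G\}$.

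I would fix a compact set $F$ and put $C=F^{-1}F$; this is compact, being the image of $F^{-1}\times F$ under multiplication, and it contains $1_G$. The next observation is that each $G_n$, being a discrete subgroup of a Hausdorff group, is closed in $G$. Consequently $C\cap G_n$ is both compact and discrete, hence finite.

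Because $\sigma$ is decreasing, the sets $C\cap G_n$ then form a decreasing sequence of finite sets whose total intersection is $C\cap\bigcap_{n}G_n=C\cap\{1_G\}=\{1_G\}$. A decreasing sequence of finite sets with this intersection must stabilize: one enumerates the finitely many elements of $(C\cap G_1)\setminus\{1_G\}$, and for each such $g$ the hypothesis $\bigcap_n G_n=\{1_G\}$ together with the decreasing property furnishes an index $n_g$ beyond which $g\notin G_m$; taking $N$ to be the maximum of these finitely many indices yields $C\cap G_m=\{1_G\}$ for all $m\geq N$. Choosing $H=G_N$ then completes the verification.

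I do not expect a serious obstacle here. The only two points requiring a moment's care are the collapse of the conjugation union under normality and the finiteness of $C\cap G_n$; both become immediate once one invokes that discrete subgroups of Hausdorff groups are closed, after which the argument reduces to the elementary stabilization of a decreasing chain of finite sets.
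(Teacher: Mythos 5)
Your proof is correct, and it takes a genuinely (if modestly) different route from the paper's. Both arguments make the same initial reduction: by normality — which you state explicitly, whereas the paper leaves the collapse $\bigcup_{k\in G}kHk^{-1}=H$ implicit — together with the Remark preceding the proposition, it suffices to produce $H\in\sigma$ with $F^{-1}F\cap H=\{1_G\}$ for a given compact $F$. The paper then runs a covering argument: for each $x\in F^{-1}F\setminus\{1_G\}$ it chooses $H_x\in\sigma$ with $x\notin H_x$ (trivial intersection), an open set $O_x\ni x$ disjoint from $H_x$ (closedness), and an identity neighborhood $U_x(1_G)$ meeting $H_x$ only in $1_G$ (discreteness); a finite subcover of $F^{-1}F$ by the sets $O_x\cup U_x(1_G)$ involves only finitely many $H_x$, and the decreasing hypothesis supplies a single $H\in\sigma$ contained in all of them, which forces $F^{-1}F\cap H=\{1_G\}$. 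You replace this covering construction with one clean observation — $F^{-1}F\cap G_1$ is compact and discrete, hence finite — and then expel its finitely many nontrivial elements one by one using $\bigcap_n G_n=\{1_G\}$ and the decreasing property, taking the maximum of the resulting indices. Your version is shorter and reduces the topology to a single standard fact (your auxiliary step that discrete subgroups of Hausdorff groups are closed is actually redundant, since closedness is built into the paper's definition of a residually compact approximation); the paper's version never needs ``compact and discrete implies finite'' and instead packages the same compactness principle as a finite subcover with neighborhood bookkeeping. Both are complete and correct.
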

    \begin{proof}
        For every compact set $F\subseteq G$, we see that $F^{-1}F$ is compact. For every $x\in F^{-1}F\backslash\{1_G\}$, there exists $H_x\in \sigma$ such that $x\notin H_x$. Since $H_x$ is closed, there is an open neighborhood $O_x$ of $x$ such that $O_x\cap H_x=\emptyset$. Since $H_x$ is discrete, there is an open neighborhood $U_x(1_G)$ of $1_G$ such that $U_x(1_G)\cap H_x=\{1_G\} $. Then $\bigcup_{x\in F^{-1}F}(O_x\cup U_x(1_G))$ is an open cover of $F^{-1}F$. Hence we obtain a finite cover and we can find $H\in\sigma$ such that $F^{-1}F\cap  H=\{1_G\} $ since $\sigma$ is decreasing.
    \end{proof}
    
	The following lemma is important for understanding why we define regular residually compact approximations in this way.
    
	\begin{lemma}[{\cite[Lemma~3.11]{SWZ19}}]\label{lem4.1}
		Let $X$ be a metric space with an isometric right action of a group $H$, let $(Y,d_Y)$ be the quotient space by the group action with the induced pseudo-metric, and let $\pi\colon  X\to Y$ be the corresponding projection. Let $x\in X$ and $R>0$ be such that $\pi$ is injective when restricted to $B_{3R}(x)$. Then $\pi|_{B_R(x)}$ is an isometry from $B_R(x)$ onto $B_R(\pi(x))$.
	\end{lemma}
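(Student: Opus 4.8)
The plan is to establish two things: that $\pi$ restricted to $B_R(x)$ preserves distances, and that it maps $B_R(x)$ bijectively onto $B_R(\pi(x))$. Throughout I write $d$ for the metric on $X$ and recall that, since the right action of $H$ is isometric, the induced pseudo-metric on $Y$ is
\[ d_Y(\pi(p),\pi(q)) = \inf_{h\in H} d(p, q\cdot h). \]
Taking $h = 1_H$ shows at once that $\pi$ is non-expansive, that is, $d_Y(\pi(p),\pi(q)) \le d(p,q)$ for all $p,q\in X$. So the content of the isometry claim is to upgrade this inequality to an equality on $B_R(x)$, and this is the step I expect to be the crux.

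For distance preservation, I would fix $y,z\in B_R(x)$, so that $d(y,z) < 2R$ by the triangle inequality, and argue that no translate of $z$ can lie strictly closer to $y$ than $z$ itself. Concretely, suppose toward a contradiction that there is $h\in H$ with $d(y,z\cdot h) < d(y,z) < 2R$. Then
\[ d(x, z\cdot h) \le d(x,y) + d(y, z\cdot h) < R + 2R = 3R, \]
so $z\cdot h\in B_{3R}(x)$, while also $z\in B_R(x)\subseteq B_{3R}(x)$. Since $z$ and $z\cdot h$ lie in the same $H$-orbit, $\pi(z\cdot h)=\pi(z)$, and the injectivity of $\pi$ on $B_{3R}(x)$ forces $z\cdot h = z$, contradicting $d(y,z\cdot h) < d(y,z)$. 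Hence $d_Y(\pi(y),\pi(z)) = d(y,z)$. This is precisely where the threshold $3R$ in the hypothesis is consumed: a shortcut of length below $d(y,z) < 2R$, together with $d(x,y)<R$, can only reach inside the ball of radius $3R$ on which $\pi$ is known to be injective.

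With the isometry identity in place, the remaining assertions are routine. Injectivity of $\pi|_{B_R(x)}$ is immediate, either from the identity (if $\pi(y)=\pi(z)$ then $d(y,z)=0$) or directly from injectivity on the larger ball. For the image, the identity gives $d_Y(\pi(y),\pi(x)) = d(y,x) < R$ for every $y\in B_R(x)$, so $\pi(B_R(x))\subseteq B_R(\pi(x))$. Conversely, given $w\in B_R(\pi(x))$, I would pick any $x'\in X$ with $\pi(x')=w$; since $d_Y(w,\pi(x)) = \inf_{h\in H} d(x'\cdot h, x) < R$, there is $h\in H$ with $d(x'\cdot h, x) < R$, and then $y := x'\cdot h$ lies in $B_R(x)$ with $\pi(y)=w$. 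Thus $\pi|_{B_R(x)}$ is a distance-preserving bijection of $B_R(x)$ onto $B_R(\pi(x))$, that is, an isometry, as claimed.
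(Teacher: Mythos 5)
Your proof is correct. Note that the paper itself contains no proof of this lemma---it is quoted verbatim from \cite[Lemma~3.11]{SWZ19}---and your argument is exactly the standard one from that source: $\pi$ is non-expansive by taking $h=1_H$ in the infimum; any translate $z\cdot h$ strictly closer to $y$ than $z$ would land in $B_{3R}(x)$ and hence coincide with $z$ by the injectivity hypothesis, forcing distance preservation; and surjectivity onto $B_R(\pi(x))$ follows by lifting a point and translating its representative along the orbit into $B_R(x)$.
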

    
	In Section~\ref{sec3}, we discussed how to obtain new residually compact groups. Now, if the residually compact approximation needs to be regular, do these results still hold? The answer is yes.
    
	\begin{proposition}
		Let $G$ be a residually compact group with a regular decreasing residually compact approximation $(G_n)_{n\in\mathbb{N}}$ and $K$ be a residually compact group with a regular decreasing residually compact approximation $(K_n)_{n\in\mathbb{N}}$. Let $\alpha\colon  G\to {\rm Aut}(K)$ be a group homomorphism with $(k,g)\to \alpha_g(k)$ being continuous and for every $n\in\mathbb{N}$, the subgroup $K_n$ is $\alpha|_{G_n}$ invariant. Then $(K_n\rtimes_{\alpha} G_n)_{n\in\mathbb{N}}$ is a regular residually compact residually approximation of $K\rtimes_\alpha G$.
	\end{proposition}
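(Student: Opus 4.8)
The plan is to verify the characterization of regularity from the Remark following its definition, i.e.\ for every compact $F\subseteq K\rtimes_\alpha G$ to produce $n$ with $F^{-1}F\cap\bigcup_{z}z(K_n\rtimes_\alpha G_n)z^{-1}=\{1\}$, where $z$ ranges over all of $K\rtimes_\alpha G$. (That $(K_n\rtimes_\alpha G_n)_n$ is a residually compact approximation already follows from the earlier semidirect-product proposition, and it is visibly decreasing since both given sequences are.) First I would factor through the product structure: a compact $F$ lies in $E\times C$ with $E=\pi_K(F)$ and $C=\pi_G(F)$ compact, and a direct computation gives $F^{-1}F\subseteq \tilde E\times(C^{-1}C)$, where $\tilde E:=\{\alpha_{c^{-1}}(e_1^{-1}e_2):c\in C,\ e_1,e_2\in E\}$ is compact by joint continuity of $\alpha$. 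Writing the conjugate of $(k_0,g_0)\in K_n\rtimes G_n$ by $(k,g)$ as $\bigl(k\,\alpha_g(k_0)\,\alpha_{gg_0g^{-1}}(k^{-1}),\,gg_0g^{-1}\bigr)$, the $G$-component lies in $C^{-1}C\cap\bigcup_g gG_ng^{-1}$; applying regularity of $(G_n)$ to the compact set $C$ furnishes $n_1$ making this trivial, and monotonicity keeps it trivial for all $n\geq n_1$. Hence $g_0=1$ and $gg_0g^{-1}=1$, so the conjugate collapses to $\bigl(k\,\alpha_g(k_0)\,k^{-1},\,1\bigr)$.

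This reduces everything to the purely $K$-theoretic statement: find $n\geq n_1$ such that $k\,\alpha_g(k_0)\,k^{-1}\in\tilde E$ with $k_0\in K_n$ forces $k_0=1$, for all $k\in K$ and $g\in G$. Here the \emph{invariance} hypothesis is essential. Since $\alpha_\gamma(K_n)=K_n$ for $\gamma\in G_n$, the image $\alpha_g(K_n)$ depends only on the coset $gG_n$ in the \emph{compact} quotient $G/G_n$, so the relevant set $\bigcup_{g}\alpha_g(K_n)=\bigcup_{[g]\in G/G_n}\alpha_g(K_n)$ is a union of cocompact subgroups indexed by a compact parameter space. The intended mechanism is to combine this with regularity of $(K_n)$, which I would first reformulate as a systole bound: regularity forces that, conjugates included, the nontrivial elements of $K_m$ are eventually expelled from any fixed compact neighbourhood of $1_K$, i.e.\ $\inf\{d_K(hxh^{-1},1_K):h\in K,\ x\in K_m\setminus\{1_K\}\}\to\infty$ as $m\to\infty$. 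The goal is then to upgrade this to the $\alpha$-twisted systole bound $\inf\{d_K\bigl(k\,\alpha_g(x)\,k^{-1},1_K\bigr):k\in K,\ g\in G,\ x\in K_n\setminus\{1_K\}\}\to\infty$, which immediately expels the twisted conjugates from the fixed compact set $\tilde E$ for $n$ large.

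The main obstacle is precisely this last upgrade: controlling the \emph{outer} automorphisms $\alpha_g$ uniformly. Regularity of $(K_n)$ only governs inner conjugation, whereas $\alpha_g$ can distort the metric severely (e.g.\ in $\mathbb{Z}^2\rtimes_A\mathbb{Z}$ with $A$ hyperbolic the family $\{\alpha_g\}$ is not precompact, so one cannot hope for equicontinuity). The tension is structural: invariance lets one replace the arbitrary $g$ by a representative of a compact transversal of $G_n$, but this transversal \emph{grows} with $n$, while a direct application of regularity of $(K_n)$ wants a \emph{fixed} compact target; taking $n=\max$ of the levels produced by the two regularities separately does not obviously close, since the compact set one must feed into the regularity of $(K_n)$ itself depends on the transversal. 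I would resolve this by a compactness/diagonal argument rather than a single application: assuming the twisted systole bound fails for all $n\geq n_1$, one extracts $x_n\in K_n\setminus\{1_K\}$, $g_n\in G$, $k_n\in K$ with $k_n\,\alpha_{g_n}(x_n)\,k_n^{-1}\in\tilde E$; regularity forces $d_K(x_n,1_K)\to\infty$ while the images remain in the compact set $\tilde E$, and the point is to exploit invariance \emph{at every level simultaneously} — pushing the cosets into the fixed compact quotients $G/G_m$ and using joint continuity of $\alpha$ together with the level-$m$ systole bounds — to derive a contradiction. Assembling the $G$-part and the resulting $K$-part then yields triviality of the intersection, hence regularity of $(K_n\rtimes_\alpha G_n)_n$.
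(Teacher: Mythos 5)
Your first half is sound and coincides with the paper's own argument: the conjugation formula $(k,g)(x,y)(k,g)^{-1}=\bigl(k\,\alpha_g(x)\,\alpha_{gyg^{-1}}(k^{-1}),\,gyg^{-1}\bigr)$, regularity of $(G_n)$ applied to $\pi_G(F)$, and decreasingness force the $G$-entry $y$ to be trivial, reducing everything to showing that $k\,\alpha_g(x)\,k^{-1}\in\tilde E$ with $x\in K_n\setminus\{1_K\}$ is eventually impossible. From there on, however, your text is a plan rather than a proof: the ``$\alpha$-twisted systole bound'' is exactly the whole content of the remaining step, and the proposed compactness/diagonal argument is never executed. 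It also \emph{cannot} be executed, because under the hypothesis as stated ($K_n$ only $\alpha|_{G_n}$-invariant) that bound is false, and so is the proposition. Take $G=\mathbb{Z}$, $G_n=4^n\mathbb{Z}$, $K=\mathbb{R}^2$, $\alpha_m=A^m$ with $A=\begin{pmatrix}2&1\\1&1\end{pmatrix}$, whose orthonormal eigenbasis $e_\pm$ has eigenvalues $\lambda^{\pm1}$, $\lambda=\tfrac{3+\sqrt5}{2}$. Put $w=e_++e_-$, $c_n=4^n$, $s_1=2$, $s_{n+1}=s_n+c_n$, and $K_n:=A^{s_n}\bigl(\mathbb{Z}w+\mathbb{Z}A^{c_n}w\bigr)$. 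By Cayley--Hamilton, $\mathbb{Z}w+\mathbb{Z}A^{c_n}w$ contains every $A^{jc_n}w$ and is $A^{\pm c_n}$-invariant; hence each $K_n$ is a lattice, is $\alpha|_{G_n}$-invariant, and $K_{n+1}\subseteq K_n$ because $s_{n+1}-s_n=c_n$ and $c_n\mid c_{n+1}$. A short computation in eigencoordinates shows every nonzero $v\in K_n$ has $\|v\|\geq\tfrac12\min(\lambda^{s_n},\lambda^{c_n-s_n})\to\infty$, so $(K_n)$ is a regular decreasing approximation of $\mathbb{R}^2$ (conjugation is irrelevant as $K$ is abelian). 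Yet $A^{s_n}w\in K_n\setminus\{0\}$ and $\alpha_{-s_n}(A^{s_n}w)=w$ for every $n$, so your twisted systole stays $\leq\|w\|=\sqrt2$ forever; concretely, with $F=\{(0,0),(w,0)\}$,
\[
(w,0)=(0,-s_n)\,(A^{s_n}w,0)\,(0,-s_n)^{-1}\in F^{-1}F\cap\bigcup_{z\in K\rtimes_\alpha G}z\bigl(K_n\rtimes_\alpha G_n\bigr)z^{-1}
\]
for every $n$, so $(K_n\rtimes_\alpha G_n)_n$ is not regular.

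So the obstacle you flagged --- outer automorphisms $\alpha_g$ are not governed by regularity of $(K_n)$, precisely in $\mathbb{Z}^2\rtimes_A\mathbb{Z}$-type examples --- is not a technicality to be diagonalized away; it is fatal to the statement as literally written. It is worth knowing how the paper handles this point: its proof closes the step in one line by asserting that $\alpha_{g_1}(k)\,\alpha_{g_1g}(x)\,\alpha_{g_1}(k)^{-1}$ lies in a $K$-conjugate of $K_n$, which silently uses $\alpha_{g_1g}(x)\in K_n$ for an \emph{arbitrary} element $g_1g\in G$, i.e.\ full $\alpha$-invariance of $K_n$ rather than the stated $\alpha|_{G_n}$-invariance. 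Under that stronger hypothesis one has $\bigcup_{g\in G}\alpha_g(K_n)=K_n$, your reduction finishes immediately with a single application of regularity of $(K_n)$ (no ``upgrade'' is needed), and the paper's argument is correct; under the hypothesis as stated, no completion of your plan --- or of the paper's --- exists.
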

    
	\begin{proof}
		We denote by $\pi_K$ the canonical map from $K \rtimes_\alpha G$ to $K$ and $\pi_G$ the canonical map from $K \rtimes_\alpha G$ to $G$.  Given any compact set $F\subseteq K\rtimes_\alpha G $, $\pi_K(F)$ is compact in $K$. There exists $n_1\in\mathbb{N}$ such that $\pi_K(F)^{-1}\pi_K(F)\cap kK_{n_1}k^{-1}=\{1_K\}$ for every $k\in K$. Similarly, there exists $n_2\in\mathbb{N}$ such that $\pi_G(F)^{-1}\pi_G(F)\cap gG_{n_2}g^{-1}=\{1_G\}$ for every $g\in G$. Let $n=\max\{n_1,n_2\}$, we claim that
        
        \[ F^{-1}F\cap (k,g)(K_n\rtimes_\alpha G_n)(k,g)^{-1}=\{(1_K,1_G)\}\]
        
		for every $(k,g)\in K\rtimes_\alpha G$.
        
		\par The elements of $ F^{-1}F$ have the form that 
		\[ (\alpha_{g_1^{-1}}(k_1^{-1}k_2),g_1^{-1}g_2)\]
		for some $k_1,k_2\in\pi_K(F)$ and $g_1,g_2\in\pi_G(F)$.
		The elements in $(k,g)(K_n\rtimes_\alpha G_n)(k,g)^{-1} $ have the form
        \[(k\alpha_g(x)\alpha_{gyg^{-1}}(k^{-1}),gyg^{-1}) \]
		for every $k\in K,g\in G$ and some $x\in K_n,y\in G_n$. 
        
        \par For any $(k,g)\in K\rtimes_\alpha G$, if there are some $k_1,k_2,g_1,g_2,x,y$ as above such that \[(\alpha_{g_1^{-1}}(k_1^{-1}k_2),g_1^{-1}g_2)=(k\alpha_g(x)\alpha_{gyg^{-1}}(k^{-1}),gyg^{-1}). \]
		Then $g_1^{-1}g_2=gyg^{-1}$ for some $g_1^{-1}g_2\in\pi_G(F)^{-1}\pi_G(F)$ and $gyg^{-1}\in gG_ng^{-1}$. Thus, $g_1=g_2$ and $y=1_G$. Moreover $k\alpha_g(x)\alpha_{gyg^{-1}}(k^{-1})=k\alpha_g(x)k^{-1}$. Hence \[\alpha_{g_1^{-1}}(k_1^{-1}k_2)=k\alpha_g(x)k^{-1}.\]
		Then \[k_1^{-1}k_2=\alpha_{g_1}(k)\alpha_{g_1g}(x)\alpha_{g_1}(k)^{-1} \]
		for some $k_1^{-1}k_2\in \pi_K(F)^{-1}\pi_K(F) $ and some $\alpha_{g_1}(k)\alpha_{g_1g}(x)\alpha_{g_1}(k)^{-1}\in kK_nk^{-1}$. Thus, $k_1=k_2$ and the claim is proved.
	\end{proof}

	The following is the main lemma of this section. The proof is similar to the discrete case (\cite[Lemma 3.13]{SWZ19}).
    
	\begin{lemma}\label{lem main}
		Let $G$ be a second countable residually compact group with a proper, right-invariant compatible metric. Let $\sigma=(G_n)_{n\in \mathbb{N}}$ be a regular residually compact approximation. Let $s\in\mathbb{N}$.
        
        If ${\rm asdim}(\Box_{\sigma}G)\leq s$. Then, for every $\varepsilon>0$ and compact subset $M\subseteq G$, there exist $n$ and compact supported functions $\mu_l\colon G\to [0,1]$ for all $l=0,\ldots,s$ satisfying the following properties:
			\begin{enumerate}[label=(\alph*)]
				\item \label{lem main: disjoint} for every $l=0,\ldots,s$, one has 
				\[{\rm supp}(\mu_l)\cap {\rm supp}(\mu_l)h=\emptyset \quad\text{ for all }\, h\in G_n\backslash\{1_{G_n}\};\]
                
				\item\label{lem main: 1} for every $g\in G$, one has
				\[\sum_{l=1}^s\sum_{h\in G_n}\mu_l(gh)=1; \]
                
				\item\label{lem main: almost invar} for every $l=0,\ldots,s$ and $g\in M$, one has\[\|\mu_l-\mu_l(g\cdot-)\|_{\infty}\leq \varepsilon. \]
			\end{enumerate}

            Moreover, if $\sigma$ is decreasing, then ${\rm asdim}(\Box_\sigma G)\leq s$ if and only if the above conditions are satisfied.
	\end{lemma}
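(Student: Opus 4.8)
\emph{Plan.} I will prove the (main) forward implication in detail, namely that ${\rm asdim}(\Box_\sigma G)\le s$ produces the functions $\mu_l$, and then indicate how the converse for decreasing $\sigma$ is obtained by running the construction backwards. Fix throughout the proper, right-invariant, compatible metric $d$ on $G$, let $\pi_n\colon G\to G/G_n$ be the quotient maps (each $1$-Lipschitz for the induced metrics $d_n$), and record two geometric facts that drive the argument. First, because $d$ is right-invariant, left translation is a uniformly bounded perturbation: $d(x,gx)=d(ex,gx)=d(e,g)$ for all $x$, so $C_M:=\sup_{g\in M}d(e,g)<\infty$ by compactness of $M$. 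Second, by the regularity of $\sigma$ together with Lemma~\ref{lem4.1}, for any prescribed radius $\rho$ we may choose $n$ so that $\pi_n$ is injective on $\overline{B(e,3\rho)}\cdot k$ for every $k$, and hence restricts to an \emph{isometry} on every ball of radius $\rho$ in $G$. These two facts are what let us turn a coarse cover of the box space into honest, compactly supported, globally Lipschitz bump functions on $G$.

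\emph{Choice of parameters.} I first pass to the uniform form of the hypothesis: ${\rm asdim}(\Box_\sigma G)\le s$ means (by the standard description of the asymptotic dimension of a coarse disjoint union) that for every $R'>0$ there is $D'>0$ such that \emph{every} component $G/G_n$ carries a cover $\mathcal V_n=\mathcal V_n^0\cup\cdots\cup\mathcal V_n^s$ with Lebesgue number at least $R'$, members of diameter at most $D'$, and each $\mathcal V_n^l$ pairwise $R'$-disjoint. This uniformity is essential, because regularity only promises \emph{some} suitable $n$, not a large one. Now set $R\ge (s+2)C_M/\varepsilon$, apply the uniform statement with $R'=5R$ to obtain $D'$, and then invoke regularity with $\rho=D'+R$ to pick $H=G_n\in\sigma$ for which $\pi_n$ is an isometry on every ball of radius $D'+R$. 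Fix the corresponding good cover $\mathcal V_n$ of this particular $G/G_n$.

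\emph{Construction.} Downstairs on $G/G_n$, for each $V\in\mathcal V_n$ put ${\rm core}(V)=\{y\in V\colon d_n(y,(G/G_n)\setminus V)\ge R\}$ and $\psi_V=\max(0,R-d_n(\cdot,{\rm core}(V)))$, which is $1$-Lipschitz, takes values in $[0,R]$, and is supported in $V$. Let $\psi^l=\sum_{V\in\mathcal V_n^l}\psi_V$ and $\Psi=\sum_{l}\psi^l$. Since distinct members of $\mathcal V_n^l$ are $5R$-disjoint, at each point at most one summand of $\psi^l$ is nonzero, so $\psi^l=\max_{V\in\mathcal V_n^l}\psi_V$ is $1$-Lipschitz and $\Psi$ is $(s+1)$-Lipschitz; and the Lebesgue number $5R$ forces $B(y,2R)\subseteq V$ for some $V$, whence $y\in{\rm core}(V)$ and $\Psi(y)\ge R$ everywhere. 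Next I lift: for each $V$ choose a single sheet $\widetilde{{\rm core}}(V)\subseteq \pi_n^{-1}({\rm core}(V))$ and set $\tilde\psi_V=\max(0,R-d(\cdot,\widetilde{{\rm core}}(V)))$, a globally $1$-Lipschitz, compactly supported function on $G$. The isometry property gives $\tilde\psi_V=\psi_V\circ\pi_n$ on the support, and, because the full preimage of the relevant $R$-neighbourhood splits into pairwise $G_n$-translated isometric sheets, it yields the tiling identity $\sum_{h\in G_n}\tilde\psi_V(gh)=\psi_V(\pi_n g)$. Finally define
\[ \mu_l=\Bigl(\sum_{V\in\mathcal V_n^l}\tilde\psi_V\Bigr)\big/\,(\Psi\circ\pi_n), \]
which is compactly supported (the numerator is a finite sum of compactly supported functions and the denominator is bounded below by $R$) and valued in $[0,1]$.

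\emph{Verification, main obstacle, and converse.} For (b): since $\Psi\circ\pi_n$ is $G_n$-invariant, summing the tiling identity over $h\in G_n$ and over the colours $l=0,\dots,s$ gives $\sum_{l=0}^{s}\sum_{h\in G_n}\mu_l(gh)=\Psi(\pi_n g)/\Psi(\pi_n g)=1$. For (a): if a sheet's support met a nontrivial $G_n$-translate of a same-colour support, their images would meet in $G/G_n$, forcing the two members of $\mathcal V_n^l$ to coincide by $5R$-disjointness, and then the injectivity of $\pi_n$ on the ambient ball forces the translate to be trivial. For (c): the numerator is $1$-Lipschitz with values in $[0,R]$ and $\Psi\circ\pi_n$ is $(s+1)$-Lipschitz with values in $[R,(s+1)R]$, so a quotient-rule estimate gives ${\rm Lip}(\mu_l)\le \tfrac1R+\tfrac{(s+1)}{R}=\tfrac{s+2}{R}$, whence for $g\in M$
\[ \|\mu_l-\mu_l(g\cdot{-})\|_\infty\le \tfrac{s+2}{R}\,\sup_x d(x,gx)=\tfrac{s+2}{R}\,d(e,g)\le \tfrac{(s+2)C_M}{R}\le\varepsilon. \]
The main obstacle is precisely the lifting step: converting the large-scale cover into genuine globally Lipschitz, compactly supported functions on $G$ whose $G_n$-averages reproduce a partition of unity downstairs. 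This is exactly what regularity buys through the local isometry of Lemma~\ref{lem4.1}, and it is also why the uniform-cover reduction is needed, since regularity dictates which $n$ we are allowed to use. Finally, for the converse when $\sigma$ is decreasing, one reverses the construction: the supports of the $\mu_l$ project to an $(s+1)$-coloured, uniformly bounded cover of $G/G_n$ with large Lebesgue number, and decreasingness lets a good cover at level $n$ be propagated to all larger levels, yielding ${\rm asdim}(\Box_\sigma G)\le s$; the details follow those of \cite[Lemma~3.13]{SWZ19}.
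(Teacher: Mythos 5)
Your argument is correct and follows essentially the same route as the paper's: both proofs take a uniformly bounded $(s+1)$-coloured cover of the box space with large Lebesgue number, use regularity together with Lemma~\ref{lem4.1} to pick $n$ so that $\pi_n$ is a local isometry (which is exactly why, as you emphasize, the diameter bound must be fixed before $n$ is chosen), lift the cover on $G/G_n$ to $G_n$-translated sheets in $G$, and convert it into a Lipschitz partition of unity whose translation estimate~(c) follows from right-invariance of the metric, with the converse for decreasing $\sigma$ obtained by projecting the supports back down as in \cite[Lemma~3.13]{SWZ19}. The only differences are cosmetic: you build tent functions over cores and normalize by the $G_n$-invariant function $\Psi\circ\pi_n$ (which makes property~(b) an exact identity), whereas the paper uses distance-to-complement functions normalized by the sum over the whole lifted cover, and the paper writes out the converse in full --- including the choice $\varepsilon<1/(s+1)$ that produces the Lebesgue number of the projected cover --- where you only sketch it and defer to the cited reference.
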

    
	\begin{proof}
		For each $n\in \mathbb{N}$, we denote by $\pi_n\colon G\to G_n$ the quotient map. Let $d$ be a proper, right-invariant compatible metric on $G$.

        let $\varepsilon>0$ and $M\subseteq G$ be a compact subset. Then there exists $R'>0$ such that $M$ is contained in $B_{R'}(1_G)$. Choose $R>\frac{2(2s+3)R'}{\varepsilon}$. Then, there is an open cover $\mathcal{V}=\mathcal{V}^0\cup\cdots\cup\mathcal{V}^s $ of $\Box_{\sigma}G$ with Lebesgue number at least $R$, such that the diameters of the members of $\mathcal{V}$ are uniformly bounded by some $R'>R$ and each $\mathcal{V}^l$ has mutually $R$-disjoint members. Since $\sigma$ is regular, for the compact subset $B_{3R'}(1_G)$ of $G$, there exists $n\in \mathbb{N}$ such that $\pi_n\colon G\to G/G_n$ is injective on $B_{3R'}(g)$ for every $g\in G$. It then follows from Lemma~\ref{lem4.1} that $\pi_n$ restricts to an isometry between $B_{R'}(g)$ and $B_{R'}(\pi_n(g))$ for any $g\in G$. We also have that 
		\[B_{3R'}(1_G)\cap \bigcup_{k\in G}kG_nk^{-1}\subseteq B_{3R'}(1_G)^{-1}B_{3R'}(1_G)\cap \bigcup_{k\in G}kG_nk^{-1}=\{1_G\} \]
        
		\par Let $\mathcal{V}'=\mathcal{V}'^0\cup\cdots\cup\mathcal{V}'^s $ denote the induced finite open cover on the compact subspace $G/G_n$. By a uniformly bounded condition, $V\in \mathcal{V}'$ is contained in $B_{R'}(\pi_n(g))$ for some $g\in G$, and thus we can find $U_V\subset B_{R'}(g)$ which is isometrically mapped to $V$ under $\pi_n$. Therefore, $\pi_n^{-1}(V)=\bigsqcup_{h\in G_n}U_Vh$. Define
		\[U^l \coloneqq\bigcup_{V\in\mathcal{V}'^l}U_V, \]
		\[\mathcal{U}^l\coloneqq \{U^lh\colon h\in G_n\} \]
		for all $l=0,\ldots,s$. We claim that $\mathcal{U}=\mathcal{U}^0\cup\cdots\cup\mathcal{U}^s$ is a uniformly bounded cover of $G$ with Lebesgue number at least $R$ and each $\mathcal{U}^l$ has mutually $R$-disjoint members. 
        
		\par Since $\mathcal{V}'$ is a finite cover, $U^l$ is bounded and $\mathcal{U}$ is uniformly bounded. Now consider the distance of elements in each $\mathcal{U}^l$. For all $h_1,h_2\in G_n$ and $g_1,g_2\in U_V\subseteq U^l$, we have
		\[d(g_1h_1,g_2h_2)=d(g_1h_1h_2^{-1}g_1^{-1},g_2g_1^{-1})>d(g_1h_1h_2^{-1}g_1^{-1},1_G)-2R'>R'\geq R \]
		because $g_1h_1h_2^{-1}g_1^{-1}\in g_1G_ng_1^{-1}$. For all $h_1,h_2\in G_n$ and $g_1\in U_{V_1}\subseteq U^l,g_2\in U_{V_2}\subseteq U^l$, we have
		\[d(g_1h_1,g_2h_2)\geq\inf\{d(g_1h_1',g_2h_2')|h_1',h_2'\in G_n \}=d_n(\pi_n(g_1),\pi_n(g_2))>R \]
		because $\pi_n(g_1)\in V_1\in \mathcal{V}'^l,\pi_n(g_2)\in V_2\in \mathcal{V}'^l$. So we obtain $d(U^lh_1,U^lh_2)>R$. 
        
		\par For any $x\in G$, $\pi_n(x)\in V$ for some $l\in \{0,\ldots,s\}$ and $V\in\mathcal{V}^l$. Then
		\[x\in \pi_n^{-1}(\pi_n(x))\subseteq \pi_n^{-1}(V)=\bigsqcup_{h\in G_n}U_Vh. \]
		Therefore, $x\in U_Vh_0\subseteq U^lh_0$ for some $h_0\in G_n$. Thus, $\mathcal{U}$ is a cover of $G$.
        
		\par Let $X\subseteq G$ be a subset with diameter less than $R$. The image $\pi_n(X)$ is a subset of $G/G_n$ with diameter less than $R$, so there exists some $l\in \{0,\ldots,s\}$ $V\in\mathcal{V}'^l$ such that $\pi_n(X)\subseteq V$, whence\[X\subseteq \pi_n^{-1}(\pi_n(X))\subseteq \pi_n^{-1}(V)=\bigsqcup_{h\in G_n}U_Vh. \]
		We showed that the $d(U_Vh_1,U_Vh_2)>R$ for all $h_1\neq h_2\in G_n$ and $V\in\mathcal{V}'$. Hence $X\subseteq U_Vh_0\subseteq U^lh_0$ for some $h_0\in G_n$. So, the Lebesgue number of the cover $\mathcal{U}$ is less than $R$.  The claim is shown.
		
		 Now define
        
		\[\mu_l\colon G\to [0,1], g\mapsto \frac{d(g,G\backslash U^l)}{\sum_{V\in \mathcal{U}}d(g,G\backslash V)} \]
        
		for all $l=0,\ldots,s$. It is easy to see that for every $l=0,\ldots,s$, ${\rm supp}(\mu_l)=\overline{U^l}$ ,\[{\rm supp}(\mu_l)\cap {\rm supp}(\mu_l)h=\emptyset \quad\text{ for all }\quad h\in G_n\backslash\{1_{G_n}\}.\]
		Furthermore, the property $(b)$ is also obvious. For property $(c)$, for every $l=0,\ldots,s$, $g\in M$ and $h\in G$,
		\begin{align*}
			|\mu_l(h)-\mu_l(gh)| &= |\frac{d(h,G\backslash U^l)}{\sum_{V\in \mathcal{U}}d(h,G\backslash V)}-\frac{d(gh,G\backslash U^l)}{\sum_{V\in \mathcal{U}}d(gh,G\backslash V)}|\\
			&\leq \frac{|d(h,G\backslash U^l)-d(gh,G\backslash U^l)|}{\sum_{V\in \mathcal{U}}d(h,G\backslash V)}+\frac{d(gh,G\backslash U^l)\sum_{V\in \mathcal{U}}|d(gh,G\backslash V)-d(h,G\backslash V)|}{\sum_{V\in \mathcal{U}}d(h,G\backslash V)\sum_{V\in \mathcal{U}}d(gh,G\backslash V)}\\
			&\leq \frac{d(h,gh)}{\sum_{V\in \mathcal{U}}d(h,G\backslash V)}+\frac{\sum_{V\in \mathcal{U}}|d(gh,G\backslash V)-d(h,G\backslash V)|}{\sum_{V\in \mathcal{U}}d(h,G\backslash V)}\\
			&\leq \frac{2}{R}d(h,gh)+\frac{2}{R}\sum_{V\in \mathcal{U}}|d(gh,G\backslash V)-d(h,G\backslash V)|\\
			&\leq \frac{2}{R}d(h,gh)+\frac{2}{R}(s(s+1))d(h,gh)\\
			&\leq \frac{2(2s+3)}{R}d(h,gh)\\
			&\leq \frac{2(2s+3)}{R}d(1_G,g)\leq \varepsilon.
		\end{align*}

        Now, if $\sigma$ is decreasing.  Given $R>0$, pick $\varepsilon>0$ with $\varepsilon<1/(s+1)$. For $\varepsilon $ and $M=B_R(1_G)$, choose $n$ and compactly supported continuous functions $\mu_l\colon G\to[0,1]$ for all $l=0,\ldots,s$ that satisfy the requirements of (3). By choosing $n$ large enough, we can also assume that the distance between any two of the subsets $\bigcup_{k=1}^{n-1}G/G_k$ and $G/G_m$ for all $m\geq n$ is at least $2R$ in $\Box_\sigma G$. Define $U^l\coloneqq {\rm supp}(\mu_l)$. Then $U^l\cap U^lh=\emptyset$ for all $l=0,\ldots,s$ and $h\in G_n\backslash\{1_G\}$, and $\{U^lh\colon h\in G_n,l=0,\ldots,s\}$ covers $G$.
        
		\par Since for any $g\in G$, $g$ is in the support of at most $s+1$ members of the partition of unity $\{\mu_l(- \cdot h)\colon h\in G_n,l=0,\ldots,s \}$, it follows that there exist $h\in G_n$ and $l\in\{0,\ldots,s\}$ such that $\mu_l(gh^{-1})\geq 1/(s+1)$. Thus, for any $g'\in B_R(g)$,
		\[\mu_l(g'h^{-1})\geq \mu_l(gh^{-1})-\varepsilon>0 .\]
		It is to say that for any $g\in G$, there are $h\in G_n$ and $l\in\{0,\ldots,s\}$ such that $B_R(g)\subseteq U^lh.$
        
		\par Note that, since $\pi_n$ is injective when restricted to $U^lh$ for all $h\in G_n$ and $l=0,\ldots,s$, we have $\pi_n$ is injective when restricted to $B_R(g)$ for any $g\in G$. For any compact subset $F\subseteq G$, there exists $R'>0$ such that $F\subseteq B_{R'}(1_G)$. Hence, for $\varepsilon>0$ and $M=B_{R'}(1_G)$, we have that $\pi_n$ is injective when restricted to $Fk$ for every $k\in G$. Thus, $\sigma$ is regular.
        
		\par Now for each $m\geq n$, define the collection $\mathcal{V}_m^l\coloneqq\{\pi_m(U^lh)\colon h\in G_n \}$, which consists of disjoint subsets of $G/G_m$. Define a cover $\mathcal{V}=\mathcal{V}^0\cup\cdots\cup\mathcal{V}^s$ of $\Box_\sigma G$ by 
        
		\[\mathcal{V}^0\coloneqq\{\bigcup_{k=1}^{n-1}G/G_k\}\sqcup\bigcup_{m=n}^\infty\mathcal{V}_m^0 ;\]
		\[\mathcal{V}^l\coloneqq\bigcup_{m=n}^\infty\mathcal{V}^l_m, l=1,\ldots,s. \]
        
		The diameters of these members are bounded by 
		\[{\rm max}\{{\rm diam}(\bigcup_{m=1}^{n-1}G/G_m),{\rm diam}(U^0),\ldots,{\rm diam}(U^s) \} \] 
		and each $\mathcal{V}^l$ consists of disjoint sets. Finally, let us show that its Lebesgue number is at least $R$. 
        
		\par Given a nonempty subset $X\subseteq \Box_\sigma G$ with diameter at most $R$, if we fix any $x\in X$, then $X\subseteq B_R(x)$. By our choice of $n$, we know that $B_R(x)$ falls entirely in one of the subsets $\bigcup_{k=1}^{n-1}G/G_k$ or $G/G_m$ for some $m\geq n$. In the first case, $B_R(x)\subseteq \bigcup_{k=1}^{n-1}G/G_k\in\mathcal{V}^0$. In the case where $B_R(x)\subseteq G/G_m$ for some $m\geq n$, there exists $g\in G$ such that $\pi_m(g)=x$ and $\pi_m(B_R(g))=B_R(x)$. For $B_R(g)$, we showed that there are $h\in G_n$ and $l\in\{0,\ldots,s\}$ such that $B_R(g)\subseteq U^lh$. Hence, 
		\[X\subseteq B_R(x)=\pi_m(B_R(g))\subseteq \pi_m(U^lh)\in \mathcal{V}_m^l. \]
        
		Therefore, $\mathcal{V}=\mathcal{V}^0\cup\cdots\cup\mathcal{V}^s$ is a uniformly bounded cover of $\Box_\sigma G$ with Lebesgue number at least $R$ and each $\mathcal{V}^l$ made up of disjoint members, which shows that ${\rm asdim}(\Box_\sigma G)\leq s$.
	\end{proof}
    
	\begin{corollary}
		Let $G$ be a second countable, locally compact, residually compact group, and $\sigma=(G_n)_{n\in\mathbb{N}}$ be a regular approximation. Let $G$ be equipped with some proper, right-invariant, compatible metric. Then ${\rm asdim}(G)\leq {\rm asdim}(\Box_{\sigma}G)$.
	\end{corollary}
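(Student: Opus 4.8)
The plan is to reduce the statement to the covering construction already performed inside the proof of Lemma~\ref{lem main}. If ${\rm asdim}(\Box_\sigma G)=\infty$ there is nothing to prove, so I would assume ${\rm asdim}(\Box_\sigma G)\leq s$ for some $s\in\mathbb{N}$ and verify the defining property of ${\rm asdim}(G)\leq s$: for every $R>0$ one must exhibit a uniformly bounded cover $\mathcal{U}=\mathcal{U}^0\cup\cdots\cup\mathcal{U}^s$ of $G$ with Lebesgue number at least $R$ such that each $\mathcal{U}^l$ consists of pairwise $R$-disjoint sets.

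First I would fix $R>0$ and invoke ${\rm asdim}(\Box_\sigma G)\leq s$ to obtain a uniformly bounded cover $\mathcal{V}=\mathcal{V}^0\cup\cdots\cup\mathcal{V}^s$ of $\Box_\sigma G$ with Lebesgue number at least $R$ and with each $\mathcal{V}^l$ pairwise $R$-disjoint, say with diameters bounded by some $R'>R$. This is exactly the input with which the proof of Lemma~\ref{lem main} begins; there $R$ was enlarged only to secure the extra almost-invariance estimate~\ref{lem main: almost invar}, which plays no role here, so I may feed in the target Lebesgue number $R$ directly. Using regularity of $\sigma$ together with Lemma~\ref{lem4.1}, I would then select $n$ so that $\pi_n\colon G\to G/G_n$ restricts to an isometry of $B_{R'}(g)$ onto $B_{R'}(\pi_n(g))$ for every $g\in G$, lift the induced finite cover $\mathcal{V}'$ of the compact space $G/G_n$ to sets $U_V\subseteq G$ mapped isometrically onto $V\in\mathcal{V}'$, and set $U^l=\bigcup_{V\in\mathcal{V}'^l}U_V$ and $\mathcal{U}^l=\{U^l h\colon h\in G_n\}$, exactly as in that proof.

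The substance is then the verification, carried out verbatim in the proof of Lemma~\ref{lem main}, that $\mathcal{U}=\mathcal{U}^0\cup\cdots\cup\mathcal{U}^s$ is a uniformly bounded cover of $G$ with Lebesgue number at least $R$ whose families $\mathcal{U}^l$ are pairwise $R$-disjoint. I would reuse this verification: uniform boundedness holds because each $U^l$ is a finite union of sets of diameter at most $2R'$, hence bounded, and the metric is right-invariant, so every translate $U^l h$ has the same diameter as $U^l$; the $R$-disjointness within each $\mathcal{U}^l$ and the lower bound on the Lebesgue number are precisely the claims established there. Since such a cover exists for every $R>0$, the definition yields ${\rm asdim}(G)\leq s$.

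I expect no genuine obstacle, since the required cover is literally produced inside the proof of Lemma~\ref{lem main}; the only point demanding care is that the descent from the box space back to $G$ preserves both the Lebesgue number and the disjointness. This is exactly where regularity of $\sigma$ is indispensable, as it guarantees via Lemma~\ref{lem4.1} that $\pi_n$ is an isometry on balls of radius $R'$, so that distinct translates $U^l h$ remain $R$-separated and every small subset of $G$ lifts into a single member $U^l h$. In short, the content of the corollary is the observation that Lemma~\ref{lem main} already constructs an asymptotic-dimension-witnessing cover of $G$, and not merely the partition of unity $(\mu_l)_{l=0}^s$.
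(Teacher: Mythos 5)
Your proposal is correct and is essentially the paper's own argument: the paper proves this corollary by simply citing the proof of Lemma~\ref{lem main}, where the cover $\mathcal{U}=\mathcal{U}^0\cup\cdots\cup\mathcal{U}^s$ of $G$ built from the sets $U^l h$ ($h\in G_n$) is shown to be uniformly bounded, to have Lebesgue number at least $R$, and to have $R$-disjoint families $\mathcal{U}^l$. Your additional observations — that the enlargement of $R$ in that proof served only the almost-invariance estimate and can be dropped, and that regularity plus Lemma~\ref{lem4.1} is what makes the descent work — are accurate elaborations of the same route.
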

    
	\begin{proof}
		This follows directly from the proof of Lemma~\ref{lem main}.
	\end{proof}
    
	\begin{corollary}
		Let $G$ be a second countable, locally compact, residually compact group with a proper, right-invariant, compatible metric, and let $\sigma=(G_n)_{n\in\mathbb{N}}$ be a regular approximation. Let $H\leq G$ be an open normal subgroup. Then
		\begin{enumerate}
			\item[(1)] $\kappa=(H\cap G_n)_{n\in\mathbb{N}}$ defines a regular approximation of $H$.
			\item[(2)] we have ${\rm asdim}(\Box_{\kappa}H)\leq {\rm asdim}(\Box_{\sigma}G)$.
		\end{enumerate}
	\end{corollary}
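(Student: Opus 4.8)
The plan is to prove the two assertions separately, with (1) a routine exercise in point‑set topology and (2) carrying the genuine difficulty.

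For (1), I would first note $\bigcap_n (H\cap G_n)=H\cap\bigcap_n G_n=\{1_G\}$ and that each $H\cap G_n$ is closed in $H$. To get cocompactness I would exploit that $H$ is open and normal: then $HG_n=G_nH$ is an open, hence clopen, subgroup of $G$, so its image $\pi_n(HG_n)$ is a clopen, therefore compact, subset of the compact space $G/G_n$; since $H$ is open, $\pi_n|_H\colon H\to\pi_n(HG_n)$ is an open continuous surjection whose fibres are exactly the cosets of $H\cap G_n$, and thus descends to a homeomorphism $H/(H\cap G_n)\cong\pi_n(HG_n)$, which is compact. For regularity I would use the conjugation characterization from the remark following the definition of regularity: for compact $F\subseteq H\subseteq G$, normality of $H$ gives $k(H\cap G_n)k^{-1}\subseteq H\cap kG_nk^{-1}$ for $k\in H$, so regularity of $\sigma$ applied to $F$ supplies an $n$ with $F^{-1}F\cap\bigcup_{k\in H}k(H\cap G_n)k^{-1}=\{1_G\}$, which is regularity of $\kappa$.

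For (2), the first issue is to set up the metrics correctly: $d|_H$ is a proper, right‑invariant, compatible metric on $H$, so by the metric‑independence results (Theorem~\ref{thm:box_space-independent-gap-metric} and Lemma~\ref{lem:asdim-coarse-eq}) I may compute $\mathrm{asdim}(\Box_\kappa H)$ using it. The crucial point is that the naive level maps $H/(H\cap G_n)\to G/G_n,\ h(H\cap G_n)\mapsto hG_n$, are \emph{not} coarse embeddings — they can distort distances arbitrarily (for $2\mathbb{Z}\leq\mathbb{Z}$ with $G_n=3^n\mathbb{Z}$ such a map is essentially ``multiplication by $2$'' on a cycle). I would instead factor through the intermediate quotient $G/(H\cap G_n)$: since both $d_n^H$ and the quotient metric $\tilde d_n$ on $G/(H\cap G_n)$ are computed as the infimum of $d$ over $H\cap G_n$‑translates, the inclusion $H/(H\cap G_n)\hookrightarrow G/(H\cap G_n)$ is an \emph{isometric} embedding. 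Hence it suffices to bound the asymptotic dimension of the coarse disjoint union of the spaces $G/(H\cap G_n)$.

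The heart of the argument is a transfer of covers along the non‑expanding quotient maps $p_n\colon G/(H\cap G_n)\to G/G_n$. Fixing a scale $R$ and a cover of $\Box_\sigma G$ that is uniformly bounded by $S$, has Lebesgue number at least $R$, and whose colours are $R$‑disjoint, regularity of $\sigma$ together with Lemma~\ref{lem4.1} makes $p_n$ an isometry on $S$‑balls at the relevant levels — here one uses that $H\cap G_n\subseteq G_n$ makes $q_n\colon G\to G/(H\cap G_n)$ at least as injective as $\pi_n$. I would then lift each cover member isometrically to a single ``sheet'' in $G/(H\cap G_n)$ and take its $G_n$‑translates: because $p_n$ is non‑expanding the images keep diameter at most $S$, while the local isometry combined with the $R$‑disjointness of the $G_n$‑translates in $G$ (as established inside the proof of Lemma~\ref{lem main}) yields both the $R$‑disjointness of distinct same‑colour sheets and the Lebesgue bound $\geq R$. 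Restricting these covers to the isometric copies $H/(H\cap G_n)$ and assembling over $n$ then witnesses $\mathrm{asdim}(\Box_\kappa H)\leq\mathrm{asdim}(\Box_\sigma G)$.

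The main obstacle I anticipate is controlling the local‑isometry radius \emph{uniformly across all large levels}: regularity of $\sigma$ directly supplies injectivity on $3S$‑balls only at a single level per compact set, whereas a cover of the box space must handle every level at once. This becomes automatic once $\sigma$, and hence $\kappa=(H\cap G_n)_{n}$, is decreasing, since then injectivity at one level propagates to all finer levels and only finitely many initial levels — each a bounded piece that can be lumped into one member of the cover — require separate, trivial treatment; this is the regime in which I would carry out the remaining bookkeeping.
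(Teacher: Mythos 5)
Part (1) of your proposal is correct and matches the paper's argument: your regularity step (intersecting $F^{-1}F\cap\bigcup_{k\in G}kG_nk^{-1}=\{1_G\}$ with the smaller union over $k\in H$ and $H\cap G_n$) is exactly what the paper does, and your direct cocompactness proof --- $HG_n$ is an open, hence clopen, subgroup, so $H/(H\cap G_n)\cong\pi_n(HG_n)$ is a clopen, hence compact, subset of the compact space $G/G_n$ --- is if anything cleaner than the paper's citation of Proposition~\ref{pro3.1}, whose statement formally assumes $H$ to be cocompact.

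Part (2) has a genuine gap, which you flag yourself in your final paragraph: your argument is only completed under the additional hypothesis that $\sigma$ is decreasing, while the corollary is stated for an arbitrary regular approximation. Your single-level work is sound --- the isometric embedding $H/(H\cap G_n)\hookrightarrow G/(H\cap G_n)$, the correct (and well-observed) failure of the naive maps $H/(H\cap G_n)\to G/G_n$ to be coarse embeddings, and the sheet construction via Lemma~\ref{lem4.1} and regularity. But a bound on ${\rm asdim}(\Box_{\kappa}H)$ requires admissible covers, with uniform constants, of $H/(H\cap G_m)$ for \emph{all} sufficiently large $m$ simultaneously, whereas regularity supplies an injectivity (hence local-isometry) radius at only \emph{one} level per compact set. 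Without $G_m\subseteq G_n$, injectivity at level $n$ does not propagate to other levels, so your construction cannot be run at the infinitely many levels where no local isometry at the needed radius is available. As written, you have therefore proved a weaker statement than the one asserted.

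That said, you are in good company: the paper's entire proof of (2) is the sentence that it ``follows directly from Lemma~\ref{lem main}'', and the direction of that lemma which converts the function/cover data back into a bound on the asymptotic dimension of a box space is proved there only for decreasing $\sigma$. So your proposal is, in cover language rather than function language, essentially the argument the paper has in mind, and it stalls at precisely the point the paper glosses over; closing it for non-decreasing regular approximations would require a new idea (or the corollary would need the decreasing hypothesis added), not just bookkeeping.
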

    
	\begin{proof}
		\par(1). By Proposition~\ref{pro3.1}, we know that $\kappa$ is a residually compact approximation of $H$. So we only need to prove that $\kappa$ is regular. For any compact $F\subseteq H$, $F$ is a compact set of $G$. Then there is $G_n\in\sigma$ such that $F^{-1}F\cap\bigcup_{k\in G}kG_nk^{-1}=\{1_{G_n}\}$. Thus 
		\[F^{-1}F\cap\bigcup_{k\in H}k(H\cap G_n)k^{-1}\subseteq F^{-1}F\cap\bigcup_{k\in G}kG_nk^{-1}=\{1_G\}. \]
		It is to say that $\kappa$ is regular.
		\par(2). This follows directly from Lemma~\ref{lem main} as well.
	\end{proof}

    We point out in passing that a prerequisite for a residually compact group to have a box space with finite asymptotic dimension is amenability. 
    
	\begin{definition}[{see, for example, \cite[Proposition~A.16]{Will07}}]\label{def:amenable}
		A locally compact group $G$ is \emph{amenable} if for every $\varepsilon>0$ and every compact subset $M\subseteq G$, there is a $f$ in the unit sphere of $ L^1(G)$ with $f\geq 0$ almost everywhere such that $\|f(x\cdot -)-f\|_1 \leq \varepsilon$ for every $x\in M$.
	\end{definition}
    
	\begin{corollary}\label{cor4.15}
		Let $G$ be a second countable residually compact group with a regular residually compact approximation $\sigma$ and ${\rm asdim}({\Box}_\sigma G)<\infty$. Then $G$ is amenable.
	\end{corollary}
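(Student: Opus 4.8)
The plan is to verify the Reiter-type condition of Definition~\ref{def:amenable} directly, using the functions supplied by Lemma~\ref{lem main} as the required almost-invariant unit vectors in $L^1(G)$. Write $s:={\rm asdim}(\Box_\sigma G)<\infty$ and equip $G$ with a proper, right-invariant, compatible metric. Given $\varepsilon'>0$ and a compact set $M\subseteq G$, I would apply Lemma~\ref{lem main} to $s$, a tolerance $\varepsilon>0$ to be fixed at the end, and the compact set $M$, obtaining an index $n$ and functions $\mu_0,\dots,\mu_s\colon G\to[0,1]$ enjoying properties~\ref{lem main: disjoint}--\ref{lem main: almost invar}. I then set $F:=\sum_{l=0}^s\mu_l$, which is a nonnegative, continuous, compactly supported function, so that $F\in L^1(G)$, and I propose $f:=F/\|F\|_1$ as the witness.

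The key structural observation is that the subgroup $G_n$ produced by the lemma is discrete: the construction in the proof of Lemma~\ref{lem main} forces $B_{3R'}(1_G)\cap G_n=\{1_G\}$, so the identity is isolated in $G_n$, and a subgroup with isolated identity is discrete. Since $G_n$ is also cocompact, it is a cocompact lattice in $G$, which entails that $G$ is unimodular; I fix a two-sided Haar measure $\mu_G$. Choosing a relatively compact Borel fundamental domain $D$ for the right $G_n$-action (so $G=\bigsqcup_{h\in G_n}Dh$) and using unimodularity, one gets the integration formula $\int_G\phi\,d\mu_G=\int_D\sum_{h\in G_n}\phi(gh)\,d\mu_G(g)$ for every $\phi\in L^1(G)$. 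Applying this to $\phi=F$ and invoking property~\ref{lem main: 1}, which gives $\sum_{h\in G_n}F(gh)=1$ for all $g$, yields $\|F\|_1=\mu_G(D)=:c_n>0$. Applying it instead to the indicator $\mathbbm 1_{{\rm supp}(\mu_l)}$, and using property~\ref{lem main: disjoint}, which makes the right $G_n$-translates of ${\rm supp}(\mu_l)$ pairwise disjoint so that $\sum_{h\in G_n}\mathbbm 1_{{\rm supp}(\mu_l)}(gh)\le 1$, gives $\mu_G({\rm supp}(\mu_l))\le c_n$ for each $l$, and hence $\mu_G({\rm supp}(F))\le (s+1)c_n$.

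It then remains to estimate the $L^1$-defect of invariance. For $x\in M$, property~\ref{lem main: almost invar} gives $|\mu_l(y)-\mu_l(xy)|\le\varepsilon$ for all $y$, so $|F(y)-F(xy)|\le (s+1)\varepsilon$ pointwise, while the function $y\mapsto|F(y)-F(xy)|$ is supported on ${\rm supp}(F)\cup x^{-1}{\rm supp}(F)$. Since $\mu_G$ is left-invariant, $\mu_G(x^{-1}{\rm supp}(F))=\mu_G({\rm supp}(F))$, and therefore
\[\|f(x\cdot-)-f\|_1=\frac{1}{c_n}\int_G|F(xy)-F(y)|\,d\mu_G(y)\le\frac{2(s+1)\varepsilon\,\mu_G({\rm supp}(F))}{c_n}\le 2(s+1)^2\varepsilon.\]
Taking $\varepsilon:=\varepsilon'/\bigl(2(s+1)^2\bigr)$ bounds this by $\varepsilon'$, while $f\ge 0$ and $\|f\|_1=1$ by construction. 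As $\varepsilon'$ and $M$ were arbitrary, $G$ satisfies Definition~\ref{def:amenable} and is amenable.

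The step I expect to be the genuine obstacle is the normalization: a priori both the numerator $\|F(x\cdot-)-F\|_1$ and the denominator $\|F\|_1=c_n$ depend on $n$, which itself depends on the tolerance $\varepsilon$, so the quotient could in principle degenerate as $\varepsilon\to 0$. What rescues the argument is that property~\ref{lem main: 1} pins $\|F\|_1$ down \emph{exactly} as the covolume $c_n$, whereas property~\ref{lem main: disjoint} bounds $\mu_G({\rm supp}(F))$ by the \emph{same} covolume up to the factor $s+1$; the two scale together, so their ratio is controlled by the dimension $s$ alone, uniformly in $n$. Consequently, the technical heart of the proof lies in justifying the fundamental-domain integration formula—precisely the place where discreteness of $G_n$, and the resulting unimodularity of $G$, are used.
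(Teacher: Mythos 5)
Your proof is correct, and it diverges from the paper's argument precisely at the step you single out as the obstacle, so the comparison is worth making explicit. The paper also starts from Lemma~\ref{lem main}, but instead of summing the towers it works with a single $\mu_l$: applying \ref{lem main: 1} at $g=1_G$ and then \ref{lem main: disjoint}, it finds $l$ and $h_0\in G_n$ with $\mu_l(h_0)\ge\tfrac{1}{s+1}$, right-translates so that $\mu_l(1_G)\ge\tfrac{1}{s+1}$, uses \ref{lem main: almost invar} to get $\mu_l|_M\ge\tfrac{1}{2(s+1)}$ and hence $\|\mu_l\|_1\ge\tfrac{\nu(M)}{2(s+1)}$, and finally normalizes $f=\mu_l/\|\mu_l\|_1$, having fixed the tolerance $\delta=\tfrac{\min\{1,\varepsilon\nu(M)\}}{2(s+1)}$ in advance. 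What the paper's write-up does not address is exactly your concern: converting the sup-norm bound of \ref{lem main: almost invar} into an $L^1$-bound costs a factor of $\nu({\rm supp}\,\mu_l)$, which is not controlled by $\nu(M)$ and a priori grows with $n$ (hence with $1/\delta$), so the final inequality $\|f(x\cdot-)-f\|_1\le\varepsilon$ is asserted rather than derived. Your fundamental-domain computation supplies precisely the missing control: property~\ref{lem main: 1} pins $\|F\|_1$ down as the covolume $c_n$ of $G_n$, property~\ref{lem main: disjoint} caps $\nu({\rm supp}\,F)$ by $(s+1)c_n$, and the ratio is then bounded in terms of $s$ alone, uniformly in $n$. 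In short, your route is more elaborate but self-correcting where the paper's short route is not.

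Two remarks on the structural inputs you invoke. First, discreteness of $G_n$ need not be pulled out of the \emph{proof} of Lemma~\ref{lem main}: it already follows from its statement, since by \ref{lem main: 1} some $\mu_l$ is nonzero, and if $u$ is a point where the (continuous) function $\mu_l$ is positive, then \ref{lem main: disjoint} gives $G_n\cap u^{-1}\{\mu_l>0\}=\{1_G\}$, exhibiting an open neighborhood of $1_G$ meeting $G_n$ trivially. Second, the assertion that a locally compact group containing a discrete cocompact subgroup is unimodular is a genuine theorem, not a formality (a uniform lattice is a lattice, and a group admitting a lattice is unimodular; its standard proof uses quasi-invariant measures on the compact quotient, so there is no circularity with amenability); it should be stated with a citation, e.g.\ to Raghunathan's book on discrete subgroups, Chapter~1. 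With that reference added, your argument is complete.
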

    
	\begin{proof}
        Write $s = {\rm asdim}(\Box_\sigma(G))$ and let $\nu$ be a Haar measure on $G$. 
        Let $\varepsilon > 0$ and a compact subset $M$ in $G$ be given. By enlarging $M$ we may assume without loss of generality that $\nu(M) > 0$. Set $\delta = \frac{\min\{1, \varepsilon \nu(M)\}}{2(s+1)} > 0$ and apply Lemma~\ref{lem main} to obtain compactly supported functions $\mu_0,\ldots,\mu_s\colon G\to [0,1]$ that satisfy conditions~\ref{lem main: disjoint}, \ref{lem main: 1}, and \ref{lem main: almost invar} in Lemma~\ref{lem main} with $\varepsilon$ replaced by $\delta$. Apply condition~\ref{lem main: 1} to $g=1_G$ to see that there exists $l\in \{0,\ldots,s\}$ such that $\sum_{h\in G_n}\mu_l(h)\geq \frac{1}{s+1}$. Then by condition~\ref{lem main: disjoint}, there exists $h_0\in G_n$ such that $\mu_l(h_0)\geq \frac{1}{s+1}$. Through a right-translation of $\mu_l$, we may assume without loss of generality that $\mu_l(1_G)\geq \frac{1}{s+1}$. 
        It follows from condition~\ref{lem main: almost invar} that $\mu_l|_{M} \geq \frac{1}{2(s+1)}$ and thus $\|\mu_l\|_1 \geq \frac{\nu(M)}{2(s+1)}$. 
        Hence $f := \frac{\mu_l}{\|\mu_l\|_1} $ satisfies the conditions in Definition~\ref{def:amenable}. 
\end{proof}
    
  In the final part of this section, we will show that if a residually compact group with polynomial growth has a residually compact approximation consisting of discrete normal cocompact subgroups, then the associated box space has finite asymptotic dimension. Here, a residually compact group $G$ is said to be of \emph{polynomial growth} if there exist $C>0$ and $d\in\mathbb{N}$ such that $m(B_r(1_G))\leq Cr^d$ for every $r\in\mathbb{R}$, where $m$ is the right Haar measure. The discrete case can be found in \cite{DT18}.
  
  \begin{theorem}
  	Let $G$ be a residually compact group with polynomial growth and $\sigma=(G_n)_{n\in\mathbb{N}}$ be a decreasing residually compact approximation consisting of normal discrete cocompact subgroups. Then ${\rm asdim}(\Box_\sigma G)<\infty$.
  \end{theorem}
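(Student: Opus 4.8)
The plan is to reduce the statement to the known discrete case recorded in \cite{DT18} by comparing $\Box_\sigma G$ with the box space of a fixed cocompact lattice. Since $\sigma$ is decreasing, the subgroup $\Gamma := G_1$ is a discrete cocompact subgroup of $G$ that contains every $G_n$. First I would check that each $G_n$ has \emph{finite index} in $\Gamma$: the covering map $G_n\backslash G \to \Gamma\backslash G$ has discrete fibre $G_n\backslash\Gamma$, and since $G_n\backslash G$ is compact the covering is finite-sheeted, whence $[\Gamma:G_n]<\infty$. As each $G_n$ is moreover normal in $G$ (hence in $\Gamma$) and $\bigcap_{n} G_n=\{1_G\}$, the sequence $\sigma'=(G_n)_{n\in\mathbb{N}}$ is a decreasing, normal residually finite approximation of the discrete group $\Gamma$.

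Next I would pin down the growth type of $\Gamma$. Equip $G$ with a proper, right-invariant, compatible metric $d$ and $\Gamma$ with the restriction $d|_\Gamma$; this is a proper right-invariant metric on the finitely generated group $\Gamma$, hence coarsely equivalent to any word metric. Because $\Gamma$ is cocompact, the inclusion $\Gamma\hookrightarrow G$ is a coarse equivalence, and comparing the counting measure of word-balls of $\Gamma$ with the Haar measure of metric balls of $G$ shows that $\Gamma$ inherits polynomial growth from $G$. By Gromov's polynomial growth theorem, $\Gamma$ is then finitely generated virtually nilpotent, so \cite{DT18} applies and gives $\mathrm{asdim}(\Box_{\sigma'}\Gamma)<\infty$.

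It remains to prove $\mathrm{asdim}(\Box_\sigma G)=\mathrm{asdim}(\Box_{\sigma'}\Gamma)$, which I would establish by exhibiting a coarse equivalence between the two box spaces. For each $n$ the inclusion induces $\iota_n\colon \Gamma/G_n\to G/G_n$, and the key computation is that both quotient metrics reduce to $\inf_{h\in G_n} d(\gamma h,\gamma')$ on the image: the quotient metric on $G/G_n$ equals $\inf_{h\in G_n} d(\gamma h,\gamma')$ by right-invariance, and since $d|_\Gamma$ is literally the restriction of $d$ and the infimum is taken over the same set $G_n\subseteq\Gamma$, the quotient metric on $\Gamma/G_n$ gives the identical value. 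Thus each $\iota_n$ is an \emph{isometric} embedding. Writing $G=C\Gamma$ for a compact $C$ and setting $R_0=\sup_{c\in C} d(c,1_G)$, right-invariance yields $d_n(gG_n,\gamma G_n)\leq d(c,1_G)\leq R_0$ whenever $g=c\gamma$, so the image of $\iota_n$ is $R_0$-dense in $G/G_n$ \emph{uniformly in $n$}. Since the coarse structures on both box spaces force the distance between distinct levels to tend to infinity, these level-wise isometric, uniformly coarsely dense embeddings assemble into a single coarse equivalence $\bigsqcup_n\iota_n\colon \Box_{\sigma'}\Gamma\to \Box_\sigma G$, and Lemma~\ref{lem:asdim-coarse-eq} gives equality of the asymptotic dimensions; the finitely many bounded levels and the passage from $d|_\Gamma$ to an honest word metric are absorbed by Theorem~\ref{thm:box_space-independent-gap-metric}.

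I expect the main obstacle to be this last step: verifying that the uniform level-wise data genuinely assemble into a coarse equivalence of the \emph{full} disjoint unions, i.e.\ that $\bigsqcup_n\iota_n$ is a controlled, coarsely surjective map admitting a controlled coarse inverse with constants independent of $n$, and cleanly reconciling $d|_\Gamma$ with the word metric defining $\Box_{\sigma'}\Gamma$. The per-level isometry together with the uniform density constant $R_0$ are the crucial inputs that make this global assembly go through.
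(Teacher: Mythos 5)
Your strategy---reduce to the lattice $\Gamma := G_1$, check each $G_n$ has finite index in $\Gamma$, and transfer the problem to a discrete box space via level-wise isometric, uniformly dense embeddings---is genuinely different from the paper's argument, and several steps are sound: the finite-index claim is correct (writing $G = KG_n$ with $K$ compact, every $\gamma\in\Gamma$ factors as $\gamma = kh$ with $h\in G_n\subseteq\Gamma$, so $k\in\Gamma\cap K$, which is finite, giving $\Gamma = (\Gamma\cap K)G_n$), and the level-wise isometry plus uniform $R_0$-density can indeed be assembled into a coarse equivalence of box spaces. The genuine gap is at the step you pass over most quickly: \textbf{finite generation of $\Gamma$}. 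You first call $\Gamma$ ``the finitely generated group'' with no argument, and later write that Gromov's theorem makes $\Gamma$ ``finitely generated virtually nilpotent''---but Gromov's theorem \emph{assumes} finite generation; it never produces it. Under the paper's definition, polynomial growth only means $m(B_r(1_G))\leq Cr^d$ for a proper right-invariant compatible metric, and this does \emph{not} imply that $G$ is compactly generated, nor that a discrete cocompact subgroup is finitely generated. Concretely, take $G=\bigoplus_{n\geq 1}\mathbb{Z}/2$ with the proper bi-invariant ultrametric $d(x,y)=\max\{2^n\colon x_n\neq y_n\}$ (and $d(x,x)=0$), and $G_n=\bigoplus_{k>n}\mathbb{Z}/2$: this is a decreasing approximation by normal, discrete, cocompact subgroups with trivial intersection, and $m(B_r(1_G))\leq r$ for $r\geq 1$, so all hypotheses of the theorem hold; yet $G_1$ is locally finite and not finitely generated, so neither Gromov's theorem nor \cite{DT18} applies, and your reduction cannot start.

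This generality is exactly what the paper's own proof is built to handle: it never invokes finite generation, but runs a direct volume-doubling argument with the Haar measure (by pigeonhole from $m(B_r)\leq Cr^d$, for each $R$ it finds a scale $R'\in[R,s_0/4]$ with $m(B_{4R'}(1_G))\leq (4^d+1)\,m(B_{R'}(1_G))$), pushes this down to the quotients $G/G_n$ using regularity of $\sigma$ (Proposition~\ref{prop: decr disc nor is regular}) and Lemma~\ref{lem4.1}, and then bounds the $R$-multiplicity of the cover by $2R'$-balls around a maximal $R'$-separated set, yielding ${\rm asdim}(\Box_\sigma G)\leq 4^d$. Your route would become a correct (and attractive) alternative if the theorem carried the extra hypothesis that $G$ is compactly generated: then cocompact lattices are finitely generated, your growth comparison does transfer to the word metric (the word-to-$d$ comparison is Lipschitz by the triangle inequality), and Gromov plus \cite{DT18} finish the proof. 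As a proof of the theorem as stated, however, it has a genuine hole.
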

  
\begin{proof}
	Let $m$ be a right Haar measure of $G$, let $\pi_n$ be the quotient map from $G$ to $G/G_n$ and $\nu_n$ be a right Haar measure on $G/G_n$ for every $n\in\mathbb{N}$.
	\par Since $G$ is of polynomial growth, there exist $C>0$ and $d\in\mathbb{N}$ such that $m(B_r(1_G))\leq Cr^d$ for every $r\in\mathbb{R}$. Let $K=4^d+1$, for every $R>0$ there exists $t\in\mathbb{N}$ such that $(\frac{K}{4^d})^t\geq \frac{CR^d}{m(B_R(1))}$. Take $s_0=4^{t+1}R$, we claim that there exists $R'$ such that $R \leq R'\leq \frac{s_0}{4}$ and $m(B_{4R'}(1_G))\leq Km(B_{R'}(1_G))$. Indeed, if it is not true, then 
	\[C4^{td}R^d\leq K^tm(B_R(1_G))<m(B_{4^tR}(1_G))\leq C4^{td}R^d. \]
	It is impossible, so the claim is proved.
    
	\par Subsequently, by Lemma~\ref{lem4.1}, Proposition~\ref{prop: decr disc nor is regular} and the definition of Box space, there exists a large enough $N\in\mathbb{N}$ such that if $n\geq N$, then $\pi_n|_{B_{s_0}(x)}$ is an isometry from $B_{s_0}(g)$ to $B_{s_0}(\pi_n(g))$ for any $g\in G$ and the distance between any two of the subsets $\bigcup_{k=1}^{N-1}G/G_k$ and $G/G_n$ is at least $R$ in $\Box_\sigma G$. For $n\geq N$, we take $X_n$ maximal in $G/G_n$ such that $B_{R'}(x)$ and $B_{R'}(y)$ are disjoint for every $x,y\in X_n$($X_n$ can be taken by the Zorn Lemma). We then define
	\[\mathcal{U}=\{\bigcup_{k=1}^{N-1}G/G_k\}\cup \bigcup_{n=N}^\infty \{B_{2R'}(x)\colon x\in X_n\}. \] 
    
	\par The collection $\mathcal{U}$ is a uniformly bounded cover of $\Box_\sigma G$. For any $z\in \Box_\sigma G$, there exists $n_0$ such that $z\in G/G_{n_0}$. If $n_0<N$, then $z\in \bigcup_{k=1}^{N-1}G/G_k$. If $n_0\geq N$, as $X_{n_0}$ is maximal, then there exists $x\in X_{n_0}$ such that $B_{R'}(z)\cap B_{R'}(x)\neq\emptyset$. So $z\in B_{2R'}(x)$. Thus, $\mathcal{U}$ is a cover and it is easy to see that the diameters of its members are bounded by $\max\{{\rm diam}(\bigcup_{k=1}^{N-1}G/G_k),s_0\}$. 
    
	\par Finally, we show that $\mathcal{U}$ has $R$-multiplicity at most $K$. For any $z\in \Box_\sigma G$, there exists $n_0$ such that $z\in G/G_{n_0}$. If $n_0<N$, then $B_R(z)$ joints at most one member in $\mathcal{U}$. In the case $n_0\geq N$, if $B_{R}(z)\cap B_{2R'}(x)\neq\emptyset$, then $x\in B_{R+2R'}(z)\subseteq B_{3R'}(z)$. Now consider $B_{3R'}(z)\cap X_{n_0}$. Since $\pi_{n_0}|_{B_{s_0}(1_G)}$ is an isometry, 
	\[|B_{3R'}(z)\cap X_{n_0}|\leq \frac{\nu_{n_0}(B_{4R'}(z))}{\nu_{n_0}(B_{R'}(z))}=\frac{\nu_{n_0}(B_{4R'}(1_{G/G_{n_0}}))}{\nu_{n_0}(B_{R'}(1_{G/G_{n_0}}))}=\frac{m(B_{4R'}(1_G))}{m(B_{R'}(1_G))}\leq K. \]
	Therefore the $R$-multiplicity of $\mathcal{U}$ is at most $K$. So, ${\rm asdim}(\Box_\sigma G)\leq 4^d$.
\end{proof}

\begin{remark}
    Adapting the proof of \cite{DT18} to the case of residually compact groups, one may show that asymptotic dimension of the box space associated to a normal approximation sequence is either infinite or equal to the asymptotic dimension of $G$.  We shall not delve into the details here as we do not need such a sharp bound. 
\end{remark}
	
	\section{Rokhlin dimension for actions of residually compact groups}\label{sec5}
    
	In this section, we introduce the definition of the Rokhlin dimension for the actions of residually compact groups, it is motivated by the definitions for actions of residually finite groups and the reals as presented in \cite[Definition~5.4]{SWZ19} and \cite[Definition~3.6 and Remark~3.10]{HSWW17}, both in turn motivated by \cite{HWZ15}. Also, our definition follows in the same spirit as that of \cite[Definition~4.1]{Sza19}.
    
	\begin{definition}[{\cite[Definition~5.1]{HSWW17}} and {\cite[Definition~4.1]{Sza19}}]\label{def main}
		Let $G$ be a second countable, locally compact group, let $A$ be a C*-algebra, and let $\alpha\colon  G\to {\rm Aut}(A)$ be a point-norm continuous action. Let $H\leq G$ be a closed cocompact subgroup. Let $d\in\mathbb{N}$ be a natural number. 
        
		\par We say that $\alpha$ has \emph{Rokhlin dimension at most $d$ relative to $H$}, and write ${\rm dim}_{\rm Rok}(\alpha,H)\leq d$, if for all separable, $\alpha$-invariant C*-subalgebras $D\subseteq A$, there exist equivariant c.p.c.\ order zero maps
		\[\varphi_l\colon (C(G/H), G\text{-}{\rm shift})\to (F^{(\alpha)}(D,A_{\infty}),\tilde{\alpha}_{\infty})\quad (l=0,\ldots,d) \]
		with $\varphi_0(1)+\cdots+\varphi_d(1)=1$. The value ${\rm dim}_{\rm Rok}(\alpha,H) $ is defined to be the smallest $d\in\mathbb{N}$ such that ${\rm dim}_{\rm Rok}(\alpha,H)\leq d$, or $\infty$ if no such $d$ exists.
        
        \par 
		We say that $\alpha$ has \emph{Rokhlin dimension at most $d$ with commuting towers and relative to $H$} , and write ${\rm dim}_{\rm Rok}^{\rm c}(\alpha,H)\leq d$, if for all separable, $\alpha$-invariant C*-subalgebras $D\subseteq A$, there exist equivariant c.p.c.\ order zero maps
		\[\varphi_l\colon (C(G/H), G\text{-}{\rm shift})\to (F^{(\alpha)}(D,A_{\infty}),\tilde{\alpha}_{\infty})\quad (l=0,\ldots,d) \]
		with pairwise commuting ranges and $\varphi_0(1)+\cdots+\varphi_d(1)=1$. 
	\end{definition}
    
	\begin{remark}
		If $A$ is separable, ${\rm dim}_{\rm Rok}(\alpha,H)\leq d$ if and only if there exist equivariant c.p.c.\ order zero maps $\varphi_0,\ldots,\varphi_d\colon C(G/H)\to F^{(\alpha)}_\infty(A)$ with  $\varphi_0(1)+\cdots+\varphi_d(1)=1$.
	\end{remark}
	
	We also provide an equivalent reformulation. 
    
	\begin{proposition}\label{prop5.1}
		Let $A$ be a separable C*-algebra. Let $G$ be a second countable locally compact group and let $\alpha\colon G\to {\rm Aut}(A)$ be a continuous action. Let $H\leq G$ be a closed cocompact subgroup. Let $d\in\mathbb{N}$ be a natural number. Denote $G\text{-}{\rm shift}$ on $C(G/H)$ by $\lambda$. Then the following statements are equivalent.
		\begin{enumerate}
			\item[(1)] ${\rm dim}_{\rm Rok}(\alpha,H)\leq d$.
			\item[(2)] There exist c.p.c.\ maps $\psi_0,\ldots,\psi_d\colon C(G/H)\to A_{\infty}^{(\alpha)}\cap A'$ satisfying:
			\begin{enumerate}[label=(2\alph*)]
				\item\label{prop:A infty 1} $(\sum_{l=0}^d\psi_l(1)) a=a$ for all $a\in A$;
                
				\item\label{prop:A infty order 0} $\psi_l(f_1)\psi_l(f_2f_3)-\psi_l(f_1f_2)\psi_l(f_3)\in {\rm Ann}(A,A_{\infty})$ for all $l=0,\ldots,d$ and $f_1,f_2,f_3\in C(G/H)$;
                
				\item\label{prop:A infty equiva} $\alpha_{\infty,g}(\psi_l(f))-\psi_l(\lambda_g(f))\in {\rm Ann}(A,A_{\infty})$ for all $l=0,\ldots,d$ and $g\in G$.
			\end{enumerate}
			\item[(3)] For all $\varepsilon>0$ and finite sets $M\subseteq G$,  $F\subseteq A$ and $S\subseteq C(G/H)$, there exist c.p.c.\ maps $\phi_0,\ldots,\phi_d\colon C(G/H)\to A$ satisfying:
			\begin{enumerate}[label=(3\alph*)]
				\item\label{prop:A 1} $(\sum_{l=0}^d\phi_l(1)) a=_{\varepsilon}a$ for all $a\in F$;
                
				\item\label{prop:A order 0} $\phi_l(f_1)\phi_l(f_2f_3)a=_{\varepsilon}\phi_l(f_1f_2)\phi_l(f_3)a$ for all $l=0,\ldots,d$ , $a\in F$and $f_1,f_2,f_3\in S$;
                
				\item\label{prop:A equiva} $\alpha_{g}(\phi_l(f))a=_{\varepsilon}\phi_l(\lambda_g(f))a$ for all $l=0,\ldots,d$, $g\in M$, $f\in S$ and $a\in F$;
                
				\item\label{prop:A abel} $\phi_l(f)a=_{\varepsilon}a\phi_l(f)$ for all $l=0,\ldots,d$, $a\in F$ and $f\in S$.
			\end{enumerate}
		\end{enumerate}
	\end{proposition}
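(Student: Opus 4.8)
The plan is to prove the three statements equivalent cyclically, via $(1)\Rightarrow(2)\Rightarrow(3)\Rightarrow(1)$. Two tools do most of the work. The first is the classical Choi--Effros lifting theorem, applicable because $G$ is second countable, so $G/H$ is compact metrizable and $C(G/H)$ is separable and nuclear; this lets us lift c.p.c.\ maps out of $C(G/H)$ along any quotient of C*-algebras. The second is the identification, via Remark~\ref{rem2.21}, of the unit of $F_\infty^{(\alpha)}(A)$: since $A$ is separable, hence $\sigma$-unital, an element $u\in A_\infty\cap A'$ satisfies $\kappa_{A,A}(u)=1$ in $F_\infty(A)$ if and only if $ua=au=a$ for every $a\in A$. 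Throughout I will use the Remark following Definition~\ref{def main} to test condition~(1) on the single subalgebra $D=A$, and the characterization of order zero maps recalled in Section~\ref{sec2}.

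For $(1)\Rightarrow(2)$: given equivariant c.p.c.\ order zero maps $\varphi_l\colon C(G/H)\to F_\infty^{(\alpha)}(A)$ with $\sum_l\varphi_l(1)=1$, I apply Choi--Effros to the quotient map $\kappa_{A,A}\colon A_\infty^{(\alpha)}\cap A'\twoheadrightarrow F_\infty^{(\alpha)}(A)$ to obtain c.p.c.\ lifts $\psi_l\colon C(G/H)\to A_\infty^{(\alpha)}\cap A'$ with $\kappa_{A,A}\circ\psi_l=\varphi_l$. Each defining relation of $(2)$ is then exactly the assertion that an element of $A_\infty^{(\alpha)}\cap A'$ lies in $\ker\kappa_{A,A}={\rm Ann}(A,A_\infty)\cap A_\infty^{(\alpha)}\subseteq {\rm Ann}(A,A_\infty)$: condition~\ref{prop:A infty order 0} is $\kappa_{A,A}$ of the order zero defect, which vanishes because $\varphi_l$ is order zero; condition~\ref{prop:A infty equiva} is $\kappa_{A,A}$ of the equivariance defect, which vanishes since $\kappa_{A,A}$ intertwines $\alpha_\infty$ and $\tilde\alpha_\infty$ while $\varphi_l$ is equivariant; and condition~\ref{prop:A infty 1} follows from $\kappa_{A,A}(\sum_l\psi_l(1))=1$ together with the unit identification above.

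For $(2)\Rightarrow(3)$: I lift each $\psi_l$ to a bounded sequence of c.p.c.\ maps $(\psi_l^{(n)})_n$, $\psi_l^{(n)}\colon C(G/H)\to A$, along the quotient $\eta_A\colon\ell^\infty(\mathbb{N},A)\twoheadrightarrow A_\infty$ (again Choi--Effros). Since the action on $A_\infty$ is computed componentwise, multiplying the relations of $(2)$ on the right by $a\in A$ and reading off representing sequences turns each relation into the statement that a norm tends to $0$; for instance \ref{prop:A infty equiva} becomes $\lim_n\|(\alpha_g\psi_l^{(n)}(f)-\psi_l^{(n)}(\lambda_g f))a\|=0$, and likewise for \ref{prop:A infty 1}, \ref{prop:A infty order 0}, and the commutation $\psi_l(f)\in A'$. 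Given $\varepsilon$ and finite $M,F,S$, only finitely many such limits are involved, so a single sufficiently large $n$ yields $\phi_l:=\psi_l^{(n)}$ satisfying \ref{prop:A 1}--\ref{prop:A abel}.

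Finally, $(3)\Rightarrow(1)$ is a reindexing argument, and is where the one genuinely delicate point appears. Choosing increasing finite sets $M_n\subseteq G$, $F_n\subseteq A$, $S_n\subseteq C(G/H)$ with dense unions and $\varepsilon_n\to0$, and letting $\phi_l^{(n)}$ be the maps furnished by $(3)$ for this data, I define a c.p.c.\ map $\psi_l\colon C(G/H)\to A_\infty$ by $\psi_l(f)=\eta_A((\phi_l^{(n)}(f))_n)$. Conditions \ref{prop:A abel}, \ref{prop:A order 0} and \ref{prop:A 1} push $\psi_l$ into $A_\infty\cap A'$ and show, after applying $\kappa_{A,A}$, that $\varphi_l:=\kappa_{A,A}\circ\psi_l$ is order zero with $\sum_l\varphi_l(1)=1$. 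The delicate step is to see that $\varphi_l$ actually lands in the \emph{continuous} part $F_\infty^{(\alpha)}(A)$ and is equivariant there: condition \ref{prop:A equiva} yields, after $\kappa_{A,A}$, the identity $\tilde\alpha_{\infty,g}(\varphi_l(f))=\varphi_l(\lambda_g f)$, whence continuity of $g\mapsto\tilde\alpha_{\infty,g}(\varphi_l(f))$ is inherited from continuity of the $G$-shift $g\mapsto\lambda_g f$ on $C(G/H)$ together with norm-continuity of the bounded map $\varphi_l$. This simultaneously verifies equivariance and membership in $F_\infty^{(\alpha)}(A)$, giving $(1)$ and closing the cycle. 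I expect this transfer of continuity---from the shift on $C(G/H)$ to the induced action on the central sequence algebra---to be the main obstacle, precisely because the approximate equivariance in $(3)$ controls norms only after multiplication by elements of $A$, so it is essential to descend to $F_\infty^{(\alpha)}(A)$ rather than attempt to land directly in $A_\infty^{(\alpha)}\cap A'$; membership there is then recovered, when needed, through the Choi--Effros lift of the $(1)\Rightarrow(2)$ step.
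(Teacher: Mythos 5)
Your steps $(1)\Rightarrow(2)$ and $(2)\Rightarrow(3)$ are correct and essentially identical to the paper's: Choi--Effros lifting along the quotient maps $\kappa_A$ (respectively $\eta_A$), the identification of the unit of $F_\infty(A)$ via Remark~\ref{rem2.21}, and a finite-set selection argument. The genuine problem is in your $(3)\Rightarrow(1)$, at exactly the step you yourself flag as delicate. Condition~\ref{prop:A equiva} is an approximate statement about the \emph{finitely many} group elements in $M$; after running the diagonal construction over data $(\tfrac1n,M_n,F_n,S_n)$ and applying $\kappa_A$, it yields the identity $\tilde\alpha_{\infty,g}(\varphi_l(f))=\varphi_l(\lambda_g f)$ only for $g$ in the countable dense set $Q=\bigcup_n M_n$, not for all $g\in G$. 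Your continuity transfer then collapses: from the fact that the a priori non-continuous orbit map $g\mapsto\tilde\alpha_{\infty,g}(\varphi_l(f))$ agrees on a dense set with the continuous map $g\mapsto\varphi_l(\lambda_g f)$, you cannot conclude that the orbit map is continuous, nor that the identity extends beyond $Q$. The underlying principle is simply false: for the translation action $\beta$ of $\mathbb{R}$ on $\ell^\infty(\mathbb{R})$ and $x=\chi_{\mathbb{Q}}$, one has $\beta_q(x)=x$ for every rational $q$, so the orbit map agrees on a dense subgroup with a constant (hence continuous) function, yet $\|\beta_s(x)-x\|_\infty=1$ for every irrational $s$. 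Appealing to continuity of $\tilde\alpha_\infty$ to bridge the gap is circular, since $\tilde\alpha_\infty$ is only known to be continuous on $F^{(\alpha)}_\infty(A)$, which is precisely where you are trying to place $\varphi_l(f)$.

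There is a second, related point left unaddressed: even granting continuity of all orbit maps, this is not literally membership in $F^{(\alpha)}_\infty(A)$, because the paper \emph{defines} $F^{(\alpha)}_\infty(A)$ as the image $\kappa_A(A^{(\alpha)}_\infty\cap A')$ of the continuous part upstairs; identifying that image with the set of continuous elements of $F_\infty(A)$ needs a lifting theorem of the type of \cite{Bro00}, and your closing remark that membership is ``recovered through the $(1)\Rightarrow(2)$ step'' is circular, as $(1)$ is what is being proved. For comparison, the paper instead proves $(3)\Rightarrow(2)$ and meets the same difficulty upstairs: it asserts that the diagonal lift $\Phi_l=(\phi_l^n)_{n\in\mathbb{N}}$ takes values in $\ell^{\infty,(\alpha)}(\mathbb{N},A)$, i.e.\ that the maps $g\mapsto\alpha_g(\phi_l^n(f))$ are equicontinuous in $n$. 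The standard way to make either route rigorous is a smoothing step: replace $\phi_l^n$ by $f\mapsto\int_G h(g)\,\alpha_g\bigl(\phi_l^n(\lambda_{g^{-1}}f)\bigr)\,dg$ for a fixed $h\in C_c(G)_+$ with $\int_G h=1$, which forces equicontinuity of the orbit maps (with modulus governed by $h$ and by $g\mapsto\lambda_g f$) while preserving conditions \ref{prop:A 1}--\ref{prop:A abel} up to small error on compact sets. Without some such device, your $(3)\Rightarrow(1)$ does not close.
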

    
	\begin{proof}
		It is similar to the proof of \cite[Lemma~3.7]{Gar17a} and \cite[Proposition~5.5]{SWZ19}. We recall that $\eta_A$ is the quotient map from $\ell^\infty(\mathbb{N},A)$ to $A_\infty$ and $\kappa_{A}$ is the quotient map from $A_\infty\cap A'$ to $F_\infty (A)$.
        
		\par (1)$\Rightarrow$(2). Since ${\rm dim}_{\rm Rok}(\alpha, H)\leq d$, there exist equivariant c.p.c.\ order zero maps
		\[\varphi_l\colon  (C(G/H),\lambda)\to (F^{(\alpha)}_\infty(A)\quad l=0,\ldots,d \]
		with $\varphi_0(1)+\cdots+\varphi_d(1)=1$. Then by the Choi-Effros lifting theorem, there exist c.p.c.\ maps
		\[\psi_l\colon C(G/H)\to A_\infty^{(\alpha)}\cap A' \]
		such that $\varphi_l=\psi_l\circ\kappa_{A}$ for every $l=0,\ldots,d$. Thus, $\sum_{l=0}^d\varphi_l(a)=1$ implies condition~\ref{prop:A infty 1}, every $\varphi_l$ is order zero implies condition~\ref{prop:A infty order 0}, and every $\varphi$ is equivariant implies condition~\ref{prop:A infty equiva}.
        
		\par (2)$\Rightarrow$(1). If there exist c.p.c.\ maps $\psi_0,\ldots,\psi_d\colon C(G/H)\to A_{\infty}^{(\alpha)}\cap A'$ that satisfy conditions~\ref{prop:A infty 1}, \ref{prop:A infty order 0}, and \ref{prop:A infty equiva}. Then define c.p.c.\ maps
		\[\varphi_l=\kappa_{A}\circ \psi_l\quad l=0,\ldots,d .\]
		Then condition~\ref{prop:A infty 1} implies that $\sum_{l=0}^d\varphi_l(1)=1$, condition~\ref{prop:A infty order 0} implies $\varphi_l$ is order zero and condition~\ref{prop:A infty equiva} implies $\varphi_l$ is equivariant.
        
		\par (2)$\Rightarrow$(3). Let $\varepsilon>0$, and let $M\subseteq G$,  $F\subseteq A$ and $S\subseteq C(G/H)$ be finite sets. There exist c.p.c.\ maps $\psi_0,\ldots,\psi_d\colon C(G/H)\to A_{\infty}^{(\alpha)}\cap A'$ satisfying conditions~\ref{prop:A infty 1}, \ref{prop:A infty order 0}, and \ref{prop:A infty equiva}. Applying the Choi-Effros lifting theorem again, we obtain c.p.c.\ maps 
		\[\Phi_l=(\phi_l^n)_{n\in\mathbb{N}}\colon C(G/H)\to \ell^\infty(\mathbb{N},A) \]
		satisfying $\psi_l=\eta_A\circ\Phi_l$ for every $l=0,\ldots,d$. Then conditions~\ref{prop:A infty 1}, \ref{prop:A infty order 0}, and \ref{prop:A infty equiva}. translate to:
        
		\begin{itemize}
			\item $\|\sum_{l=0}^d\phi_l^n(1)a-a\|\stackrel{n\to\infty}\longrightarrow 0$ for all $a\in A$;
			\item $\|\phi_l^n(f_1)\phi_l^n(f_2f_3)-\phi_l^n(f_1f_2)\phi_l^n(f_3)a\|\stackrel{n\to\infty}\longrightarrow 0$  for all $l=0,\ldots,d$ , $a\in A$ and $f_1,f_2,f_3\in C(G/H)$;
			\item $\|\alpha_{g}(\phi_l^n(f))a-\phi_l^n(\lambda_g(f))a\|\stackrel{n\to\infty}\longrightarrow 0$ for all $l=0,\ldots,d$, $g\in G$, $f\in C(G/H)$ and $a\in A$. 
		\end{itemize}
        
		In fact $\psi_l$ maps $C(G/H)$ to $A_\infty\cap A'$, so we also have
        
		\begin{itemize}
			\item $\|\phi_l^n(f)a-a\phi_l^n(f)\|\stackrel{n\to\infty}\longrightarrow0$ for all $l=0,\ldots,d$, $a\in A$ and $f\in C(G/H)$.
		\end{itemize}
        
		Then since $F, M,S$ are all finite sets, we can choose a large enough number $n$ such that $\phi^n_l$ satisfies conditions~\ref{prop:A 1}, \ref{prop:A order 0}, \ref{prop:A equiva}, and \ref{prop:A abel}.
        
		\par (3)$\Rightarrow$(2). Since $ A$ is separable, there is an increasing sequence $(F_n)_{n\in\mathbb{N}}$ of finite subsets of $A$ such that $\overline{\bigcup_{n\in\mathbb{N}}F_n}=A$.  Since $G$ is second countable, it is separable, there is an increasing sequence $(M_n)_{n\in\mathbb{N}}$ of finite subsets of $G$ such that $\overline{\bigcup_{n\in\mathbb{N}}M_n}=G$. Since $G/H$ is a compact metric space, $C(G/H)$ is also separable, there is also an increasing sequence $(S_n)_{n\in\mathbb{N}}$ of finite subsets of $C(G/H)$ such that $\overline{\bigcup_{n\in\mathbb{N}}S_n}=C(G/H)$. Now for every $(\frac{1}{n}, F_n,M_n,S_n)$, there exist c.p.c.\ maps
		\[ \phi_0^n,\ldots,\phi_d^n\colon C(G/H)\to A\]
		satisfying conditions~\ref{prop:A 1}, \ref{prop:A order 0}, \ref{prop:A equiva}, and \ref{prop:A abel}. Then, we have:
        
		\begin{itemize}
			\item $\|\sum_{l=0}^d\phi_l^n(1)a-a\|\stackrel{n\to\infty}\longrightarrow 0$ for all $a\in \bigcup_{k\in\mathbb{N}}F_k$;
			\item $\|\phi_l^n(f_1)\phi_l^n(f_2f_3)-\phi_l^n(f_1f_2)\phi_l^n(f_3)a\|\stackrel{n\to\infty}\longrightarrow 0$  for all $l=0,\ldots,d$ , $a\in\bigcup_{k\in\mathbb{N}}F_k$ and $f_1,f_2,f_3\in\bigcup_{k\in\mathbb{N}}S_k$;
			\item $\|\alpha_{g}(\phi_l^n(f))a-\phi_l^n(\lambda_g(f))a\|\stackrel{n\to\infty}\longrightarrow 0$ for all $l=0,\ldots,d$, $g\in \bigcup_{k\in\mathbb{N}}M_k$, $f\in\bigcup_{k\in\mathbb{N}}S_k$ and $a\in \bigcup_{k\in\mathbb{N}}F_k$.;
			\item  $\|\phi_l^n(f)a-a\phi_l^n(f)\|\stackrel{n\to\infty}\longrightarrow 0$ for all $l=0,\ldots,d$, $a\in \bigcup_{k\in\mathbb{N}}F_k$ and $f\in \bigcup_{k\in\mathbb{N}}S_k$.
		\end{itemize}
        
		\par For every $l=0,\ldots,d$, let $\Phi_l=(\phi_l^n)_{n\in\mathbb{N}}$, $\Phi_l$ is a c.p.c.\ map from $C(G/H)$ to $\ell^\infty(\mathbb{N},A)$. In fact, the image of $\Phi_l$ is contained in $\ell^{\infty,(\alpha)}(\mathbb{N},A)$. Define $\psi_l=\eta_A\circ \Phi_l$ for each $l=0,\ldots,d$. Since $\bigcup_{k\in\mathbb{N}}F_k\subseteq D$, $\bigcup_{k\in\mathbb{N}}M_k\subseteq G$ and $\bigcup_{k\in\mathbb{N}}S_k\subseteq C(G/H)$ are dense, respectively. Hence,  $\psi_0,\ldots,\psi_d$ are c.p.c.\ maps form $C(G/H)$ to $A_{\infty}^{(\alpha)}\cap A'$ that satisfy conditions~conditions~\ref{prop:A infty 1}, \ref{prop:A infty order 0}, and \ref{prop:A infty equiva}.
	\end{proof}
    
	\begin{remark}
    With the above notion, the following are equivalent.
    \begin{enumerate}
        \item[(1)] ${\rm dim}_{\rm Rok}^{\rm c}(\alpha,H)\leq d$.
        \item[(2)] There exist c.p.c.\ maps $\psi_0,\ldots,\psi_d\colon C(G/H)\to A_{\infty}^{(\alpha)}\cap A'$ satisfying: \ref{prop:A infty 1}, \ref{prop:A infty order 0},\ref{prop:A infty equiva} and
        $$(2d)\,\psi_l(f_1)\psi_k(f_2)=\psi_k(f_2)\psi_l(f_1)\quad \text{ for all } \,l,k=0,\ldots,d, l\neq k ,\,f_1,f_2\in C(G/H) \; .$$
        \item[(3)] For all $\varepsilon>0$ and finite sets $M\subseteq G$,  $F\subseteq A$ and $S\subseteq C(G/H)$, there exist c.p.c.\ maps $\phi_0,\ldots,\phi_d\colon C(G/H)\to A$ satisfying: \ref{prop:A 1}, \ref{prop:A order 0}, \ref{prop:A equiva}, \ref{prop:A abel} and
        $$(3e)\,\phi_l(f_1)\phi_k(f_2)a=_{\varepsilon}\phi_k(f_2)\phi_l(f_1)a \quad\text{ for all }\, l,k=0,\ldots,d, l\neq k,a\in F,f_1,f_2\in S.$$
    \end{enumerate}
	\end{remark}
    
	\begin{lemma}
		Let $A$ be a C*-algebra. Let $G$ be a second countable locally compact group and let $\alpha\colon G\to {\rm Aut}(A)$ be a continuous action. If $H_2\leq H_1\leq G$ are two closed cocompact subgroups, then
		\[{\rm dim}_{\rm Rok}(\alpha,H_1)\leq {\rm dim}_{\rm Rok}(\alpha,H_2) \]and
		\[{\rm dim}_{\rm Rok}^{\rm c}(\alpha,H_1)\leq {\rm dim}_{\rm Rok}^{\rm c}(\alpha,H_2).\]
	\end{lemma}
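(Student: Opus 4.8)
The plan is to exploit the natural $G$-equivariant quotient map between the two coset spaces. Since $H_2\leq H_1$, the assignment $gH_2\mapsto gH_1$ is well-defined and yields a continuous surjection $\pi\colon G/H_2\to G/H_1$ (both spaces being compact since $H_1,H_2$ are closed cocompact). This $\pi$ is $G$-equivariant for the left-translation actions, since $\pi(g\cdot xH_2)=gxH_1=g\cdot\pi(xH_2)$. Pulling back, I would define $\rho\colon C(G/H_1)\to C(G/H_2)$ by $\rho(f)=f\circ\pi$. Because $\pi$ is a continuous surjection of compact Hausdorff spaces, $\rho$ is an injective unital $\ast$-homomorphism, and the equivariance of $\pi$ translates into equivariance of $\rho$ with respect to the two $G$-shifts.

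With $\rho$ in hand, the argument is a straightforward composition. Suppose $\mathrm{dim}_{\mathrm{Rok}}(\alpha,H_2)\leq d$, so that for every separable $\alpha$-invariant $D\subseteq A$ there are equivariant c.p.c.\ order zero maps $\varphi_0,\ldots,\varphi_d\colon C(G/H_2)\to F^{(\alpha)}(D,A_\infty)$ with $\sum_{l=0}^d\varphi_l(1)=1$. I would set $\psi_l\coloneqq\varphi_l\circ\rho\colon C(G/H_1)\to F^{(\alpha)}(D,A_\infty)$ for $l=0,\ldots,d$ and check that these witness $\mathrm{dim}_{\mathrm{Rok}}(\alpha,H_1)\leq d$. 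Each $\psi_l$ is c.p.c.\ as the composition of a $\ast$-homomorphism with a c.p.c.\ map; it is order zero because $\rho$, being a $\ast$-homomorphism, preserves orthogonality, so $a\perp b$ in $C(G/H_1)$ gives $\rho(a)\perp\rho(b)$ and hence $\varphi_l(\rho(a))\perp\varphi_l(\rho(b))$; and it is equivariant because both $\rho$ and $\varphi_l$ are. Finally, since $\rho$ is unital we get $\sum_{l=0}^d\psi_l(1)=\sum_{l=0}^d\varphi_l(\rho(1))=\sum_{l=0}^d\varphi_l(1)=1$, as required.

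For the commuting-towers inequality $\mathrm{dim}_{\mathrm{Rok}}^{\mathrm{c}}(\alpha,H_1)\leq\mathrm{dim}_{\mathrm{Rok}}^{\mathrm{c}}(\alpha,H_2)$, I would observe that the range of $\psi_l$ is contained in the range of $\varphi_l$. Hence if the $\varphi_l$ have pairwise commuting ranges, so do the $\psi_l$, and the same maps serve as a commuting-towers witness.

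There is no substantial obstacle here: the entire content is the construction of the equivariant unital $\ast$-homomorphism $\rho$, after which functoriality of the order zero, equivariance, and commutativity conditions is immediate. The only points that merit a line of verification are that $\pi$ is continuous and equivariant and that order zero is preserved under precomposition with a $\ast$-homomorphism, both of which are routine.
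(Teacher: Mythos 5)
Your proposal is correct and is essentially the paper's own argument: the paper's entire proof consists of noting the existence of the equivariant unital $\ast$-homomorphism $(C(G/H_1),G\text{-}\mathrm{shift})\to (C(G/H_2),G\text{-}\mathrm{shift})$ and then composing the Rokhlin maps with it, exactly as you do. Your write-up merely spells out the routine verifications (continuity and equivariance of the quotient map $G/H_2\to G/H_1$, preservation of order zero under precomposition with a $\ast$-homomorphism, and the range-containment argument for the commuting-towers case) that the paper leaves implicit.
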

    
	\begin{proof}
		There is an equivariant unital $\ast$-homomorphism 
		\[(C(G/H_1),G\text{-}{\rm shift})\to (C(G/H_2),G\text{-}{\rm shift}). \]
		Therefore, it is easy to obtain ${\rm dim}_{\rm Rok}(\alpha,H_1)\leq {\rm dim}_{\rm Rok}(\alpha,H_2)$ from the definition.
	\end{proof}

	\begin{definition}[see also {\cite[Definition~4.1]{Sza19}}]
		Let $A$ be a C*-algebra, $G$ a second countable, residually compact group and  $\alpha\colon G\to {\rm Aut}(A)$ be a continuous action. Let $\sigma=(G_n)_{n\in\mathbb{N}}$ be a regular approximation. 
        
		\par We define the \emph{Rokhlin dimension of $\alpha$ along $\sigma$} as 
		\[{\rm dim}_{\rm Rok}(\alpha,\sigma)=\sup_{n\in\mathbb{N}}{\rm dim}_{\rm Rok}(\alpha,G_n). \]
		And the \emph{Rokhlin dimension of $\alpha$} as
		\[{\rm dim}_{\rm Rok}(\alpha)=\sup\{{\rm dim}_{\rm Rok}(\alpha,H)\colon H\leq G, H \ \text{is closed cocompact}\}. \]
        
		\par We define the \emph{Rokhlin dimension with commuting towers of $\alpha$ along $\sigma$} as 
		\[{\rm dim}_{\rm Rok}^{\rm c}(\alpha,\sigma)=\sup_{n\in\mathbb{N}}{\rm dim}_{\rm Rok}^{\rm c}(\alpha,G_n). \]
		And the \emph{Rokhlin dimension with commuting towers of $\alpha$} as
		\[{\rm dim}_{\rm Rok}^{\rm c}(\alpha)=\sup\{{\rm dim}_{\rm Rok}^{\rm c}(\alpha,H)\colon H\leq G, H \ \text{is closed cocompact}\}. \]
	\end{definition}
    
	\begin{remark}
		If $G$ is a residually finite group, then the definition coincides with the Rokhlin dimension of residually finite groups as defined in \cite[Definition~5.4 and Definition~5.8]{SWZ19}.
        
 		\par If $G$ is chosen as $\mathbb{R}$, then the definition coincides with the Rokhlin dimension of $\mathbb{R}$ as defined in \cite[Definition~3.6 and Remark~3.10]{HSWW17}.
	\end{remark}
    
	The following is a proposition regarding the Rokhlin dimension for actions of residually compact groups. It is an analogy to \cite[Theorem~3.8]{Gar17a}.
    
	\begin{proposition}
		Let $A$ be a separable C*-algebra, $G$ be a second countable, residually compact group, and let $\alpha\colon G\to {\rm Aut}(A)$ be a continuous action of $G$ on $A$. Let $I$ be an $\alpha$-invariant ideal in $A$, and denote by $\bar{\alpha}\colon G\to {\rm Aut}(A/I)$ the induced action on the quotient. Then we have:
		\[{\rm dim}_{\rm Rok}(\bar{\alpha})\leq {\rm dim}_{\rm Rok}(\alpha), \]
		\[{\rm dim}_{\rm Rok}^{\rm c}(\bar{\alpha})\leq {\rm dim}_{\rm Rok}^{\rm c}(\alpha). \]
		
	\end{proposition}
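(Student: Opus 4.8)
The plan is to reduce everything to a single closed cocompact subgroup and then push the Rokhlin towers for $\alpha$ forward along the quotient map. Since both ${\rm dim}_{\rm Rok}(\alpha)$ and ${\rm dim}_{\rm Rok}(\bar{\alpha})$ are defined as suprema of ${\rm dim}_{\rm Rok}(-,H)$ over the \emph{same} family of closed cocompact subgroups $H\leq G$ (and likewise for the commuting-towers variant), it suffices to prove the per-$H$ inequalities ${\rm dim}_{\rm Rok}(\bar{\alpha},H)\leq {\rm dim}_{\rm Rok}(\alpha,H)$ and ${\rm dim}_{\rm Rok}^{\rm c}(\bar{\alpha},H)\leq {\rm dim}_{\rm Rok}^{\rm c}(\alpha,H)$ for each such $H$. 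Note that $A/I$ is separable since $A$ is, so the reformulation in the remark following Definition~\ref{def main} (towers valued in $F^{(\alpha)}_\infty(A)$) is available on both sides.

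The key device is the equivariant quotient map. Let $\pi\colon A\to A/I$ be the quotient $\ast$-homomorphism; it is $\alpha$-$\bar{\alpha}$ equivariant because $I$ is $\alpha$-invariant. Applying $\pi$ componentwise gives a homomorphism $\ell^\infty(\mathbb{N},A)\to \ell^\infty(\mathbb{N},A/I)$ carrying $c_0(\mathbb{N},A)$ into $c_0(\mathbb{N},A/I)$, hence inducing $\pi_\infty\colon A_\infty\to (A/I)_\infty$. I would record that $\pi_\infty$ is $\alpha_\infty$-$\bar{\alpha}_\infty$ equivariant, intertwines the standard embeddings ($\pi_\infty\circ\iota_A=\iota_{A/I}\circ\pi$), and restricts to a map $A^{(\alpha)}_\infty\to (A/I)^{(\bar{\alpha})}_\infty$ on the $\alpha$-continuous parts. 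Because $\pi$ is surjective, $\pi_\infty$ sends $A^{(\alpha)}_\infty\cap A'$ into $(A/I)^{(\bar{\alpha})}_\infty\cap (A/I)'$ and the annihilator ${\rm Ann}(A,A_\infty)$ into ${\rm Ann}(A/I,(A/I)_\infty)$; it therefore descends to an equivariant $\ast$-homomorphism $\bar{\pi}\colon F^{(\alpha)}_\infty(A)\to F^{(\bar{\alpha})}_\infty(A/I)$.

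With $\bar{\pi}$ in hand the conclusion is immediate. Write $d={\rm dim}_{\rm Rok}(\alpha,H)$ and choose, via the remark after Definition~\ref{def main}, equivariant c.p.c.\ order zero maps $\varphi_0,\ldots,\varphi_d\colon C(G/H)\to F^{(\alpha)}_\infty(A)$ with $\sum_l\varphi_l(1)=1$. Then each $\bar{\pi}\circ\varphi_l$ is again equivariant, c.p.c.\ and order zero (a $\ast$-homomorphism preserves orthogonality), and $\sum_l\bar{\pi}(\varphi_l(1))=\bar{\pi}(1)=1$ once we know $\bar{\pi}$ is unital; hence ${\rm dim}_{\rm Rok}(\bar{\alpha},H)\leq d$. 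If in addition the $\varphi_l$ have pairwise commuting ranges, so do the $\bar{\pi}\circ\varphi_l$, which yields the commuting-towers inequality verbatim. Taking the supremum over $H$ finishes both statements.

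The main point requiring care---and the only genuine obstacle---is the bookkeeping that makes $\bar{\pi}$ a \emph{well-defined unital} equivariant $\ast$-homomorphism. Surjectivity of $\pi$ is what guarantees the inclusion $\pi_\infty({\rm Ann}(A,A_\infty))\subseteq {\rm Ann}(A/I,(A/I)_\infty)$ (so that $\bar{\pi}$ descends) and, together with the module identity $1_{F^{(\alpha)}_\infty(A)}\cdot d=d$ of Remark~\ref{rem2.21}, that $\bar{\pi}$ carries the unit to the unit: if $u\in A^{(\alpha)}_\infty\cap A'$ represents $1_{F^{(\alpha)}_\infty(A)}$ then $\pi_\infty(u)$ acts as the identity on $\iota_{A/I}(A/I)$, so $\bar{\pi}(1)=\kappa_{A/I}(\pi_\infty(u))=1$. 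As an alternative route that avoids the sequence-algebra formalism, one could instead verify the estimates using the local characterization of Proposition~\ref{prop5.1}(3): lift a finite subset of $A/I$ through $\pi$, apply condition (3) for $\alpha$, and compose the resulting maps with $\pi$; the contractivity and equivariance of $\pi$ turn each of \ref{prop:A 1}--\ref{prop:A abel} (and the commuting relation $(3e)$ in the commuting case) for $\alpha$ into the corresponding estimate for $\bar{\alpha}$.
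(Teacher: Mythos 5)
Your proof is correct, but your primary argument takes a genuinely different route from the paper's; in fact, the ``alternative route'' you sketch in your final paragraph is exactly the paper's proof. The paper argues through the local characterization, Proposition~\ref{prop5.1}(3): fixing a closed cocompact $H\leq G$, $\varepsilon>0$ and finite sets $M\subseteq G$, $F\subseteq A/I$, $S\subseteq C(G/H)$, it lifts $F$ set-theoretically to a finite $F'\subseteq A$ with $\pi(F')=F$, takes approximate towers $\psi_0,\dots,\psi_d\colon C(G/H)\to A$ for $(\varepsilon,F',M,S)$, and observes that the compositions $\pi\circ\psi_l$ satisfy the same conditions for $(\varepsilon,F,M,S)$ because $\pi$ is a contractive, equivariant $\ast$-homomorphism; the commuting-towers case is handled the same way via condition (3e). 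Your main argument instead works structurally at the level of sequence algebras: you induce a unital, equivariant $\ast$-homomorphism $\bar{\pi}\colon F^{(\alpha)}_\infty(A)\to F^{(\bar{\alpha})}_\infty(A/I)$ and push exact towers forward through it. What your route buys is the absence of $\varepsilon$-bookkeeping and of any lifting (beyond what is already packaged inside Proposition~\ref{prop5.1}), at the cost of the well-definedness checks you correctly identify as the real content: surjectivity of $\pi$ is what gives $\pi_\infty(A^{(\alpha)}_\infty\cap A')\subseteq (A/I)^{(\bar{\alpha})}_\infty\cap (A/I)'$ and $\pi_\infty(\mathrm{Ann}(A,A_\infty))\subseteq \mathrm{Ann}(A/I,(A/I)_\infty)$, and, combined with the module identity of Remark~\ref{rem2.21}, unitality of $\bar{\pi}$. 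The one micro-step you leave implicit---that an element $v\in (A/I)^{(\bar{\alpha})}_\infty\cap(A/I)'$ acting as the identity on $\iota_{A/I}(A/I)$ represents the unit of $F^{(\bar{\alpha})}_\infty(A/I)$---is a two-line computation: for any $y\in (A/I)_\infty\cap (A/I)'$ and $b\in A/I$ one has $(vy-y)b=v(by)-yb=(vb)y-by=0$, and similarly on the other side, so $vy-y\in\mathrm{Ann}(A/I,(A/I)_\infty)$; hence no gap. Both proofs conclude identically by taking the supremum of the per-$H$ inequalities over the same family of closed cocompact subgroups of $G$.
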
 
    
	\begin{proof}
		\par  Let $H$ be a closed cocompct subgroup of $G$, and let $d={\rm dim}_{\rm Rok}(\alpha,H)$. Let $\varepsilon>0$ and $M\subseteq G,F\subseteq A/I,S\subseteq C(G/H)$ be finite sets. Choose $F'\subseteq A$ such that $\pi(F')=F$ where $\pi$ is the quotient map from $A$ to $A/I$. Applying Proposition~\ref{prop5.1} for $\varepsilon,F',M,S$, there exist  c.p.c.\ maps
		\[\psi_0,\ldots\psi_d\colon C(G/H)\to A \]
		satisfying conditions in Proposition~\ref{prop5.1}. Then define $$\varphi_l=\pi\circ\psi$$ for each $l=0,\ldots,d$. These maps $\varphi_0,\ldots,\varphi_d$ also satisfy the conditions in Proposition~\ref{prop5.1} for $(\varepsilon,F,M,S)$. Thus, ${\rm dim}_{\rm Rok}(\bar{\alpha},H)\leq {\rm dim}_{\rm Rok}(\alpha,H) $. Then $\sup_{H}{\rm dim}_{\rm Rok}(\bar{\alpha},H)\leq\sup_{H} {\rm dim}_{\rm Rok}(\alpha,H)$. That is, ${\rm dim}_{\rm Rok}(\bar{\alpha})\leq {\rm dim}_{\rm Rok}(\alpha)$. The second inequality is the same.
	\end{proof}

	\section{Nuclear dimension of crossed products}\label{sec6}
    
	In this section, we consider the nuclear dimension of the crossed product C*-algebras. 
	We first recall the definition of the nuclear dimension and a technical characterization of it that we will use later.
    
	\begin{definition}[{\cite[Definition~2.1]{WZ10}}]
		Let $A$ be a C*-algebra and $n\in \mathbb{N}$ be a natural number. The \emph{nuclear dimension} of $A$ is the least integer $n$ such that the following holds:
		\par For any finite set $F\subseteq A$ and $\varepsilon>0$, there exists a finite-dimensional C*-algebra $\mathcal{F}=\mathcal{F}^0\oplus \dots \oplus \mathcal{F}^n$, a c.p.c.\ map $\psi \colon A\to \mathcal{F}$ and a c.p.\ map $\varphi\colon \mathcal{F}\to A$ such that the restriction of $\varphi$ to $\mathcal{F}^i$ is c.p.c.\ order zero for every $i=0,\ldots,n$ and
		\[\|\varphi(\psi(a))-a\|\leq \varepsilon \]
		for all $a\in F$. We denote the nuclear dimension of $A$ by ${\rm dim}_{\rm nuc}(A)$. If no such $n$ exists, we write ${\rm dim}_{\rm nuc}(A)=\infty$.
	\end{definition}
	
	\begin{lemma}[{\cite[Lemma 2.14]{HSWW17}}]\label{lem6.1}
		Let $A$ be a C*-algebra, and let $d,n>0$. Denote by $\iota\colon A\to A_\infty$ the canonical inclusion as constant sequences. Suppose that for every finite set $F\subseteq A$ and for any $\varepsilon>0$, there exists a C*-algebra $B=B_{F,\varepsilon}$ with ${\rm dim}_{\rm nuc}(B)\leq d$. a c.p.c.\ map $\varphi\colon A\to B$ and a family of c.p.c.\ order zero maps $\psi^0,\ldots,\psi^n \colon  B\to A_\infty$ such that
		\[\|\iota (x)-\sum_{j=0}^n \psi^j(\varphi(x))\|\leq \varepsilon \]
		for all $x\in \mathcal{F}$. Then
		\[{\rm dim}_{\rm nuc}^{+1}(A)\leq (d+1)(n+1). \]
	\end{lemma}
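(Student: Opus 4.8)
The plan is to verify the standard decomposable-approximation criterion for nuclear dimension head on. Given a finite set $F\subseteq A$ and $\varepsilon>0$, I will manufacture a finite-dimensional C*-algebra split into $(d+1)(n+1)$ summands, a c.p.c.\ map from $A$ into it, and a c.p.\ map back whose restriction to each summand is order zero, approximating the identity on $F$ to within $\varepsilon$. The hypothesis already hands us an approximate factorization of $\iota\colon A\to A_\infty$ through the auxiliary algebra $B$, so two tasks remain: (i) refine the single factor $B$ into finitely many order zero pieces using ${\rm dim}_{\rm nuc}(B)\le d$, and (ii) push the factorization from $A_\infty$ back down to $A$.

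For task (i), fix $F$ and $\varepsilon$ and let $B$, $\varphi\colon A\to B$ and the order zero maps $\psi^0,\dots,\psi^n\colon B\to A_\infty$ be as in the hypothesis, arranged so that $\|\iota(x)-\sum_{j}\psi^j(\varphi(x))\|\le \varepsilon/2$ for $x\in F$. Applying the definition of nuclear dimension to the finite set $\varphi(F)\subseteq B$ and a tolerance $\delta$ to be fixed later, I obtain a finite-dimensional $\mathcal{F}=\mathcal{F}^0\oplus\cdots\oplus\mathcal{F}^d$, a c.p.c.\ map $\mu\colon B\to\mathcal{F}$ and a c.p.\ map $\nu\colon\mathcal{F}\to B$ with each $\nu|_{\mathcal{F}^i}$ order zero and $\|\nu(\mu(b))-b\|\le\delta$ for $b\in\varphi(F)$. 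The key structural point is that a composition of two c.p.c.\ order zero maps is again c.p.c.\ order zero, since each preserves orthogonality of positive elements; hence for each pair $(i,j)$ the map $\psi^j\circ(\nu|_{\mathcal{F}^i})\colon \mathcal{F}^i\to A_\infty$ is c.p.c.\ order zero. I take the down map $\Phi\colon A\to \bigoplus_{j=0}^n\mathcal{F}$ to be the diagonal $x\mapsto(\mu\varphi(x),\dots,\mu\varphi(x))$, which is c.p.c.\ because the C*-direct-sum norm is the maximum; I view the target as the $(d+1)(n+1)$-fold sum $\bigoplus_{i,j}\mathcal{F}^i$, and on the $(i,j)$-summand let the up map be $\psi^j\circ(\nu|_{\mathcal{F}^i})$. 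By linearity of each $\psi^j$ the composite equals $x\mapsto \sum_{j}\psi^j(\nu\mu\varphi(x))$, which differs from $\sum_j\psi^j(\varphi(x))$ by at most $(n+1)\delta$ on $F$; choosing $\delta\le \varepsilon/(2(n+1))$ then gives an $\varepsilon$-approximate factorization of $\iota$ through a finite-dimensional algebra with $(d+1)(n+1)$ order zero pieces.

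For task (ii), I transfer this from $A_\infty$ to $A$ by reindexing. Here I would invoke the projectivity of c.p.c.\ order zero maps with finite-dimensional domain (equivalently, projectivity of the cones $C_0((0,1])\otimes\mathcal{F}^i$, cf.\ \cite{WZ09}): each of the finitely many order zero maps $\psi^j\circ(\nu|_{\mathcal{F}^i})\colon\mathcal{F}^i\to A_\infty$ lifts to a c.p.c.\ order zero map $\mathcal{F}^i\to\ell^\infty(\mathbb{N},A)$, whose components are c.p.c.\ order zero maps $\theta^{(m)}_{i,j}\colon\mathcal{F}^i\to A$. The estimate from task (i) then reads $\limsup_m\|\sum_{i,j}\theta^{(m)}_{i,j}((\mu\varphi(x))_i)-x\|\le\varepsilon$ for each $x$ in the finite set $F$, so a single index $m$ makes this hold (up to $2\varepsilon$, say) simultaneously for all $x\in F$. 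The maps $\theta^{(m)}_{i,j}$, together with $\Phi$, form a genuine decomposable approximation of the identity on $A$ with $(d+1)(n+1)$ order zero pieces, whence ${\rm dim}_{\rm nuc}(A)\le (d+1)(n+1)-1$, that is, ${\rm dim}_{\rm nuc}^{+1}(A)\le (d+1)(n+1)$.

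The main obstacle is precisely the lifting in task (ii): moving from maps into the quotient $A_\infty$ to maps into $A$ while \emph{keeping the order zero structure intact} is not an elementary perturbation but rests on the projectivity of the cones over finite-dimensional C*-algebras. Everything else is bookkeeping, once one records that orthogonality-preservation composes and that the diagonal map into a C*-direct sum stays contractive.
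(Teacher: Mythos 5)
Your proof is correct, and its two-step structure --- first composing the hypothesized factorization of $\iota$ with a nuclear-dimension decomposition of $B$ (using that order zero maps compose), then lifting the finitely many resulting order zero maps from $A_\infty$ back to $\ell^\infty(\mathbb{N},A)$ via projectivity of cones over finite-dimensional C*-algebras and selecting a single good coordinate for the finite set $F$ --- is exactly the standard argument. Note that the paper itself offers no proof of this lemma, quoting it directly from \cite[Lemma~2.14]{HSWW17}; your argument matches the strategy of that cited proof, so there is nothing further to reconcile.
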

	
	\begin{theorem}\label{thm:dimnuc}
		Let $A$ be a separable C*-algebra, G be a second countable, residually compact group and $\alpha\colon G\to {\rm Aut}(A)$ a continuous action. Let $\sigma=(G_n)_{n\in \mathbb{N}}$ be a regular approximation of $G$. Then the following inequality holds:
		\[{\rm dim}_{\rm nuc}^{+1}(A\rtimes_\alpha G)\leq {\rm asdim}^{+1}(\Box_{\sigma}G)\cdot {\rm dim}_{\rm Rok}^{+1}(\alpha,\sigma)\cdot {\rm dim}_{\rm nuc}^{+1}(A). \]
	\end{theorem}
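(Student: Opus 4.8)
The plan is to apply the technical criterion for nuclear dimension recorded in Lemma~\ref{lem6.1} to the crossed product $A \rtimes_\alpha G$. Writing $s = {\rm asdim}(\Box_\sigma G)$, $r = {\rm dim}_{\rm Rok}(\alpha,\sigma)$ and $d = {\rm dim}_{\rm nuc}(A)$, I aim to produce, for every finite set $F \subseteq A\rtimes_\alpha G$ and every $\varepsilon > 0$, a C*-algebra $B$ with ${\rm dim}_{\rm nuc}(B) \le d$, a c.p.c.\ map $\varphi \colon A\rtimes_\alpha G \to B$, and a family of c.p.c.\ order zero maps $\{\psi^{(l,m)}\}$ indexed by $(l,m) \in \{0,\dots,s\}\times\{0,\dots,r\}$ from $B$ into $(A\rtimes_\alpha G)_\infty$ with $\bigl\|\sum_{l,m}\psi^{(l,m)}(\varphi(x)) - \iota(x)\bigr\| \le \varepsilon$ for all $x\in F$, where $\iota$ denotes the canonical inclusion into $(A\rtimes_\alpha G)_\infty$. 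Since the index set has $(s+1)(r+1)$ elements, Lemma~\ref{lem6.1} then yields ${\rm dim}_{\rm nuc}^{+1}(A\rtimes_\alpha G) \le (d+1)(s+1)(r+1)$, which is the claim. Thus the three factors arise, respectively, from the number of colours in a box-space cover, the number of Rokhlin towers, and the nuclear dimension of the coefficient algebra $A$.

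First I would reduce $F$ to a finite subset of $C_c(G,A)$ and record a compact set $M_0 \subseteq G$ containing the supports of its entries. Feeding a suitably small tolerance and a compact set $M \supseteq M_0$ into Lemma~\ref{lem main}, I obtain an index $n$ and compactly supported functions $\mu_0,\dots,\mu_s \colon G \to [0,1]$ whose supports are $G_n$-separated (property~\ref{lem main: disjoint}), whose $G_n$-periodizations descend to functions $\bar\mu_l \in C(G/G_n)$ summing to $1$ (property~\ref{lem main: 1}), and which are approximately $M$-invariant (property~\ref{lem main: almost invar}); here property~\ref{lem main: disjoint} guarantees that the periodization sum has at most one nonzero term, so $\bar\mu_l$ is well defined even when $G_n$ is non-discrete. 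Enlarging $n$ using regularity of $\sigma$, I may assume $\pi_n$ is injective on the relevant translates of $M_0$, so that the coarse geometry of $\bigcup_l {\rm supp}(\mu_l)$ faithfully models the compact region of $G$ supporting $F$. Next, since ${\rm dim}_{\rm Rok}(\alpha,G_n) \le r$, Proposition~\ref{prop5.1} supplies equivariant c.p.c.\ order zero maps $\varphi_0,\dots,\varphi_r \colon C(G/G_n) \to F^{(\alpha)}(A_\infty)$ with $\sum_m \varphi_m(1) = 1$, which via Remark~\ref{rem2.21} multiply into $A_\infty^{(\alpha)}$ and, being equivariant, induce c.p.c.\ order zero maps on the crossed products through Lemma~\ref{lem2.31} and the comparison map of Lemma~\ref{lem2.21}.

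The core of the construction is to interleave these two pieces of data. I would take $B$ to be a $\mathcal{K}(L^2(G/G_n)) \otimes A$-type algebra modelling $A$-valued ``matrix coefficients'' over $G/G_n$, whose nuclear dimension equals that of $A$; the compression $\varphi$ records an element of $A\rtimes_\alpha G$ as such a matrix coefficient, which is legitimate precisely because $n$ was chosen so that $\pi_n$ is injective on the support region. For each colour $l$ I use the cut-down by $\bar\mu_l$ (equivalently, by $\mu_l^{1/2}$ on the partial section selected by ${\rm supp}(\mu_l)$) to localise the spatial variable, and for each Rokhlin index $m$ I apply $\varphi_m$ to transport the $C(G/G_n)$-variable into the central sequence algebra, thereby defining $\psi^{(l,m)}$ as an order zero map into $(A\rtimes_\alpha G)_\infty$. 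The order zero property of each $\psi^{(l,m)}$ hinges on two facts: that $\varphi_m$ is order zero, and that within a fixed colour $l$ the $G_n$-disjointness of ${\rm supp}(\mu_l)$ together with the regularity of $\sigma$ forces contributions from distinct cosets to be mutually orthogonal in $(A\rtimes_\alpha G)_\infty$. That $\sum_{l,m}\psi^{(l,m)}\circ\varphi$ recovers $\iota$ up to $\varepsilon$ on $F$ follows by using $\sum_l \bar\mu_l = 1$ and $\sum_m \varphi_m(1) = 1$ to telescope the cuts back to the identity, with the errors controlled by the approximate invariance in property~\ref{lem main: almost invar} and the approximate equivariance and centrality of the Rokhlin maps.

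The main obstacle I anticipate is precisely this last verification in the non-discrete setting. Unlike the residually finite case of \cite[Theorem~B]{SWZ19}, here $G$ and the subgroups $G_n$ need be neither discrete nor normal, so sums over cosets must be replaced by integrals and one cannot rely on honest matrix units; the disjoint-support and regularity conditions must be leveraged carefully to recover the orthogonality that makes each colour an order zero map, and the approximate invariance estimates of property~\ref{lem main: almost invar} must be combined with the approximate centrality of the Rokhlin maps while commuting elements past the $A$-valued integrand. Keeping all these approximations simultaneously under control---so that the induced maps on the crossed product remain genuinely c.p.c.\ order zero and the reconstruction error stays below $\varepsilon$---is the technical heart of the argument.
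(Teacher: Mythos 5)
Your outline reproduces the architecture of the paper's proof: reduce $F$ to a finite subset of $C_c(G,A)$; invoke Lemma~\ref{lem main} to obtain $n$ and the functions $\mu_0,\dots,\mu_s$; use finite Rokhlin dimension to obtain equivariant c.p.c.\ order zero maps $C(G/G_n)\to F^{(\alpha)}_\infty(A)$ and push them to crossed products via Remark~\ref{rem2.21}, Lemma~\ref{lem2.31} and Lemma~\ref{lem2.21}; compress by $\bar\mu_l^{1/2}$ for the downward map; take $B$ of the form ``$A$ tensor compacts'' so that $\dim_{\rm nuc}(B)\le \dim_{\rm nuc}(A)$; and conclude by Lemma~\ref{lem6.1} with $(s+1)(r+1)$ upward order zero maps. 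This is exactly the paper's strategy, down to the indexing that yields the product of the three dimensions.

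However, the part you defer as an ``anticipated obstacle'' is not a routine verification to be kept ``under control''; it is the mathematical content of the theorem, and your proposal contains no mechanism for it: you never actually define $B$ or the maps in and out of it, you only stipulate what they should do. The paper's device is the following. For each $i$, the set $B^i$ of $L^1$-functions valued in $C_0(G,A)$ and supported (in both variables) in $U^i={\rm supp}(\mu_i)$ is simultaneously a $\ast$-subalgebra of $L^1(G,C_0(G,A))$ and of $L^1(G,C(G/G_n,A))$ --- injectivity of $\pi_n$ on $U^i$ (regularity) is what lets one read a function on $U^i$ as a function on $G/G_n$ --- and an explicit computation shows the two convolution products agree; representing both crossed products on the same Hilbert space $L^2(G\times G,H')$ then shows the two completions $B^i_\beta\subseteq (C_0(G)\otimes A)\rtimes_{\beta\otimes\alpha}G$ and $B^i_\lambda\subseteq (C(G/G_n)\otimes A)\rtimes_{\lambda\otimes\alpha}G$ are isomorphic via a map $\zeta_i$ fixing $B^i$. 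This bridge is what makes your ``matrix coefficient'' picture rigorous: $B^i_\beta$ is hereditary in $(C_0(G)\otimes A)\rtimes_{\beta\otimes\alpha}G\cong A\otimes\mathcal{K}(L^2(G))$ by Takai duality, which gives the bound $\dim_{\rm nuc}(B^i_\beta)\le\dim_{\rm nuc}(A)$, while $\zeta_i$ carries it into the $G/G_n$-crossed product where the Rokhlin-induced maps $\eta_j\rtimes G$ act. Two further points your sketch misplaces: the compression $x\mapsto\bar\mu_i^{1/2}x\bar\mu_i^{1/2}$ lands in a hereditary subalgebra $C^i$ of the $G/G_n$-crossed product rather than in $B^i_\beta$, so one needs a conditional expectation $\phi_i'\colon C^i\to B^i_\beta$ (obtained via Tomiyama's theorem); and the order zero property of the upward maps is automatic --- each is a composition of $\ast$-homomorphisms ($P_i$, $\zeta_i$, the inclusion, and the map of Lemma~\ref{lem2.21}) with the single order zero map $\eta_j\rtimes G$ from Lemma~\ref{lem2.31} --- so the coset-orthogonality argument you anticipate is not where the difficulty lies. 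As written, the proposal is a faithful plan of the paper's proof with its crucial constructions missing.
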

    
	\begin{proof}
		First of all, if one of the dimensions on the right side of the ineuqation is infinity, there is nothing to prove. Then we assume that the asymptotic dimension of the box space $\Box_\sigma G$, the Rokhlin dimension of $\alpha$ along to $\sigma$, and the nuclear dimension of $A$ are all finite. By Corollary~\ref{cor4.15}, $G$ is amenable. Let $l={\rm asdim}(\Box_\sigma G)$, $d={\rm dim}_{\rm Rok}(\alpha,\sigma)$. For any finite set $F\subseteq A\rtimes_\alpha G$ and any $\varepsilon>0$. Without loss of generality, we may assume that $F\subseteq C_c(G,A)$. Then there is a compact set $M\subseteq G$ such that ${\rm supp}(x)\subseteq M$ for every $x\in F$. Set $L=\sup_{x\in F} \|x\|_{L^1}$.
        
		\par Since the square function $\centerdot^{\frac{1}{2}}\colon [0,1]\to [0,1]$ is uniformly continuous, for $\frac{\varepsilon}{(l+1)(d+1)L}>0$ there is a $\delta>0$ such that 
		\[\|a^\frac{1}{2}-b^\frac{1}{2}\|\leq \frac{\varepsilon}{(l+1)(d+1)L}\quad \text{whenever} \|a-b\|\leq \delta, a,b\in [0,1]. \]
        
		\par For compact set $M$ and $\delta>0$, applying the Lemma~\ref{lem main} there exist natural number $n$ and compactly supported functions $\mu_i\colon G\to [0,1]$ for all $i=0,\ldots,l$ satisfying:
		\begin{enumerate}
			\item for every $i=0,\ldots,l$, one has 
			\[{\rm supp}(\mu_i)\cap {\rm supp}(\mu_i)h=\emptyset \quad\text{ for all }\quad h\in G_n\backslash\{1_{G_n}\};\]
			\item for every $g\in G$, one has
			\[\sum_{i=1}^l\sum_{h\in G_n}\mu_i(gh)=1; \]
			\item for every $i=0,\ldots,l$ and $t\in M$, one has\[\|\mu_i-\mu_i(t^{-1}\cdot-)\|_{\infty}\leq \delta. \]
		\end{enumerate}
		Therefore, we have $\|\mu_i^\frac{1}{2}-\mu_i^\frac{1}{2}(t^{-1}\cdot-)\|_{\infty}\leq \frac{\varepsilon}{(l+1)(d+1)L} $ for all $t\in M$.
        
		\par Let $\lambda\colon G\to {\rm Aut}(G/G_n)$ be the shift $\lambda_t(f)(sG_n)=f((t^{-1}s)G_n)$ where $s,t\in G,f\in C(G/G_n)$. Then we have the induced action $\lambda\otimes\alpha\colon G\to {\rm Aut}(C(G/G_n)\otimes A)$ ($C(G/G_n)\otimes A\cong C(G/G_n,A)$). Since ${\rm dim}_{\rm Rok}(\alpha,\sigma)=d$, there exist $\lambda$-$\alpha_\infty$ equivariant c.p.c.\ order zero maps 
		\[\varphi_0,\ldots,\varphi_d\colon  C(G/G_n)\to F_{\infty}^{(\alpha)} (A) \]
		with\[\varphi_0(1)+\cdots+\varphi_d(1)=1. \]
		Then by Remark~\ref{rem2.21} the maps $\varphi_0,\ldots,\varphi_d$ induce  $\lambda\otimes\alpha$-$\alpha_\infty$ equivariant c.p.c.\ order zero maps
		\[\eta_0,\ldots,\eta_d\colon C(G/G_n)\otimes A\to A_\infty^{(\alpha)} \]
		with \[\eta_0(1\otimes a)+\cdots+\eta_d(1\otimes a)=a \] for all $a\in A$.
		For each $j\in\{0,\ldots,d\}$, applying Lemma~\ref{lem2.31}, the map $\eta_j$ induce  a c.p.c.\ order zero map
		\[\eta_j\rtimes G\colon (C(G/G_n)\otimes A)\rtimes_{\lambda\otimes\alpha G}\to A_\infty^{(\alpha)} \rtimes_{\alpha_\infty}G. \]
		And we have the equation that 
		\[\sum_{j=0}^d(\eta_j\rtimes G)\circ ((1\otimes {\rm id}_A)\rtimes G)=\left(\sum_{j=0}^d\eta_j\circ(1\otimes {\rm id}_A)\right)\rtimes G=\iota_A\rtimes G.\]
        
		\par  Let $\beta \colon G\to {\rm Aut}(C_0(G))$ be the shift, $\beta_t(f)(s)=f(t^{-1}s) $ where $ s,t\in G,f\in C_o(G)$, we also have the induced action $\beta\otimes \alpha\colon G\to {\rm Aut}(C_0(G)\otimes A)$. For every $\mu_i,i=0,\ldots,l$, denote $U^i=\{g\in G\colon \mu_i(g)>0\}$, consider the set
		\[B^i=\{f\in L^1(G,C_0(G,A))\colon f(t)(x)=0 \quad\text{whenever}\quad t^{-1}x\notin U^i \text{or} \,x\notin U^i \} .\]
		Since the canonical quotient map $\pi_n\colon G\to G/G_n$ restricts to $U^i$ is injective, we can view every element in $B^i$ as an element in $L^1(G,C(G/G_n,A))$. 
		
		\par We claim that $B^i$ is a $\ast$-subalgebra of $L^1(G,C_0(G,A))$ and $L^1(G,C(G/G_n,A))$, and the two restricted operations are consistent. Let $f,g\in B^i$, consider the product in $L^1(G,G_0(G,A))$, 
		\begin{align*}
			f\ast g(t)(s)&=\int_G f(r)(s)(\beta_r\otimes\alpha_r)(g(r^{-1}t))(s)dr\\
			&=\int_G f(r)(s)\alpha_r(g(r^{-1}t)(r^{-1}s))dr.
		\end{align*}
		If $s\notin U^i$ then $f(r)(s)=0$. If $t^{-1}s\notin U^i$ then $(r^{-1}t)^{-1}(r^{-1}s)\notin U^i$, so $g(t-r)(s-r)=0$. Thus, $f\ast g\in B^i$.
        
		\par If we consider the product in $L^1(G,G(G/G_n,A))$, 
		\begin{align*}
			f\ast g(t)(\bar{s})&=\int_G f(r)(\bar{s})(\lambda_r\otimes\alpha_r)(g(r^{-1}t))(\bar{s})dr\\
			&=\int_G f(r)(\bar{s})\alpha_r(g(r^{-1}t)(\overline{r^{-1}s}))dr.
		\end{align*}
		If $\bar{s}\notin \pi_n(U^i)$ then $f(r)(\bar{s})=0$, so we set $s\in U^i$. But if $r^{-1}s\notin U^i$ we also have $f(r)(\bar{s})=0$ hence the two expressions are the same. The convolution is similar to the product. Then the claim is proved.
		
		\par We denote by $B^i_\beta$ and $B^i_\lambda$ the completions of $B^i$ in $(C_0(G)\otimes A)\rtimes_{\beta\otimes\alpha }G$ and $(C(G/G_n)\otimes A)\rtimes_{\lambda\otimes\alpha}G$. Using the left regular representation, we can represent those two crossed product C*-algebras to the same $B(H)$ where $H=L^2(G\times G,H')$ and $H'$ is the Hilbert space where $A$ acts on. Thus, the norm of $f\in B^i$ is the same in either completion. Hence, the identity map $B^i\to B^i$ extends to an isomorphism
		\[\zeta_i\colon B^i_\beta\to B^i_\lambda. \]
        
		\par Viewing $\mu_i$ as an element of $\mathcal{M}((C_0(G)\otimes A)\rtimes_{\beta\otimes\alpha}G)$. Let \[B_{\mu_i}=\overline{\mu_i((C_0(G)\otimes A)\rtimes_{\beta\otimes\alpha}G)\mu_i}.\]
		$B_{\mu_i}$ is a hereditary subalgebra of $(C_0(G)\otimes A)\rtimes_{\beta\otimes\alpha}G$. \par We claim that $B_{\mu_i}=B^i_\beta$. If $f\in C_c(G,C_0(G,A))$, by the canonical embedding, we have $$(\mu_if\mu_i)(t)(s)=f(t)(s)\mu_i(s)\mu_i(t^{-1}s),t,s\in G.$$
		If $s\notin U^i$ then $\mu_i(s)=0$, if  $t^{-1}s\notin U^i$ then $\mu_i(s-t)=0$. Thus, $\mu_if\mu_i\in B^i$ and $B_{\mu_i}\subseteq B^i_\beta$. On the other hand, $\mu_iC_c(G,C_0(G,A))\mu_i$ is dense in $B^i$. Thus, the claim is proved.
        
		\par For each $i=0,\ldots,l$, $\mu_i$ can be regarded as an element in $C(G/G_n)$, we denote it by $\overline{\mu_i}$. Viewing $\overline{\mu_i}$ as an element of $\mathcal{M}((C(G/G_n)\otimes A)\rtimes_{\lambda\otimes\alpha}G)$. We can also let \[C^{i}=\overline{\overline{\mu_i}((C(G/G_n)\otimes A)\rtimes_{\lambda\otimes\alpha}G)\overline{\mu_i}}.\]
		$C^{i}$ is a hereditary subalgebra of $(C(G/G_n)\otimes A)\rtimes_{\lambda\otimes\alpha}G$. Similarly, $C^i$ is the completion of the set 
			\[C^i_0=\{f\in L^1(G,C(G/G_n,A))\colon f(t)(\overline{x})=0 \quad\text{whenever}\quad \overline{t^{-1}x}\notin \pi_n(U^i) \text{or} \,\overline{x}\notin \pi_n(U^i) \} .\]
		In fact, $B^i_\lambda$ is a subalgebra of $C^i$. 
		
		\par Since there is a canonical equivariant embedding from $A$ to $C(G/G_n)\otimes A$, we can definite  c.p.c.\ maps 
		\[\phi_i\colon  A\rtimes_\alpha G\to C^i \quad \text{ via}\quad  \phi_i(a)=\overline{\mu_i}^{\frac{1}{2}}a\overline{\mu_i}^{\frac{1}{2}}, \]
		for each $i=0,\ldots,l$. Thus, we can define a c.p.c.\ map
		\[\Phi\colon   A\rtimes_\alpha G\to C^0\oplus\cdots\oplus C^l \quad\text{via}\quad \Phi(a)=(\phi_0(a),\ldots,\phi_l(a)). \]
        
		\par Then, for each $i=0,\ldots,l$, we define a map $\phi'_i$ from $C^i$ to $B_\beta^i$ which is induced by 
		\[\phi'_i(f)(t)(s)=\left\{
		\begin{array}{rl}
			f(t)(\bar{s}) & \text{if}\, s\in U^i \, \text{and}\, t^{-1}s\in U^i,\\
			0 & \text{otherwise},
		\end{array}
		\right. \]
		where $f\in C^i_0$. The map $\phi'_i$ is a projection. Moreover, by the regular representation, we can represent both $f$ and $\phi'_i(f)$ in $B(L^2(G\times G,H'))$ and show that $\phi'_i$ is contractive. Thus, by a theorem of Tomiyama(\cite{Tom59}), we can prove that each $\phi'_i$ is a conditional expectation. We can define a c.p.c.\ map 
		\[\Phi'=(\phi'_0,\ldots,\phi'_l)\colon C^0\oplus\cdots\oplus C^l\to B^0_\beta \oplus\cdots\oplus B^l_\beta. \]
		
		\par Let $\kappa\colon A\rtimes_\alpha G\to A^\alpha_\infty \rtimes_{\alpha_\infty}G$ be the natural inclusion that is induced by the equivariant inclusion of $A$ into $A^\alpha_\infty$ as constant sequences. We define a homomorphsim $$P_i\colon B^0_\beta \oplus\cdots\oplus B^l_\beta \to B^i_\beta ,\quad\text{via}\quad P_i(f_0\oplus\cdots\oplus f_l)=f_i,$$ 
		where $i=0,\ldots,l,f_i\in B^i_\beta$. Now consider the following diagram: \par

        \begin{adjustbox}{center}
            \begin{tikzpicture}
			\node (A) at (0,0) {$ A\rtimes_\alpha G$};
			\node (B) at (3,-3) {$C^0\oplus\cdots\oplus C^l$};
			
			\node (D) at (6,-6) {$B^0_\beta \oplus\cdots\oplus B^l_\beta$};
			\node (E) at (8,-4) {$B^i_\beta$};
			\node (F) at (10,-2) {$B^i_\lambda$$\subseteq C^i$};
			\node (G) at (12,0) {$A^\alpha_\infty \rtimes_{\alpha_\infty}G$};
			\draw [-to](A)-- node[above] {$\kappa$} (G);
			\draw [-to](A)--node[below left]{$\Phi$}(B);
			\draw [-to](B)--node[below left]{$\Phi'$}(D);
			
			\draw [-to](D)--node[below right]{$P_i$}(E);
			\draw [-to](E)--node[below right]{$\zeta_i$}(F);
			\draw [-to](F)--node[below right]{$\eta_j\rtimes G $}(G);
			
		\end{tikzpicture}
        \end{adjustbox}
		
	  Where $j=0,\ldots,d,i=0,\ldots,l$ . We wish to show that 
		\[\|\sum_{j=0}^d\sum_{i=0}^l(\eta_j\rtimes G) \circ \zeta_i\circ P_i\circ \Phi'\circ\Phi (x)-\kappa(x)\|\leq\varepsilon. \]
		for all $x\in F$.
        
		\par 
		For each $x\in F$, it can be regarded as an element in $C_c(G,C(G/G_n,A))$. Thus, for every $i=0,\ldots,l$, $\overline{\mu_i}x$ is an element in $C_c(G,C(G/G_n,A))$,
		$$\overline{\mu_i}x(t)=\overline{\mu_i}\otimes x(t),\quad \overline{\mu_i}x(t)(\bar{s})=\mu_i(\bar{s})x(t),\quad t\in G, \bar{s}\in G/G_n.$$
		Furthermore, $$\zeta_i\circ P_i\circ \Phi(a)(t)(\bar{s})=x(t)\mu_i^\frac{1}{2}(s)\mu_i^\frac{1}{2}(t^{-1}s)$$
		where $s\in U^i$ and $t^{-1}s\in U^i$ and it is zero when otherwise. Thus
        
		\begin{align*}
			&\|\zeta_i\circ P_i\circ \Phi'\circ\Phi(x)(t)(\bar{s})- \overline{\mu_i}x(t)(\bar{s})\|\\
            &=\|x(t)\mu_i^\frac{1}{2}(s)(\mu_i^\frac{1}{2}(t^{-1}s)-\mu_i^\frac{1}{2}(s))\|\\
			&\leq \|x(t)\|\frac{\varepsilon}{(l+1)(d+1)L},
		\end{align*}
        
		where $t\in G$ and $s\in U^i$. Therefore, we have
        
		\begin{align*}
			&\|\eta_j\rtimes G) \circ \zeta_i\circ P_i\circ\Phi'\circ\Phi(x)-\eta_j\rtimes G)(\overline{\mu_i})x\|\\
            &\leq \| \zeta_i\circ P_i\circ\Phi'\circ\Phi(x)-\overline{\mu_i}x\|\\
			&\leq \|x\|_{L^1}\frac{\varepsilon}{(l+1)(d+1)L}\leq \frac{\varepsilon}{(l+1)(d+1)}.
		\end{align*}
        
		for all $x\in F$ and $j=0,\ldots,d,i=0,\ldots,l$. Hence for every $x\in F$ we have
        
		\begin{align*}
			&\|\sum_{j=0}^d\sum_{i=0}^l(\eta_j\rtimes G) \circ \zeta_i\circ P_i\circ\Phi'\circ \Phi (x)-\kappa(x)\|\\
            &\leq (l+1)(d+1)\frac{\varepsilon}{(l+1)(d+1)}\\&+\|\sum_{j=0}^d\sum_{i=0}^l(\eta_j\rtimes G)(\overline{\mu_i}x)-\kappa(x)\|\\
			&=\varepsilon+\|\sum_{j=0}^d(\eta_j\rtimes G)(x)-\kappa(x)\|=\varepsilon.
		\end{align*}
		
		\par Takai's duality theorem states that $C_0(G,A)\rtimes_{\beta\otimes\alpha} G \cong A\otimes \mathcal{K}(L^2(G))$ and for each $i=0,\ldots,l$, $B_\beta^i$ is a hereditary C*-subalgebra of $C_0(G,A)\rtimes_{\beta\otimes\alpha}G$. Thus, for every $i=0,\ldots,l$
		\[{\rm dim}_{\rm nuc}(B^i_\beta)\leq  {\rm dim}_{\rm nuc}(A), \]
		then \[{\rm dim}_{\rm nuc}( B^0_\beta \oplus\cdots\oplus B^l_\beta)
		\leq  {\rm dim}_{\rm nuc}(A).\]
		From Lemma~\ref{lem2.21} the map $\kappa$ is compatible with the standard embedding from $A\rtimes_\alpha G$ to $(A\rtimes_\alpha G)_\infty$. Then we can apply Lemma~\ref{lem6.1} and obtain that
		$${\rm dim}_{\rm nuc}^{+1}(A\rtimes_\alpha G)\leq {\rm asdim}^{+1}(\Box_{\sigma}G)\cdot {\rm dim}_{\rm Rok}^{+1}(\alpha,\sigma)\cdot {\rm dim}_{\rm nuc}^{+1}(A) \; . $$
	\end{proof}

	\section{$D$-absorption of crossed products}\label{sec7}
    
	\par In this section, we study the permanence with respect to $D$-absorption for a strongly self-absorbing C*-algebra $D$. We first recall the definition. 
    
	\begin{definition}[{\cite[Definition~1.3]{TW07}}]
		A separable, unital C*-algebra $D$ is called \emph{strongly self-absorbing} if there is an isomorphism $\varphi\colon D\to D\otimes D$ that is approximately unitarily equivalent to ${\rm id}_D\otimes 1_D\colon D\to D\otimes D$, that is to say, there is a sequence of unitaries $u_n\in D\otimes D$ such that 
		\[\varphi(x)=\lim_{n\to\infty}u_n(x\otimes 1_d )u_n^\ast \]
		for all $x\in D$.
		\par A C*-algebra $A$ is called \emph{$D$-absorbing} (or \emph{$D$-stable}) if $A\cong A\otimes D$.
	\end{definition}
    
	Now we recall some lemmas that will be used.
    
	\begin{lemma}[{\cite[Lemma~6.5]{HSWW17}}]\label{lemd1}
		Let $A$ be a separable C*-algebra and let $D$ be a strongly self-absorbing C*-algebra. Suppose that $\alpha\colon G\to {\rm Aut}(A)$ is a point-norm continuous action of a second countable, locally compact group. If there exists a unital $\ast$-homomorphism from $D$ to the fixed point algebra $F_\infty^{(\alpha)}(A)^{\tilde{\alpha}_\infty}$, then $A\rtimes_\alpha G$ is $D$-absorbing.
	\end{lemma}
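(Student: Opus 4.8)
The plan is to reduce the statement to the central-sequence characterisation of $D$-absorption and then transport the given embedding of $D$ from the central sequence algebra of $A$ to that of the crossed product. Since $A$ is separable and $G$ is second countable, $B := A\rtimes_\alpha G$ is separable; and by the characterisation of $D$-stability for separable C*-algebras in terms of central sequence algebras (\cite{TW07,Kir06}), it suffices to exhibit a unital $\ast$-homomorphism $D\to F_\infty(B)=F(B,B_\infty)$. I would obtain this by composing the hypothesised $\theta\colon D\to F_\infty^{(\alpha)}(A)^{\tilde\alpha_\infty}$ with a natural unital $\ast$-homomorphism
\[ \Lambda\colon F_\infty^{(\alpha)}(A)^{\tilde\alpha_\infty}\longrightarrow F_\infty(A\rtimes_\alpha G). \]

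To build $\Lambda$ I use the principle that an \emph{equivariantly central} sequence of $A$ induces a central sequence of the crossed product: centrality produces commutation with the copy of $A$, while $\alpha$-invariance, via the covariance relation, produces commutation with the group direction. Concretely, represent a $\tilde\alpha_\infty$-fixed class by a bounded sequence $(x_n)$ in $A$, lifted from $A_\infty^{(\alpha)}\cap A'$, that is asymptotically central and asymptotically $\alpha$-invariant modulo annihilators (the continuity subalgebra $A_\infty^{(\alpha)}$ guarantees that $g\mapsto\alpha_{\infty,g}(x)$ is continuous, hence the invariance is uniform on compact sets of $g$). Via the canonical nondegenerate embedding $A\hookrightarrow\mathcal{M}(A\rtimes_\alpha G)$, each $x_n$ is a multiplier of $A\rtimes_\alpha G$, and for $b\in C_c(G,A)$ one computes $[x_n,b](g)=[x_n,b(g)]+b(g)(x_n-\alpha_g(x_n))$, where the first term tends to $0$ by asymptotic centrality and the second by asymptotic invariance, uniformly over the (fixed, compact) support of $b$; integrating against the $L^1$-bound on the crossed-product norm gives $\|x_n b-b x_n\|\to 0$. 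Thus $(x_n)$ acts asymptotically centrally on $A\rtimes_\alpha G$ and determines an element of $F_\infty(A\rtimes_\alpha G)$, which can be packaged using the compatibility map $\phi$ of Lemma~\ref{lem2.21} together with $\phi\circ(\iota_A\rtimes G)=\iota_{A\rtimes_\alpha G}$. One then checks that this assignment is independent of the representative, sends an approximate unit of $A$ (representing $1\in F_\infty^{(\alpha)}(A)$) to an approximate unit of $A\rtimes_\alpha G$, and is multiplicative and $\ast$-preserving, yielding the desired $\Lambda$.

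Finally, $\Lambda\circ\theta\colon D\to F_\infty(A\rtimes_\alpha G)$ is a unital $\ast$-homomorphism, so by the criterion invoked in the first step $A\rtimes_\alpha G$ is $D$-absorbing.

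I expect the main obstacle to be the well-definedness of $\Lambda$: that the multiplier-valued central sequence genuinely descends to $F_\infty(A\rtimes_\alpha G)=(B_\infty\cap B')/{\rm Ann}(B,B_\infty)$ rather than living only at the multiplier level, and that the construction respects the annihilator ideals and the continuity subalgebra throughout. The commutator estimate must be controlled uniformly over the convolution algebra $C_c(G,A)$ — here the non-discreteness and non-unitality of $G$ force us to treat the group direction as a convolution rather than through unitaries — and it is precisely Lemma~\ref{lem2.21}, with the compatibility of $\phi$ with the standard embeddings, that lets us assemble these estimates into a genuine $\ast$-homomorphism (as opposed to merely a completely positive map) into the central sequence algebra of the crossed product.
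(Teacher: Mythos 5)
This lemma is not proved in the paper at all: it is imported verbatim from \cite[Lemma~6.5]{HSWW17}, so the relevant comparison is with that proof. Your endgame coincides with it: both arguments reduce $D$-absorption of the separable, non-unital algebra $B := A\rtimes_\alpha G$ to producing a unital $\ast$-homomorphism $D\to F_\infty(B)$, and then invoke Kirchberg's characterization from \cite{Kir06} (the unital Toms--Winter criterion of \cite{TW07} does not apply, since $B$ is non-unital when $G$ is not discrete). Your commutator estimate is also essentially right; in fact it is cleaner than you make it, because a lift $x\in A_\infty^{(\alpha)}\cap A'$ of a $\tilde\alpha_\infty$-fixed class satisfies $\alpha_{\infty,g}(x)-x\in{\rm Ann}(A,A_\infty)$, so the term $b(g)(x-\alpha_{\infty,g}(x))$ vanishes exactly against coefficients $b(g)\in A$, not merely asymptotically.

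The genuine gap is the step you yourself flag as the ``main obstacle'' but never close, and the tool you nominate cannot close it. Since $G$ is non-discrete, the copy of $A$ lies in $\mathcal{M}(B)$ and not in $B$; hence your sequence $(x_n)$ is a sequence of \emph{multipliers} of $B$ and defines no element of $B_\infty$, a fortiori no element of $F_\infty(B)=(B_\infty\cap B')/{\rm Ann}(B,B_\infty)$. Lemma~\ref{lem2.21} does not repair this: its map $\phi\colon A_\infty^{(\alpha)}\rtimes_{\alpha_\infty}G\to(A\rtimes_\alpha G)_\infty$ is defined only on the crossed product, and applying it to a multiplier would require extending $\phi$ to multiplier algebras, i.e.\ nondegeneracy of $\phi$, which is neither established nor to be expected; moreover, even granting such an extension, its values would lie in $\mathcal{M}((A\rtimes_\alpha G)_\infty)$, so you would face exactly the same descent problem one level up. Closing the gap honestly requires cutting by an approximate unit of $B$ and re-diagonalizing (a Kirchberg-style $\varepsilon$-test argument) to manufacture an element of $B_\infty\cap B'$ inducing the correct $B$-bimodule map. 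The proof in \cite{HSWW17} is structured precisely to avoid this: by Remark~\ref{rem2.21}, the hypothesis yields an equivariant $\ast$-homomorphism $D\otimes_{\max}A\to A_\infty^{(\alpha)}$, $d\otimes a\mapsto\theta(d)\cdot a$, where $D$ carries the trivial $G$-action (equivariance is exactly the $\tilde\alpha_\infty$-invariance of the image of $\theta$); taking crossed products gives $D\otimes(A\rtimes_\alpha G)\cong(D\otimes A)\rtimes_{{\rm id}\otimes\alpha}G\to A_\infty^{(\alpha)}\rtimes_{\alpha_\infty}G$, and composing with $\phi$ of Lemma~\ref{lem2.21} produces a $\ast$-homomorphism $D\otimes(A\rtimes_\alpha G)\to(A\rtimes_\alpha G)_\infty$ sending $1\otimes b\mapsto\iota(b)$, which by \cite{Kir06} is the same thing as a unital $\ast$-homomorphism $D\to F_\infty(A\rtimes_\alpha G)$. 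The multiplication by $A$ built into Remark~\ref{rem2.21} converts would-be multipliers into honest elements \emph{before} $\phi$ is ever applied---this conversion is exactly what your construction of $\Lambda$ is missing.
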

	
	\begin{lemma}[{\cite[Lemma~6.8]{HSWW17}}]\label{lemd2}
		Let $D$ be a strongly self-absorbing C*-algebra. Let $B$ be a unital C*-algebra. Suppose that $\Psi^1,\ldots,\Psi^n\colon D\to B$ are c.p.c.\ order zero maps with pairwise commuting images such that $\Psi^1(1_D)+\cdots+\Psi^n(1_D)=1_B$. Then there exists a unital $\ast$-homomorphism from $D$ to $B$.
	\end{lemma}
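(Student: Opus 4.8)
The strategy is to convert the order zero maps into genuine $*$-homomorphisms, assemble them into a single homomorphism out of a tensor product using the commuting-ranges hypothesis, use the normalization $\sum_i\Psi^i(1_D)=1_B$ to cut down to a ``simplex algebra'' that no longer refers to $B$, and finally invoke strong self-absorption to embed $D$ unitally into that algebra. First I would recall the structure theorem for order zero maps \cite{WZ09}: each c.p.c.\ order zero map $\Psi^i\colon D\to B$ is of the form $\Psi^i(d)=\rho_i(\iota\otimes d)$ for a unique $*$-homomorphism $\rho_i\colon C_0((0,1])\otimes D\to B$, where $\iota\in C_0((0,1])$ is the canonical positive generator; in particular $h_i:=\Psi^i(1_D)=\rho_i(\iota\otimes 1_D)$. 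The hypothesis that the $\Psi^i$ have pairwise commuting ranges implies that $\rho_1,\dots,\rho_n$ have pairwise commuting ranges, so by the universal property of the maximal tensor product they assemble into a single $*$-homomorphism
\[ \rho\colon \textstyle\bigotimes_{i=1}^n\big(C_0((0,1])\otimes D\big)\;\cong\; C_0\big((0,1]^n\big)\otimes D^{\otimes n}\longrightarrow B, \]
under which the $i$-th coordinate function $t_i$ (tensored with $1$) maps to $h_i$.

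Next I would exploit the normalization. Writing $p=\sum_{i=1}^n t_i\in C_0((0,1]^n)$, the hypothesis $\sum_i h_i=1_B$ says $\rho(p\otimes 1)=1_B$; since then $\rho\big((p^2-p)\otimes 1\big)=0$ and $p^2-p=p(p-1)$ vanishes on $(0,1]^n$ exactly on the locus $\{p=1\}$, the homomorphism $\rho$ annihilates every function vanishing there. Restricting to this locus realizes $\rho$ as a \emph{unital} $*$-homomorphism out of the algebra of sections of $D^{\otimes n}$ over the standard simplex $\Delta^{n-1}=\{t\in[0,1]^n:\sum_i t_i=1\}$, subject to the boundary conditions that along each face $\{t_i=0\}$ the $i$-th tensor factor degenerates (the support of $\Psi^i$ vanishes there, so $\rho$ loses its dependence on the $i$-th slot). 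Call this ``simplex algebra'' $E_n(D)$; crucially it depends only on $D$, not on $B$. At this point the lemma reduces to the $B$-independent assertion that \emph{there exists a unital $*$-homomorphism $D\to E_n(D)$}, since composing any such map with $\rho$ produces the desired unital $*$-homomorphism $D\to B$.

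Finally I would construct a unital $*$-homomorphism $D\to E_n(D)$ using that $D$ is strongly self-absorbing. Since $D\cong D^{\otimes n}$, there is a unital embedding of $D$ into the ``interior'' field $C(\Delta^{n-1})\otimes D^{\otimes n}$, and the work is to deform it so as to meet the face degeneracies. Here I would use the approximately inner half-flip of $D$ (a standard consequence of strong self-absorption) to connect, along the faces of the simplex, the embeddings that still see a given tensor factor to those in which it has collapsed, and then use the isomorphism $D\cong D\otimes D^{\otimes\infty}$ to absorb the resulting approximation errors \emph{exactly} by a telescoping, Elliott-intertwining argument across the infinitely many available tensor factors. \textbf{This last step is the main obstacle}: strong self-absorption supplies the required symmetries of $D^{\otimes n}$ only up to approximate unitary equivalence, whereas $E_n(D)$ demands them on the nose along the boundary, and reconciling the two is precisely where the full force of $D\cong D^{\otimes\infty}$, rather than a single half-flip, must be brought to bear.
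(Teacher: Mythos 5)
The paper offers no proof of this lemma at all --- it is quoted verbatim from \cite[Lemma~6.8]{HSWW17} --- so your proposal can only be compared with the proof given there, whose skeleton (supporting homomorphisms on cones, a ``simplex algebra'' $E_n(D)$ depending only on $D$, and a unital embedding $D\to E_n(D)$ coming from strong self-absorption) your outline correctly reproduces. The problem is your execution of the middle step, which is not merely sketchy but fails as written. You claim that $\rho\colon C_0((0,1]^n)\otimes D^{\otimes n}\to B$ factors through a \emph{unital} homomorphism out of $E_n(D)$, so that composing with any unital map $D\to E_n(D)$ finishes the proof. Take $B=D\oplus\cdots\oplus D$ ($n$ summands) and $\Psi^i(d)=(0,\ldots,0,d,0,\ldots,0)$ with $d$ in the $i$-th slot: these are $\ast$-homomorphisms (hence c.p.c.\ order zero), their images commute pairwise (all cross-products are zero), and $\sum_i\Psi^i(1_D)=1_B$; yet $\rho$ is identically zero, since $\rho(a_1\otimes\cdots\otimes a_n)=\rho_1(a_1)\cdots\rho_n(a_n)$ and $\rho_i(a_i)\rho_j(a_j)=0$ for $i\neq j$. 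So no composition through $\rho$ can ever be unital, although the conclusion of the lemma is trivially true here via $d\mapsto(d,\ldots,d)$. Technically, what goes wrong is this: $p=\sum_i t_i$ does not belong to $C_0((0,1]^n)$ (its level sets $\{p\geq\varepsilon\}$ are not compact), and $\rho$ need not be nondegenerate --- it can be zero --- so ``$\rho(p\otimes 1)=1_B$'' has no meaning; moreover $\{p=1\}\cap(0,1]^n$ is the \emph{open} simplex, so killing the functions vanishing on it only produces the non-unital ideal $C_0(\{p=1\}\cap(0,1]^n)\otimes D^{\otimes n}$ of $E_n(D)$, with no trace of the face conditions. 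The boundary structure of $E_n(D)$, which is exactly what carries the unit, cannot be recovered from $\rho$; it has to be built from the supporting homomorphisms themselves. The correct route (and, in essence, the one behind the cited proof) is a universality argument: each $h_i=\Psi^i(1_D)$ is \emph{central} in $C:=C^*(1_B,\Psi^1(D),\ldots,\Psi^n(D))$ (it commutes with $\Psi^i(D)$ by the order zero structure theorem and with the other images by hypothesis); since $\sum_i h_i=1_B$, the algebra $C$ becomes a $C(X)$-algebra over the joint spectrum $X\subseteq\Delta^{n-1}$ of $(h_1,\ldots,h_n)$, and fibrewise the supporting homomorphisms give unital maps out of the appropriate tensor powers of $D$; assembling these yields the desired unital homomorphism $E_n(D)\to B$. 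That is a genuine argument, not a restriction of $\rho$.

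Your last step --- the unital embedding $D\to E_n(D)$ --- is the part you yourself flag as the main obstacle, and it does remain unproved in your proposal; but it does not require the bespoke intertwining you describe. $E_n(D)$ is a separable unital $C(\Delta^{n-1})$-algebra over a compact finite-dimensional space, all of whose fibres are of the form $D^{\otimes k}\cong D$ for some $1\leq k\leq n$, hence $D$-stable; by the theorem of Hirshberg--R{\o}rdam--Winter (or Dadarlat--Winter's trivialization results) on $C_0(X)$-algebras with $D$-stable fibres, $E_n(D)$ is itself $D$-stable, so $E_n(D)\cong E_n(D)\otimes D$ contains a unital copy of $D$. In summary: the architecture of your proof matches the cited one, but both load-bearing steps --- the unital factorization through $E_n(D)$, which as written is contradicted by the example above, and the embedding $D\hookrightarrow E_n(D)$, which is missing --- need to be replaced by actual arguments along the lines just indicated.
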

	
	\begin{lemma}[{\cite[Lemma~6.9]{HSWW17}}]\label{lem7.4}
		Let $A$ and $B$ be C*-algebras, and let $\phi\colon A\to B$ be a c.p.c.\ order zero map. Then for every $x,y,x',y'\in A$, we have
		\[\|\phi(x)\phi(y)-\phi(x')\phi(y')\|\leq \|xy-x'y'\|. \]
	\end{lemma}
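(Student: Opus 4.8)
The plan is to reduce everything to the structure theorem for c.p.c.\ order zero maps, which expresses $\phi$ as the product of a positive contraction and a genuine $*$-homomorphism; once we have that factorization, the desired inequality becomes an immediate norm estimate.

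First I would invoke \cite[Theorem~3.3]{WZ09}. Setting $C := C^*(\phi(A)) \subseteq B$, this produces a positive element $h \in \mathcal{M}(C) \cap C'$ with $\|h\| = \|\phi\| \le 1$ together with a $*$-homomorphism $\pi \colon A \to \mathcal{M}(C) \cap \{h\}'$ such that $\phi(a) = h\,\pi(a)$ for all $a \in A$. The two features I will exploit are that $\|h\| \le 1$ (since $\phi$ is contractive) and that $h$ commutes with the range of $\pi$.

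Next I would compute the product of two values of $\phi$. Using $\phi(x) = h\pi(x)$, $\phi(y) = h\pi(y)$, the fact that $h$ commutes with $\pi(x)$, and multiplicativity of $\pi$, one obtains
\[ \phi(x)\phi(y) = h\pi(x)h\pi(y) = h^2\pi(x)\pi(y) = h^2\pi(xy). \]
Applying this to both pairs and subtracting gives
\[ \phi(x)\phi(y) - \phi(x')\phi(y') = h^2\bigl(\pi(xy) - \pi(x'y')\bigr) = h^2\,\pi(xy - x'y'). \]

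Finally I would read off the norm bound. Since $\pi$ is a $*$-homomorphism it is contractive, so $\|\pi(z)\| \le \|z\|$ for $z := xy - x'y'$, and combined with $\|h^2\| = \|h\|^2 \le 1$ this yields
\[ \|\phi(x)\phi(y) - \phi(x')\phi(y')\| = \|h^2\,\pi(xy - x'y')\| \le \|h\|^2\,\|xy - x'y'\| \le \|xy - x'y'\|, \]
which is exactly the claim. I expect no genuine obstacle here, as the structure theorem does all the real work; the only point requiring a word of care is the non-unital bookkeeping, since $h$ and the elements $\pi(a)$ a priori live in $\mathcal{M}(C)$ rather than in $B$. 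I would simply observe that each product above in fact lies in $C \subseteq B$ and that the multiplier-algebra norm restricts compatibly, so the estimate is valid as stated.
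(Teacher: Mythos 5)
Your proof is correct and coincides with the proof of the cited result: the paper does not reprove this lemma but quotes it from \cite[Lemma~6.9]{HSWW17}, whose argument is exactly this application of the Winter--Zacharias structure theorem, writing $\phi = h\pi$ with $h$ a positive contraction commuting with the image of the $*$-homomorphism $\pi$, so that $\phi(x)\phi(y)-\phi(x')\phi(y') = h^2\pi(xy-x'y')$ and the estimate follows from $\|h\|\le 1$ and contractivity of $\pi$. Your remark on the non-unital bookkeeping is also sound, since all the products in question lie in $C^*(\phi(A))$, which embeds isometrically in its multiplier algebra.
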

	
	\begin{lemma}[{\cite[Lemma~6.10]{HSWW17}}]\label{lem7.5}
		Let $Y$ be a locally compact Hausdorff space and let $A,B$ be two C*-algebras. Let $\phi_1,\phi_2\colon C_0(Y)\to B$ be two c.p.c.\ order zero maps with commuting images. Then for every two functions $f_1,f_2\in C_0(Y,A)\cong C_0(Y)\otimes A$, we have
		\[ \|[(\phi_1\otimes{\rm id}_A)(f_1),(\phi_2\otimes{\rm id}_A)(f_2)]\|_{B\otimes_{\rm max}A}\leq \max_{y_1,y_2\in Y}\|[f_1(y),f_2(y)] \|.\]
	\end{lemma}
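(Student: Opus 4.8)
The plan is to reduce the estimate to the contractivity of a single $\ast$-homomorphism assembled from the structure theory of order zero maps. First I would invoke the Winter--Zacharias structure theorem \cite{WZ09}: for each $i\in\{1,2\}$ the c.p.c.\ order zero map $\phi_i\colon C_0(Y)\to B$ dilates to a $\ast$-homomorphism $\rho_i\colon C_0(Y)\otimes C_0((0,1])\to B$ off the cone, uniquely determined by $\rho_i(f\otimes\iota)=\phi_i(f)$, where $\iota\in C_0((0,1])$ is the generator $\iota(t)=t$; moreover the image of $\rho_i$ lies in $C^*(\phi_i(C_0(Y)))$. Identifying $C_0(Y)\otimes C_0((0,1])\cong C_0(Y\times(0,1])$, this encodes each $\phi_i$ as a genuine $\ast$-homomorphism at the cost of one extra coordinate in $(0,1]$. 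Since both sides of the desired inequality are continuous in $(f_1,f_2)$ and all maps in sight are bounded, it suffices to prove it for $f_1,f_2$ in the dense algebraic tensor product $C_0(Y)\odot A$, say $f_1=\sum_k g_k\otimes a_k$ and $f_2=\sum_l h_l\otimes b_l$.

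Next I would promote the hypothesis of commuting images to the cone level. Because $\phi_1(C_0(Y))$ and $\phi_2(C_0(Y))$ commute elementwise and a relative commutant is a C*-algebra, the generated C*-subalgebras $C^*(\phi_1(C_0(Y)))$ and $C^*(\phi_2(C_0(Y)))$ commute; as these contain the images of $\rho_1$ and $\rho_2$ respectively, the two $\ast$-homomorphisms $\rho_1,\rho_2$ have commuting ranges. Since the cone $C_0(Y\times(0,1])$ is commutative, hence nuclear, the universal property of the maximal tensor product yields a $\ast$-homomorphism $\Theta\colon C_0(Y\times(0,1])\otimes C_0(Y\times(0,1])\to B$ with $\Theta(u\otimes v)=\rho_1(u)\rho_2(v)$, and tensoring with the identity gives a contractive $\ast$-homomorphism $\Theta\otimes\mathrm{id}_A$ into $B\otimes_{\max}A$.

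Finally I would rewrite the commutator through $\Theta\otimes\mathrm{id}_A$. Writing $\tilde f_i\in C_0(Y\times(0,1])\otimes A$ for the lift with $\tilde f_i(y,s)=s\,f_i(y)$, one has $(\phi_i\otimes\mathrm{id}_A)(f_i)=(\rho_i\otimes\mathrm{id}_A)(\tilde f_i)$, and using that $\rho_1,\rho_2$ commute a direct computation gives
\[[(\phi_1\otimes\mathrm{id}_A)(f_1),(\phi_2\otimes\mathrm{id}_A)(f_2)]=(\Theta\otimes\mathrm{id}_A)(W),\]
where $W=\sum_{k,l}(g_k\otimes\iota)\otimes(h_l\otimes\iota)\otimes[a_k,b_l]$, regarded as an element of $C_0(Y\times(0,1])\otimes C_0(Y\times(0,1])\otimes A\cong C_0\big((Y\times(0,1])^2,A\big)$. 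Evaluating the function, $W\big((y,s),(y',t)\big)=st\,[f_1(y),f_2(y')]$, so that $\|W\|=\sup_{y,y'\in Y}\|[f_1(y),f_2(y')]\|$, since the factors $s,t\in(0,1]$ are at most $1$; this supremum is exactly the right-hand side $\max_{y_1,y_2\in Y}\|[f_1(y_1),f_2(y_2)]\|$. As $\Theta\otimes\mathrm{id}_A$ is a $\ast$-homomorphism and hence norm-decreasing, the claim follows. The one genuinely delicate point, and the step I would treat most carefully, is the passage from commuting images of the order zero maps $\phi_i$ to commuting ranges of the dilating $\ast$-homomorphisms $\rho_i$: this rests on each $\rho_i$ landing in $C^*(\phi_i(C_0(Y)))$ together with the fact that elementwise commutation is inherited by generated C*-algebras, after which the nuclearity of the commutative cone makes the maximal tensor product construction automatic.
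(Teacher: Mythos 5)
Your proof is correct, and it is essentially the same argument as that of the original source: this paper does not reprove the lemma but quotes it as \cite[Lemma~6.10]{HSWW17}, whose proof likewise dilates each order zero map to a $\ast$-homomorphism $\rho_i$ on the cone $C_0(Y\times(0,1])$, uses the commuting ranges to assemble a single $\ast$-homomorphism on $C_0\bigl((Y\times(0,1])^2\bigr)$, and reads off the bound from the sup-norm of the $A$-valued function $\bigl((y,s),(y',t)\bigr)\mapsto st\,[f_1(y),f_2(y')]$. The step you flag as delicate, passing from commuting images of the $\phi_i$ to commuting ranges of the $\rho_i$ via the fact that $\rho_i$ lands in $C^*\bigl(\phi_i(C_0(Y))\bigr)$, is indeed the crux, and you have handled it correctly.
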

	
	\begin{lemma}[{\cite[Lemma~1.4]{Kas88}}]\label{lem7.6}
		Let $A$ be a $\sigma$-unital C*-algebra, $G$ a $\sigma$-compact, locally compact group and $\alpha\colon G\to {\rm Aut}(A)$ a point-norm continuous action. Then there exists an approximate identity $(e_n)_{n\in\mathbb{N}}$ for $A$ such that $\|\alpha_g(e_n)-e_n\|\to\infty$ uniformly on compact subsets of $G$.
	\end{lemma}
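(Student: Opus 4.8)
The plan is to prove the (intended) statement that the approximate identity $(e_n)_{n\in\mathbb{N}}$ can be chosen so that $\|\alpha_g(e_n)-e_n\|\to 0$ uniformly for $g$ in compact subsets of $G$; this is Kasparov's approximately invariant approximate unit. First I would fix an \emph{increasing} (sequential) approximate identity $(u_k)_{k\in\mathbb{N}}$ for $A$, which exists since $A$ is $\sigma$-unital, and a compact exhaustion $G=\bigcup_{m\in\mathbb{N}}K_m$ with $K_m\subseteq \mathrm{int}(K_{m+1})$, which exists since $G$ is $\sigma$-compact and locally compact. I would also fix a dense sequence $(a_j)_{j\in\mathbb{N}}$ in $A$. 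The construction then rests on a single averaging lemma, isolated next.

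The \emph{averaging lemma} asserts: for every compact $K\subseteq G$, every $\varepsilon>0$ and every $N\in\mathbb{N}$ there is a convex combination $v=\sum_i t_i u_{k_i}$ with all $k_i\ge N$ such that $\sup_{g\in K}\|\alpha_g(v)-v\|<\varepsilon$. To prove it I would consider the bounded sequence $T_k\in C(K,A)$ given by $T_k(g)=\alpha_g(u_k)-u_k$ (continuous in $g$ by point-norm continuity, with $\|T_k\|_\infty\le 2$). The key point is that $T_k\to 0$ in the weak topology of the Banach space $C(K,A)$: for each fixed $g$ the increasing approximate identity converges $\sigma$-weakly to the unit $1\in A^{**}$, and since the bidual extension $\alpha_g^{**}$ is a normal automorphism fixing $1$, one obtains $\phi(\alpha_g(u_k)-u_k)\to 0$ for every $\phi\in A^*$; feeding this pointwise-in-$g$ convergence into the integral representation of functionals on $C(K,A)$ (integration of a weak*-measurable $A^*$-valued density against a finite positive control measure) and invoking dominated convergence (the integrands are bounded by $2$) yields $\Lambda(T_k)\to 0$ for every $\Lambda\in C(K,A)^*$. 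Once weak convergence to $0$ is in hand, Mazur's theorem places $0$ in the norm-closed convex hull of $\{T_k:k\ge N\}$, and unwinding the convex combination $\sum_i t_i T_{k_i}=\bigl(g\mapsto \alpha_g(v)-v\bigr)$ with $v=\sum_i t_i u_{k_i}$ produces exactly the desired $v$.

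With the averaging lemma available, I would build $(e_n)$ diagonally. At stage $n$, first choose a depth $N_n$ so large that $\|u_k a_j-a_j\|<1/n$ and $\|a_j u_k-a_j\|<1/n$ for all $k\ge N_n$ and all $j\le n$ (possible since $u_k a_j\to a_j$ and $a_j u_k\to a_j$); then apply the averaging lemma to $K_n$, tolerance $1/n$, and depth $N_n$ to obtain $e_n\in\mathrm{conv}\{u_k:k\ge N_n\}$ with $\sup_{g\in K_n}\|\alpha_g(e_n)-e_n\|<1/n$. Since $e_n$ is a convex combination of the deep elements $u_k$ ($k\ge N_n$), the bounds on $(a_j)$ pass to $e_n$, so $(e_n)$ is itself an approximate identity for $A$ (by density of $(a_j)$). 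Finally, any compact $K\subseteq G$ lies in some $K_{m_0}$ by the exhaustion property, whence for $n\ge m_0$ we get $\sup_{g\in K}\|\alpha_g(e_n)-e_n\|\le \sup_{g\in K_n}\|\alpha_g(e_n)-e_n\|<1/n$, which gives the claimed uniform convergence on compacts.

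The main obstacle is the averaging lemma, and within it the upgrade from pointwise-weak convergence of $T_k(g)$ to genuine weak convergence of $T_k$ in $C(K,A)$: this is precisely where the potential non-amenability of $G$ blocks a naive integration-over-$G$ averaging, forcing one to rely instead on the convexity (Hahn–Banach/Mazur) mechanism together with the identification of $C(K,A)^*$ with $A^*$-valued measures. The remaining ingredients — the diagonal bookkeeping and the fact that convex combinations of a deep approximate identity are again an approximate identity — are routine.
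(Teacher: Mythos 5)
The paper itself contains no proof of this lemma: it is imported verbatim from Kasparov (\cite[Lemma~1.4]{Kas88}), and as stated it even carries a typo ($\|\alpha_g(e_n)-e_n\|\to\infty$ should of course be $\to 0$), which you correctly identified and corrected. So the only meaningful comparison is with Kasparov's original argument, and your proposal is essentially that argument: fix a countable increasing approximate identity $(u_k)$; observe that for each compact $K\subseteq G$ the functions $g\mapsto\alpha_g(u_k)-u_k$ form a bounded sequence in $C(K,A)$ converging pointwise weakly to $0$ (your bidual argument works, or even more simply $\phi(\alpha_g(u_k))=(\phi\circ\alpha_g)(u_k)\to\|\phi\circ\alpha_g\|=\|\phi\|$ for positive $\phi\in A^*$); upgrade this to weak convergence in $C(K,A)$ via the representation of $C(K,A)^*$ by $A^*$-valued measures of bounded variation together with dominated convergence; and then invoke Mazur's theorem to extract norm-small convex combinations, finishing with a diagonal argument over a compact exhaustion of $G$. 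This is precisely the convexity mechanism in Kasparov's proof, and your remark that naive averaging over $G$ would only give invariance under an amenability (F{\o}lner) hypothesis is the correct explanation of why the convexity route is needed.

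One small slip relative to the stated hypotheses: you fix a dense sequence $(a_j)$ in $A$ to guarantee that the convex combinations $e_n$ remain an approximate identity, but the lemma assumes only that $A$ is $\sigma$-unital, not separable, so no dense sequence need exist. The fix is trivial and in fact shortens the bookkeeping: it suffices to arrange $N_n\to\infty$ (say $N_n\geq n$). Then for any fixed $a\in A$ one has $\|e_n a-a\|\leq\sup_{k\geq N_n}\|u_k a-a\|\to 0$, since $e_n$ is a convex combination of $\{u_k\colon k\geq N_n\}$ and $(u_k)$ is an approximate identity, and similarly on the other side; hence $(e_n)$ is an approximate identity for all of $A$ with no separability assumption.
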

	The following is the main theorem of this section.
	
	\begin{theorem}\label{thm: D-ab}
		Let $D$ be a strongly self-absorbing C*-algebra. Let $A$ be a separable, $D$-absorbing C*-algebra. Let $G$ be a second countable residually compact group with a regular approximation $\sigma=(G_n)_{n\in\mathbb{N}}$ and $\alpha\colon G\to {\rm Aut}(A)$ a continuous action. If ${\rm asdim}(\Box_{\sigma}G)<\infty$ and ${\rm dim}_{\rm Rok}^{\rm c}(\alpha,\sigma)<\infty$. Then $A\rtimes_\alpha G$ is $D$-absorbing.
	\end{theorem}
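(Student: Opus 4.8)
The plan is to produce a unital $*$-homomorphism $D\to F_\infty^{(\alpha)}(A)^{\tilde{\alpha}_\infty}$ into the fixed-point algebra of the equivariant central sequence algebra, after which Lemma~\ref{lemd1} immediately gives that $A\rtimes_\alpha G$ is $D$-absorbing. By Lemma~\ref{lemd2}, it suffices to construct finitely many c.p.c.\ order zero maps $D\to F_\infty^{(\alpha)}(A)^{\tilde{\alpha}_\infty}$ with pairwise commuting images whose unit-images sum to $1$. Since $D\cong D^{\otimes m}$ for every $m$ and $A$ is $D$-absorbing, $A$ is also $D^{\otimes m}$-absorbing, so we may fix as many mutually commuting unital copies $\rho_1,\ldots,\rho_m\colon D\to F_\infty^{(\alpha)}(A)$ as we like; a standard reindexing argument lets us further assume these copies commute with the images of the Rokhlin maps introduced below.

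Fix a finite set $F\subseteq D$, a finite set $M\subseteq G$ and $\varepsilon>0$. Because ${\rm asdim}(\Box_\sigma G)<\infty$, Corollary~\ref{cor4.15} shows $G$ is amenable, and Lemma~\ref{lem main} supplies, for a suitable index $n$, compactly supported functions $\mu_0,\ldots,\mu_l\colon G\to[0,1]$ (with $l={\rm asdim}(\Box_\sigma G)$) whose supports have pairwise $G_n$-disjoint translates, that partition unity over $G/G_n$, and that are almost invariant under left translation by $M$ (condition~\ref{lem main: almost invar}). By the disjointness~\ref{lem main: disjoint} together with the injectivity of the quotient map $\pi_n$ on the supports, each $\mu_i$ descends to $\bar\mu_i\in C(G/G_n)$ with $\sum_i\bar\mu_i=1$. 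Because ${\rm dim}_{\rm Rok}^{\rm c}(\alpha,\sigma)<\infty$, for this $n$ we obtain equivariant c.p.c.\ order zero maps $\varphi_0,\ldots,\varphi_d\colon (C(G/G_n),\lambda)\to(F_\infty^{(\alpha)}(A),\tilde{\alpha}_\infty)$ with pairwise commuting images and $\sum_j\varphi_j(1)=1$.

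The heart of the argument is to assemble these data into c.p.c.\ order zero maps $\Psi^0,\ldots,\Psi^d\colon D\to F_\infty^{(\alpha)}(A)$ that are approximately $\tilde{\alpha}_\infty$-invariant on $F$ over $M$, have pairwise commuting images, and satisfy $\sum_j\Psi^j(1)=1$. A first approximation such as $\Psi^j(x)=\sum_i\varphi_j(\bar\mu_i)\,\rho_{i,j}(x)$ already has commuting images, using the commuting towers, the commutation of the $\rho$'s with the $\varphi$'s, and the independence of the copies $\rho_{i,j}$, with Lemma~\ref{lem7.5} controlling residual commutators through the disjointness of the supports; that it is order zero and jointly unital is then immediate from Lemma~\ref{lem7.4}. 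The genuine difficulty is invariance: while equivariance of $\varphi_j$ and the almost-invariance of $\bar\mu_i$ control the $C(G/G_n)$-direction, a \emph{fixed} copy $\rho_{i,j}$ is moved by $\tilde{\alpha}_g$ and cannot be invariant on its own. To repair this, the copies of $D$ must be spread across the homogeneous space $G/G_n$ in step with the $G$-action, by transporting a single localized copy along the section of $\pi_n$ determined by the supports of the $\mu_i$; the transition elements of this section lie in $G_n$, and the regularity of $\sigma$ together with the injectivity of $\pi_n$ on large balls (Lemma~\ref{lem4.1}) forces these transitions to be harmless on the scale dictated by $M$ once $n$ is large, so that translation by $M$ merely permutes the spread copies up to the controlled error coming from $\bar\mu_i$. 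Here Lemma~\ref{lem7.6} furnishes a quasi-invariant approximate unit making the spreading compatible with the non-unitality of $A$.

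Finally, letting $F$ and $M$ exhaust $D$ and $G$ and $\varepsilon\to0$, a reindexing argument promotes these approximately invariant, approximately commuting families to exact c.p.c.\ order zero maps $\Psi^0,\ldots,\Psi^d\colon D\to F_\infty^{(\alpha)}(A)^{\tilde{\alpha}_\infty}$ with pairwise commuting images and $\sum_j\Psi^j(1)=1$. Lemma~\ref{lemd2} then yields a unital $*$-homomorphism $D\to F_\infty^{(\alpha)}(A)^{\tilde{\alpha}_\infty}$, and Lemma~\ref{lemd1} concludes that $A\rtimes_\alpha G$ is $D$-absorbing. I expect the main obstacle to be precisely this equivariant spreading of $D$ over $G/G_n$: reconciling the fact that the group moves any fixed copy of $D$ with the need for an honestly invariant image, which is where the regularity of the approximation and the almost-invariant box functions must be deployed in tandem.
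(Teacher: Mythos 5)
Your skeleton matches the paper's strategy (reduce via Lemma~\ref{lemd1} and Lemma~\ref{lemd2} to producing jointly unital, pairwise commuting c.p.c.\ order zero maps from $D$ into the invariant part of the central sequence algebra, using the box functions of Lemma~\ref{lem main} and the commuting-tower Rokhlin maps), and you correctly identify equivariance as the crux. But there is a genuine gap at exactly that crux. You start from exact, mutually commuting unital homomorphisms $\rho_{i,j}\colon D\to F_\infty^{(\alpha)}(A)$. $D$-absorption of $A$ only yields a unital embedding $D\to F_\infty(A)$; nothing guarantees that such a copy can be arranged to land in the \emph{continuous part} $F_\infty^{(\alpha)}(A)$, and this is precisely the obstruction, because your proposed repair --- spreading a copy over $G/G_n$ --- requires applying $\tilde\alpha_{\infty,g}$ to it continuously in $g$, i.e.\ it presupposes membership in the continuous part. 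Moreover, even granting such copies, there is no canonical multiplication allowing you to combine $\varphi_j(\bar\mu_i)\in F_\infty^{(\alpha)}(A)$ with an independent central-sequence copy of $D$ and still implement the spreading: the canonical map of Remark~\ref{rem2.21} is $F^{(\alpha)}(D,A_\infty)\otimes_{\rm max}D\to A_\infty^{(\alpha)}$, which requires one tensor factor to be the algebra $A$ itself. The paper resolves both points at once by Choi--Effros: it lifts the single embedding $D\to F_\infty(A)$ to c.p.c.\ maps $k^{(i,l)}\colon D\to A$ into $A$ (only \emph{approximately} multiplicative, central, and mutually commuting), spreads them by the explicit approximately equivariant order zero maps $\varphi^i\colon A\to C(G/G_{n_0})\otimes A$, $\varphi^i(a)(gG_{n_0})=\sum_{h\in G_{n_0}}\mu_i(gh)\alpha_{gh}(a)$, and then composes with $\phi_l\otimes\mathrm{id}_A$ and $\theta\colon F_\infty^{(\alpha)}(A)\otimes_{\rm max}A\to A_\infty^{(\alpha)}$. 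Since constants lie in $A_\infty^{(\alpha)}$, continuity is automatic; the price is that all conditions \ref{D-ab d1}--\ref{D-ab d5} hold only up to $\varepsilon$, and a final diagonal argument over exhausting finite sets produces the exact invariant maps. Your sketch never actually constructs the spreading, and the mechanism you describe (regularity of $\sigma$ making the $G_n$-transitions ``harmless'') is not the operative one: the copy placed at the coset $gG_{n_0}$ is twisted by $\alpha_{gh}$, and approximate invariance comes from $\|\mu_i-\mu_i(t^{-1}\,\cdot\,)\|_\infty\leq\delta$; regularity enters only through Lemma~\ref{lem main}.

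A second, independent error: your candidate $\Psi^j(x)=\sum_i\varphi_j(\bar\mu_i)\,\rho_{i,j}(x)$ is \emph{not} order zero, so Lemma~\ref{lemd2} cannot be applied to it. If $x\perp y$ in $D_+$, the diagonal terms of $\Psi^j(x)\Psi^j(y)$ vanish, but the cross terms $\varphi_j(\bar\mu_i)\varphi_j(\bar\mu_{i'})\,\rho_{i,j}(x)\rho_{i',j}(y)$ with $i\neq i'$ do not: the disjointness in condition~\ref{lem main: disjoint} of Lemma~\ref{lem main} concerns only $G_n$-translates of the \emph{same} $\mu_i$, while distinct $\bar\mu_0,\ldots,\bar\mu_s$ come from a cover of multiplicity $s+1$ and genuinely overlap, and $\rho_{i,j}(x)\rho_{i',j}(y)$ is a product of commuting positive elements from independent copies, generically nonzero. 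This is exactly why the paper keeps all $(s+1)(m+1)$ maps $\psi^{(i,l)}$ as separate order zero maps, indexed by both the box index $i$ and the tower index $l$, and feeds the whole family into Lemma~\ref{lemd2}; the corresponding fix on your end is to not sum over $i$.
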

    
	\begin{proof}
		We assume ${\rm asdim}(\Box_{\sigma}G)=s$ and ${\rm dim}_{\rm Rok}^{\rm c}(\alpha,\sigma)=m$. Let $M$ be a compact subset of $G$ and $\varepsilon>0$. Since ${\rm asdim}(\Box_{\sigma}G)=s<\infty$, by Lemma~\ref{lem main} there exist $n_0$ and compactly supported continuous functions $\mu_0,\ldots,\mu_s\colon G\to [0,1]$ satisfying 
		\begin{itemize}
			\item for every $i=0,\ldots,s$,
			\[{\rm supp}(\mu_i)\cap {\rm supp}(\mu_i)h=\emptyset \quad\text{ for all }\quad h\in G_{n_0}\backslash\{1_{G_{n_0}}\};\]
			\item for every $g\in G$,
			\[\sum_{i=1}^s\sum_{h\in G_{n_0}}\mu_i(gh)=1; \]
			\item for every $i=0,\ldots,s$ and $t\in M$,  \[\|\mu_i-\mu_i(t^{-1}\cdot-)\|_{\infty}\leq \frac{\varepsilon}{s+1}. \]
		\end{itemize}
        
		\par  For each $i=0,\ldots,s$, $\mu_i$ can be regarded as an element in $C(G/G_{n_0})$, we denote it by $\overline{\mu_i}$. We have $\overline{\mu_i}(gG_{n_0})=\sum_{h\in G_{n_0}}\mu_i(gh)$, for all $g\in G$. We also denote $M'=\bigcup_{i=0}^s{\rm supp}(\mu_i)$, it is a compact subset of $G$.
        
		\par Consider the c.p.c.\ order zero maps
		\[\varphi^i\colon A\to C(G/G_{n_0})\otimes A\cong C(G/G_{n_0},A),\quad i=0,\ldots,s  \]
		given by 
		\[\varphi^i(a)(gG_{n_0})=\sum_{h\in G_{n_0}}\mu_i(gh)\alpha_{gh}(a) \]
		for all $g\in G$. These maps are well defined, and we have 
		\begin{equation}\label{7.1}
			\|(\lambda_t\otimes\alpha_t)\circ \varphi^i-\varphi^i\|\leq\varepsilon
		\end{equation}
		
		for all $i=0,\ldots,s$ and $t\in M$, where $\lambda$ is the $G$-shift on $C(G/G_{n_0})$. To see this we have
		\begin{align*}
			&\|(\lambda_t\otimes\alpha_t)(\varphi^i(a))(gG_{n_0})-\varphi^i(a)(gG_{n_0})\|\\
			=&\|\alpha_t(\varphi^i(a)(t^{-1}gG_{n_0}))-\varphi^i(a)(gG_{n_0})\|\\
			=&\|\sum_{h\in G_{n_0}}\mu_i(t^{-1}gh)\alpha_t(\alpha_{t^{-1}gh}(a))-\sum_{h\in G_{n_0}}\mu_i(gh)\alpha_{gh}(a)\|\\
			=& \|\sum_{h\in G_{n_0}}(\mu_i(t^{-1}gh)-\mu_i(gh))\alpha_{gh}(a)\|\\
			\leq& \varepsilon.
		\end{align*}
        
		\par Since $A$ is $D$-absorbing, there exists a unital $\ast$-homomorphism $\tilde{k}\colon D\to F_\infty(A)$. As $D$ is strongly self-absorbing, $D$ is nuclear. Then we can apply the Choi-Effros lifting theorem(\cite{CE76}) and find a c.p.c.\ lift of this map to $\ell^\infty(\mathbb{N},A)$, and represent it by a sequence of c.p.c.\ maps $k_n\colon D\to A$ such that $\tilde{k}(d)$ is the image of $(k_1(d),\ldots,k_n(d),\ldots)$ for all $d\in D$. Thus, we have 
		\begin{enumerate}[label=(a\arabic*)]
			\item\label{D-ab a1} $\|k_n(1)a-a\|\rightarrow 0$ as $ n\to\infty$, for all $a\in A$ .
			\item \label{D-ab a2} $\|(k_n(d_1)k_n(d_2)-k_n(d_1 d_2))a\|\to 0$ as $ n\to\infty$, for all $d_1,d_2\in D$.
			\item\label{D-ab a3} $\|[a,k_n(d)]\|\to 0$ as $ n\to \infty$, for all $a\in A$ and $d\in D$.
		\end{enumerate}
        
		\par Let $F_D\subseteq D$ and $F_A\subseteq A$ be finite subsets, with $1_d\in F_D$. We may assume that they consist of elements with norm at most 1. Applying the lemma~\ref{lem7.6}, we may also assume that $F_A$ contains a positive element $e$ of norm 1, such that $\|ea-a\|<\varepsilon$ and $\|ae-a\|<\varepsilon$ for all $a\in F_A\backslash\{e\}$ and such that $\|\alpha_t(e)-e\|<\varepsilon$ for all $t\in M\cup M'$.
        
		\par By picking $\kappa=\kappa_n$ for some sufficiently large $n$, we have a c.p.c.\ map satisfying:
		\begin{enumerate}[label=(b\arabic*)]
			\item  \label{D-ab b1} $\|k(1)\alpha_t(a)-\alpha_t(a)\|\leq\varepsilon$, for all $a\in F_A$ and $t\in M\cup M'$ ;
			\item \label{D-ab b2} $\|(k_n(d_1)k_n(d_2)-k_n(d_1 d_2))\alpha_t(a)\|\leq\varepsilon$, for all $d_1,d_2\in F_D$ and $t\in M\cup M'$;
			\item\label{D-ab b3} $\|[\alpha_t(a),k_n(d)]\|\leq\varepsilon$, for all $a\in F_A$, $d\in F_D$ and $t\in M\cup M'$.
		\end{enumerate}
        
		\par We choose inductively c.p.c.\ maps $k^{(i,l)}\colon D\to A$ for all $i=0,\ldots,s$ and $l=0,\ldots,m$ satisfying the above conditions and such that we also have
		\begin{enumerate}[label=(b\arabic*)]
        \setcounter{enumi}{3}
			\item \label{D-ab b4} $\|[\alpha_t\circ k^{(i,l)}(d_1),k^{(i',l')}(d_2)]\|\leq\varepsilon$, for all $d_1,d_2\in D$, $t\in M\cup M'$ and $(i,l)\neq(i',l')$.
		\end{enumerate}
        
		\par Combining the properties of the maps $k^{(i,l)}$ and $\varphi^i$, we have the following :
		\begin{enumerate}[label=(c\arabic*)]
			\item \label{D-ab c1} $\|(1-\sum_{i=0}^s\varphi^i\circ k^{(i,l)}(1))\cdot (1_{C(G/G_{n_0})}\otimes a) \|\leq (s+1)\varepsilon $, for all $l=0,\ldots,m$ and $a\in F_A$;
			\item \label{D-ab c2} $\|(\lambda_t\otimes\alpha_t)\circ \varphi^i\circ k^{(i,l)}- \varphi^i\circ k^{(i,l)} \|\leq \varepsilon$, for all $i=0,\ldots,s$, $l=0,\ldots,m$ and $t\in M$.
		\end{enumerate}
		Here, \ref{D-ab c1} follows from \ref{D-ab b1} and \ref{D-ab c2} follows from \eqref{7.1}.
        
		\par Since $m={\rm dim}_{\rm Rok}^{\rm c}(\alpha,\sigma)<\infty$, there exist $\lambda$-$\tilde{\alpha}_\infty$ equivariant c.p.c.\ order zero maps
		\[\phi_0,\ldots,\phi_m\colon C(G/G_{n_0})\to F^{(\alpha)}_\infty(A) \]
		with pairwise commuting images and such that $\phi_0(1)+\cdots+\phi_m(1)=1$. By Remark~\ref{rem2.21}, there is an canonical $\tilde{\alpha}_\infty\otimes\alpha$-$\alpha_\infty$ equivariant $\ast$-homomorphsim\[\theta\colon F^{(\alpha)}_\infty(A)\otimes_{\rm max} A\to A_\infty^{(\alpha)}. \]
		\par For all $i=0,\ldots,s$ and $l=0,\ldots,m$, we define c.p.c.\ maps $\psi^{(i,l)}\colon D\to A^{(\alpha)}_\infty$ as the composition
        
		\par
		\begin{tikzpicture}
			\node (A) at (0,0) {$ D$};
			\node (B) at (1.5,0) {$A$};
			\node (C) at (4,0) {$C(G/G_{n_0})\otimes A$};
			\node (D) at (9,0) {$F^{(\alpha)}_\infty(A)\otimes_{\rm max} A$};
			\node (E) at (12,0) {$A_\infty^{(\alpha)}$};
			\draw [-to](A)-- node[above] {$k^{(i,l)}$} (B);
			\draw [-to](B)--node[above]{$\varphi^i$}(C);
			\draw [-to](C)--node[above]{$\phi_l\otimes{\rm id}_A$}(D);
			\draw[-to] (D)--node[above]{$\theta$}(E);
			\draw [-to](0.1,-0.2)  arc (250:290:16.9) ;
			\node (F) at (6,-1.5) {$\psi^{(i,l)}$};
			
		\end{tikzpicture}
        
		\par We claim that the maps $\psi^{(i,l)}$ satisfy the following properties,
		\begin{enumerate}[label=(d\arabic*)]
			\item \label{D-ab d1} $\|(1-\sum_{l=0}^m\sum_{i=0}^s \psi^{(i,l)}(1))a \|\leq(s+1)(m+1)\varepsilon$, for all $a\in F_A$;
            
			\item\label{D-ab d2} $\|\alpha_{\infty,t}\circ \psi^{(i,l)}-\psi^{(i,l)} \|\leq \varepsilon$, for all $i=0,\ldots,s$, $l=0,\ldots,m$ and $t\in M$; 
            
			\item\label{D-ab d3} $\| (\psi^{(i,l)}(d_1)\psi^{(i,l)}(d_2)-\psi^{(i,l)}(d_1d_2)\psi^{(i,l)}(1) )a\|\leq 6\varepsilon$, for all $i=0,\ldots,s$, $l=0,\ldots,m$, $d_1,d_2\in F_D$ and $a\in F_A$;
            
			\item\label{D-ab d4} $\|[a,\psi^{(i,l)}(d)] \|\leq\varepsilon$, for all  $i=0,\ldots,s$, $l=0,\ldots,m$ and $d\in F_D$;
            
			\item\label{D-ab d5} $\|[\psi^{(i,l)}(d_1),\psi^{(i',l')}(d_2)]\|\leq\varepsilon$, for all $i,i'=0,\ldots,s$, $l,l'=0,\ldots,m$ with $(i,l)\neq(i'.l')$ and $d_1,d_2\in F_D$.
		\end{enumerate}
        
		\par As for \ref{D-ab d1}, since for all $a\in A$, we have
		\[\sum_{l=0}^m \theta\circ(\phi_l\otimes{\rm id}_A)(1_{C(G/G_{n_0})}\otimes a)=\theta(\sum_{l=0}^m\phi_l(1)\otimes a)=a. \]
		Then \ref{D-ab d1} follows from \ref{D-ab c1} directly.
        
		\par As for \ref{D-ab d2}, it follows easily from \ref{D-ab c2}.
        
		\par As for \ref{D-ab d3}, since we have $\|ea-a\|<\varepsilon$ for all $a\in F_A$, it suffices to prove the claim for $a=e$ and $4\varepsilon$ instead of $6\varepsilon$. Since $\|\alpha_t(e)-e\|<\varepsilon$ for all $t\in M$, we have $\|\varphi^i(e)-\overline{\mu_i}\otimes e\|<\varepsilon$ for all $i=0,\ldots,s$. Furthermore, for any $x\in A$ and $i=0,\ldots,s$, we have 
		\begin{equation}\label{7.2}
			\|\varphi^i(xe)-\varphi^i(x)(1\otimes e) \|\leq \varepsilon \|x\|
		\end{equation}
		Note that if $y\in C(G/G_{n_0})\otimes A$ and $x\in A$ then 
		\begin{equation}\label{7.3}
			(\phi_l\otimes{\rm id}_A)(y\cdot(1_{C(G/G_{n_0})}\otimes x))=(\phi_l\otimes{\rm id}_A)(y)\cdot(1_{F^{(\alpha)}_\infty(A)}\otimes x)
		\end{equation}
		In particular, for any $d\in D$ we have 
		\begin{equation}\label{7.4}
			\psi^{(i,l)}(d)e=\theta\circ(\phi_l\otimes{\rm id}_A)(\varphi^i\circ k^{(i,l)}(d)\cdot (1\otimes e)).
		\end{equation}
        
		Thus, we have
		\begin{align*}
			&\| (\psi^{(i,l)}(d_1)\psi^{(i,l)}(d_2)-\psi^{(i,l)}(d_1d_2)\psi^{(i,l)}(1) )e\|\\
			\overset{\eqref{7.3}}{=}& \|\theta((\phi_l\otimes{\rm id}_A)(\varphi^i(k^{(i,l)}(d_1)))\cdot  (\phi_l\otimes{\rm id}_A)(\varphi^i(k^{(i,l)}(d_2))\cdot(1\otimes e)) ) \\
			-&\theta((\phi_l\otimes{\rm id}_A)(\varphi^i(k^{(i,l)}(d_1d_2)))\cdot  (\phi_l\otimes{\rm id}_A)(\varphi^i(k^{(i,l)}(1))\cdot(1\otimes e)) ) \|\\
			\leq& \|(\phi_l\otimes{\rm id}_A)(\varphi^i(k^{(i,l)}(d_1)))\cdot  (\phi_l\otimes{\rm id}_A)(\varphi^i(k^{(i,l)}(d_2))\cdot(1\otimes e))\\
			-&(\phi_l\otimes{\rm id}_A)(\varphi^i(k^{(i,l)}(d_1d_2)))\cdot  (\phi_l\otimes{\rm id}_A)(\varphi^i(k^{(i,l)}(1))\cdot(1\otimes e)) \|\\
			\overset{\text{Lemma }\ref{lem7.4}}{\leq}& \|\varphi^i(k^{(i,l)}(d_1))\cdot \varphi^i(k^{(i,l)}(d_2))\cdot(1\otimes e)
			- \varphi^i(k^{(i,l)}(d_1d_2))\cdot \varphi^i(k^{(i,l)}(1))\cdot(1\otimes e) \|\\
			\overset{\eqref{7.2}}{\leq}& \|\varphi^i(k^{(i,l)}(d_1))\cdot \varphi^i(k^{(i,l)}(d_2)e)- \varphi^i(k^{(i,l)}(d_1d_2))\cdot \varphi^i(k^{(i,l)}(1)e) \|+2\varepsilon\\
			\overset{\text{Lemma }\ref{lem7.4}}{\leq}& \|k^{(i,l)}(d_1)k^{(i,l)}(d_2)e-k^{(i,l)}(d_1d_2)k^{(i,l)}(1)e \|+2\varepsilon\\
			\overset{(\text{\ref{D-ab b1}})}{\leq} &\|k^{(i,l)}(d_1)k^{(i,l)}(d_2)e-k^{(i,l)}(d_1d_2)e \|+3\varepsilon\\
			\overset{(\text{\ref{D-ab b4}})}{\leq}& 4\varepsilon.
		\end{align*}
        
		\par As for \ref{D-ab d4},  for all $g\in G$, we have $\|[1\otimes a , \varphi^i(k^{(i,l)}(d))](gG_{n_0})\|\leq \varepsilon$, this follows from the definition of $\varphi^i$ and \ref{D-ab b3}. Hence, 
		\begin{align*}
			\|[a,\psi^{(i,l)}(d)] \|\overset{\eqref{7.3}}{=}&\|\theta((\phi_l\otimes{\rm id}_A)([1\otimes a , \varphi^i(k^{(i,l)}(d))])) \|\\
			\overset{\text{Lemma }\ref{lem7.4}}{\leq} & \|[1\otimes a , \varphi^i(k^{(i,l)}(d))] \| \leq \varepsilon.
		\end{align*}
        
		\par Lastly, for \ref{D-ab d5}, we have
		\begin{align*}
			&\|[\psi^{(i,l)}(d_1),\psi^{(i',l')}(d_2)] \|\\
			\leq & \|[(\phi_l\otimes{\rm id}_A)(\varphi^i(k^{(i,l)}(d_1))) ,(\phi_{l'}\otimes{\rm id}_A)(\varphi^{i'}(k^{(i',l')}(d_2))) ]\|\\
			\overset{\text{Lemma }\ref{lem7.5}}{\leq} & \max_{g_1,g_2\in G} \|[\varphi^i(k^{(i,l)}(d_1))(g_1G_n),\varphi^{i'}(k^{(i',l')}(d_2))(g_2G_n)] \|\\
			\leq & \max_{g_1,g_2\in M'}\|[\alpha_{g_1}((k^{(i,l)}(d_1)),\alpha_{g_2}((k^{(i',l')}(d_2))] \|\\
			\overset{(\text{\ref{D-ab b4}})}{\leq}& \varepsilon.
		\end{align*}
		The claim is proved.
        
		\par Since $D$ is separable, there is an increasing sequence $(F_{D,n})_{n\in\mathbb{N}}$ of finite subsets such that the union is dense in the unit ball of $D$. Since $A$ is separable, there is also an increasing sequence $(F_{A,n})_{n\in\mathbb{N}}$ of finite subsets such that the union is dense in the unit ball of $A$. Since $G$ is second countable, then it is $\sigma$-compact and there is also an increasing sequence $(M_n)_{n\in\mathbb{N}}$ of compact subsets such that the union is $G$. For $(\frac{1}{n}, M_n, F_{D,n}, F_{A,n})$, we successively choose c.p.c.\ maps from $D$ to $A_\infty^{(\alpha)}$ satisfying conditions~\ref{D-ab d1} to \ref{D-ab d5}, so we can obtain c.p.c.\ maps $\hat{\psi}^{i,l}\colon D\to A_\infty^{(\alpha)} $ satisfying
		\begin{itemize}
			\item $a=\sum_{l=0}^m\sum_{i=0}^s \hat{\psi}^{(i,l)}(1)a$;
			\item $\alpha_{\infty,t}\circ \hat{\psi}^{(i,l)}=\hat{\psi}^{(i,l)}$;
			\item $\hat{\psi}^{(i,l)}(d_1)\hat{\psi}^{(i,l)}(d_2)a=\hat{\psi}^{(i,l)}(d_1d_2)\hat{\psi}^{(i,l)}(1)a$;
			\item $[a,\hat{\psi}^{(i,l)}(d)]=0$;
			\item $[\hat{\psi}^{(i,l)}(d_1),\hat{\psi}^{(i',l')}(d_2)]=0$,
		\end{itemize}
		for all $i,i'=0,\ldots,s$ and $l,l'=0,\ldots,m$ with $(i,l)\neq (i',l')$, for all $t\in G$, $a\in A$ and $d_1,d_2\in D$. 
        
		\par For all $i=0,\ldots,s$ and $l,l'=0,\ldots,m$, consider the maps 
		\[\Psi^{(i,l)}\colon D\to F_\infty^{(\alpha)}(A)^{\tilde{\alpha}_\infty}  \]
		given by \[\Psi^{(i,l)}(d)=\hat{\psi}^{(i,l)}(d)+{\rm Ann}(A,A_\infty) \] for all $d\in D$. Then, $\Psi^{(i,l)}$ are c.p.c.\ order zero maps from $D$ to $F_\infty^{(\alpha)}(A)^{\tilde{\alpha}_\infty}$ with pairwise commuting images and satisfying the equation
		\[\sum_{i=0}^s\sum_{l=0}^m\Psi^{(i,l)}(1)=1. \]
		Then applying Lemma~\ref{lemd1} and Lemma~\ref{lemd2}, we have that $A\rtimes_\alpha G$ is $D$-absorbing.	
	\end{proof}
	
	\section{Stability of crossed products}\label{sec8}
    
	In this section, we will show that if the actions of residually compact groups have finite Rokhlin dimension, then the corresponding crossed products are stable. We recall that a C*-algebra $A$ is \emph{stable} if $A\otimes \mathcal{K} \cong A$, where $\mathcal{K}$ denotes the compact operators on a separable infinite-dimensional Hilbert space. In \cite{HR98}, a local characterization of stability for $\sigma$-unital C*-algebras is provided, and we will use a reformulation of it presented in \cite{HSWW17}.
    
	\begin{lemma}[{\cite[Lemma~7.1]{HSWW17}}]\label{lem8.1}
		A $\sigma$-unital C*-algebra $A$ is stable if and only if for any $b\in A_+$ and any $\varepsilon>0$, there exists $y\in A$ such that $\|yy^*-b\|<\varepsilon$ and $\|y^2\|<\varepsilon$.
	\end{lemma}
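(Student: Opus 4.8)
The statement is the local Hjelmborg--R{\o}rdam characterization of stability \cite{HR98}, repackaged through a single element $y$, and the plan is to prove the two implications separately. The guiding observation is that $\|y^2\|$ being small is an approximate form of the orthogonality between $yy^*$ and $y^*y$: one checks that $y^2=0$ holds exactly when the range projections of $yy^*$ and of $y^*y$ are orthogonal. Since $y^*y$ is always Cuntz equivalent to $yy^*$, the hypothesis thus produces, for each $b\in A_+$ and each tolerance, a copy of $b$ (namely $y^*y$, Cuntz equivalent to $yy^*\approx b$) sitting approximately orthogonally to $b$ itself. This ``room to push $b$ off itself'' is precisely the mechanism behind stability.

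For the forward implication, assume $A$ is stable and fix an isomorphism $A\cong A\otimes\mathcal K$, writing $(e_{ij})$ for the matrix units of $\mathcal K$. Given $b\in A_+$ and $\varepsilon>0$, approximate $b$ to within $\varepsilon$ by a $b'\in A_+$ supported in a finite corner $P=1\otimes\big(\sum_{i=1}^n e_{ii}\big)$, so that $Pb'=b'P=b'$. Let $s\in\mathcal M(A\otimes\mathcal K)$ be the isometry $1\otimes S$ induced by the coordinate shift $S\colon e_i\mapsto e_{n+i}$, so that $s^*s=1$ while $Ps=0$. Then $y:=(b')^{1/2}s^*$ lies in $A\otimes\mathcal K$ and satisfies $yy^*=(b')^{1/2}s^*s(b')^{1/2}=b'$, whence $\|yy^*-b\|<\varepsilon$; moreover $s^*(b')^{1/2}=s^*P(b')^{1/2}=0$ forces $y^2=0$. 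Thus the criterion holds, even with $\|y^2\|=0$.

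The reverse implication is the substance of the lemma and is where I expect the real work. One must upgrade purely local, approximate data into an honest isomorphism $A\cong A\otimes\mathcal K$. Following \cite{HR98}, I would fix a strictly positive element of the $\sigma$-unital algebra $A$ and apply the hypothesis repeatedly inside successively chosen hereditary subalgebras, each application furnishing a Cuntz-equivalent copy of a prescribed positive element that is approximately orthogonal to it. Iterating with careful control of the accumulated errors yields an infinite family of mutually, approximately orthogonal and mutually equivalent copies, which an intertwining argument then assembles into the desired stable structure. The delicate points are the bookkeeping that keeps the successive approximations compatible and the limiting passage from approximate to genuine orthogonality; rather than redo this, I would invoke the theorems of \cite{HR98} (in the form recorded as \cite[Lemma~7.1]{HSWW17}) to conclude.
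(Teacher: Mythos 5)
Your proposal is consistent with how the paper itself handles this statement: the paper gives no proof of this lemma at all, importing it verbatim as \cite[Lemma~7.1]{HSWW17}, which in turn rests on the Hjelmborg--R{\o}rdam characterization of stability from \cite{HR98}; so resting the substantive direction on that literature is exactly what the paper does. Your forward implication is correct and self-contained: with $P=1\otimes\sum_{i=1}^{n}e_{ii}$, $b'=PbP$ and $s=1\otimes S$ one indeed has $yy^*=(b')^{1/2}s^*s(b')^{1/2}=b'$ and $s^*(b')^{1/2}=s^*P(b')^{1/2}=0$, hence $y^2=0$. Your guiding observation is also the correct bridge to the two-element criterion of \cite{HR98}: given $\|yy^*-b\|<\varepsilon$ and $\|y^2\|<\varepsilon$, the element $a:=y^*y$ is exactly equivalent to $yy^*$ (via $y$ itself), while $\|(yy^*)(y^*y)\|=\|y(y^2)^*y\|\leq\|y\|^2\|y^2\|$ is small, so each positive element admits an equivalent, approximately orthogonal copy --- which is the property \cite{HR98} shows implies stability for $\sigma$-unital algebras (modulo a routine perturbation to pass from $yy^*$ back to $b$). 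The one point to correct is the citation in your final sentence: invoking \cite[Lemma~7.1]{HSWW17} would be circular, since that is precisely the statement being proved; the non-circular route is to cite the theorem of \cite{HR98} and supply the bridge you have already sketched.
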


    Intuitively, $y$ in the above lemma ``moves'' $b$ to an approximately orthogonal position. 

    \begin{remark}\label{rmk:lem8.1-sequence}
        By a standard perturbation argument, one can see that Lemma~\ref{lem8.1} still holds if we allow $y$ to be taken from $A_\infty$. 
    \end{remark}
    
	To establish our theorem, we shall need (the ``only if'' half of) the following more refined version of Lemma~\ref{lem8.1} that involves a subalgebra and multiple $y_i$'s that ``moves'' $b$ to a number of mutually approximately orthogonal positions. To this end, we recall that a subalgebra $B$ in a C*-algebra $A$ is \emph{nondegenerate} if $A = \overline{B \cdot A} := \overline{\mathrm{span}\{b a \colon b \in B, a \in A\}}$, or equivalently, any (or equivalently, some) approximate identity of $B$ is also an approximate identity of $A$. 
    
	\begin{lemma}\label{lem8.2}
        Let $A$ be a C*-algebra and let $B$ be a $\sigma$-unital nondegenerate C*-subalgebra of $A$. Let $d$ be a natural number. Then $A$ is stable if and only if for any $b\in B_+$ and $\varepsilon>0$, there exist $e,e_0, \ldots, e_d, c_0, \ldots, c_d\in B_+^1$ and $y_0 , \ldots, y_d\in A$ such that for any $i,j \in \{0,\ldots,d\}$, we have $\|eb-b\|<\varepsilon$, 
        $e_i e = e$, 
        $\|y_iy_i^*-e_i\|<\varepsilon$, 
        $\|y_i^*y_ic_i-y_i^*y_i\|<\varepsilon$, 
        $\|y_i^2\|<\varepsilon$, 
        $e c_i=0$, 
        $(e_i + c_i) e_i = e_i$, 
        and
        $c_i c_j=0$ whenever $j\neq i$.
	\end{lemma}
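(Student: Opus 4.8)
The plan is to prove both implications by reducing to the basic characterization in Lemma~\ref{lem8.1} (together with Remark~\ref{rmk:lem8.1-sequence}). The first thing to record is that $A$ is automatically $\sigma$-unital: if $h\in B_+$ is strictly positive in $B$, then nondegeneracy of $B$ forces any approximate unit of $B$ — in particular a sequential one built from $h$ via functional calculus — to be an approximate unit of $A$, so $h$ is strictly positive in $A$ as well. This is what lets us apply Lemma~\ref{lem8.1} to $A$, and it also supplies the single element $h\in B$ whose functional calculus I will use to manufacture all the required elements of $B$.

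For the (easy) ``if'' direction I would verify the criterion of Lemma~\ref{lem8.1} directly. Given $a\in A_+$ with $\|a\|\le 1$ and $\varepsilon>0$, first use nondegeneracy of $B$ to pick $b\in B^1_+$ with $ba=_\varepsilon a=_\varepsilon ab$, then apply the hypothesis to this $b$ (with a sufficiently small tolerance) and keep only the $i=0$ data $e,e_0,c_0,y_0$. The relations $eb=_\varepsilon b$ and $e_0e=e$ propagate to show that $e$ and $e_0$ act as approximate units on $a$, and hence, by continuity of the functional calculus, on $a^{1/2}$. I then set $y:=a^{1/2}y_0$. Using $y_0y_0^*=_\varepsilon e_0$ and $e_0a^{1/2}\approx a^{1/2}$ one gets $yy^*=a^{1/2}(y_0y_0^*)a^{1/2}\approx a^{1/2}e_0a^{1/2}\approx a$; and combining $\|y_0-y_0c_0\|\le\sqrt\varepsilon$ (extracted from $\|y_0^*y_0c_0-y_0^*y_0\|<\varepsilon$) with $a^{1/2}c_0\approx 0$ (which follows from $ec_0=0$ and $ea^{1/2}\approx a^{1/2}$) gives that $\|y^2\|$ is small. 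Thus $A$ is stable. This half uses only a fraction of the hypotheses, which is expected: the full package is tailored to the transfer argument in the main theorem.

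The substance is the ``only if'' direction, and here the guiding constraint is that $e,e_0,\dots,e_d,c_0,\dots,c_d$ must all lie in the possibly small subalgebra $B$, whereas stability is a property of $A$. My plan is to read off every one of these elements from the functional calculus of the single strictly positive $h\in B$. Choose $g_0$ supported on $[\rho,1]$ with $e:=g_0(h)$ satisfying $eb=_\varepsilon b$ (possible for $\rho$ small, since $g_0(h)\to 1$ strictly), and take each $e_i:=g_1(h)$ for a fixed $g_1$ that equals $1$ on $\mathrm{supp}(g_0)$, so that $e_ie=e$ holds exactly. For the targets, partition the part of $\sigma(h)$ accumulating at $0$ (which is infinite because $B$, hence $h$, is non-unital) into $d+1$ \emph{infinite} sets and let $c_0,\dots,c_d\in B^1_+$ be spectral elements of $h$ supported on neighbourhoods of these sets, chosen with mutually disjoint supports and disjoint from $\mathrm{supp}(g_0)$; this makes $c_ic_j=0$ for $i\ne j$ and $ec_i=0$ automatic, and a routine adjustment of the shapes of $g_1$ and the $c_i$ near the cut arranges $(e_i+c_i)e_i=e_i$. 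It then remains to produce, for each $i$, an element $y_i\in A$ with $y_iy_i^*=_\varepsilon e_i$, $y_i^*y_i=_\varepsilon y_i^*y_ic_i$ and $\|y_i^2\|<\varepsilon$; this amounts to moving $e_i$ by a near-isometry into the part of the hereditary subalgebra generated by $c_i$ that is orthogonal to $e_i$, so that $y_i^*y_i$ and $y_iy_i^*$ become approximately orthogonal, which is exactly what forces $\|y_i^2\|<\varepsilon$.

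The main obstacle is the existence of these $y_i$, which is precisely a Cuntz-comparison statement: the ``head'' $e_i=g_1(h)$ must be Cuntz subequivalent in $A$ to the part of the ``tail'' element $c_i$ orthogonal to $e_i$. This is the one place where stability of $A$ is indispensable — in a stable C*-algebra the tail of a strictly positive element, being an infinite orthogonal sum of spectral pieces accumulating at $0$, absorbs its head — and converting this comparison into concrete $y_i\in A$ while keeping the \emph{exact} relations among the $B$-elements $e,e_i,c_i$ intact is the delicate part. I expect to handle the exact-versus-approximate bookkeeping by first producing approximately orthogonal moves in $A$ from Lemma~\ref{lem8.1}, and then ``rounding'' the spectral cuts of $h$ to enforce the exact orthogonality relations, absorbing the resulting perturbations into the $\varepsilon$-estimates on the $y_i$.
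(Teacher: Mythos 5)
Your ``if'' direction is fine and is essentially the paper's (both reduce to Lemma~\ref{lem8.1} via $y=b^{1/2}y_0$), but the ``only if'' direction has two genuine gaps. First, the configuration you describe is internally inconsistent: you take all $e_i=g_1(h)$ and demand the $c_i$ have supports disjoint from $\mathrm{supp}(g_1)$ and from each other. Then $c_ie_i=0$, so $(e_i+c_i)e_i=e_i$ forces $e_i^2=e_i$, i.e.\ $g_1(h)$ must be a projection --- impossible whenever $\sigma(h)$ is connected (e.g.\ $\sigma(h)=[0,1]$, which happens for natural choices of $B$). No ``routine adjustment of shapes'' can repair this: viewing everything as functions on $\sigma(h)$, the relation $(e_i+c_i)e_i=e_i$ forces $c_i=1-g_1$ on the ramp $\{0<g_1<1\}\cap\sigma(h)$, so all the $c_i$ agree and are nonzero there, contradicting $c_ic_j=0$. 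Note that the paper's relations do \emph{not} ask for $c_i\perp e_i$; its $c_i=f_{n_{i+1}}-f_{n_i^+}$ deliberately overlaps the ``ramp'' of $e_i=f_{n_i^+}$, with the telescoping identity $f_nf_{n+1}=f_n$ doing the work.

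Second, and more fundamentally, the existence of the $y_i$ rests on a false principle. ``In a stable C*-algebra the tail of a strictly positive element absorbs its head'' fails for prescribed tail pieces: take $A=B=\mathcal{K}\oplus\mathcal{K}$ (stable) and $h=(T_1,T_2)$ with $\sigma(T_1)=\{2^{-k}\}$, $\sigma(T_2)=\{3^{-k}\}$; if your partition separates the two sequences, then $c_1(h)=(0,c_1(T_2))$ is an infinite orthogonal sum of spectral pieces accumulating at $0$, yet it cannot Cuntz-dominate the head $(g_1(T_1),g_1(T_2))$, which is nonzero in the first summand. Stability (via Lemma~\ref{lem8.1}) gives an approximately orthogonal copy of $e_i$ \emph{somewhere} in $A$, not inside a hereditary subalgebra you fixed in advance; which spectral pieces of $h$ can absorb the head depends on the ideal/Cuntz structure of $A$, which is invisible in $\sigma(h)$, so no partition rule based on the spectrum alone can work, and you supply no other mechanism. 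The paper avoids comparison theory entirely by reversing the order of quantifiers: it fixes a telescoping sequential approximate unit $(f_n)$ of $B$ (also one of $A$, by nondegeneracy), applies Lemma~\ref{lem8.1} to $f_{n_i^+}$ to get $y_i$, and only \emph{then} chooses the next level $n_{i+1}$ so large that $f_{n_{i+1}}$ almost acts as a unit on $y_i^*y_i$ --- possible because $(f_n)$ is an approximate unit for all of $A$. The condition $\|y_i^*y_ic_i-y_i^*y_i\|<\varepsilon$ then follows because $f_{n_{i+1}}$ almost-units $y_i^*y_i$ while $f_{n_i^+}\approx y_iy_i^*$ is approximately orthogonal to $y_i^*y_i$ (as $\|y_i^2\|$ is small). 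If you want to keep your functional-calculus framing, you can run the same recursion with $f_n=\kappa_n(h)$ for telescoping functions $\kappa_n$, but the $c_i$ must be chosen \emph{after} the $y_i$, as annuli $(\kappa_{n_{i+1}}-\kappa_{n_i^+})(h)$, not fixed in advance as pieces of the tail.
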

    
	\begin{proof}
		Note that $A$ is also $\sigma$-unital since it has a $\sigma$-unital nondegenerate C*-subalgebra. Then, to prove the ``if'' direction, one observes that Lemma~\ref{lem8.1} holds true if one requires in addition that $b$ is taken from some fixed approximate unit of $A$. Applying this observation to a fixed approximate unit $(f_\lambda)_{\lambda \in \Lambda}$ of $B$, one quickly sees that our assumption verifies the condition in Lemma~\ref{lem8.1} (with a different $\varepsilon$) by taking $y := b^{\frac{1}{2}} y_0$. 

        Now we prove the ``only if '' direction, which is a bit more involved. 
        Since $B$ is $\sigma$-unital, it admits a countable approximate identity $\{f_n\}_{n\in\mathbb{N}}$ that satisfies $f_n f_{n+1}= f_n= f_{n+1} f_n$ for any $n \in\mathbb{N}$. This also forms an approximate identity of $A$ since $B$ is nondegenerate in $A$. 
        Now we fix arbitrary $b\in B_+$ and $\varepsilon>0$, and assume, without loss of generality, $\varepsilon<1$. We shall find a strictly increasing chain $n_0<n_1 <\ldots < n_d < n_{d+1}$ of natural numbers and elements $y_0, \ldots, y_d \in A$ by a recursive procedure. To begin with, we find $n_{0} \in \mathbb{N}$ such that $\|f_{n_{0}} b-b\|<\varepsilon$. Then for each $i \in \{0, \ldots, d\}$, supposing we have determined $n_{i} \in \mathbb{N}$, we go on to determine $y_i \in A$ and  $n_{i+1} \in \mathbb{N}$: writing $n_i^+$ for $n_i + 1$ for convenience and applying Lemma~\ref{lem8.1}, we find $y_i\in A$ such that $\|y_i y_i^*- f_{n_{i} ^+}\|<\varepsilon/5$ and $\|y_i^2\|<\varepsilon/5$; we then can find $n_{i+1} > n_i^+$ such that $\|y_i^*y_i f_{n_{i+1}}-y_i^*y_i\|<\varepsilon/5$. 
        
        Having completed the recursive definitions, we set 
        \[
            e := f_{n_0}, \quad e_i := f_{n_{i}^+} \quad \text{and} \quad c_i := f_{n_{i+1}} - f_{n_{i}^+} \quad \text{ for } i \in \{0, \ldots, d\} \; .
        \]
    It remains to verify they satisfy the desired conditions. It is clear from our construction that  $\|eb-b\|<\varepsilon$, 
        $e_i e = e$, 
        $\|y_iy_i^*-e_i\|<\varepsilon$, 
        $\|y_i^2\|<\varepsilon$, 
        for any $i \in \{0, \ldots, d\}$. 
        where $i,j=0,\ldots,d$ and $i\neq j$.
        We also check, for any $i \in \{0, \ldots, d\}$, that 	
        \[ec_i=f_{n_0} (f_{n_{i+1}} - f_{n_{i}^+})=f_{n_0}-f_{n_0}=0 \; , \]	
        \[
        (e_i + c_i) e_i = f_{n_{i+1}}  f_{n_{i}^+} = f_{n_{i}^+} = e_i \; ,
        \]
        \[c_ic_j=(f_{n_{i+1}} - f_{n_{i}^+})(f_{n_{j+1}} - f_{n_{j}^+})=f_{n_{i+1}} - f_{n_{i}^+}-f_{n_{i+1}} + f_{n_{i}^+}=0  \quad \text{ for any } j > i  \; , \]
	and  
		\begin{align*}
		&\|y_i^*y_ic_i-y_i^*y_i\|=\|y_i^*y_i(f_{n_{i+1}} - f_{n_{i}^+})-y_i^*y_i\|\\
		&=\|(y_i^*y_i f_{n_{i+1}}-y_i^*y_i) - y_i^*y_i(f_{n_{i}^+} - y_iy_i^* )-y_i^*y_iy_iy_i^* \|\\
		&<\varepsilon/5+\|y_i\|^2\varepsilon/5+\|y_i\|^2\varepsilon/5\\
		&< \varepsilon/5+2\varepsilon/5+2\varepsilon/5=\varepsilon \; .
	\end{align*}
    This completes the proof.
	\end{proof}

	\par Now, we proceed to prove the main theorem in this section.

	\begin{theorem}\label{thm: stable}
		Let $G$ be a second countable residually compact group with a non-open cocompact closed  subgroup $H$ (in particular, $G$ is not discrete). Let $A$ be a separable C*-algebra and let $\alpha\colon  G\to {\rm Aut}(A)$ a continuous group action. If ${\rm dim}_{\rm Rok}(\alpha,H)<\infty$, then $A\rtimes_\alpha G$ is stable. 
	\end{theorem}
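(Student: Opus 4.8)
The plan is to verify the local characterization of stability from Lemma~\ref{lem8.1}, in the refined form of Lemma~\ref{lem8.2}, for the $\sigma$-unital C*-algebra $A\rtimes_\alpha G$, by importing ``stability witnesses'' from a model crossed product. The model is $C(G/H)\rtimes_\lambda G$, where $\lambda$ is the $G$-shift; since $H$ is cocompact, $G/H$ is compact, so $C(G/H)$ is unital and $C(G/H)\rtimes_\lambda G$ contains the canonical copy of $C^*(G)$. The key structural input is that $C(G/H)\rtimes_\lambda G$ is \emph{stable}: because $H$ is \emph{not} open, the homogeneous space $G/H$ is infinite, and Green's imprimitivity theorem realizes $C(G/H)\rtimes_\lambda G$ as the compacts on a Hilbert $C^*(H)$-module which, $G/H$ being infinite, absorbs the standard module, whence $C(G/H)\rtimes_\lambda G\cong C^*(H)\otimes\mathcal{K}$ is stable (in the compact case $H=\{1_G\}$ this is just $C(G)\rtimes G\cong\mathcal{K}(L^2(G))$). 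Moreover $C^*(G)$ is a $\sigma$-unital nondegenerate C*-subalgebra of the model, and it is \emph{non}-unital precisely because $G$ is non-discrete (forced by $H$ non-open); this non-unitality is what will supply the mutually orthogonal elements demanded by Lemma~\ref{lem8.2}.

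First I would build the transfer maps. Writing $d=\dim_{\mathrm{Rok}}(\alpha,H)<\infty$ and invoking Proposition~\ref{prop5.1}, fix equivariant c.p.c.\ order zero maps $\psi_0,\dots,\psi_d\colon C(G/H)\to A_\infty^{(\alpha)}\cap A'$ with $\sum_l\psi_l(1)=1$. By functoriality of the full crossed product for order zero maps (Lemma~\ref{lem2.31}) together with the canonical map $A_\infty^{(\alpha)}\rtimes_{\alpha_\infty}G\to(A\rtimes_\alpha G)_\infty$ of Lemma~\ref{lem2.21}, these induce c.p.c.\ order zero maps
\[
\Psi_l\colon C(G/H)\rtimes_\lambda G\longrightarrow (A\rtimes_\alpha G)_\infty,\qquad l=0,\dots,d.
\]
The crucial properties, both flowing from equivariance of the $\psi_l$ and the group-multiplication structure of the crossed product, are that each $\Psi_l$ is a bimodule map over $C^*(G)$ (with respect to $C^*(G)\hookrightarrow\mathcal{M}(C(G/H)\rtimes_\lambda G)$ and $C^*(G)\hookrightarrow\mathcal{M}((A\rtimes_\alpha G)_\infty)$), and that $\sum_l\Psi_l$ restricts on $C^*(G)$ to the canonical embedding $\iota$. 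Writing $h_l:=\Psi_l(1)$ for the order zero support, it follows that the $h_l$ commute with $\iota(C^*(G))$ and satisfy $\sum_l h_l=1$.

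Next I would apply the ``only if'' half of Lemma~\ref{lem8.2} to the stable algebra $C(G/H)\rtimes_\lambda G$ with subalgebra $B=C^*(G)$ and natural number $d$. For a given positive element (reduced, via a fixed approximate unit of $C^*(G)$, to an element of $C^*(G)_+$) and $\varepsilon>0$, this produces $e,e_i,c_i\in C^*(G)_+^1$ and $y_i\in C(G/H)\rtimes_\lambda G$ such that the $c_i$ are mutually orthogonal and orthogonal to $e$, with $y_i$ approximately right-supported on $c_i$, $y_iy_i^*\approx e_i$, and $y_i^2\approx 0$. I would then transfer these by setting $Y:=\sum_{l=0}^d\Psi_l(y_l)$, matching the $l$-th tower with the $l$-th witness. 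Using the bimodule identity $\Psi_l(y_l)\approx\Psi_l(y_l)\iota(c_l)$ and its adjoint together with $c_lc_{l'}=0$ for $l\neq l'$, the off-diagonal terms of $YY^*$ vanish in the limit, while a parallel computation controls $Y^2$ via $y_l^2\approx 0$; the resulting approximate relations for $Y$ are then matched against the retained data $e,e_i,c_i\in C^*(G)\subseteq A\rtimes_\alpha G$ to verify the hypotheses of the ``if'' direction of Lemma~\ref{lem8.2} for $A\rtimes_\alpha G$. The perturbation remark after Lemma~\ref{lem8.1} lets us push $Y$, a priori in $(A\rtimes_\alpha G)_\infty$, down to $A\rtimes_\alpha G$.

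The main obstacle is precisely the reconciliation of the order zero supports $h_l$. Because the towers need not commute, the $h_l$ are positive contractions with $\sum_l h_l=1$ but are \emph{not} mutually orthogonal, so the diagonal computation delivers $YY^*\approx\sum_l h_l^2\,\iota(e_l)$ rather than a clean $\iota(e_l)$; and the orthogonality of the $c_l$ that kills the cross terms simultaneously prevents the off-diagonal contributions $\sum_{l\neq l'}h_lh_{l'}$ from restoring the missing mass. Overcoming this is the heart of the argument and is exactly where the interplay between the $(d+1)$ mutually orthogonal witnesses in $C^*(G)$ and the bimodule structure of the $\Psi_l$ must be exploited, rather than any commutativity. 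A secondary technical point, when $A$ is non-unital, is that $C^*(G)$ no longer sits inside $A\rtimes_\alpha G$; there one replaces it by a suitable $\sigma$-unital nondegenerate subalgebra and uses an almost-invariant approximate unit (Lemma~\ref{lem7.6}) to relate general positive elements of $A\rtimes_\alpha G$ to the $C^*(G)$-side data.
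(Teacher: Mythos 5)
Your setup is the same as the paper's: the model algebra $C(G/H)\rtimes_\lambda G\cong C^*(H)\otimes \mathcal{K}(L^2(G/H))$, stability via Green's imprimitivity and non-openness of $H$, the nondegenerate $\sigma$-unital copy of $C^*(G)$ inside it, the witnesses from the ``only if'' half of Lemma~\ref{lem8.2}, and the transfer to $(A\rtimes_\alpha G)_\infty$ via the crossed-product order zero maps of Lemma~\ref{lem2.31}, Lemma~\ref{lem2.21}, exploited as $C^*(G)$-bimodule maps. But there is a genuine gap, and you have located it yourself without closing it: the diagonal terms of $YY^*$ come out as $\sum_l h_l^2\,\iota(e_l)$ with $h_l=\Psi_l(1)$, and since $\sum_l h_l=1$ only gives $\sum_l h_l^2\leq 1$ (with no orthogonality or commutation available to recover the defect), the computation does not produce $\iota(e_l)$-type mass adding up to the target element. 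Writing ``overcoming this is the heart of the argument and is exactly where the interplay \ldots must be exploited'' is a statement of the problem, not a proof; as written, the argument cannot be completed.

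The paper's resolution is a short but essential trick you are missing: before forming the crossed-product maps, replace the Rokhlin maps $\varphi_0,\dots,\varphi_d$ by their square roots $\psi_l=\varphi_l^{1/2}$, using the functional calculus for order zero maps (\cite[Corollary~4.2]{WZ09}). These are still equivariant c.p.c.\ order zero, but now satisfy $\psi_0(1)^2+\cdots+\psi_d(1)^2=1$ rather than $\sum_l\psi_l(1)=1$. Since each $\psi_l$ is equivariant and $1_{C(G/H)}$ is shift-invariant, $\psi_l(1)$ is $\tilde{\alpha}_\infty$-invariant and hence commutes with the image of $C^*(G)$ in the multiplier algebra; combined with the bimodule property, the $l$-th diagonal term becomes $\psi_l(1)^2\,\theta_{\tilde{\alpha}_\infty}\bigl(g e_l(e_l+c_l)g\bigr)$, and the relations from Lemma~\ref{lem8.2} ($(e_l+c_l)e_l=e_l$, $e_l e=e$, $\|eg-g\|<\varepsilon$) reduce each such term to approximately $\psi_l(1)^2\,\theta_{\tilde{\alpha}_\infty}(g)^2$, so the sum over $l$ is approximately $\theta_{\tilde{\alpha}_\infty}(g)^2$ exactly because the squares sum to $1$. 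Note also that the witnesses $e_l$ need not be transferred to the target algebra at all: after sandwiching by $\theta(g)$ one verifies the elementary criterion of Lemma~\ref{lem8.1} (with the sequence-algebra refinement) directly for $b$, using $y=b^{1/2}a\theta_\alpha(g)\sum_l\phi(x_l)$, where $a\in A_+^1$ and $g\in C^*(G)_+^1$ are chosen with $\|b^{1/2}a\theta_\alpha(g)-b^{1/2}\|<\varepsilon$; this one-line factorization also disposes of your ``secondary technical point'' about non-unital $A$ more cleanly than an invariant approximate unit argument.
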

    
    \begin{proof}
    	Because $A$ is separable and $G$ is second countable, $A\rtimes_\alpha G$ is $\sigma$-unital. We will use Lemma~\ref{lem8.1} and Remark~\ref{rmk:lem8.1-sequence} to prove that it is stable. To this end, we fix  $b\in (A\rtimes_\alpha G)_+$ and $\varepsilon>0$. Without loss of generality, we may assume $\|b\|\leq 1$ and $\varepsilon<1$. 
   There exist $a\in A_+^1$ (viewed as an element in $\mathcal{M}(A \rtimes_{\alpha} G)$) and $g\in C^*(G)_+^1$ such that 
   \begin{equation}\label{Stable: bag=b}
       \|b^{\frac{1}{2}}a\theta_{\alpha}(g)-b^{\frac{1}{2}}\|<\varepsilon
   \end{equation}
   where $\theta_{\alpha}$ denotes the canonical homomorphism from $C^*(G)$ to $\mathcal{M}(A\rtimes_\alpha G)$. 
        
    	\par       
        As $G/H$ is compact, the unital embedding of $\mathbb{C}$ in $C(G/H)$ induces a canonical homomorphism $C^*(G)\to C(G/H)\rtimes_\lambda G$, which is in fact injective, because for any unitary representation $\rho$ of $G$, the embedding of the trivial representation of $G$ as the constants in the quasi-regular representation $\lambda_{G/H} \colon G \to U(L^2(G/H))$ gives rise to a containment $\rho \leq \rho \otimes \lambda_{G/H}$ of unitary representations of $G$, while $\rho \otimes \lambda_{G/H}$ is covariant with the standard representation of $C(G/H)$ on $L^2(G/H)$ and thus extends to a representation of $C(G/H)\rtimes_\lambda G$. Hence we may view $C^*(G)$ as a subalgebra of $C(G/H)\rtimes_\lambda G$ that clearly is nondegenerate. It is also $\sigma$-unital as $G$ is second countable. 
        
        By Green's imprimitivity, $C(G/H)\rtimes_\lambda G\cong C^*(H)\otimes K(L^2(G/H))$, which shows it is stable, since $H$ is not open. Therefore, we can regard $g$ as a positive element in $C^*(G)\subseteq C(G/H)\rtimes_\lambda G$ and use Lemma~\ref{lem8.2} to obtain 
        $e,e_0, \ldots, e_d, c_0, \ldots, c_d\in C^*(G)_+^1$ and $y_0 , \ldots, y_d\in C(G/H)\rtimes_\lambda G$ such that
        for any $i,j \in \{0,\ldots,d\}$, we have 
        \begin{gather}
            \|eg-g\|<\varepsilon \label{Stable: eg=g}\\
            e_i e = e\label{Stable: ee_i=e}\\
            \|y_iy_i^*-e_i\|<\varepsilon\label{Stable: y_iy*_i=e_i}\\
            \|y_i^*y_ic_i-y_i^*y_i\|<\varepsilon\label{Stable: y*_iy_ic_i=y*_iy_i}\\
            \|y_i^2\|<\varepsilon\label{Stable: y_i^2=0}\\
            e c_i=0\label{Stable: ec_i=0}\\
            (e_i + c_i) e_i = e_i\label{Stable: (e_i + c_i) e_i = e_i}\\
            c_i c_j=0\;\text{whenever}\;j\neq i \label{Stable: c_i c_j=0}.
        \end{gather}
        It follows that
        \begin{gather}
            \|y_i\|<\sqrt{1+\varepsilon}<2 \label{Stable: |y_i|<2}\\
            \|y_ic_i-y_i\| < 2\sqrt{\varepsilon}\label{Stable: y_ic_i=y_i}\\
            \|(e_i+c_i)y_i-y_i\|<3\sqrt{\varepsilon}\label{Stable: (e_i+c_i)y_i=y_i}
        \end{gather}
             
    	\par We write $d := {\rm dim}_{\rm Rok}(\alpha,H) < \infty$ and denote the $G$-shift on $G/H$ by $\lambda$. Then there exist $\lambda$-$\tilde{\alpha}_\infty$ equivariant c.p.c.\ order zero maps $\varphi_0,\ldots,\varphi_d\colon C(G/H)\to F^{(\alpha)}_\infty(A)$ with  $\varphi_0(1)+\cdots+\varphi_d(1)=1$. By \cite[Corollary~4.2]{WZ09}, for each $i=0,\ldots,d$, we define $\psi_i=\varphi_i^{\frac{1}{2}}$, and then we obtain $\lambda$-$\tilde{\alpha}_\infty$ equivariant c.p.c.\ order zero maps $$\psi_0,\ldots,\psi_d\colon C(G/H)\to F^{(\alpha)}_\infty(A)$$ with  $\psi_0(1)^2+\cdots+\psi_d(1)^2=1$. Hence, by Lemma~\ref{lem2.31} there are induced c.p.c.\ order zero maps $$\psi_0\rtimes G,\ldots,\psi_d\rtimes G\colon C(G/H)\rtimes_\lambda G\to F^{(\alpha)}_\infty(A)\rtimes_{\tilde{\alpha}_\infty}G \; ,$$
        which are also $C^*(G)$-bimodule maps in the sense that for any $i \in \{0,\ldots,d\}$, any $x \in C^*(G)$ and any $y \in C(G/H)\rtimes_\lambda G$, we have 
        \[
            (\psi_i\rtimes G)(x y) = \theta_{\tilde{\alpha}_\infty}(x) \ (\psi_i\rtimes G)(y) \quad \text{and} \quad (\psi_i\rtimes G)(y x) =  (\psi_i\rtimes G)(y) \ \theta_{\tilde{\alpha}_\infty}(x) \; ,
        \]
        where $\theta_{\tilde{\alpha}_\infty}$ denotes the canonical homomorphism from $C^*(G)$ to $\mathcal{M}(F^{(\alpha)}_\infty(A)\rtimes_{\tilde{\alpha}_\infty}G)$. 
        It follows that for any different $i, j \in \{0, \ldots, d\}$, we have
        \begin{equation}\label{Stable: psi_i(y_i)psi_jy_j=0}
            \begin{aligned}
                &\|(\psi_i\rtimes G)(y_i)\ (\psi_j\rtimes G)(y_j^*)\|\\
    		&\overset{\eqref{Stable: y_ic_i=y_i}}{<} 8\sqrt{\varepsilon}+\|(\psi_i\rtimes G)(y_ic_i)\ (\psi_j\rtimes G)(c_jy_j^*)\| \\
    		&=8\sqrt{\varepsilon}+\|(\psi_i\rtimes G)(y_i) \ \theta_{\tilde{\alpha}_\infty}(c_ic_j)\ (\psi_j\rtimes G)(y_j^*)\|\\
    		&=8\sqrt{\varepsilon}.
            \end{aligned}
        \end{equation}

    Then, \begin{align*}
    	&\left\|\theta_{\tilde{\alpha}_\infty}(g)\left(\sum_{i=0}^d(\psi_i\rtimes G)(y_i)\right)\left(\sum_{i=0}^d(\psi_i\rtimes G)(y_i)\right)^*\theta_{\tilde{\alpha}_\infty}(g)-\theta_{\tilde{\alpha}_\infty}(g)^2\right\|\\
    	&\overset{\eqref{Stable: psi_i(y_i)psi_jy_j=0}}{<}8d(d+1)\sqrt{\varepsilon}+	\left\|\theta_{\tilde{\alpha}_\infty}(g)\left(\sum_{i=0}^d(\psi_i\rtimes G)(y_i)(\psi_i\rtimes G)(y_i^*)\right)\theta_{\tilde{\alpha}_\infty}(g)-\theta_{\tilde{\alpha}_\infty}(g)^2\right\|\\
    	&\overset{\eqref{Stable: (e_i+c_i)y_i=y_i}}{<}(8d+6)(d+1)\sqrt{\varepsilon}+ \left\|\theta_{\tilde{\alpha}_\infty}(g)\left(\sum_{i=0}^d(\psi_i\rtimes G)(y_iy_i^*)(\psi_i\rtimes G)(e_i+c_i)\right)\theta_{\tilde{\alpha}_\infty}(g)-\theta_{\tilde{\alpha}_\infty}(g)^2\right\|\\
    	&\overset{\eqref{Stable: y_iy*_i=e_i}}{<}(8d+6)(d+1)\sqrt{\varepsilon}+2(d+1)\varepsilon+ \left\|\theta_{\tilde{\alpha}_\infty}(g)\left(\sum_{i=0}^d(\psi_i\rtimes G)(e_i)(\psi_i\rtimes G)(e_i+c_i)\right)\theta_{\tilde{\alpha}_\infty}(g)-\theta_{\tilde{\alpha}_\infty}(g)^2\right\|\\
    	&=(8d+6)(d+1)\sqrt{\varepsilon}+2(d+1)\varepsilon+\left\|\sum_{i=0}^d\psi_i(1)^2\theta_{\tilde{\alpha}_\infty}(ge_i(e_i+c_i)g-g^2)\right\|\\
        &\overset{\eqref{Stable: (e_i + c_i) e_i = e_i}}{=} (8d+6)(d+1)\sqrt{\varepsilon}+2(d+1)\varepsilon+\left\|\sum_{i=0}^d\psi_i(1)^2\theta_{\tilde{\alpha}_\infty}(ge_ig-g^2)\right\|\\
        &\overset{\eqref{Stable: eg=g}}{<} (8d+6)(d+1)\sqrt{\varepsilon}+4(d+1)\varepsilon+\left\|\sum_{i=0}^d\psi_i(1)^2\theta_{\tilde{\alpha}_\infty}(gee_ieg-g^2)\right\|\\
    	&\overset{\eqref{Stable: ee_i=e}}{<}(8d+6)(d+1)\sqrt{\varepsilon}+4(d+1)\varepsilon+\|geeg-g^2\|\\
    	&\overset{\eqref{Stable: eg=g}}{<} (8d+6)(d+1)\sqrt{\varepsilon}+(4d+6)\varepsilon.
        \end{align*}
    And \begin{align*}
	&\left\|\theta_{\tilde{\alpha}_\infty}(g)\left(\sum_{i=0}^d(\psi_i\rtimes G)(y_i)\right)\theta_{\tilde{\alpha}_\infty}(g) \right\|\\
	&\overset{\eqref{Stable: y_ic_i=y_i}}{<}2(d+1)\sqrt{\varepsilon}+\left\|\theta_{\tilde{\alpha}_\infty}(g)\left(\sum_{i=0}^d(\psi_i\rtimes G)(y_ic_i)\right)\theta_{\tilde{\alpha}_\infty}(g)\right\|\\
	&\overset{\eqref{Stable: eg=g}}{<}2(d+1)\sqrt{\varepsilon}+2(d+1)\varepsilon+\left\|\theta_{\tilde{\alpha}_\infty}(g)\left(\sum_{i=0}^d(\psi_i\rtimes G)(y_i) \ \theta_{\tilde{\alpha}_\infty}(c_i)\right)\theta_{\tilde{\alpha}_\infty}(eg)\right\|\\
	&\overset{\eqref{Stable: ec_i=0}}{=}2(d+1)\sqrt{\varepsilon}+2(d+1)\varepsilon.
        \end{align*}

   \par  
   We lift each $(\psi_i\rtimes G)(y_i)$ to $x_i\in (A^{(\alpha)}_\infty\cap A') \rtimes_{\alpha_\infty} G$
   and denote the canonical homomorphism from $C^*(G)$ to $\mathcal{M}(A_\infty^{(\alpha)} \rtimes_{\alpha_\infty}G)$ by $\theta_{\alpha_\infty}$.  
   Viewing $a\in A$ as an element of $\mathcal{M}(A^{(\alpha)}_\infty\rtimes_{\alpha_\infty} G)$, we have 
   \begin{equation}\label{Stable: agxxga=agga}
       \left\|a\theta_{\alpha_\infty}(g)\left(\sum_{i=0}^dx_i\right)\left(\sum_{i=0}^dx_i^*\right)\theta_{\alpha_\infty}(g)a-a\theta_{\alpha_\infty}(g)^2a \right\|<(8d+6)(d+1)\sqrt{\varepsilon}+(4d+6)\varepsilon,
   \end{equation}
   and
   \begin{equation}\label{Stable: agxga=0}
       \left\|a\theta_{\alpha_\infty}(g)\left(\sum_{i=0}^dx_i\right)\theta_{\alpha_\infty}(g)a\right\|<2(d+1)\sqrt{\varepsilon}+2(d+1)\varepsilon.
   \end{equation}

    	\par By Lemma~\ref{lem2.21}, there is a natural $\ast$-homomorphism  $\phi\colon  A_\infty^{(\alpha)} \rtimes_{\alpha_\infty}G \to (A\rtimes_\alpha G)_\infty$. Viewing $b,a\theta_{\alpha}(g)\in A\rtimes_\alpha G$ as elements in $(A\rtimes_\alpha G)_\infty$ via the canonical embedding, we set
    	\[y=b^{\frac{1}{2}}a\theta_{\alpha}(g)\sum_{i=0}^d\phi(x_i)\in (A\rtimes_\alpha G)_\infty .\]
    	Then
    	\begin{align*}
    		\|yy^*-b\|&=\left\|b^{\frac{1}{2}}a\theta_{\alpha}(g)\left(\sum_{i=0}^d\phi(x_i)\right)\left(\sum_{i=0}^d\phi(x_i^*)\right)\theta_{\alpha}(g)ab^{\frac{1}{2}}-b\right\|\\
    		&\overset{\eqref{Stable: agxxga=agga}}{<}(8d+6)(d+1)\sqrt{\varepsilon}+(4d+6)\varepsilon+\|b^{\frac{1}{2}}a\theta_{\alpha}(g)\theta_{\alpha}(g)ab^{\frac{1}{2}}-b\|\\
    		&\overset{\eqref{Stable: bag=b}}{<}(8d+6)(d+1)\sqrt{\varepsilon}+(4d+8)\varepsilon\\
            &<(8d^2+18d+14)\sqrt{\varepsilon}.
    	\end{align*}
        In particular, $\|y^2\|\leq 1+(8d^2+18d+14)\sqrt{\varepsilon}$. 
    We further have
    \begin{align*}
    	\|y^2\|&= \left\|b^{\frac{1}{2}}a\theta_{\alpha}(g)\left(\sum_{i=0}^d\phi(x_i)\right)b^{\frac{1}{2}}a\theta_{\alpha}(g)\left(\sum_{i=0}^d\phi(x_i)\right) \right\|\\
    	&\overset{\eqref{Stable: bag=b}}{<} (d+1)\|y\|\varepsilon+ \left\|b^{\frac{1}{2}}a\theta_{\alpha}(g)\left(\sum_{i=0}^d\phi(x_i)\right)\theta_{\alpha}(g)ab^{\frac{1}{2}}a\theta_{\alpha}(g)\left(\sum_{i=0}^d\phi(x_i)\right) \right\|\\
        &\overset{\eqref{Stable: agxga=0}}{<} (d+1)\|y\|\varepsilon+ (2(d+1)\sqrt{\varepsilon}+2(d+1)\varepsilon)\|y\|\\
    	&<(5(d+1)\sqrt{\varepsilon}) \sqrt{1+(8d^2+18d+14)\sqrt{\varepsilon}} .
    \end{align*}

  Therefore, by Lemma~\ref{lem8.1} and Remark~\ref{rmk:lem8.1-sequence}, $A\rtimes_\alpha G$ is stable. 
    \end{proof}

	From the above theorem, we also have an immediate corollary concerning residually compact groups.
    
	\begin{corollary}
			Let $G$ be a second countable residually compact group with a non-open cocompact closed  subgroup (in particular, $G$ is not discrete). Let $A$ be a separable C*-algebra and $\alpha\colon  G\to {\rm Aut}(A)$ a continuous group action. If ${\rm dim}_{\rm Rok}(\alpha)<\infty$, then $A\rtimes_\alpha G$ is stable. 
	\end{corollary}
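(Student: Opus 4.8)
The plan is to observe that this corollary is a direct specialization of Theorem~\ref{thm: stable}, so essentially no new work is required beyond unwinding the definition of ${\rm dim}_{\rm Rok}(\alpha)$. Recall that this quantity is defined as the supremum
\[
{\rm dim}_{\rm Rok}(\alpha)=\sup\{{\rm dim}_{\rm Rok}(\alpha,H)\colon H\leq G,\ H\ \text{is closed cocompact}\},
\]
taken over \emph{all} closed cocompact subgroups $H$ of $G$. In particular, for every individual closed cocompact subgroup $H$, one has the pointwise inequality ${\rm dim}_{\rm Rok}(\alpha,H)\leq {\rm dim}_{\rm Rok}(\alpha)$.

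First I would invoke the standing hypothesis of the corollary to fix a non-open cocompact closed subgroup $H\leq G$; such an $H$ exists precisely because $G$ is assumed to have one. For this particular $H$, the displayed inequality gives ${\rm dim}_{\rm Rok}(\alpha,H)\leq {\rm dim}_{\rm Rok}(\alpha)<\infty$, where finiteness is exactly the assumption we are given. At this point every hypothesis of Theorem~\ref{thm: stable} is in force: $G$ is a second countable residually compact group equipped with the non-open cocompact closed subgroup $H$, the algebra $A$ is separable, the action $\alpha$ is point-norm continuous, and ${\rm dim}_{\rm Rok}(\alpha,H)<\infty$. Applying Theorem~\ref{thm: stable} directly to this $H$ then yields that $A\rtimes_\alpha G$ is stable, which completes the argument.

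There is no genuine obstacle here, as the deduction is purely formal; all the substantive analytic content—the passage through the stability characterizations of Lemma~\ref{lem8.1} and Lemma~\ref{lem8.2}, Green's imprimitivity, and the $C^*(G)$-bimodule structure of the induced order zero maps—has already been carried out in the proof of Theorem~\ref{thm: stable}. The only point worth emphasizing is that the corollary strengthens the hypothesis from ``finite Rokhlin dimension relative to some chosen non-open cocompact $H$'' to ``finite Rokhlin dimension relative to every closed cocompact subgroup,'' so the implication is in the trivial direction: the stronger uniform finiteness assumption ${\rm dim}_{\rm Rok}(\alpha)<\infty$ automatically supplies the weaker relative finiteness needed to trigger the theorem.
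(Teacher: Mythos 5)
Your proposal is correct and is precisely the paper's (implicit) argument: the paper presents this corollary as an immediate consequence of Theorem~\ref{thm: stable}, obtained by fixing the hypothesized non-open cocompact closed subgroup $H$ and noting that ${\rm dim}_{\rm Rok}(\alpha,H)\leq {\rm dim}_{\rm Rok}(\alpha)<\infty$ by the definition of ${\rm dim}_{\rm Rok}(\alpha)$ as a supremum over all closed cocompact subgroups. Nothing further is needed.
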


\section{Tube dimension vs. Rokhlin dimension}\label{sec9}

In \cite{HSWW17}, the Rokhlin dimension and the tube dimension of a flow have a mutually controlling relationship. Enstad, Favre and Raum extended the definition of tube dimension to locally compact groups in \cite{EFR23}. So in this section, we study the relationship with Rokhlin dimension and the tube dimension for actions of residually compact groups. We begin by recalling the definition of the tube dimension from \cite{EFR23}.

\begin{definition}[{\cite[Definition~3.1]{EFR23}}]
	Let $G$ be a locally compact group, and $X$ be a locally compact Hausdorff space. Let $G\curvearrowright X$ be an action and $K\subseteq G$ be some compact identity neighborhood. A \emph{$K$-slice} is a compact subset $E\subseteq X$ such that the map $K\times E\to X$ given by $(g,x)\mapsto gx$ is injective. The resulting image $KE$ in $X$ is called a \emph{tube}.
\end{definition}

From the definition, it is easy to see that the tube $KE$ is a compact set in $X$ homeomorphic to $K\times E$. 

\begin{definition}[{\cite[Definition~4.1]{EFR23}}]
	Let $G$ be a locally compact group, and let $X$ be a locally compact Hausdorff space. The \emph{tube dimension} of an action $G\curvearrowright X$, denoted by ${\rm dim}_{\rm tube}(G\curvearrowright X)$, is the least natural number $d$ such that for all compact subsets $K\subseteq G$ and $Y\subseteq X$ there is a family $\mathcal{U}$ of open sets of $X$ satisfying the following properties:
	\begin{enumerate}
		\item for all $x\in Y$ there is $U\in \mathcal{U}$ such that $Kx\subseteq U$;
		\item every $U\in \mathcal{U}$ is contained in a tube;
		\item the multiplicity of $\mathcal{U}$ is at most $d+1$.
	\end{enumerate}
If no such natural number $d$ exists, then we define ${\rm dim}_{\rm tube}(G\curvearrowright X)=\infty$.
\end{definition}

For convenience, we also recall some propositions related to the tube dimension, which are given in \cite{EFR23}. We say that a function $\varphi\colon X\to \mathbb{C}$ is $(K,\varepsilon)$-F{\o}lner if $|\varphi(gx)-\varphi(x)|<\varepsilon$, for every $g\in K$, $x\in X$, where $K$ is a compact subset of $G$. 

\begin{proposition}[{\cite[Proposition~5.4]{EFR23}}]\label{prop9.3}
	Let $G$ be an amenable locally compact second countable group, $X$ be a locally compact Hausdorff space, and $G\curvearrowright X$ be an action. Then the following are equivalent.
	\begin{enumerate}
		\item The tube dimension of $G\curvearrowright X$ is at most $d$.
		\item For every compact subset $K\subseteq G$, $\varepsilon>0$ and every compact subset $Y\subseteq X$ there is a finite partition of unity $\{\varphi_i\}_{i\in I}$ for $Y\subseteq X$ such that
		\begin{enumerate}[label=(2\alph*)]
			\item\label{tube: folner} $\varphi_i$ is $(K,\varepsilon)$-F{\o}lner for all $i\in I$;
			\item\label{tube: interior of a tube} $\varphi_i$ is supported in the interior of a tube;
			\item\label{tube: partition} there is a partition $I=I^0\sqcup\cdots\sqcup I^d$ such that for all $l\in \{0,\ldots,d\}$ and all $i,j\in I^l$ we have \[{\rm supp}(\varphi_i)\cap {\rm supp}(\varphi_j)=\emptyset. \]
		\end{enumerate}
	\end{enumerate}
	
\end{proposition}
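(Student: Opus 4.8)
The plan is to prove the two implications separately, with $(2)\Rightarrow(1)$ being essentially a thresholding argument and the bulk of the work going into $(1)\Rightarrow(2)$, where amenability enters decisively through Definition~\ref{def:amenable}. For $(2)\Rightarrow(1)$, I would fix compact sets $K\subseteq G$, $Y\subseteq X$, and apply (2) to this $K$, this $Y$, and $\varepsilon=\tfrac{1}{4(d+1)}$ to obtain a partition of unity $\{\varphi_i\}_{i\in I}$ satisfying \ref{tube: folner}--\ref{tube: partition}. The coloring \ref{tube: partition} forces at most $d+1$ of the $\varphi_i$ to be nonzero at any point (at most one per color class), so for $x\in Y$ the identity $\sum_i\varphi_i(x)=1$ yields an index $i$ with $\varphi_i(x)\geq\tfrac{1}{d+1}$. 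Setting $U_i:=\{\varphi_i>\tfrac{1}{2(d+1)}\}$ gives open sets contained in $\mathrm{supp}(\varphi_i)$, hence in tubes by \ref{tube: interior of a tube}; the Følner estimate \ref{tube: folner} gives $\varphi_i(gx)\geq\varphi_i(x)-\varepsilon>\tfrac{1}{2(d+1)}$ for $g\in K$, so $Kx\subseteq U_i$; and the multiplicity of $\{U_i\}$ is dominated by that of $\{\varphi_i>0\}$, which is at most $d+1$. This verifies all three clauses in the definition of tube dimension, so $\mathrm{dim}_{\mathrm{tube}}(G\curvearrowright X)\leq d$.

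For $(1)\Rightarrow(2)$, fix compact $K,Y$ and $\varepsilon>0$. The first ingredient is a \emph{convolution device} for manufacturing the sup-norm Følner property from the $L^1$-Følner property supplied by amenability. By Definition~\ref{def:amenable} I would choose $f\in C_c(G)_+$ with $\int_G f=1$ and $\|f(h\cdot-)-f\|_1<\varepsilon$ for all $h\in K$, and set $K_0=\mathrm{supp}(f)$. Then for any bounded $\rho\colon X\to\mathbb{C}$ the averaged function $\Phi(x):=\int_G f(g)\rho(g^{-1}x)\,dg$ satisfies, after the substitution $g\mapsto hg$, the estimate $|\Phi(hx)-\Phi(x)|=|\int_G (f(hg)-f(g))\rho(g^{-1}x)\,dg|\leq\|f(h\cdot-)-f\|_1\,\|\rho\|_\infty<\varepsilon\|\rho\|_\infty$ for $h\in K$, so $\Phi$ is $(K,\varepsilon\|\rho\|_\infty)$-Følner. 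The second ingredient is the topological input: I would apply the hypothesis $\mathrm{dim}_{\mathrm{tube}}(G\curvearrowright X)\leq d$ to the compact set $K_0^{-1}Y$ to obtain a finite open cover of multiplicity at most $d+1$ whose members lie in tubes, fix a subordinate partition of unity $\{\rho_i\}$ on $K_0^{-1}Y$, and then recolor. Since the cover has multiplicity $\leq d+1$ its nerve $\mathcal N$ has dimension $\leq d$, and the canonical map $X\to|\mathcal N|$ together with the barycentric subdivision $\mathcal N'$ produces a colored partition of unity $\{\tilde\rho_j\}_{j\in J}$: the vertices of $\mathcal N'$ are the simplices of $\mathcal N$, colored by dimension $0,\ldots,d$, and since any flag contains at most one simplex of each dimension, the open stars of two distinct same-color barycenters are disjoint. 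Pulling back, one gets $J=J^0\sqcup\cdots\sqcup J^d$ with $\mathrm{supp}(\tilde\rho_j)\cap\mathrm{supp}(\tilde\rho_{j'})=\emptyset$ for distinct $j,j'$ of the same color, each $\tilde\rho_j$ supported in an intersection of the original supports and hence in a tube.

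It remains to install the Følner property \ref{tube: folner} on the colored partition by convolving against $f$, and this is the step I expect to be the main obstacle, because it must be done \emph{without} destroying the coloring \ref{tube: partition} or the support-in-a-tube condition \ref{tube: interior of a tube}. Naive convolution $x\mapsto\int_G f(g)\tilde\rho_j(g^{-1}x)\,dg$ smears supports along orbits: the enlargement from $\mathrm{supp}(\tilde\rho_j)$ to $K_0\cdot\mathrm{supp}(\tilde\rho_j)$ can both inflate the multiplicity and merge two previously disjoint same-color supports. A useful observation is that the $G$-action preserves the transverse coordinate, since in a tube $T\cong K'\times E$ one has $g^{-1}(k e)=(g^{-1}k)e$, so the averaging moves only the group coordinate and never smears across slices; consequently any disjointness that is \emph{transverse} survives the convolution, and the support enlargement is absorbed into a tube by taking the cover's slices to be slices for the larger compact set $K_0K'$ (thinner slices remain slices for larger compacta). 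The genuinely delicate point is to guarantee that the disjointness produced by the barycentric coloring can be realized transversally, or else that same-color pieces that are separated only along orbits are separated by more than the diameter of $K_0$ there; arranging the cover and the choice of $f$ so that these two separation scales are compatible, and bookkeeping the uniform choice of slices, is where the argument needs the most care. Once this is achieved, $\sum_j\varphi_j\equiv1$ on $Y$ follows from $\int_G f=1$ and $\sum_j\tilde\rho_j\equiv1$ on $K_0^{-1}Y$, and the three clauses \ref{tube: folner}--\ref{tube: partition} hold, completing the equivalence.
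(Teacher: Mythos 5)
You should first note that the paper does not prove this proposition at all: it is quoted verbatim from \cite[Proposition~5.4]{EFR23} and used as a black box in the proof of Theorem~\ref{thm: tubedim}, so there is no in-paper argument to compare against; your proposal has to stand on its own. Its first half does: the direction $(2)\Rightarrow(1)$ via thresholding is complete and correct. Taking $\varepsilon=\tfrac{1}{4(d+1)}$, the coloring (2c) guarantees at most one nonzero $\varphi_i$ per color class at each point, so some $\varphi_i(x)\geq\tfrac{1}{d+1}$ for $x\in Y$; the F{\o}lner estimate (2a) then pushes $Kx$ into $U_i=\{\varphi_i>\tfrac{1}{2(d+1)}\}$, and $U_i\subseteq\mathrm{supp}(\varphi_i)$ yields both the tube containment and the multiplicity bound $d+1$. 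This is the standard argument and there is nothing to object to.

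The direction $(1)\Rightarrow(2)$, however, has a genuine gap, which you yourself flag but do not close, and the two claims you offer to bridge it are incorrect. You color first (via the nerve and barycentric subdivision) and convolve afterwards, so each support is enlarged from $\mathrm{supp}(\tilde\rho_j)$ to $K_0\cdot\mathrm{supp}(\tilde\rho_j)$, threatening both (2b) and (2c). Your first rescue claim, that ``thinner slices remain slices for larger compacta,'' is false: a $K'$-slice $E$ need not be a $K_0K'$-slice, since injectivity of $K_0K'\times E\to X$ can fail through orbit recurrence no matter how small $E$ is (a periodic orbit whose period lies in $K_0K'$ meets $E$ at the same point with two distinct group coordinates). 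Your second claim, that the action ``never smears across slices'' because $g^{-1}(ke)=(g^{-1}k)e$, is only valid while $g^{-1}k$ remains in $K'$; once an orbit exits and re-enters the tube --- precisely the phenomenon the tube formalism exists to control --- there is no globally defined transverse coordinate, so transverse disjointness gives no protection. The standard repair, used in the flow antecedent \cite[Section~6]{HSWW17} and in \cite{EFR23}, inverts your order of operations: feed the \emph{enlarged $G$-compact set} (built from $K$ and $K_0=\mathrm{supp}(f)$, e.g.\ $K_0\cup K_0K$) into the tube-dimension hypothesis at the outset, shrink each cover member $U$ to its core $C_U=\{x\colon K_0x\subseteq U\}$, and convolve the indicator of the core; then $\mathrm{supp}\bigl(\int_G f(g)\mathbbm{1}_{C_U}(g^{-1}\,\cdot\,)\,dg\bigr)\subseteq K_0C_U\subseteq U$, so tube containment and same-color disjointness are inherited from the cover rather than destroyed, the enlarged Lebesgue property guarantees the cores still cover $Y$ with room to spare, and a final normalization (whose F{\o}lner degradation is controlled by the multiplicity bound) produces the partition of unity. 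Your observation that convolving exact partition functions preserves $\sum_j\varphi_j\equiv 1$ on $Y$ is a nice feature, but it cannot be combined with support control in the architecture you chose; without the shrink-before-convolve step the proof does not go through.
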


Using the proposition shown above, we can also demonstrate a close relationship between the tube dimension of a topological dynamical system and the Rokhlin dimension of the induced C*-dynamical system in the residually compact case.

\begin{theorem}\label{thm: tubedim}
	Let $G$ be an amenable residually compact group and let $X$ be a locally compact Hausdorff space. Let $G\curvearrowright X$ be an action and $\alpha\colon  G\to {\rm Aut}(C_0(X))$ be the associated action. Then
	\[{\rm dim}_{\rm Rok}(\alpha)\leq {\rm dim}_{\rm tube}(G\curvearrowright X). \]
\end{theorem}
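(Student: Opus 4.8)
The plan is to bound $\dim_{\mathrm{Rok}}(\alpha,H)\le d:=\dim_{\mathrm{tube}}(G\curvearrowright X)$ for every closed cocompact subgroup $H\le G$, which suffices by the definition of $\dim_{\mathrm{Rok}}(\alpha)$; I may assume $d<\infty$. Fixing such an $H$ (so that $G/H$ is compact and $C(G/H)$ is unital) and an arbitrary separable $\alpha$-invariant subalgebra $D\subseteq C_0(X)$, I would produce, as required by Definition~\ref{def main}, equivariant c.p.c.\ order zero maps $\varphi_0,\dots,\varphi_d\colon C(G/H)\to F^{(\alpha)}(D,(C_0(X))_\infty)$ with $\sum_l\varphi_l(1)=1$. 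The engine is Proposition~\ref{prop9.3}: since $G$ is amenable and second countable, tube dimension $\le d$ yields, for any compact $K\subseteq G$, any compact $Y\subseteq X$ and any $\varepsilon>0$, a finite partition of unity $\{\varphi_i\}_{i\in I}$ over $Y$ with each $\varphi_i$ being $(K,\varepsilon)$-F{\o}lner and supported in the interior of a tube $K_iE_i$, together with a splitting $I=I^0\sqcup\dots\sqcup I^d$ into classes of pairwise disjoint supports.

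The main new ingredient is a \emph{coordinate map} attached to each tube. For a tube $T=K'E$ (with $K'$ a compact identity neighbourhood and $E$ a $K'$-slice), the defining map $K'\times E\to T$, $(g,e)\mapsto ge$, is a continuous bijection from a compact space onto a Hausdorff space, hence a homeomorphism; composing its inverse with the coordinate projection onto $K'$ and the quotient map $K'\to G/H$ gives a continuous map $\kappa_T\colon T\to G/H$. For each $i$ I then set $\Theta_i(f):=\varphi_i\cdot(f\circ\kappa_{T_i})$, extended by zero off the tube; since $\varphi_i$ is compactly supported in the interior this lands in $C_0(X)$, and because $C_0(X)$ is commutative and $f_1f_2=0$ forces $\Theta_i(f_1)\Theta_i(f_2)=0$, each $\Theta_i$ is a c.p.c.\ order zero map. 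Summing over a disjoint-support class, $\phi_l:=\sum_{i\in I^l}\Theta_i$ is again c.p.c.\ and order zero, and $\sum_l\phi_l(1)=\sum_{i\in I}\varphi_i$ equals $1$ on $Y$.

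The crux is approximate equivariance, i.e.\ comparing $\alpha_g(\phi_l(f))(x)=\sum_{i\in I^l}\varphi_i(g^{-1}x)\,f(\kappa_{T_i}(g^{-1}x))$ with $\phi_l(\lambda_g f)(x)=\sum_{i\in I^l}\varphi_i(x)\,f(g^{-1}\kappa_{T_i}(x))$. The key local identity is that if $x=g_xe_x\in T_i$ with $g_x\in K_i'$ and $g^{-1}x$ still lies in $T_i$, then by uniqueness of the slice decomposition $g^{-1}x=(g^{-1}g_x)e_x$, so $\kappa_{T_i}(g^{-1}x)=g^{-1}\kappa_{T_i}(x)$ and the two $f$-values agree. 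When instead $g^{-1}x$ is pushed out of the tube, the corresponding $\alpha_g$-summand vanishes while the other summand is bounded by $\varphi_i(x)=|\varphi_i(x)-\varphi_i(g^{-1}x)|$, which is small by the F{\o}lner condition. Since within $I^l$ the supports are disjoint, at most two summands are nonzero at each $x$, and a short case analysis gives $\|\alpha_g(\phi_l(f))-\phi_l(\lambda_g f)\|\le 2\varepsilon\|f\|$ for $g\in K$. This is the step I expect to be the main obstacle, as it is where the tube geometry, the slice decomposition, and the F{\o}lner estimate must be combined just right.

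Finally I would run the construction along sequences $K_n\uparrow G$ (an exhaustion by compacts, available since $G$ is $\sigma$-compact), $\varepsilon_n\to0$, and $Y_n$ chosen so that a fixed dense sequence of $D$ is eventually $\varepsilon_n$-supported in $Y_n$, obtaining contractive maps $\phi_l^{(n)}$. Assembling $\psi_l(f):=\eta_{C_0(X)}\big((\phi_l^{(n)}(f))_n\big)\colon C(G/H)\to (C_0(X))_\infty$ yields, in the limit, honest c.p.c.\ order zero maps: order zero passes to the limit, the relative unit condition $\big(\sum_l\psi_l(1)\big)d=d$ for $d\in D$ follows from the choice of $Y_n$, and the equivariance estimate upgrades to the exact identity $\alpha_{\infty,g}\circ\psi_l=\psi_l\circ\lambda_g$ in $(C_0(X))_\infty$; as $g\mapsto\lambda_g f$ is norm-continuous, this also shows $\psi_l$ takes values in $(C_0(X))_\infty^{(\alpha)}$. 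Since $C_0(X)$ is commutative, $\psi_l(f)\in (C_0(X))_\infty\cap D'$ automatically, so composing with $\kappa_{D,C_0(X)}$ descends $\psi_0,\dots,\psi_d$ to equivariant c.p.c.\ order zero maps into $F^{(\alpha)}(D,(C_0(X))_\infty)$ with $\sum_l\varphi_l(1)=1$, as Definition~\ref{def main} demands. Taking the supremum over $H$ then gives $\dim_{\mathrm{Rok}}(\alpha)\le d$.
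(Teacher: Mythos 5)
Your proposal is correct and follows essentially the same route as the paper's proof: the coordinate map $\kappa_{T_i}$ (the paper's $\pi_i$, built from the slice homeomorphism $K_i\times E_i\cong B_i$ followed by projection to $K_i$ and the quotient $G\to G/H$), the maps $\Theta_i(f)=\varphi_i\cdot(f\circ\kappa_{T_i})$ summed over the disjoint-support classes $I^l$, and the two-case F{\o}lner estimate for approximate equivariance (same slice point vs.\ pushed out of the tube) are exactly the paper's construction, including the same appeal to uniqueness of the slice decomposition in the first case. The only difference is packaging: you assemble approximants along exhaustions $(K_n,\varepsilon_n,Y_n)$ into exact equivariant order zero maps landing in $F^{(\alpha)}(D,(C_0(X))_\infty)$ as in Definition~\ref{def main}, whereas the paper invokes the finitary characterization of Proposition~\ref{prop5.1}(3) and lets that proposition do the assembly.
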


\begin{proof}
	If the tube dimension of $G\curvearrowright X$ is infinity, there is nothing to prove. So, let us assume ${\rm dim}_{\rm tube}(G\curvearrowright X)=d$ for some positive integer $d$, and we show that ${\rm dim}_{\rm Rok}(\alpha)\leq d$. From the definition of the Rokhlin dimension, it is sufficient to show that ${\rm dim}_{\rm Rok}(\alpha,H)\leq d$ for each closed cocompact subgroup $H$ of $G$. Fixing a $H$, we denote the $G$-shift on $G/H$ by $\lambda$ and use Proposition~\ref{prop5.1} to prove ${\rm dim}_{\rm Rok}(\alpha,H)\leq d$. 
    
	\par For any $\varepsilon>0$ and finite sets $M\subseteq G$, $F\subseteq C_0(X)$ and $S\subseteq C(G/H)$, without loss of generality, we assume $F\subseteq C_c(X)$ and the norm of the elements in $F$ and $S$ are less than 1. Then we can pick a compact set $Y\subseteq X$ so large that it contains the supports of all functions in $F$. Then we apply Proposition~\ref{prop9.3} to obtain a finite partition of unity $\{\varphi_i\}_{i\in I}$ of $Y$ and a decomposition $I=I^0\sqcup\cdots\sqcup I^d$ satisfying the three conditions with regard to $\varepsilon/2$ and $M^{-1}$. From condition~\ref{tube: interior of a tube}, each $\varphi_i$ is supported in the interior of a tube, we denote it by $B_i$. Thus, there exist compact identity neighborhood $K_i$ and compact subset $E_i$ such that $B_i=K_iE_i\cong K_i \times E_i$. Hence, there is a continuous map $\pi_i$ from $B_i$ to $G/H$ given by
	\[B_i\cong K_i\times E_i \to K_i \to G \to G/H. \]
	Moreover, there is an induced unital homomorphism
	\[\overline{\pi_i}\colon C(G/H)\to C(B_i), \overline{\pi_i}(f)=f\circ \pi_i, f\in C(G/H). \]
	Now set 
	\[\psi_i=\varphi_i\overline{\pi_i} \colon C(G/H)\to C(B_i), \]
	$\psi_i$ can be continuously extended to all $X$ by setting $\psi_(x)=0$ for all $x\in X\backslash B_i$. Then we define
	\[\phi_l=\sum_{i\in I^l}\psi_i \colon C(G/H)\to C_0(X) \]
	for all $l=0,\ldots,d$. Note that for any $x\in X$ and $f\in C(G/H)$, at most one of the functions $\psi_i(f)$ in the sum is nonzero because they are pairwise orthogonal from condition~\ref{tube: partition}. It is also obvious that each $\phi_l$ is completely positive contractive. Subsequently, we check that $\{\phi_l\colon l=0,\ldots,d\}$ satisfies the conditions~\ref{prop:A 1}, \ref{prop:A order 0}, \ref{prop:A equiva} and \ref{prop:A abel} in Proposition~\ref{prop5.1}.
    
	\par As for condition~\ref{prop:A 1}, by definition, we have
	\begin{align*}
		\sum_{l=0}^d \phi_l(1)=\sum_{l=0}^d\sum_{i\in I^l}\varphi_i\overline{\pi_i}(1)
		=\sum_{i\in I}\varphi_i.
	\end{align*}
  For any $a\in F$, $a$ is supported in $Y$ and $\{\varphi_i\}$ is the partition of unity for $Y$. Therefore, \[\sum_{l=0}^d \phi_l(1)a=\sum_{i\in I}\varphi_ia=a. \]
  
  \par As for condition~\ref{prop:A order 0}, fix $l\in \{0,\ldots,d\}$ and let $f_1,f_2,f_3\in C(G/H)$, we have
  \begin{align*}
  	\phi_l(f_1)\phi_l(f_2f_3)&=\left(\sum_{i\in I^l}\varphi_i\overline{\pi_i}(f_1)\right)\left(\sum_{i\in I^l}\varphi_i\overline{\pi_i}(f_2f_3)\right)\\
  	&=\sum_{i\in I^l}\varphi_i^2\overline{\pi_i}(f_1)\overline{\pi_i}(f_2f_3)\\
  	&=\sum_{i\in I^l}\varphi_i^2\overline{\pi_i}(f_1f_2)\overline{\pi_i}(f_3)\\
  	&=\left(\sum_{i\in I^l}\varphi_i\overline{\pi_i}(f_1f_2)\right)\left(\sum_{i\in I^l}\varphi_i\overline{\pi_i}(f_3)\right)\\
  	&=\phi_l(f_1f_2)\phi_l(f_3).
  \end{align*}
Thus for any $a\in F$ and $f_1,f_2,f_3\in S$, we have
$	\phi_l(f_1)\phi_l(f_2f_3)a=\phi_l(f_1f_2)\phi_l(f_3)a$.

\par As for condition~\ref{prop:A equiva}, fix a $l\in \{0,\ldots,d\}$ and let $f\in C(G/H), g\in M$ and $x\in X$.
\begin{align*}
	&(\alpha_g(\phi_l(f))-\phi_l(\lambda_g(f)))(x)\\
	&=\sum_{i\in I^l}\alpha_g(\varphi_i\overline{\pi_i}(f))(x)-\sum_{i\in I^l}\varphi_i\overline{\pi_i}(\lambda_g(f))(x)\\
	&=\sum_{i\in I^l}\varphi_i(g^{-1}x)f(\pi_i(g^{-1}x))-\sum_{i\in I^l}\varphi_i(x)f(g^{-1}\pi_i(x))\\
\end{align*}
Since at most one of the $\varphi_i$ in the sum is nonzero, there exist $i_0,j_0\in I^l$ such that 
\[(\alpha_g(\phi_l(f))-\phi_l(\lambda_g(f)))(x)=\varphi_{i_0}(g^{-1}x)f(\pi_{i_0}(g^{-1}x))-\varphi_{j_0}(x)f(g^{-1}\pi_{j_0}(x)). \]
This breaks down into two case:
\begin{enumerate}
	\item[Case 1:] If $i_0=j_0$, then $g^{-1}x$ and $x$ are both in $B_{i_0}$. Thus, by the definition of $\pi_i$,  $\pi_{i_0}(g^{-1}x)=g^{-1}\pi_{j_0}(x)$. Therefore, 
	\begin{align*}
		&|\varphi_{i_0}(g^{-1}x)f(\pi_{i_0}(g^{-1}x))-\varphi_{j_0}(x)f(g^{-1}\pi_{j_0}(x))|\\
		&=|(\varphi_{i_0}(g^{-1}x)-\varphi_{i_0}(x))f(\pi_{i_0}(g^{-1}x))|\\
		&\leq |\varphi_{i_0}(g^{-1}x)-\varphi_{i_0}(x)|<\varepsilon/2.
	\end{align*}
    \item[Case 2:] If $i_0\neq j_0$, then $\varphi_{i_0}(x)=0$ and $\varphi_{j_0}(g^{-1}x)=0$. Hence, 
    \begin{align*}
    	&|\varphi_{i_0}(g^{-1}x)f(\pi_{i_0}(g^{-1}x))-\varphi_{j_0}(x)f(g^{-1}\pi_{j_0}(x))|\\
    	&\leq |(\varphi_{i_0}(g^{-1}x)-\varphi_{i_0}(x))f(\pi_{i_0}(g^{-1}x))|+|(\varphi_{j_0}(x)-\varphi_{j_0}(g^{-1}x))f(g^{-1}\pi_{j_0}(x))|\\
    	&\leq |\varphi_{i_0}(g^{-1}x)-\varphi_{i_0}(x)|+|\varphi_{j_0}(x)-\varphi_{j_0}(g^{-1}x)|<\varepsilon.
    \end{align*}
\end{enumerate}
Thus, we have 
\[\|\alpha_g(\phi_l(f))-\phi_l(\lambda_g(f))\|<\varepsilon. \]

\par Condition~\ref{prop:A abel} is trivially satisfied, since $C_0(X)$ is commutative. So, ${\rm dim}_{\rm Rok}(\alpha,H)\leq d$.
\end{proof}

The tube dimension provides a better estimate of the nuclear dimension of the associated crossed product than the Rokhlin dimension. However, there actually exist non-commutative C*-algebras that admit residually compact group actions with finite Rokhlin dimension, to which the tube dimension cannot be applied for estimation.

\begin{example}
	Let $X$ be a compact Hausdorff space, $G$ a residually compact group and $\Gamma$ a discrete group. Let $\alpha$ be an action of $G$ on $X$, $\beta$ an action of $\Gamma$ on $X$ and the actions $\alpha,\beta$ are commutative. Denote the induced actions on $C(X)$ by $\bar{\alpha}$ and $\bar{\beta}$ respectively. Then there is a natural action of $G$ on the crossed product $C(X)\rtimes_{\bar{\beta}}\Gamma$, we denote it by $\gamma$. Since $C(X)\rtimes_{\bar{\beta}}\Gamma$ is unital, we can define the infinite tensor product $(C(X)\rtimes_{\bar{\beta}}\Gamma)^{\otimes \infty}$ and there is also a natural action $\gamma^{\otimes \infty}$ on the infinite tensor product. 
    
	\par If $\bar{\alpha}$ has a finite Rokhlin dimension, we claim that $\gamma^{\otimes \infty}$ has finite Rokhlin dimension. We assume ${\rm dim}_{\rm Rok}(\bar{\alpha})=n$, and then for any closed cocompact subgroup $H$ of $G$, there exist $n+1$  $G\text{-}{\rm shift}$-$\bar{\alpha}_\infty$ equivariant c.p.c.\ order zero maps $$\phi_0,\ldots,\phi_n\colon C(G/H)\to C(X)_\infty$$ with $\phi_0(1)+\cdots+\phi_n(1)=1$. Since $C(X)$ can be embedded in $C(X)\rtimes_{\bar{\beta}}\Gamma$ which can be embedded in $(C(X)\rtimes_{\bar{\beta}}\Gamma)^{\otimes \infty}$, there is a unital $\bar{\alpha}_\infty$-$(\gamma^{\otimes \infty})_\infty$ equivariant homomorphism $\rho$ from $C(X)_\infty$ to $((C(X)\rtimes_{\bar{\beta}}\Gamma)^{\otimes \infty})_\infty$. Actually, $\rho$ maps  $C(X)_\infty$ into $((C(X)\rtimes_{\bar{\beta}}\Gamma)^{\otimes \infty})_\infty\cap (C(X)\rtimes_{\bar{\beta}}\Gamma)^{\otimes \infty})' $. 
    
	Then we define $$\psi_i=\rho \circ \phi_i,$$ $i=0,\ldots,n$, $\{\psi_i\}$ are $G\text{-shift}$-$(\gamma^{\otimes \infty})_\infty$ equivariant c.p.c.\ order zero maps from $C(G/H)$ to $((C(X)\rtimes_{\bar{\beta}}\Gamma)^{\otimes \infty})_\infty\cap (C(X)\rtimes_{\bar{\beta}}\Gamma)^{\otimes \infty})' $ with $\psi_0(1)+\cdots+\psi_n(1)=1$. Thus, $\gamma^{\otimes \infty}$ has finite Rokhlin dimension.
\end{example}
	
\bibliographystyle{alpha}
\bibliography{ref}

\end{document}